\newtheorem{newthm}{Theorem}
\newtheorem{theorem}{Theorem}[section]
\newtheorem{lemma}[theorem]{Lemma}
\newtheorem{proposition}[theorem]{Proposition}
\newtheorem{corollary}[theorem]{Corollary}
\newtheorem{definition}[theorem]{Definition}
\theoremstyle{remark}
\theoremstyle{plain}
\newtheorem{conj}[theorem]{\bf Conjecture}
\numberwithin{equation}{section}
\newcommand{\wt}{\widetilde}
\newcommand{\wh}{\widehat}
\def\EEE{{\cal E}}
\def\FFF{{\cal F}}
\def\HHH{{\cal H}}
\def\JJJ{{\cal J}}
\def\KKK{{\cal K}}
\def\OOO{{\cal O}}
\def\NNN{{\cal N}}
\def\PPP{{\cal P}}
\def\QQQ{{\cal Q}}
\def\RRR{{\cal R}}
\def\VVV{{\cal V}}
\def\WWW{{\cal W}}
\def\FFFF{\mathscr F}
\def\JJJJ{\mathscr J}
\def\LLLL{\mathscr L}
\def\EEEE{\mathscr E}
\def\XXXX{\mathscr X}
\def\c{\mathbf c}
\def\s{\mathbf s}
\def\DD{\mathbf D}
\def\XX{\mathbf X}
\def\ZZ{\mathbf Z}
\def\g{\gamma}
\def\G{\Gamma}
\def\De{\Delta}
\def\de{\delta}
\def\sm{\smallsetminus}
\def\R{\mbox{$\mathbb R$}}
\def\C{\mbox{$\mathbb C$}}
\def\T{\mbox{$\mathbb T$}}
\def\D{\mathbb D}
\def\Q{\mathbb Q}
\def\Z{\mbox{$\mathbb Z$}}
\def\N{\mbox{$\mathbb N$}}
\def\lv{ \left(\begin{matrix} }
 \def\rv{\end{matrix}\right)}
\def\cal{\mathcal}
\def\dw{{\dw}}
\def\ds{\displaystyle}
\newcommand{\mylabel}[1]{\label{#1}}
\newcommand{\REFEQN}[1] { \begin{equation}\mylabel{#1} }
\newcommand{\ENDEQN}{\end{equation}}
\newcommand{\REFTHM}[1] { \begin{theorem}\mylabel{#1} }
\newcommand{\ENDTHM}{\end{theorem}}
\newcommand{\REFNTH}[1] { \begin{newthm}\mylabel{#1} }
\newcommand{\ENDNTH}{\end{newthm}}
\newcommand{\REFPROP}[1]{\begin{proposition}\mylabel{#1} }
\newcommand{\ENDPROP}{\end{proposition} }
\newcommand{\REFLEM}[1]{\begin{lemma}\mylabel{#1} }
\newcommand{\ENDLEM}{\end{lemma} }
\newcommand{\REFCOR}[1]{\begin{corollary}\mylabel{#1} }
\newcommand{\ENDCOR}{\end{corollary} }
\def\ds{\displaystyle }
\def\pf{postcritically-finite }
\def\Mi{Misiurewicz }
\def\ov{\overline}
\def\T{{\mathbb T}}
\tikzstyle{every picture}=[> = to]
\tikzset{cdlabel/.style={execute at begin node=$\scriptstyle,execute at end node=$}}
\tikzset{implication/.style={double equal sign distance, -implies}}
\tikzset{biimplication/.style={double equal sign distance, implies-implies}}
\title{On the core entropy of Newton maps}
\author{Yan Gao}
\date{\today}
\begin{document}
\maketitle
\begin{abstract}
In this paper, we define the core entropy for \pf Newton maps and study its continuity within this family. We show that the entropy function is not continuous in this family, which is different from the polynomial case studied by Thurston, Gao, Dudko-Schleicher,  Tiozzo \cite{TG,GT,DS,Ti}, and describe completely the continuity of the entropy function at generic parameters.

\vspace{0.1cm}
\noindent{\bf Keywords and phrases}: core entropy, extended Newton graph, critical marking, polynomial, Newton map.

\vspace{0.1cm}

\noindent{\bf AMS(2010) Subject Classification}: 37B40, 37F10, 37F20.
\end{abstract}
\section{Introduction}

A classical way to measure the topological complexity of a dynamical system is its \emph{entropy}.
In particular, to each real polynomial map $f$ one can associate the topological entropy of $f$ as
a dynamical system on the real line.

The classical entropy theory goes back to the seminal work of Milnor-Thurston \cite{MT},
who proved that the topological entropy of real quadratic polynomials depends continuously and monotonically on the parameter.

In order to generalize the entropy theory to complex polynomials and  develop a ``qualitative picture'' of the parameter space of degree $d$ polynomials, W. Thurston introduce a notion of entropy for complex polynomial around 2010. In his view, the entropy of a complex polynomial $P$ should be defined as the topological entropy of the restriction of $P$ on its ``core'', where a set as a candidate of a core of $P$ should satisfy:
\begin{enumerate}
\item it is compact and connected;
\item it is a minimal $P$-invariant set; and
\item it contains all critical points of $P$.
\end{enumerate}
Such defined entropy is called, by Thurston, the ``core entropy'' of $P$.

 At least in \emph{postcritically-finte} case, i.e., the orbit of each critical point is finite, such cores for polynomials  exist and known as \emph{Hubbard tree}: the minimal invariant tree containing all critical points (see Section \ref{sec:polynomials} below).
This tree completely captures the dynamics of the corresponding polynomial (Poirier \cite[Theorem 1.3]{Poi2}), and hence an appropriate candidate as a core.
By Thurston, the \emph{core entropy} of a polynomial $P$ is defined as the topological entropy of the restriction of $P$ on its Hubbard tree $\HHH_f$ if existing, i.e.,
\begin{equation}\label{eq:core-entropy-polynomial}
h(P):=h_{top}(P|_{\HHH_P}),
\end{equation}

We remark that the invariant segment for a real polynomial is just its Hubbard tree when considering it as a complex one. So the notion of core entropy is indeed a generalization of the topological entropy for real polynomials. Using core entropy, the classical entropy theory of real quadratic polynomial can be generalized to the complex case: the core entropy of quadratic polynomials depends continuously and monotonically along each \emph{vein} of the Mandelbrot set  (\cite{Li}, \cite{Ti},\cite{Ze}).

One can also consider the continuity of the entropy function on the whole space. Let $\PPP_d$ denote the set of monic, centered polynomials of degree $d\geq2$, and $\PPP_d^{\rm pf}$  the subspace of $\PPP_d$ consisting of only \pf ones. Then we get an entropy function
\[h:\PPP_d^{\rm pf}\to\R,\]
mapping $f$ to its core entropy $h(f)$. Thurston asked whether this function is continuous. The answer is YES. The result are independently proved by Tiozzo \cite{Ti} and Dudko-Schleicher \cite{DS} in quadratic case, and proved by Gao-Tiozzo \cite{GT} in general case.

Following Thurston's spirit, in more general case, one can define the core entropy of a rational map if its core is found. Recently,  Lodge,  Mikulich and Schleicher construct the core for any \pf Newton map.

A rational map of degree $d\geq3$ is called a \emph{Newton map} if there exists a polynomial $P$ so that
\[f=z-\frac{P(z)}{P'(z)}\text{ for all $z\in\C$}.\]
We denote by $\NNN_d$ the space of Newton maps of degree $d$, and by $\NNN_d^{\rm pf}$ the subspace of $\NNN_d$ consisting of \pf ones.

 The cases $d<3$ are excluded because they are trivial. Observed that $f$ arises naturally when Newton＊s method is applied to find
the roots of $P$. Each root of $P$ is an attracting fixed point of $f$, and the point at infinity is
a repelling fixed point of $f$. The degree $d$ coincides with the number of
distinct roots of $P$. Since the relation with the root-finding problem, the study of Newton maps became one of the major themes
with general interest, both in discrete dynamical system (pure mathematics), and in root-finding algorithm (applied mathematics), see for example \cite{AR,HSS,Ro,RS,RWY,Tan,WYZ}.

In recent works, Lodge,  Mikulich and Schleicher solve the long-standing classification problem for \pf Newton maps.
The authors first specifically construct for any \pf Newton map $f$ a finite connected $f$-invariant graph which satisfies a sequence of properties, called an \emph{extended Newton graph} (see the proof of Theorem 6.2, Definition 7.3 and the proof of Theorem 1.2 in \cite{LMS1}), and then prove that the \pf Newton maps (up to affine conjugation) can be classified by the equivalence classes of these extended Newton graphs (\cite[Theorems 1.4, 1.5]{LMS2}).

By their results, the extended Newton graphs completely capture the  dynamics  of  \pf Newton maps, analogy to the status of Hubbard trees in \pf polynomial family. Therefore, it is reasonable to consider the extended Newton graph as a ``core'' of a \pf Newton map, and define the \emph{core entropy} of a \pf Newton maps $f$ as the topological entropy of the restriction of $f$ on its extended Newton graph $G_f$, i.e.,
\begin{equation}\label{eq:entropy-Newton}
h(f):=h_{top}(f|_{G_f}).
\end{equation}
The goal of this paper is to study the continuity of the core entropy of \pf Newton maps.

To present the main results of the paper, we give a brief overview of the structure of extended Newton graphs. All materials come from \cite{LMS1}, referring to Section \ref{sec:newton-graph} for details.

Let $f$ be a \pf Newton map of degree $d\geq3$. The \emph{channel diagram} $\De$ of $f$ is the union of the accesses from
finite fixed points of $f$ to $\infty$. The \emph{Newton graph of
level $n$} is constructed to be the connected component of $f^{-n}(\De)$ containing
$\infty$ and is denoted by $\De_n$. For a sufficiently high level $n$, the Newton graph
captures the critical/postcritical points mapping onto fixed points.
The remaining critical/postcritical points, if any,  are disjoint from the Newton graph $\De_n$ for any level $n$.
 Factually, they are captured by another $f$-invariant combinatorial object called  the \emph{canonical Hubbard forest} of $f$. It is the disjoint union of finite trees which contain critical/postcritical point of $f$.

 Thus far, all critical/postcritical points are contained in either the Newton graph
or the Hubbard forest, but the Hubbard forest are disjoint
from the Newton graph. To remedy this, preperiodic \emph{Newton ray} are used to connect
each component of the Hubbard forest to the Newton graph. Therefore, an \emph{extended Newton graph} is a finite connected graph composed of:
\begin{itemize}
\item the Newton graph, which contains all critical/postcritical points mapping to fixed points;
\item canonical Hubbard forest $\HHH_f$, which contains the remaining critical/postcritical points;
\item preperiodic Newton rays connecting each subtree of $\HHH_f$ to the Newton graph.
\end{itemize}

 We will say more about the canonical Hubbard forest. In fact, each non-trivial periodic component $H$ of $\HHH_f$ corresponds a renormalization triple $\rho_H=(f^p,U,V)$ of $f$, where $p$ is the renormalization period of $\rho_H$, such that $H$ is an extended Hubbard tree of $\rho_H$  (see Section \ref{sec:renormalization} for the related concepts).  By Straightening Theorem \cite[Theorem 1]{DH2}, the polynomial-like map $f^p:U\to V$ is hybrid equivalent to a \pf polynomial $P_H\in \PPP_\de$ with $\de:={\rm deg}(f^p|_U)$, and the image of $H$ under the conjugation is an extended Hubbard tree of $P_H$. It follows immediately that $h_{top}(f^p|_H)=h(P_H)$. The polynomial $P_H$ is called a \emph{renormalization polynomial} of $f$ (associated to $H$) and $p$ called the \emph{renormalization period} of $P_H$. We denote
 \begin{equation}\label{eq:renormalization-polynomials}
 \mathfrak{R}(f):=\{P_H:\text{ $H$ is a non-trivial periodic component of $\HHH_f$.}\}
 \end{equation}
 The Newton map $f$ is called \emph{renormalizable} if $\mathfrak{R}(f)\not=\emptyset$ and \emph{non-renormalizable} otherwise.

 The first result in this paper is a  formula to compute the core entropy of \pf Newton maps.

 \begin{proposition}\label{pro:entropy-formula}
 Let $f$ be a \pf Newton map of degree $d\geq3$. Then we have the core entropy formula
 \[h(f)=\left\{
          \begin{array}{ll}
            \max_H \frac{1}{p_{_H}}h_{top}(f^{p_{_H}}|_H)=\max_{P\in\mathfrak{R}(f)}\frac{1}{p}h(P),, & \hbox{if $f$ is renormalizable;} \\[5pt]
            0, & \hbox{otherwise,}
          \end{array}
        \right.
 \]
 where $H$ go though all non-trivial periodic components of $\HHH_f$ with period $p_{_H}$, and $p$ denotes the renormalization period of $P$. The renormalization polynomials achieving the maximum are called \emph{entropy-maximal}.
 \end{proposition}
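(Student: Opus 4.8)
The plan is to compute the entropy of $f$ restricted to the extended Newton graph $G_f$ by decomposing $G_f$ into its natural invariant pieces and tracking how the transition matrix (or the associated directed graph of a Markov partition) decomposes into strongly connected components. Recall that $G_f$ is the union of the Newton graph $\Delta_N$ (for some high level $N$), the canonical Hubbard forest $\HHH_f$, and finitely many preperiodic Newton rays joining components of $\HHH_f$ to $\Delta_N$. The key structural observation is that the only part of $G_f$ on which the dynamics is genuinely ``expanding'' (carries positive entropy) is the union of the non-trivial periodic components of $\HHH_f$; everything else — the Newton graph $\Delta_N$, the preperiodic Newton rays, the preperiodic components of $\HHH_f$, and the trivial (point-like or eventually-collapsing) periodic components — contributes no entropy.

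First I would set up a Markov partition for $f|_{G_f}$ adapted to the vertex set of $G_f$, so that the core entropy equals $\log\lambda$, where $\lambda$ is the spectral radius of the incidence matrix $M$ of the resulting directed graph (this is the standard interval/graph-map entropy formula, e.g. via Misiurewicz–Szlenk; I can invoke it since $f|_{G_f}$ is a piecewise-monotone graph map). Next I would argue that $M$ is block upper-triangular with respect to the decomposition $G_f = \Delta_N \cup (\text{rays}) \cup \HHH_f$: edges of $\Delta_N$ map to edges of $\Delta_N$ (since $\Delta_n$ is $f$-invariant and maps into $\Delta_{n-1}\subset\Delta_n$), and each ray is preperiodic and eventually lands in $\Delta_N$, so the corresponding blocks are nilpotent or feed forward into the $\Delta_N$ block. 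Crucially, on $\Delta_N$ the map $f$ is a covering onto a subgraph with the property that every edge is eventually mapped homeomorphically onto an edge incident to a fixed point (the channel diagram $\Delta$ is mapped into itself edge-by-edge, and preimages do not ``fold'' in a way that creates loops of growing length) — so that block, too, has spectral radius $1$, i.e. zero entropy. Hence $h_{top}(f|_{G_f})$ equals the maximum of the entropies of the blocks coming from the periodic components $H$ of $\HHH_f$, i.e. $\max_H h_{top}(f|_{\bigcup_{j}f^j(H)})$. For a single periodic cycle of components $H\mapsto f(H)\mapsto\cdots\mapsto f^{p_H-1}(H)\mapsto H$, the entropy of the first-return map $f^{p_H}|_H$ equals $p_H$ times the entropy of $f$ on the whole cycle, so $h_{top}(f|_{\text{cycle of }H}) = \tfrac1{p_H}h_{top}(f^{p_H}|_H)$. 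Finally, by the Straightening Theorem discussion preceding the proposition, $f^{p_H}|_H$ is topologically conjugate to $P_H$ restricted to an extended Hubbard tree of $P_H$, whose entropy is by definition $h(P_H)$; assembling these gives the stated formula, with the convention that if $\mathfrak R(f)=\emptyset$ (no non-trivial periodic $H$) then all blocks have spectral radius $1$ and $h(f)=0$.

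The main obstacle I anticipate is the claim that the Newton-graph block has zero entropy, together with the verification that the rays and the preperiodic/trivial components of $\HHH_f$ contribute nothing — in other words, showing rigorously that no positive-entropy ``cross-talk'' arises from the gluing. Concretely: the Newton graph $\Delta_N = f^{-N}(\Delta)\cap(\text{component of }\infty)$ has exponentially many edges in $N$, and one must rule out that $f|_{\Delta_N}$ has positive entropy despite $h_{top}(f|_\Delta)=0$; the point is that $f:\Delta_N\to\Delta_{N-1}$ is an ``expansion-free'' graph covering (a combinatorial consequence of the fact, from \cite{LMS1}, that the channel-diagram edges and their iterated preimages form a Markov structure in which each edge maps one-to-one onto a single edge), so its incidence matrix is a $0/1$ matrix each of whose columns — or after transposing, each of whose rows corresponding to a given edge — has exactly one nonzero entry up to the finitely many ramified edges; such matrices are permutation-like and have spectral radius $1$. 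I would make this precise by citing the Markov property of the Newton graph from \cite{LMS1} and a short linear-algebra lemma. A secondary, more bookkeeping-type obstacle is handling the possibility that several periodic components of $\HHH_f$ lie in the same grand orbit or share a ray, but since $\HHH_f$ is by construction a disjoint union of trees and the rays are attached without creating new cycles, the block structure is genuinely block-triangular and the argument goes through; the max in the formula then simply records the largest spectral radius among the finitely many periodic-component blocks.
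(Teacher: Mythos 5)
Your plan follows essentially the same route as the paper: decompose $G_f$ into the Newton graph $\Delta_N$, the Newton rays, and the Hubbard forest $\HHH_f$, observe block-triangularity of the incidence matrix, kill the $\Delta_N$ and ray blocks, reduce to periodic cycles of $\HHH_f$, rescale by $1/p_H$, and straighten. Two remarks. (i) For the Newton graph block you argue combinatorially that each edge maps homeomorphically onto a single edge, so the incidence matrix is ``permutation-like''; the paper instead uses the cleaner observation that $\Delta_N$ eventually maps into the channel diagram $\Delta_0$ on which $f$ is a homeomorphism, and then invokes the reduction-to-invariant-attractor property of topological entropy (Proposition~\ref{Do3}) to get $h_{top}(f|_{\Delta_N})=h_{top}(f|_{\Delta_0})=0$; both work, the paper's avoids the linear-algebra lemma. (ii) The step ``the entropy of the first-return map $f^{p_H}|_H$ equals $p_H$ times the entropy of $f$ on the whole cycle'' is glossed over: because one only has $f(H_i)\subseteq H_{i+1}$ and not equality, $f:H_i\to H_{i+1}$ is not a conjugacy between the return maps, so it is not immediate that $h_{top}(f^{p_H}|_{H_i})$ is the same for every $i$. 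The paper proves this (Lemma~\ref{lem:orbit-entropy}) by passing to the larger tree $\wt{H}_i:=(\text{component of }f^{-1}(H_{i+1})\text{ containing }H_i)$, applying the bounded-fiber semiconjugacy criterion (Proposition~\ref{Do4}) to $f:\wt{H}_i\to H_{i+1}$, and then using that $H_i$ and $\wt{H}_i$ are both extended Hubbard trees of the same renormalization, so have equal entropy by the straightening argument. You would need to insert this (or an equivalent) argument for your plan to be complete, but it is available from the same ingredients you already cite, so I regard this as a fillable gap rather than a flaw in the approach.
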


 Let $f,\ f_n,n\geq1$ be \pf Newton maps of degree $d\geq 3$ such that $f_n\to f$ as $n\to\infty$. To study the continuity of the entropy function, we need to compare $h(f)$ and the limit behavior of $h(f_n)$ as $n\to\infty$. Let $H$ be a non-trivial periodic component  with period $p$ of the canonical Hubbard forest $\HHH_f$, and $\rho_H=(f^p,U,V)$ a renormalization triple associated to $H$. As $f_n\to f$, we know that there exist polynomial-like maps $f^p_n:U_n\to V_n$ for all sufficiently large $n$ such that $f_n^p:U_n\to V_n$ converge to $f^p:U\to V$ as $n\to\infty$. Note that the filled-in Julia set of $(f^p_n,U_n,V_n)$ is not necessarily connected. Let $H_n$ denote the intersection of the canonical Hubbard forest $\HHH_{f_n}$ of $f_n$ and the filled-in Julia set of $(f^p_n,U_n,V_n)$. We get the topological entropy $h_{top}(f^p_n|_{H_n})$. According to Proposition \ref{pro:entropy-formula}, the continuity of the entropy function at $f$ is quite related to the limit behavior of $h_{top}(f^p_n|_{H_n})$ compared with $h_{top}(f^p|_H)$ for every non-trivial periodic component $H$ of $\HHH_f$.

Using Straightening Theorem,  the polynomial-like map $f^p:U\to V$ is hybrid equivalent to a polynomial $P_H\in\PPP_\de$, and $f^p_n:U_n\to V_n$ is hybrid equivalent to a polynomial $P_{H,n}\in\PPP_\de$. By suitable choice of the conjugations, we can assume $P_{H,n}\to P_H$ as $n\to\infty$. Note that $P_H$ is \pf and $h(P_H)=h_{top}(f^p|_H)$, but $P_{H,n}$ is possibly not postcritically-finte: it may have disconnected Julia set although its critical points in the filled-in Julia set have finite orbits, and we haven't defined the core entropy for such polynomials. This motivate us to define the core entropy for some kinds of polynomials with disconnected filled-in Julia sets, including these $P_{H,n}$, so that $h(P_{H,n})=h_{top}(f^p_n|_{H_n})$.

A polynomial is called \emph{partial \pf} if its critical points in the filled-in Julia set have finite orbits. For any partial \pf polynomial $P$, we define its \emph{Hubbard forest} $\HHH_P$ to be the minimal $P$-invariant forest containing all critical points in the filled-in Julia set, and define its core entropy $h(P)$ as the topological entropy of the restriction of $P$ on $\HHH_P$, i.e.,
\[h(P):=h_{top}(P|_{\HHH_P}).\]
 Note that $\HHH_P=\emptyset$ if and only if all critical points escape to $\infty$, and we define $h(P)=0$ in this case. We denote by $\PPP_d^{\rm ppf}$ the subspace of $\PPP_d$ consisting of all partial \pf polynomials. Then $\PPP_d^{\rm pf}\subseteq \PPP_d^{\rm ppf}$ and the definition domain of the entropy function is enlarged from $\PPP_d^{\rm pf}$ to $\PPP_d^{\rm ppf}$.

 It is easy to check that the polynomials $P_{H,n}, n\geq1$ discussed above are  partial \pf and $h(P_{H,n})=h_{top}(f^p_n|_{H_n})$. It then follows from the discussion above that the continuity at $f$ of the entropy function within the \pf Newton family has a close relationship to the limit behavior of $h(P_{H,n})$ compared with $h(P_H)$. In other words, to study the continuity of the core entropy of \pf Newton maps, we should first make clear of the continuity of the entropy function $h:\PPP_d^{\rm ppf}\to\R$ at \pf parameters. The following  result completely solve this problem, and it plays an essential role in this paper.

 \begin{proposition}\label{pro:key}
 Let $P$ be a \pf polynomial in $\PPP_d$. Then the entropy function $h:\PPP_d^{\rm ppf}\to\R$ is upper semi-continuous at $P$, i.e.,
 \[\limsup_{\PPP_d^{\rm ppf}\ni Q\to P}h(Q)\leq h(P).\]
 Furthermore, we can find a $P$-invariant forest $\HHH_P^*$ in $\HHH_P$ such that
\[\liminf_{\PPP_d^{\rm ppf}\ni Q\to P}h(Q)=h_{top}(P|_{\HHH_P^*}).\]
As a consequence,
the function $h$ is continuous at $P$ if and only if $h(P)=h_{top}(P|_{\HHH_P^*})$.
 \end{proposition}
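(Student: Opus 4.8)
The plan is to analyze separately the behavior of critical points of a perturbation $Q$ near $P$. Fix a \pf polynomial $P\in\PPP_d$. For $Q$ close to $P$, each critical point of $Q$ lies close to a critical point of $P$. The critical points of $P$ all lie in the Hubbard tree $\HHH_P$ and have finite (preperiodic) orbits. Under perturbation, a critical point $c$ of $P$ splits into nearby critical points of $Q$; some of these may escape to $\infty$ (leaving the filled-in Julia set $K_Q$), while others remain in $K_Q$. Only the latter contribute to $\HHH_Q$ and hence to $h(Q)$. The Hubbard forest $\HHH_Q$ is therefore ``no larger'' than a perturbed copy of $\HHH_P$ in a suitable sense, which should yield the upper semi-continuity bound $\limsup_{Q\to P}h(Q)\le h(P)$; the technical implementation is a semicontinuity-of-entropy argument for the restriction of $Q$ to a graph that converges (in Hausdorff sense, with control on the combinatorial/Markov structure) to a subforest of $\HHH_P$. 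One convenient way to make this rigorous is to pass to a Markov partition of $\HHH_P$ associated with the (pre)periodic orbit of the critical set, transport it to $Q$, and observe that the transition matrix for $Q$ is a submatrix (or has smaller spectral radius) of that for $P$ because some transitions present for $P$ disappear when critical arcs escape.

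For the second, more delicate assertion, the idea is to identify precisely which part of $\HHH_P$ ``survives generically'' under perturbation. Define $\HHH_P^*$ to be the minimal $P$-invariant subforest of $\HHH_P$ that one can always recover in the limit from perturbations $Q\to P$: concretely, a point $x\in\HHH_P$ should belong to $\HHH_P^*$ precisely when, for $Q$ close to $P$, there is a uniformly nearby point of $\HHH_Q$, i.e.\ the arcs of $\HHH_P$ that ``must'' persist. The persistence is governed by whether the associated critical points can be pushed out of the filled-in Julia set: a critical point $c$ of $P$ can be made to escape under an arbitrarily small perturbation iff $c$ is not ``trapped'' by the dynamics (e.g.\ $c$ is not a periodic critical point and is not forced to stay by some combinatorial constraint). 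One then shows: (i) $\HHH_P^*$ is $P$-invariant and contained in $\HHH_P$, so $h_{top}(P|_{\HHH_P^*})\le h(P)$; (ii) there is a sequence $Q_n\to P$ (moving the non-trapped critical points out to $\infty$) with $\HHH_{Q_n}\to\HHH_P^*$ and hence $h(Q_n)\to h_{top}(P|_{\HHH_P^*})$, giving $\liminf\le h_{top}(P|_{\HHH_P^*})$; (iii) conversely, for every sequence $Q_n\to P$, the part of $\HHH_{Q_n}$ that survives in the limit always contains (a copy of) $\HHH_P^*$, so the Markov matrix of $Q_n$ dominates that of $P|_{\HHH_P^*}$ and $\liminf h(Q_n)\ge h_{top}(P|_{\HHH_P^*})$. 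Combining (ii) and (iii) gives the stated $\liminf$ formula, and the final ``if and only if'' is then immediate from comparing the two one-sided limits.

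The main obstacle I expect is step (iii) — the lower bound for an \emph{arbitrary} perturbing sequence. Upper semicontinuity and the existence of \emph{one} good sequence are relatively soft (Hausdorff convergence of graphs plus monotonicity/semicontinuity of the spectral radius of transition matrices under passing to subgraphs). But to show that \emph{every} $Q_n\to P$ retains at least $\HHH_P^*$, one must argue that the ``trapped'' critical points of $P$ genuinely cannot leave $K_{Q_n}$ for large $n$, and that the arcs of $\HHH_P^*$ connecting them are forced to persist in $\HHH_{Q_n}$ with comparable combinatorics. This requires a careful stability analysis: periodic critical points of $P$ remain in $K_{Q_n}$ (e.g.\ by a holomorphic-motion or normal-families argument, since a superattracting cycle cannot be destroyed instantly, or more precisely one tracks that the relevant cycle stays bounded), and the preperiodic critical points that are ``combinatorially trapped'' are controlled by the fact that the external-ray/landing structure deforms continuously. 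Making ``$\HHH_{Q_n}$ contains a copy of $\HHH_P^*$ with matching Markov structure'' precise — so that the inequality on topological entropies actually follows — is where the real work lies, and it is presumably handled via the extended-Hubbard-tree/straightening machinery and a semi-conjugacy argument rather than by bare hands.
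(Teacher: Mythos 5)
Your high-level architecture (upper semi-continuity plus identifying a ``surviving'' $P$-invariant subforest $\HHH_P^*$ that governs the $\liminf$, realized by one good sequence and bounded below by all sequences) matches the paper's, and you correctly flag step (iii) as the crux. But your characterization of $\HHH_P^*$ — via ``trapped'' versus ``non-trapped'' critical points — misses the decisive combinatorial ingredient, and as a result step (iii) as you state it is actually false. The paper defines, for each \emph{weak Julia critical marking} $\JJJJ$ of $P$ (a choice of how the external rays landing at Julia critical points are grouped into sets realizable by a critical portrait), a puzzle of $\JJJJ$-ends, and takes $\HHH_\JJJJ$ to be the regulated hull of Fatou critical/postcritical points that land in a common periodic $\JJJJ$-end. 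Then $\HHH_P^*:=\HHH_{\JJJJ_0}$ for the marking $\JJJJ_0$ minimizing $h_{top}(P|_{\HHH_\JJJJ})$. The point is that which edges of $\HHH_P$ survive depends not just on which Julia critical points can be pushed out of $\KKK_Q$, but on \emph{how} their ray configurations split under perturbation; different perturbing sequences have different ``types'' $\JJJJ$ and retain different $\HHH_\JJJJ$, which are generally not nested. So it is not true that ``every $Q_n\to P$ retains at least $\HHH_P^*$''; what is true (Proposition~\ref{pro:liminf} plus compactness of the space of critical portraits) is that every sequence, after passing to type-stable subsequences, retains some $\HHH_\JJJJ$ with $h_{top}(P|_{\HHH_\JJJJ})\ge h_{top}(P|_{\HHH_P^*})$, which is what the lower bound actually requires. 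Without the $\JJJJ$-end structure you have no way to even state, let alone prove, the correct step (iii).

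Your upper semi-continuity sketch also papers over a real difficulty. The transition matrix of $(\HHH_Q, Q)$ is not a submatrix of that of $(\HHH_P, P)$: when $\KKK_Q$ is disconnected the Hubbard forest can branch differently and is not simply ``$\HHH_P$ with some edges erased,'' and spectral radius is not monotone under the kind of graph degeneration you would encounter. The paper instead shows (Lemma~\ref{pro:less}) that if $\Theta$ is a critical marking of a partial \pf $Q$ satisfying properties (C1)--(C7), then $h(Q)\le h(\Theta)$, by embedding $\HHH_Q$ as a subforest of the Hubbard tree of the Poirier realization $\wt{Q}$ of $\Theta$ with the \emph{same} incidence matrix — an equality, not a domination, argument — and then uses continuity of $h(\Theta)$ in $\Theta$ (Gao--Tiozzo) together with the convergence $\Theta_{Q_n}\to\Theta$ (Proposition~\ref{pro:convergence4}) to conclude. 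The critical-marking bookkeeping is precisely the device that replaces your informal ``Hausdorff convergence with control on the Markov structure,'' and is needed both here and in the $\liminf$ direction. Finally, the existence of a realizing sequence is done by capture surgery (Section~\ref{sec:capture}), not by an abstract ``move the non-trapped critical points to infinity,'' and the specific construction is needed to ensure the type $\JJJJ$ of the sequence equals the minimizing $\JJJJ_0$.
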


 We remark that by Gao-Tiozzo's result \cite{GT}, the entropy function is continuous within the \pf polynomial family. But the proposition above shows that in a larger family $\PPP_d^{\rm ppf}$, the entropy function is not continuous, for example at a \Mi parameter $P$ with positive core entropy, since $h_{top}(P|_{\HHH_P^*})=0$ in this case.

 As discussed before,  we can learn the continuity of the core entropy of \pf Newton maps by application of Proposition \ref{pro:key} and Straightening Theorem. In this process,  a method of perturbing rational maps, called \emph{capture surgery} (see Section \ref{sec:capture}), is also needed.
We now state the main result in this paper. A \pf Newton map is called \emph{generic} if the orbits of its critical points avoid $\infty$.

\begin{theorem}\label{thm:main2}
Let $f$ be a generic \pf Newton map $f$ of degree $d$. Then the entropy function $h:\NNN_d^{\rm pf}\to\R$ is upper semi-continuous at $f$, i.e.,
\[\limsup_{\NNN_d^{\rm ppf}\ni g\to f}h(g)\leq h(f).\]
Furthermore, if $f$ is non-renormalizable, then the function $h$ is continuous at $f$; otherwise,
 the entropy function  is continuous at $f$ if and only if:
 there exists an entropy-maximal renormalization polynomial $P\in \mathfrak{R}(f)$ such that  the entropy function $h:\PPP_\de^{\rm ppf}\to\R$ is continuous at $P$ with $\de:={\rm deg}(P)$.
\end{theorem}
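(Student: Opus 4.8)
\medskip
\noindent\textit{Plan of proof.}

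The plan is to transfer the polynomial statement, Proposition~\ref{pro:key}, to the Newton family through the extended Newton graph, using the Straightening Theorem on each renormalization window and capture surgery to realize prescribed limiting dynamics there. The first step is a stability statement: since $f$ is generic, a small perturbation creates and destroys no access from a finite fixed point to $\infty$, so the channel diagram, the Newton graphs of all bounded levels, and hence the whole extended Newton graph --- together with its splitting into the Newton-graph-and-rays part and the non-trivial periodic components of the canonical Hubbard forest $\HHH_f$ --- persist for every $g\in\NNN_d^{\rm ppf}$ close enough to $f$; moreover, the fixed points of $g$ near the roots of $f$ remain superattracting, so no new periodic component of the Hubbard forest can appear off the old renormalization windows. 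Concretely, for each non-trivial periodic component $H$ of $\HHH_f$, with renormalization triple $\rho_H=(f^{p_H},U,V)$, one gets polynomial-like restrictions $g^{p_H}\colon U_g\to V_g$ converging to $f^{p_H}\colon U\to V$, hybrid equivalent by the Straightening Theorem \cite{DH2} to partial \pf polynomials $P_{H,g}\in\PPP_\de^{\rm ppf}$ ($\de=\deg(f^{p_H}|_U)$), and, for suitable conjugacies, $P_{H,g}\to P_H$ as $g\to f$. Rerunning the argument of Proposition~\ref{pro:entropy-formula} for $g$ --- the Newton-graph-and-rays part still carries zero entropy, being eventually absorbed into the channel diagram --- I would then obtain
\[
h(g)=\max_H\frac{1}{p_H}h(P_{H,g}),\qquad\text{so in particular}\qquad h(f)=\max_H\frac{1}{p_H}h(P_H),
\]
with $H$ ranging over the finitely many non-trivial periodic components of $\HHH_f$.

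Upper semi-continuity then follows at once: by Proposition~\ref{pro:key} one has $\limsup_{g\to f}h(P_{H,g})\le h(P_H)$ for each $H$, and a finite maximum of upper semi-continuous functions is upper semi-continuous, so $\limsup_{g\to f}h(g)\le h(f)$. If $f$ is non-renormalizable, then $h(f)=0$ by Proposition~\ref{pro:entropy-formula} and, since $h\ge 0$ everywhere, this forces $h(g)\to 0=h(f)$, i.e.\ continuity at $f$. Now suppose $f$ is renormalizable and that some entropy-maximal $P_H\in\mathfrak{R}(f)$ has $h\colon\PPP_\de^{\rm ppf}\to\R$ continuous at $P_H$. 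For $g$ near $f$, the forest $H_g:=\HHH_g\cap K(g^{p_H},U_g,V_g)$ is a forward $g^{p_H}$-invariant compact subset of the core $G_g$ with $h_{top}(g^{p_H}|_{H_g})=h(P_{H,g})$, so $h(g)\ge\frac{1}{p_H}h(P_{H,g})$; choosing conjugacies with $P_{H,g}\to P_H$ and using continuity of $h$ at $P_H$ yields $\liminf_{g\to f}h(g)\ge\frac{1}{p_H}h(P_H)=h(f)$, which together with upper semi-continuity gives $\lim_{g\to f}h(g)=h(f)$.

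For the remaining (``only if'') direction I would show that, for renormalizable $f$, if $h$ is discontinuous at \emph{every} entropy-maximal $P_H\in\mathfrak{R}(f)$ then $h$ is discontinuous at $f$. For each such $H$, Proposition~\ref{pro:key} provides partial \pf polynomials $Q_m^{(H)}\to P_H$ with $h(Q_m^{(H)})\to h_{top}(P_H|_{\HHH_{P_H}^*})<h(P_H)$. Since the renormalization windows attached to the entropy-maximal components lie in pairwise disjoint regions, away from $\infty$ and from the immediate root basins, one can perform capture surgery (Section~\ref{sec:capture}) independently inside each of them to produce, for every $m$, a Newton map $g_m\in\NNN_d^{\rm ppf}$, with $g_m\to f$ as $m\to\infty$, whose $H$-renormalization polynomial is $Q_m^{(H)}$ for each entropy-maximal $H$ and whose remaining renormalization polynomials are unchanged. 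By the formula of the first step,
\[
h(g_m)=\max\Bigl(\ \max_{H\text{ maximal}}\frac{1}{p_H}h\bigl(Q_m^{(H)}\bigr),\ \ \max_{H'\text{ non-maximal}}\frac{1}{p_{H'}}h(P_{H'})\ \Bigr),
\]
and the right-hand side tends, as $m\to\infty$, to a maximum of finitely many numbers each strictly smaller than $h(f)$ --- the first group because $\frac{1}{p_H}h_{top}(P_H|_{\HHH_{P_H}^*})<\frac{1}{p_H}h(P_H)=h(f)$, the second because $H'$ does not attain the maximum in Proposition~\ref{pro:entropy-formula}. Hence $\limsup_m h(g_m)<h(f)$, so $h$ is not lower semi-continuous, hence not continuous, at $f$.

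The two steps I expect to be hard are the following. The first is the stability claim above, in particular the assertion that perturbation creates no \emph{spurious} renormalization: this is exactly where the genericity hypothesis (that $\infty$ lies on no critical orbit of $f$) is indispensable, and it has to be extracted carefully from the Lodge--Mikulich--Schleicher construction and extended to the partial \pf setting. The second, and the real technical core, is the capture surgery of the last step: it must realize \emph{prescribed} nearby polynomial dynamics inside each renormalization window \emph{simultaneously}, yield an honest degree-$d$ Newton map converging to $f$, and leave the entropy of every other part of the core unaffected; this is developed in Section~\ref{sec:capture}.
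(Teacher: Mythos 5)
Your overall strategy is the paper's: reduce $h(g)$ to the core entropies of the renormalization polynomials via the Straightening Theorem, invoke Proposition~\ref{pro:key}, and realize the limit inferior by performing capture surgery inside the renormalization windows. The genuine gap lies in the stability claim with which you open, namely that for $g\in\NNN_d^{\rm pf}$ close to $f$ the renormalization structure freezes and ``no new periodic component of the Hubbard forest can appear off the old renormalization windows''. That is strictly stronger than what genericity gives, and it is not what is actually true: new non-trivial periodic components of $\HHH_g$ \emph{can} appear. For instance, a Julia critical point of $f$ lying on a trivial (point) component of $\HHH_f$ may, under perturbation inside $\NNN_d^{\rm pf}$, become captured by a new polynomial-like restriction; what genericity controls is not that this cannot happen, but that the \emph{period} of any such new component tends to infinity as $g\to f$. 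That is exactly the content of Lemmas~\ref{lem:non-renormalizable} and~\ref{lem:infinite-period}, which rest on the decomposition of $\HHH_{g}$ into a ``bounded'' part shadowing the old windows and an ``escaping'' part, together with the convergence of Newton graphs (Lemma~\ref{lem:perturbation-newton-graph}). As stated, your formula $h(g)=\max_H\frac{1}{p_H}h(P_{H,g})$ is therefore not an identity near $f$, only an inequality $\geq$, and the step from $\limsup h(P_{H,g})\leq h(P_H)$ to $\limsup_{g\to f} h(g)\leq h(f)$ does not follow without the additional argument that the escaping part contributes zero entropy in the limit.

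The same over-strong assertion also underlies the displayed formula for $h(g_m)$ in your ``only if'' step, where you implicitly assume that the surgered Newton maps carry no renormalizations beyond the prescribed ones. You correctly flag the stability claim as the hard point, but your formulation of it is false as an absolute statement; what must be proved, and what the paper proves, is the asymptotic statement about diverging periods. The remaining ingredients you use --- convergence of the straightening perturbations $P_{H,g}\to P_H$ (Lemma~\ref{lem:straighten}), the identification $h_{top}(g^{p_H}|_{H_g})=h(P_{H,g})$ (Lemmas~\ref{lem:filled-Julia-set}, \ref{lem:equal-entropy}), the inequality $h(g)\geq\frac{1}{p_H}h(P_{H,g})$ from invariance of $H_g\subseteq\HHH_g$, and the realization of entropy-minimal sequences by capture surgery (Proposition~\ref{pro:liminf-newton}) --- are exactly the paper's, and once the escaping-period lemma is supplied your argument closes.
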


This theorem only tell us the continuity of the entropy function $h:\NNN_d^{\rm pf}\to \R$ at generic parameters, but we conjecture that it is correct for all parameters

\begin{conj}
Theorem \ref{thm:main2} holds for any \pf Newton map $f$.
\end{conj}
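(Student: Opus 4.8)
The plan is to remove the genericity hypothesis from Theorem \ref{thm:main2} by handling the extra difficulties caused by critical/postcritical points whose orbits hit $\infty$. When $f$ is generic, every non-trivial periodic component $H$ of the canonical Hubbard forest $\HHH_f$ is disjoint from the Newton graph and the only entropy-carrying dynamics live on the renormalization pieces, so Proposition \ref{pro:entropy-formula} reduces everything to the polynomial statement Proposition \ref{pro:key} via the Straightening Theorem and capture surgery. For non-generic $f$, the complications are of two kinds: (i) some critical or postcritical points land on $\infty$ (equivalently on the channel diagram $\De$ after iteration), so the Newton graph $\De_n$ itself may carry more of the combinatorics, and a perturbation $g\to f$ can destroy the exact incidence of these orbits with $\De_g$; and (ii) under perturbation the renormalization triples $\rho_H=(f^p,U,V)$ may degenerate in a way coupled to the Newton-graph part, so the decomposition of $h(g)$ into a ``Newton-graph part'' and a ``Hubbard-forest part'' is not as clean as in the generic case. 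The claim of the conjecture is that, despite this, the final answer is unchanged: $h$ is always $0$ on the Newton-graph part (because $f|_{\De_n}$ is conjugate to a subshift with zero entropy — the channel diagram dynamics is eventually a permutation of edges up to the $\infty$-fixed structure), and hence the only source of positive entropy and of discontinuity remains the renormalization polynomials.

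The key steps, in order, are as follows. First, I would prove that $h_{top}(f|_{\De_n})=0$ for every level $n$ and every \pf Newton map $f$, generic or not: the channel diagram $\De$ has finitely many edges, $f$ maps $\De$ into $\De_1$ and each $\De_n$ has a bounded number of edges growing at most polynomially in $n$ (this is in \cite{LMS1}), and $f|_{\De_n}$ is a piecewise-monotone map of a finite graph with no ``horseshoe'' — more precisely it is eventually injective on the edge set modulo the periodic fixed-access structure — so its entropy vanishes. This already gives the $0$-term in Proposition \ref{pro:entropy-formula} its full generality. Second, I would extend Proposition \ref{pro:entropy-formula} itself to non-generic $f$: the point is that the extended Newton graph $G_f$ is the union of $\De_n$ (for suitable $n$), the Hubbard forest $\HHH_f$, and finitely many preperiodic Newton rays, and since rays are eventually mapped into the Newton graph and the Newton graph has zero entropy, $h_{top}(f|_{G_f})=\max_H \frac{1}{p_H}h_{top}(f^{p_H}|_H)$ by the standard fact that topological entropy of a graph map equals the max over the entropies of the ``recurrent'' invariant subgraphs; here the only recurrent pieces with positive entropy are the $H$'s. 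This step needs care because a periodic component $H$ may now be attached to, or even share vertices with, the Newton graph, but sharing a finite set of vertices cannot change the entropy.

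Third, with Proposition \ref{pro:entropy-formula} available in full generality, I would redo the upper-semicontinuity and the $\liminf$ computation exactly as in the generic case. For upper semicontinuity: given $g\to f$, each renormalization polynomial $P_{H,g}$ (obtained by straightening the perturbed polynomial-like restriction $f_g^{p}:U_g\to V_g$, where $p=p_H$) converges, after adjusting the conjugacies, to the \pf renormalization polynomial $P_H$ of $f$; by Proposition \ref{pro:key} applied in $\PPP_{\de}^{\rm ppf}$ with $\de=\deg P_H$, we get $\limsup_g h(P_{H,g})\le h(P_H)$, and since any renormalization piece of $g$ either converges to one of the finitely many $H$'s or has asymptotically vanishing entropy (again because the Newton-graph part has zero entropy and capture surgery controls the combinatorics), we conclude $\limsup_g h(g)\le \max_H \frac{1}{p_H}h_{top}(f^{p_H}|_H)=h(f)$. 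For the lower bound: when $f$ is non-renormalizable, $h(f)=0$ and there is nothing to prove beyond the upper bound; when $f$ is renormalizable, pick any entropy-maximal $P\in\mathfrak{R}(f)$, realized on a periodic component $H$ with period $p$, and approximate $f$ by $g_n\to f$ whose restriction near $U$ straightens to polynomials $Q_n\to P$ realizing $\liminf_{Q\to P}h(Q)=h_{top}(P|_{\HHH_P^*})$ (this is the content of Proposition \ref{pro:key}); the perturbation can be done by capture surgery supported away from the other periodic components so that $h(g_n)\ge \frac{1}{p}h(Q_n)\to \frac{1}{p}h_{top}(P|_{\HHH_P^*})$, giving continuity at $f$ precisely when some entropy-maximal $P$ has $h$ continuous at it in $\PPP_{\de}^{\rm ppf}$.

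\textbf{Main obstacle.} The hard part is Step 3's claim that, for a non-generic $f$, a perturbation $g$ cannot create new positive-entropy recurrent pieces that are ``hidden'' in the way the Hubbard-forest components interact with $\infty$ and with the Newton graph. Concretely, when a postcritical point of $f$ lands on $\infty$, a small perturbation displaces it off $\infty$ and it may then wander into a region that, for $g$, is organized by a polynomial-like restriction with larger degree or different combinatorics than any renormalization piece of $f$; one must show that the entropy contributed by such a piece still converges to $0$ (or to a renormalization value bounded by $h(f)$). I expect this to require a careful ``localization'' argument: a quantitative version of the Straightening Theorem together with the fact (from \cite{LMS1, LMS2}) that the extended Newton graph combinatorics is stable under perturbation except on a controlled set of degenerating accesses, plus the capture-surgery technology of Section \ref{sec:capture} to exhibit, for each nearby $g$, a model whose entropy is visibly $\le h(f)+o(1)$. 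Once this localization is in place, the generic-case argument goes through verbatim, and the conjecture follows.
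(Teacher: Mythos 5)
This statement is an open conjecture in the paper, not a theorem with a proof: the author proves Theorem \ref{thm:main2} only for generic $f$ and resolves the conjecture only in the cubic case (Theorem \ref{thm:main3}), using results special to cubic Newton maps from \cite{Ro}. Your text is likewise not a proof but a strategy with a self-acknowledged gap, and the gap you flag in your ``Main obstacle'' paragraph is essentially the open problem itself; ``I expect this to require a careful localization argument'' does not close it. Beyond that, your first two steps are misdirected: the vanishing of $h_{top}(f|_{\De_n})$ and the entropy formula of Proposition \ref{pro:entropy-formula} are already established in the paper for \emph{all} \pf Newton maps, generic or not (Lemma \ref{lem:well-defined} and Section 7 make no genericity assumption). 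So extending them buys nothing.

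Where genericity actually enters is in the perturbation theory, and your sketch does not engage with it. Lemma \ref{lem:perturbation-newton-graph} (stability of the Newton graphs $\De_m^n\to\De_m$) relies on every Julia vertex of $\De_m$ having an orbit avoiding ${\rm Crit}_f$; if $\infty\in{\rm Post}_f$, some iterated preimage of $\infty$ in $V(\De_m)$ meets a critical orbit, its continuation is not given by the implicit function theorem, and the whole puzzle-deformation machinery (Definition \ref{def:puzzle}, Lemmas \ref{lem:infinite-period} and \ref{lem:filled-Julia-set}) loses its foundation. Likewise, the proof of Lemma \ref{lem:non-renormalizable} explicitly invokes ``since $f$ is generic, then $b\neq\infty$''; when $b=\infty$ one must rule out non-trivial periodic components $T_n$ of $\HHH_{f_n}$ of bounded period accumulating on a critical point whose orbit hits $\infty$, and the paper can only do this for $d=3$ via the separating curves of internal rays and the intersection bound of \cite[Lemma 6.3]{Ro}. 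A proof of the conjecture would have to supply degree-$d$ replacements for these two lemmas; your proposal assumes rather than proves that the nearby maps $g$ cannot develop positive-entropy renormalization pieces attached to the degenerating access at $\infty$. As it stands, the proposal restates the conjecture's difficulty rather than resolving it.
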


We are able to prove the conjecture in the cubic case, and it display a simple form.
\begin{theorem}\label{thm:main3}
The entropy function $h:\NNN_3^{\rm pf}\to\R$ is continuous at $f$ if and only if the map $f$ is either hyperbolic or non-renormalizable.
\end{theorem}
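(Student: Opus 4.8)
The plan is to obtain Theorem \ref{thm:main3} from Theorem \ref{thm:main2} by exploiting the very simple critical structure of cubic Newton maps. Write $f=z-P/P'$ with $\deg P=3$; then $f$ has exactly one free critical point $c$ (the root of $P''$), its other critical points being the three simple roots of $P$, which are superattracting fixed points. For $f\in\NNN_3^{\rm pf}$ the orbit of $c$ is finite, and I would organize the discussion according to the cycle it eventually lands on: (A) $c$ is preperiodic to $\infty$; (B) $c$ is periodic of period $\geq 2$; (C) $c$ is preperiodic to a root of $P$; (D) $c$ is strictly preperiodic, landing on a cycle of period $\geq 2$ that avoids $\infty$. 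From the description of the extended Newton graph recalled in Section \ref{sec:newton-graph}, (A) and (C) are exactly the cases where $c$ is captured by the Newton graph, so $\HHH_f$ has no non-trivial periodic component and $f$ is non-renormalizable; in (B) and (D) the component of $\HHH_f$ carrying the eventual cycle of $c$ is non-trivial, $\mathfrak{R}(f)\neq\emptyset$, and — since $c$ is the only free critical point — this component is unique and its renormalization polynomial $P_f$ is a \emph{quadratic} (the associated polynomial-like restriction has a single critical point). Finally $f$ is hyperbolic precisely in cases (B) and (C), while in (A) and (D) the cycle $c$ reaches is repelling, so $f$ is non-hyperbolic. Hence ``$f$ is hyperbolic or non-renormalizable'' means exactly ``$f$ is not of type (D)'', and the theorem reduces to showing that $h$ is continuous at $f$ in cases (A), (B), (C) and discontinuous in case (D).

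Cases (B), (C), (D) are generic (the orbit of $c$ avoids $\infty$), so Theorem \ref{thm:main2} applies. In case (C), $f$ is non-renormalizable, so continuity is immediate. In case (B), $\mathfrak{R}(f)=\{P_f\}$ with $P_f$ a quadratic whose critical point is periodic, i.e. the center of a hyperbolic component of $\PPP_2$. Such a center is isolated in $\PPP_2^{\rm ppf}$: a small disc around it lies inside its hyperbolic component, which contains no escaping parameter and no other \pf parameter, so the disc meets $\PPP_2^{\rm ppf}$ only in the center. Therefore $h\colon\PPP_2^{\rm ppf}\to\R$ is vacuously continuous at $P_f$; as $P_f$ is the unique, hence entropy-maximal, renormalization polynomial, Theorem \ref{thm:main2} yields continuity of $h$ at $f$. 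In case (D), $\mathfrak{R}(f)=\{P_f\}$ with $P_f$ a quadratic whose critical point is strictly preperiodic to a repelling cycle of period $\geq 2$ (repelling because a \pf map has no attracting cycle other than superattracting ones, no neutral cycle, and $c$ does not lie on the cycle). Thus $P_f$ is a \Mi polynomial, and \Mi quadratic polynomials have positive core entropy (their Hubbard tree carries positive topological entropy). By the remark following Proposition \ref{pro:key}, $h\colon\PPP_2^{\rm ppf}\to\R$ is then discontinuous at $P_f$, since $h_{top}(P_f|_{\HHH_{P_f}^*})=0<h(P_f)$. As $P_f$ is the only renormalization polynomial, the equivalent condition in Theorem \ref{thm:main2} fails, and $h$ is discontinuous at $f$.

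It remains to treat case (A), where Theorem \ref{thm:main2} does not apply since $f$ is not generic; this is the genuinely new point. Here $f$ is non-renormalizable, so $h(f)=0$ by Proposition \ref{pro:entropy-formula}, and I would argue by contradiction. Suppose $g_n\to f$ in $\NNN_3^{\rm pf}$ with $\liminf_n h(g_n)>0$. For $n$ large $g_n$ is renormalizable, hence of type (B) or (D), and by Proposition \ref{pro:entropy-formula} $h(g_n)=\frac{1}{p_n}h(P_{g_n})\le\frac{1}{p_n}\log 2$, so $p_n\le\log 2/h(g_n)$ is bounded; passing to a subsequence with constant period $p$, and using that the set of cubic Newton maps admitting a period-$p$ renormalization is closed in $\NNN_3$ — which I would establish with the help of the capture surgery of Section \ref{sec:capture} and the classification of \pf Newton maps, by controlling the degeneration of the polynomial-like restrictions $g_n^{p}\colon U_n\to V_n$ — one concludes that $f$ admits a period-$p$ renormalization, contradicting its non-renormalizability. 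Hence $h(g_n)\to 0=h(f)$.

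I expect this last step to be the main obstacle: it amounts to a compactness statement for renormalizations near a cubic Newton map whose free critical orbit passes through the repelling fixed point $\infty$, and this is exactly the situation that capture surgery is designed to handle. Everything else is the combinatorial bookkeeping above together with the standard positivity of core entropy for \Mi quadratics.
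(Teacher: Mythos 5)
Your overall strategy matches the paper's: reduce Theorem~\ref{thm:main3} to Theorem~\ref{thm:main2} by exploiting that a cubic Newton map has a single free critical point, so $\mathfrak{R}(f)$ is at most a singleton and the renormalization polynomial is quadratic. Your treatment of cases (B), (C), (D) is essentially the paper's: (C) is non-renormalizable and generic, (B) is hyperbolic (and you correctly observe that the center of a hyperbolic component is isolated in $\PPP_2^{\rm ppf}$, making continuity vacuous), and (D) reduces via Theorem~\ref{thm:main2} to the discontinuity of $h$ at Misiurewicz quadratic parameters, which is the same point the paper makes in its ``only if'' direction. Two remarks there: (i) you tacitly assert that case (D) always produces a \emph{non-trivial} periodic component of $\HHH_f$ --- this is correct for cubic Newton maps (it follows from Roesch's puzzle analysis in \cite{Ro}) but it is not automatic from the definitions in Section~\ref{sec:newton-graph} and ought to be cited; (ii) the paper avoids this issue entirely by phrasing the dichotomy directly as ``hyperbolic or non-renormalizable,'' so it never needs to show that preperiodicity to a non-fixed repelling cycle implies renormalizability.

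The genuine gap is in case (A), the non-generic case, which is precisely the new content beyond Theorem~\ref{thm:main2}. You reduce to showing the periods $p_n$ of the renormalizations of $g_n\to f$ cannot remain bounded, and propose to do this by establishing that ``the set of cubic Newton maps admitting a period-$p$ renormalization is closed in $\NNN_3$,'' using capture surgery. This is not the right tool and this closedness statement is not obviously true: as $g_n\to f$, the polynomial-like restrictions $g_n^p\colon U_n\to V_n$ can degenerate (the fundamental annulus can lose modulus, or $K_{\rho_n}$ can collapse), and capture surgery is a device for \emph{constructing} perturbations of a fixed sub-hyperbolic map, not for proving compactness/closedness of a family of renormalizations. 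The paper's argument is instead structural and specific to cubic Newton maps: it invokes Lemmas~3.14, 3.15, Corollary~3.16 of Roesch \cite{Ro} to build a Jordan curve $\gamma$ of preperiodic internal rays (with non-pre-critical landing points) separating $c$ from $\infty$; perturbing $\gamma$ to a curve $\gamma_n$ in the dynamics of $g_n$ (by a variant of Lemma~\ref{lem:perturbation-newton-graph}), one sees that the renormalization tree $T_n$ must cross $\gamma_n$ at a point of $\partial U_n$ for a fixed Fatou component $U_n$, and bounded $p_n$ would then force $T_n\cap\partial U_n$ to contain at least two points of the filled Julia set of the renormalization, contradicting Roesch's Lemma~6.3 that this intersection is at most a single point. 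Your proposal has the right target (show $p_n\to\infty$) but the wrong mechanism; what you actually need from the literature is this separating-curve machinery for cubic Newton maps, not capture surgery.
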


For the completeness of the paper, we finally describe, as a byproduct, of the continuity of the entropy function $h:\PPP_d^{\rm ppf}\to\R$ at all parameters, not just at \pf ones as given in Proposition \ref{pro:key}. The statement is parallel to that of Theorem \ref{thm:main2} since
the partial \pf polynomials are very similar to the \pf Newton maps in the view of entropy: both of them have the Hubbard forests on which the core entropy concentrate, and every non-trivial periodic component of the Hubbard forest induces  a renormalization triple.

Let $P$ be a partial \pf polynomial. We call $P$ \emph{renormalizable} if $\KKK_P$ has non-trivial components, and \emph{non-renormalizable} otherwise. Suppose that $P$ is renormalizable. Then for any non-trivial periodic component $K$ of $\KKK_P$ with period $p$, there exists a monic,centered polynomial $P_K$ of degree ${\rm deg}(P^p|_K)$ such that the restriction of $P^p$ on $K$ conjugates to the restriction of $P_K$ on its filled-in Julia set. Using these notations, we state the following result.

\begin{theorem}\label{thm:main1}
Let $P$ be any partial \pf polynomial in $\PPP_d^{\rm ppf}$. Then the entropy function $h:\PPP_d^{\rm ppf}\to\R$ is upper semi-continuous at $P$.
 Furthermore, if $P$ has connected Julia set, the continuity of $h$ at $P$ is characterized by Proposition \ref{pro:key}; in the case that $\JJJ_P$ is not connected, if $P$ is non-renormalizable, the function $h$ is continuous at $P$, otherwise, the entropy function is continuous at $P$ if and only if : there exists a non-trivial component $K$ of $\KKK_P$ of period $p$ such that $h(P)=h(P_K)/p$ and the entropy function $h:\PPP_\de^{\rm ppf}\to\R$ is continuous at $P_K$, where $\de$ denotes the degree of $P_K$.
\end{theorem}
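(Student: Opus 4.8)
The plan is to transport the analysis of the \pf Newton family --- Proposition \ref{pro:entropy-formula} and Theorem \ref{thm:main2} --- to the partial \pf polynomial family, where the translation is actually lighter since capture surgery is not needed. First I would record the polynomial counterpart of Proposition \ref{pro:entropy-formula}: for $P\in\PPP_d^{\rm ppf}$,
\[ h(P)=\max_K\tfrac1{p_K}h_{top}(P^{p_K}|_K)=\max_K\tfrac1{p_K}h(P_K), \]
where $K$ ranges over the non-trivial periodic components of $\KKK_P$ of period $p_K$, and $h(P)=0$ when there is no such $K$. This comes from the structure of $\HHH_P$: it is a finite forest, its components are cyclically permuted by $P$, each lies inside a component of $\KKK_P$, and on a periodic cycle of components the first-return map is $P^{p_K}$ restricted to the Hubbard tree of $P_K$; the formula is then the standard expression for the topological entropy of a map permuting the pieces of a finite forest, together with $h_{top}(P^{p_K}|_K)=h(P_K)$ from the Straightening Theorem. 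In particular, if $\JJJ_P$ is connected then all critical points of $P$ lie in $\KKK_P$, so $P$ is \pf, the formula degenerates to \eqref{eq:core-entropy-polynomial}, and the continuity assertion of the theorem is exactly Proposition \ref{pro:key}; and if $\JJJ_P$ is disconnected and $P$ is non-renormalizable, then every periodic component of $\KKK_P$ is a point, $\HHH_P$ is a finite invariant set, and $h(P)=0$.

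For upper semi-continuity at $P$, let $Q\to P$. As in the discussion preceding Proposition \ref{pro:key}, each non-trivial periodic component $K$ of $\KKK_P$ persists: $(Q^{p_K},U_Q,V_Q)$ is polynomial-like of the same degree $\de_K$ for $Q$ near $P$, straightens to a partial \pf polynomial $Q_K\to P_K$, and $h(Q_K)=h_{top}(Q^{p_K}|_{K_Q})$ for the corresponding piece $K_Q$ of $\KKK_Q$. The key structural input I would need, proved as in the proof of Theorem \ref{thm:main2}, is a dichotomy for the non-trivial periodic components $K'$ of $\KKK_Q$ when $Q$ is close to $P$: either $K'$ is one of the pieces $K_Q$ above, or $K'$ shadows a point-component or the escaping set of $P$, and in the latter case its period tends to $\infty$ as $Q\to P$. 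Since $h(Q_{K'})\le\log\de_{K'}\le d-1$ (by Riemann--Hurwitz around the cycle), these latter components contribute at most $\tfrac{d-1}{p_{K'}}\to0$. Combining this with the entropy formula applied to $Q$ and the upper semi-continuity part of Proposition \ref{pro:key} applied to each of the \pf polynomials $P_K$, and using $h(Q)=\max_{K'}\tfrac1{p_{K'}}h(Q_{K'})$, we obtain
\[ \limsup_{Q\to P}h(Q)\le\max\big(0,\ \max_K\tfrac1{p_K}h(P_K)\big)=h(P), \]
using $h\ge0$. When $P$ is non-renormalizable the right side is $0=h(P)$, so $h$ is continuous there, as claimed.

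For the renormalizable case with $\JJJ_P$ disconnected, let $\HHH_{P_K}^*\subseteq\HHH_{P_K}$ be the forest furnished by Proposition \ref{pro:key} for each $P_K$; I would prove
\[ \liminf_{Q\to P}h(Q)=\max_K\tfrac1{p_K}h_{top}(P_K|_{\HHH_{P_K}^*}). \]
The inequality ``$\ge$'' is immediate: for every $Q$ and every $K$ one has $\HHH_Q\supseteq K_Q$, so $h(Q)\ge\tfrac1{p_K}h(Q_K)$, hence $\liminf_{Q\to P}h(Q)\ge\tfrac1{p_K}\liminf_{R\to P_K}h(R)=\tfrac1{p_K}h_{top}(P_K|_{\HHH_{P_K}^*})$ by the liminf part of Proposition \ref{pro:key}. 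For ``$\le$'' I would choose $Q\to P$ along which every straightening $Q_K$ approaches $P_K$ along an entropy-minimising sequence \emph{simultaneously} over all $K$; combined with the dichotomy above, such $Q$ satisfy $h(Q)\to\max\big(0,\max_K\tfrac1{p_K}h_{top}(P_K|_{\HHH_{P_K}^*})\big)$. The hard part is exactly producing such $Q$: it rests on the openness of the simultaneous straightening $Q\mapsto(Q_K)_K$ onto a neighbourhood of $(P_K)_K$ in $\prod_K\PPP_{\de_K}$, a ``Mandelbrot-like family'' statement which for polynomials follows from classical Douady--Hubbard theory --- the several renormalizations are carried by mutually disjoint critical pieces and are therefore independent --- and which here takes over the role played by capture surgery in the Newton case.

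Finally I would close the characterisation. Continuity of $h$ at $P$ means $\max_K\tfrac1{p_K}h(P_K)=\max_K\tfrac1{p_K}h_{top}(P_K|_{\HHH_{P_K}^*})$. If some non-trivial periodic $K$ has $h(P)=h(P_K)/p_K$ and $h$ is continuous at $P_K$, i.e.\ $h(P_K)=h_{top}(P_K|_{\HHH_{P_K}^*})$ by Proposition \ref{pro:key}, then $h(P)=\tfrac1{p_K}h_{top}(P_K|_{\HHH_{P_K}^*})\le\liminf_{Q\to P}h(Q)\le h(P)$, so equality holds throughout and $h$ is continuous at $P$. Conversely, if $h$ is continuous at $P$, pick $K$ realising $\max_{K'}\tfrac1{p_{K'}}h_{top}(P_{K'}|_{\HHH_{P_{K'}}^*})$; then $h(P)=\tfrac1{p_K}h_{top}(P_K|_{\HHH_{P_K}^*})\le\tfrac1{p_K}h(P_K)\le h(P)$ forces $h(P)=h(P_K)/p_K$ and $h(P_K)=h_{top}(P_K|_{\HHH_{P_K}^*})$, i.e.\ $h$ continuous at $P_K$. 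I expect the simultaneous-openness statement in the ``$\le$'' half of the liminf identity --- together with the period blow-up that controls spurious small renormalizations --- to be the main obstacle; everything else is bookkeeping around Proposition \ref{pro:key} and the entropy formula.
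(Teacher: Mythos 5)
Your architecture matches the paper's Section~9 exactly: the same entropy formula \eqref{eq:entropy2'}, the same ``deformed pieces versus escaping shadows with period blow-up'' dichotomy (the paper's $\HHH^n_{\rm bd}$/$\HHH^n_{\rm esc}$ split and Lemma~\ref{lem:infinite-period'}), the same per-piece reduction to Proposition~\ref{pro:key} via Straightening (Lemma~\ref{lem:straighten'}, Lemma~\ref{lem:entropy-f_n'}), and the same closing characterisation. The one place you genuinely diverge is the construction of the entropy-minimal approximating sequence $Q\to P$.

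You claim capture surgery is ``not needed'' and can be replaced by ``openness of the simultaneous straightening $Q\mapsto(Q_K)_K$'' from classical Douady--Hubbard Mandelbrot-like-family theory. This is the gap. Openness of straightening, even if granted, only lets you hit prescribed targets $(Q_{K,n})_K$ in $\prod_K\PPP_{\de_K}$; it says nothing about whether the resulting global polynomial $Q_n$ lies in $\PPP_d^{\rm ppf}$, which you need both for $h(Q_n)$ to be defined and for the sequence to count toward $\liminf_{\PPP_d^{\rm ppf}\ni Q\to P}h(Q)$. The entropy-minimal targets $Q_{K,n}$ are maximal-hyperbolic (Proposition~\ref{pro:liminf-strong}), so each Julia critical point of $P_K$ is forced to escape the polynomial-like domain under $Q_n$; once such a critical point leaves $U_H$, nothing in the abstract openness statement controls its global orbit under $Q_n$. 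It could land in a bounded Fatou component or in another non-trivial piece of $\KKK_{Q_n}$, giving an infinite bounded critical orbit and taking $Q_n$ outside $\PPP_d^{\rm ppf}$. The paper's capture surgery (Section~\ref{sec:capture}, used via Propositions~\ref{pro:liminf-strong} and \ref{pro:liminf-newton'}) exists precisely to avoid this: the invariant domain $\ZZ$ of the perturbation data is explicitly chosen in the basin of infinity, so the escaping critical points of the local renormalization genuinely escape to $\infty$ and $Q_n$ remains partial postcritically finite by construction. If you want to bypass capture surgery you would have to build the quasiconformal deformation so that the escaping critical points are pushed directly into the ``moat'' $\Omega(Q_n)$ between BH-puzzle levels --- at which point you are essentially re-deriving the capture-surgery construction rather than replacing it.

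Everything else --- the entropy formula as the polynomial analogue of Proposition~\ref{pro:entropy-formula}, the upper semi-continuity argument, the non-renormalizable case, the ``$\ge$'' half of the liminf identity, and the final bookkeeping --- is correct and tracks the paper. One small slip: the entropy bound you use should read $h(Q_{K'})\le(2d-2)\log d$ (the degree of the renormalization return map is at most $d^{2d-2}$, not $\de_{K'}\le d$; see the claim in the proof of Proposition~\ref{pro:non-renormalizable}); this does not affect the conclusion that the contribution tends to $0$ as periods blow up.
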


\noindent{\bf Organization of the paper.} The paper is organized as follows:

In Section $2$, we summarize some basic facts used in the paper.
\vspace{-5pt}

 In Section $3$, we develop a method to perturb a sub-hyperbolic rational map to a hyperbolic one, called \emph{capture surgery}, which is used in the proof of Proposition \ref{pro:key} and Theorem \ref{thm:main2}.
\vspace{-5pt}

In Section $4$, we study the convergence of internal/external rays when perturbing rational maps.
\vspace{-5pt}

In Section $5$, we will consider the dynamics of partial \pf polynomials. We focus on the critical markings of partial \pf polynomials, including its  construction, properties and convergence.
\vspace{-5pt}

In Section $6$, we describe the continuity of the entropy function $h:\PPP_d^{\rm ppf}\to\R$ at the \pf parameters and prove Proposition \ref{pro:key}. This is the key part of the paper. The proof of Proposition \ref{pro:key} is divided into two parts: we first show that the entropy function within the partial \pf polynomial family is upper semi-continuous at any \pf parameter $P$; and then construct a sequence $\{P_n,n\geq1\}\subseteq \PPP_d^{\rm ppf}$ converging to $P$ by capture surgery such that the limit of their core entropies obtain the
limit inferior $\liminf_{Q\to P}h(Q)$.
\vspace{-5pt}

In Section $7$, we introduce the concept of core entropy for \pf Newton maps, and prove Proposition \ref{pro:entropy-formula}, which provides a formula for the computation of the core entropy.
\vspace{-5pt}

In Section $8$, we explore the continuity of the entropy function on \pf Newton family. We prove Theorem \ref{thm:main2} by applying Proposition \ref{pro:key} and Straightening Theorem, and testify Theorem \ref{thm:main3} with the help of Theorem \ref{thm:main2} and some specific properties of cubic Newton maps given in \cite{Ro}.
\vspace{-5pt}

Finally, in Section $9$, we will give an outline of the proof of Theorem \ref{thm:main1}, which is parallel to the proof of Theorem \ref{thm:main2}.

{\noindent \bf Acknowledgement.} I would like to thank Laura De Marco for the very useful discussion
and suggestions.  The author is supported by NSFC grant no. 11871354.

\section{Preliminary}
\subsection{Topological entropy on graphs}\label{sec:topological-entropy}
We will not use the general definition of the topological entropy in the paper (see \cite{AKM}). Instead, we summarize some basic results about the topological entropy that will be applied below.

Let $h_{top}(f|_X)$ denote the topological entropy of $f$ on $X$. Throughout the paper, for the uniform of the statement, we stipulate {\bf $h_{top}(f|_\emptyset):=0$.}
The following  propositions can be found in \cite{Do}.

\REFPROP{Do1}
For any positive integer $k$ we have $k\cdot h_{top}(f|_X)=h(f^k|_X)$.
\ENDPROP

\REFPROP{Do2}
If $X=X_1\cup X_2$, with $X_1$ and $X_2$ compact, $f(X_1)\subset X_1$ and $f(X_2)\subset X_2$, then $h_{top}(f|_X)=\sup\bigl(h_{top}(f|_{X_1}),h_{top}(f|_{X_2})\bigr)$. \ENDPROP

\REFPROP{Do3}
Let $Z$ be a closed subset of $X$ such that $f(Z)\subset Z$. Suppose that for any $x\in X$, the distance of $f^n(x)$ to $Z$ tends to $0$, uniformly on any compact set in $X-Z$. Then $h_{top}(f|_X)=h_{top}(f|_Z)$. \ENDPROP

\REFPROP{Do4} Assume that $\pi$ is a  surjective semi-conjugacy $$\begin{array}{rcl}Y &\xrightarrow[]{\ g\ }  &Y\\ \pi\Big\downarrow &&\Big\downarrow \pi \vspace{-0.1cm} \\ X &  \xrightarrow[]{\ f\ } & X\vspace{-0.1cm}.\end{array}$$
Then $h_{top}(f|_X)\leq h_{top}(g|_Y)$. Furthermore, if  $\ds \sup_{x\in X}\#\pi^{-1}(x)< \infty$ then  $ h_{top}(f|_X)=h_{top}(g|_Y)$.\ENDPROP


A (finite topological) \emph{graph} $G$ is a compact  Hausdorff space which contains a
finite non-empty set $V_G$, called \emph{vertex set} of $G$, such that every connected component
of $G\setminus V_G$ is homeomorphic to an open interval of the real line. The closure of each component of $G\setminus V_G$ is called an \emph{edge} of $G$.
There is a special kind of connected graphs, called \emph{trees}, which are graphs without cycles. A finite disjoint union of trees is called a \emph{forest}.

Let $G$ be a finite graph with vertex set $V_G$.
A continuous map $f:G\to G$ is called a \emph{Markov map} if  $f(V_G)\subseteq V_G$  and the restriction of $f$ on each edge is injective.

Let $f:G\to G$ be a Markov graph map. By the definition,  any edge of $G$ is mapped to the union of several edges of $G$.
Enumerate the edges of  $G$  by $e_i$ , $i=1,\cdots,k$. We then obtain an \emph{incidence matrix} $D_{(G,f)}=(a_{ij})_{k\times k}$ of $(G,f)$
 such that  $a_{ij}=1$ if $f(e_i)$ covers $e_j$ and $0$ otherwise. Note that choosing  different enumerations of the edges gives rise to conjugate incidence matrices, so in particular, the eigenvalues are independent of the choices.

 Denote by $\lambda$ the greatest non-negative eigenvalue of $D_{(G,f)}$. By the Perron-Frobenius theorem  such an eigenvalue exists
  and equals
the growth rate of $\|D_{(G,f)}^n\|$ for any matrix norm.
The following result is classical (see \cite{AM,MS}):

\begin{proposition}\label{entropy-formula}
The topological entropy $h_{top}(f|_G)$ is equal to $0$ if $D_{(G,f)}$ is nilpotent, i.e., all eigenvalues of $D_{(G,f)}$ are zero; and equal to $\log\lambda$ otherwise.
\end{proposition}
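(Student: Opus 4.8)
The plan is to reduce the identity to the classical fact that for any piecewise-monotone self-map $\varphi$ of a finite graph one has $h_{top}(\varphi)=\limsup_{n\to\infty}\frac1n\log c(\varphi^{n})$, where $c(\varphi^{n})$ denotes the number of \emph{laps} of $\varphi^{n}$ (the maximal closed sub-arcs on which $\varphi^{n}$ is injective), together with the Perron-Frobenius statement already recalled above, that $\|D^{n}\|^{1/n}\to\lambda$ for $D:=D_{(G,f)}$ and any matrix norm. The remaining work is to show that $c(f^{n})$ grows at exactly the rate recorded by $\|D^{n}\|$.

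First I would record the combinatorial meaning of $D^{n}$. Since $f(V_G)\subseteq V_G$, each iterate $f^{n}$ is again a Markov map with vertex set $V_G$ and incidence matrix $D^{n}$; explicitly, the $(i,j)$-entry $a^{(n)}_{ij}$ of $D^{n}$ is the number of admissible edge-paths $e_{i}=e_{k_{0}},e_{k_{1}},\dots,e_{k_{n}}=e_{j}$, namely those with $f(e_{k_{\ell}})\supseteq e_{k_{\ell+1}}$ for every $\ell$. Because $f$ restricts to a homeomorphism on each edge, pulling such a path back one step at a time through these homeomorphisms produces \emph{exactly one} closed sub-arc $I\subseteq e_{i}$ with $f^{n}|_{I}$ a homeomorphism onto $e_{j}$ and $f^{\ell}(I)\subseteq e_{k_{\ell}}$ for all $\ell$, and conversely every such sub-arc comes from a unique admissible path. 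I will call these the \emph{full sub-arcs} of $f^{n}$; then $\|D^{n}\|:=\sum_{i,j}a^{(n)}_{ij}$ is precisely their number (this is a submultiplicative matrix norm, so $\|D^{n}\|^{1/n}\to\lambda$).

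Next I would compare $c(f^{n})$ with $\|D^{n}\|$. Since $f$ is injective on each edge, $f$ has no turning point in the interior of an edge; an induction on $n$ — distinguishing whether a turning point $x$ of $f^{n}|_{e_{i}}$ is already a turning point of $f^{n-1}|_{e_{i}}$ or $f$ turns at $f^{n-1}(x)$ — shows that every turning point of $f^{n}|_{e_{i}}$ is carried by $f^{n}$ into $V_G$. As the endpoints of $e_{i}$ lie in $V_G$ as well, each lap of $f^{n}|_{e_{i}}$ is mapped homeomorphically onto an arc of $G$ whose two endpoints lie in $V_G$; such an arc traverses at least one whole edge and hence contains a full sub-arc, and distinct laps contain distinct full sub-arcs, so $c(f^{n})\le\|D^{n}\|$. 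Conversely a lap is an embedded arc of $G$, so it traverses each edge at most once and therefore contains at most as many full sub-arcs as $G$ has edges; summing, $\|D^{n}\|$ is at most the number of edges of $G$ times $c(f^{n})$. Thus $c(f^{n})$ and $\|D^{n}\|$ have the same exponential growth rate. If $D$ is nilpotent, then $\|D^{n}\|=0$ for $n$ large, so $c(f^{n})$ is bounded and $h_{top}(f|_G)=0$ by the classical identity. If $D$ is not nilpotent, its transition digraph contains a cycle, so $\lambda=\rho(D)\ge 1$ and $\|D^{n}\|^{1/n}\to\lambda$; combined with the comparison and the classical identity, this gives $h_{top}(f|_G)=\log\lambda$.

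The step I expect to be the main obstacle is the classical identity $h_{top}(f|_G)=\limsup_{n}\frac1n\log c(f^{n})$ for graph maps, and in particular its lower bound. The difficulty is that combinatorially distinct itineraries need not produce metrically separated orbits, since two edges sharing a vertex can be arbitrarily close, so the symbol count is not directly visible to the entropy. The standard resolution, which I would invoke from \cite{AM,MS}, passes to a high iterate $f^{m}$ and a dominant irreducible block of $D$ and extracts from it a topological horseshoe: pairwise disjoint sub-arcs $I_{1},\dots,I_{s}$ of a single edge with $f^{m}(I_{t})\supseteq I_{1}\cup\dots\cup I_{s}$ for each $t$, where $s$ is of order $\lambda^{m}$. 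Such a horseshoe semiconjugates onto the full one-sided shift on $s$ symbols, whence $h_{top}(f^{m}|_G)\ge\log s$ by Proposition~\ref{Do4}; Proposition~\ref{Do1} then gives $h_{top}(f|_G)\ge\frac1m\log s$, and letting $m\to\infty$ yields $h_{top}(f|_G)\ge\log\lambda$. Everything else in the argument is routine bookkeeping with the incidence matrix.
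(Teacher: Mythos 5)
The paper does not actually prove Proposition~\ref{entropy-formula}: it is invoked as classical with a pointer to \cite{AM,MS}, so there is no in-text argument to compare against. Your reconstruction is correct, and it is essentially the standard one. The combinatorial bookkeeping is sound: the $(i,j)$-entry $a^{(n)}_{ij}$ of $D^n$ is indeed in bijection with the closed sub-arcs of $e_i$ that $f^n$ carries homeomorphically onto $e_j$ along a prescribed $n$-step edge itinerary; since $f$ is injective on edges, a fold of $f^n|_{e_i}$ can only occur at a point whose $\ell$-th image lies in $V_G$ for some $\ell<n$, and $V_G$ is forward-invariant, so every turning point is sent into $V_G$; hence each lap's image is an embedded arc between vertices, so it contains at least one whole edge, and conversely, being embedded, it traverses each edge at most once. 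This gives $c(f^n)\asymp\|D^n\|_1$ up to the constant $\#E(G)$, which is exactly what is needed.

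One small point of framing. In the last paragraph you present the horseshoe construction as a proof of the lower half of the general Misiurewicz--Szlenk lap-number identity $h_{top}=\limsup\frac1n\log c(f^n)$, but what you actually sketch is the direct Markov argument, which bypasses that identity entirely for the lower bound: pass to a power $f^m$ and a dominant irreducible block of $D$, produce $s\sim\lambda^m$ pairwise disjoint sub-arcs of a single edge each mapped by $f^m$ over their union, semi-conjugate the resulting invariant set onto the full one-sided $s$-shift via the itinerary map, and apply Propositions~\ref{Do4} and~\ref{Do1}. If you take that route you only need the elementary upper half of the lap-number identity, $h_{top}\le\limsup\frac1n\log c(f^n)$, combined with $c(f^n)\le\|D^n\|_1$. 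Either packaging is fine and lands on the same references. A final inessential remark: with the paper's definition of Markov graph map, $f(e_i)$ is always a nondegenerate embedded arc with endpoints in $V_G$, so it covers at least one whole edge; hence every row of $D$ has a nonzero entry and $D$ is never actually nilpotent, making the nilpotent clause of the statement vacuous in this setting. Your argument handles it harmlessly in any case.
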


\subsection{Dynamics of polynomials and rational maps}\label{sec:polynomials}

\subsubsection{General dynamical properties of rational maps}
Let $f$ be a rational map of degree $d\ge2$. We denote by $\JJJ_f$ and $\FFF_f$ the Fatou set and Julia set of $f$ respectively. Let ${\rm Crit}_f$ denote the set of critical points of $f$, and the set
\[{\rm Post}_f:=\{f^n(c):c\in{\rm Crit}_f,n\geq1\}\]
is called the \emph{postcritical set} of $f$. A point $z\in\ov{\C}$ is called a \emph{Fatou point/Julia point} if $z\in\FFF_f/\JJJ_f$, and called a \emph{pre-critical point} if $f^n(z)$ is a critical point of $f$ for some $n\geq0$.

The map $f$ is called \emph{\pf} if $\#{\rm Post}_f<\infty$; called \emph{hyperbolic} if all its critical points are attracted by the attracting cycles; and called \emph{sub-hyperbolic} if all its Fatou critical points are attracted by the attracting cycles and all its Julia critical points have finite orbits.

\begin{lemma}\label{lem:sub-hyperbolic}
Let $f$ be a hyperbolic (resp. sub-hyperbolic) rational map. Then there exists a conformal metric (resp. orbifold metric) $\omega$ in a neighborhood of $\JJJ_f$ such that $f$ is \emph{uniformly expanding} with respect to $\omega$, i.e., $\exists \lambda>1, \text{ s.t }||Df_z||_\omega>\lambda$ for all $z$ in this neighborhood (resp. whenever $z$ and $f(z)$ are not ramified points). Moreover, each connected component of $\JJJ_f$ is locally-connected.
\end{lemma}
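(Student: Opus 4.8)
\beginp
Both assertions are classical; I sketch the plan. \emph{Expanding metric, hyperbolic case.} I would run the standard Poincar\'e-metric argument. Since $f$ is hyperbolic, $\overline{{\rm Post}_f}$ lies in the union of the basins of the attracting cycles, hence is disjoint from $\JJJ_f$, so one can choose a neighbourhood $U$ of the attracting cycles, a finite union of closed Jordan domains, with $\overline{{\rm Post}_f}\subset U$ and $f(\overline U)\subset U$ (such a forward-invariant $U$ exists because ${\rm Post}_f$ is forward invariant and accumulates only on the attracting cycles). Put $W:=\cbar\setminus\overline U$, a hyperbolic Riemann surface containing $\JJJ_f$. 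From $f(\overline U)\subset\overline U$ one gets $f^{-1}(W)\subset W$, the inclusion being strict because $\overline U$ is a proper subset of the basins; and since every critical point $c$ has $f(c)\in{\rm Post}_f\subset U$, no critical point lies in $f^{-1}(W)$, so $f\colon f^{-1}(W)\to W$ is a proper unramified map, i.e.\ a covering, hence a local isometry from the Poincar\'e metric $\rho_{f^{-1}(W)}$ to $\rho_W$. Strictness of $f^{-1}(W)\subsetneq W$ forces $\rho_{f^{-1}(W)}>\rho_W$, so by compactness of $\JJJ_f$ there are $\lambda>1$ and a neighbourhood $N$ of $\JJJ_f$ with $\rho_{f^{-1}(W)}\ge\lambda\rho_W$ on $N$; then $\|Df_z\|_{\rho_W}=\rho_{f^{-1}(W)}(z)/\rho_W(z)\ge\lambda$ for $z\in N$, and $\omega:=\rho_W|_N$ is the conformal metric sought.

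\emph{Sub-hyperbolic case.} Now the Fatou critical points are treated as above, while finitely many critical points may lie in $\JJJ_f$; each is strictly preperiodic (a periodic critical point would be superattracting, hence Fatou), so $P_J:={\rm Post}_f\cap\JJJ_f$ is finite. I would replace $\rho_W$ by the orbifold Poincar\'e metric of the canonical orbifold of $f$: let $\nu_f\colon\cbar\to\{1,2,\dots\}$ be minimal with $\nu_f(f(y))$ a multiple of $\nu_f(y)\deg_y f$ for all $y$ (sub-hyperbolicity makes $\nu_f$ finite, and $\nu_f>1$ only on ${\rm Post}_f$), equip $W=\cbar\setminus\overline U$ with cone points at $P_J$ of orders $\nu_f$, and note that the resulting orbifold $\mathcal O$ is hyperbolic (the Riemann surface $W$ already is). Carrying out the construction above in the orbifold category gives an orbifold covering $f\colon\mathcal O'\to\mathcal O$ with $\mathcal O'\subsetneq\mathcal O$ over a neighbourhood of $\JJJ_f$, and the same strict-inclusion argument yields $\|Df_z\|_\omega\ge\lambda>1$ for the orbifold Poincar\'e metric $\omega$ whenever $z$ and $f(z)$ are not ramified (cone) points. (Alternatively, near each $c\in\JJJ_f\cap{\rm Crit}_f$ one can unwind the ramification by a local branched cover of degree $\nu_f$, build a smooth expanding metric upstairs, push it down, and patch with $\rho_W$ away from the finite forward orbit of these $c$.)

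\emph{Local connectivity.} With $\omega$ and $\lambda>1$ fixed on a neighbourhood $N$ of $\JJJ_f$, the plan is a Markov-cover argument. I would cover $\JJJ_f$ by finitely many closed Jordan domains $D_1,\dots,D_m\subset N$ of small $\omega$-diameter with $\JJJ_f\subset\bigcup_i{\rm int}\,D_i$, such that each $f|_{D_i}$ is injective or (when $D_i$ contains a ramified point) a degree-$\nu_f$ branched cover onto its image, and $f(D_i)\cap D_j\ne\emptyset$ implies $D_j\subset f(D_i)$. For a word $i_0\cdots i_n$ set $D_{i_0\cdots i_n}:=\bigcap_{j=0}^n f^{-j}(D_{i_j})$. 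Expansion (with the branched expansion at the ramified points) gives ${\rm diam}_\omega(D_{i_0\cdots i_n})\le C\lambda^{-n}$, so the closed sets $\JJJ_f\cap\overline{D_{i_0\cdots i_n}}$ cover $\JJJ_f$, have diameter tending to $0$ uniformly in the word, each nonempty one lies in a single component of $\JJJ_f$, and — this is the step to verify, by the Markov property and induction on $n$ — is connected. Hence every component of $\JJJ_f$ admits, for each $\varepsilon>0$, a finite cover by connected closed subsets of diameter $<\varepsilon$, so, being a compact connected metric space with Property S, it is locally connected.

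The main difficulty is in the last step: arranging the Markov cover to behave correctly at the finitely many ramified/critical points of $f$, and proving that the cylinder pieces $\JJJ_f\cap\overline{D_{i_0\cdots i_n}}$ stay connected (and that the decomposition restricts to each component when $\JJJ_f$ is disconnected). The two earlier steps are routine once the forward-invariant neighbourhood $U$ and the canonical orbifold are in place, both of which are standard.
\endp
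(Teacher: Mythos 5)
The paper does not prove this lemma; it cites Milnor \cite[Section~19]{Mil} for the expanding (orbifold) metric and McMullen \cite{Mc2} together with Pilgrim--Tan \cite{PT} for the local connectivity of Julia components. Your expanding-metric argument (hyperbolic and sub-hyperbolic cases) is exactly the Poincar\'e/orbifold-metric construction that Milnor carries out, so the first part of your sketch is correct and matches the cited source.

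The local-connectivity part, however, has a genuine gap that goes beyond the ``step to verify'' you flag. The claim that the cylinder pieces $\JJJ_f\cap\overline{D_{i_0\cdots i_n}}$ are connected is in general false when $\JJJ_f$ is disconnected: a single Markov domain $D_i$ will typically meet many distinct Julia components (think of a Cantor Julia set, or a sub-hyperbolic map with countably many non-degenerate components plus uncountably many point components), so $\JJJ_f\cap\overline{D_{i}}$ is already badly disconnected, and inductively so are the refinements. Your covering therefore does not restrict to a cover of a fixed component $J_0$ by \emph{connected} small sets, which is what Property~S requires. To repair this one essentially has to bring in the structure theory of disconnected Julia sets of geometrically finite maps: McMullen's theorem that every non-degenerate Julia component is preperiodic (so one reduces to finitely many periodic non-degenerate components, each of which carries a polynomial-like restriction with connected Julia set, to which a puzzle/Markov argument then applies), and the Pilgrim--Tan analysis handling the passage from the periodic components to all components and the point components. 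That is precisely the content of the two references the paper invokes, and it is a substantial theorem rather than a routine expansion argument. So the first half of your proposal is fine; the second half needs either the citation route the paper takes or the preperiodicity-of-components reduction, neither of which is present in your sketch.
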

\begin{proof}
The existence of uniformly expanding (orbifold) metric can be found in \cite[Section 19]{Mil}, and the local connectivity of Julia components is duo to the results of McMullen \cite{Mc2} and Tan-Pilgrim \cite{PT}.
\end{proof}

Let $f$ be a rational map, and $U$ a periodic Fatou component of $f$. Assume that $U$ contains a unique postcritical point of $f$. Then $U$ must be simply connected and there is a system of Riemann mappings
$$\Big\{\phi_W: \D\to W\,\Big|\, W \text{ Fatou component with $f^n(W)=f^m(U)$ for some $m,n\geq0$}\Big\}$$
so that  the following diagram commutes for all $W$:
\begin{equation*}
 \begin{tikzpicture}
   \matrix[row sep=0.8cm,column sep=2.4cm] {
     \node (Gammai) {$ \D $}; &
       \node (Gamma) {$ \D$}; \\
     \node (S2i) {$ W$}; &
       \node (S2) {$f(W)$,}; \\
   };
   \draw[->] (Gamma) to node[auto=left,cdlabel] {\phi_{f(W)}} (S2);
   \draw[->] (S2i) to node[auto=right,cdlabel] {f} (S2);w
   \draw[->] (Gammai) to node[auto=left,cdlabel] {\text{ power map }z^{d_{\tiny\mbox{$W$}}}} (Gamma);
   \draw[->] (Gammai) to node[auto=right,cdlabel] {\phi_W} (S2i);
 \end{tikzpicture}
 \end{equation*}
where $d_W$ denotes the degree of $f$ on $W$. The conformal map $\phi_W^{-1}$ is called a \emph{B\"{o}ttcher coordinate} of $W$, the image $\phi_W(0)$ is called the \emph{center} of $W$ and the image in $W$ under $\phi_W$ of radial lines in $\D$ are called \emph{internal rays} of $W$. Note that $f$ maps internal rays of $W$ to those of $f(W)$.

\subsubsection{Dynamics of Polynomials}
Let $P$ be a monic polynomial of degree $d\geq2$. Basic tools to understand
the dynamics of $P$ are the Green function $g_P$ and the B\"{o}ttcher map $\phi_P$.

The \emph{Green function} $g_P$ of $P$ is defined as
\[g_P(z):=\lim\log^+|P^n(z)|/d^n,\text{ for all $z\in\C$}.\]
It is a well-defined continuous function which vanishes on the \emph{filled-in Julia set}
\[\KKK_P:=\{z\in\C:P^n(z)\not\to\infty\text{ as $n\to\infty$}\},\]
and satisfies the functional relation $g_P(P(z))=dg_P(z)$. In the basin of infinity $\Omega(P):=\C\setminus\KKK_P$, the
derivative of $g_P$
vanishes at $z$ if and only if $z$ is a pre-critical points of $P$. We say that $z$ is a singularity of $g_P$.

The B\"{o}ttcher map $\phi_P$
conjugates $P$ with $z\mapsto z^d$
in a neighborhood of $z$.  Since $P$ is monic, we can normalize $\phi_P$ so that $\phi_P(z)/z\to 1$ as $z\to\infty$. We extend, along flow lines, to the basin of infinity under the gradient flow $\nabla g_P$. Following Levin and Sodin \cite{LS}, we define the \emph{reduced basin of
infinity} $\Omega^*(P)$ to be the maximal basin in which the gradient flow $\nabla g_P$ is smooth. Now
\begin{equation}\label{eq:reduced-basin}
\phi_f:\Omega^*(P)\to D_P\subseteq \C\setminus \ov{\D}
\end{equation}
is a conformal isomorphism from $\Omega^*(P)$
onto a starlike (around $\infty$) domain $D_P$. We have the commutative diagram:
\[\begin{array}{ccc}\Omega^*(P) &\xrightarrow[]{\ P\ }  &\Omega^*(P) \\ \phi_P\Big\downarrow &&\Big\downarrow \phi_P  \\ D_P &  \xrightarrow[]{\ z\mapsto z^d\ } & D_P\vspace{-0.1cm}.\end{array}\]

A flow line of $\nabla g_P$ in $\Omega^*(P)$ is called an \emph{external radius}. External radii are parameterized $\T=\R/\Z$. More precisely, for $t\in\T$, let
$(r,\infty)e^{2\pi it}$
be the maximal portion $(1,\infty)e^{2\pi it}$
contained in $D_P$. The \emph{external
radius of argument $t$} is defined as
\[\RRR^*_P(t):=\phi_P^{-1}\big((r,\infty)e^{2\pi it}\big).\]
In the case of $r>1$, the radius  terminate at a singularity of $g_P$. While in the case of $r=1$, the radius accumulate at the Julia set, and we call it a \emph{smooth external ray}, writing also as $\RRR_P(t)$.

Let $\tau=\tau_d:\T\to\T$ denote the mapping which sends $\theta$ to $d\cdot\theta\ ({\rm mod}\Z)$.
Now let $\theta_1,\ldots,\theta_r$ be the arguments of the external radii that terminate at
critical points of $P$. Since every pre-critical point of $P$ is a singularity of $g_P$, the
external radii with arguments in
\[\Sigma=\bigcup_{n\geq0}\tau^{-n}_d(\{\theta_1,\ldots,\theta_r\})\]
also terminate at singularities. Since every singularity is a pre-critical point, we
have smooth external rays defined for arguments in $\T\setminus\Sigma$.
Following Goldberg
and Milnor, for $t\in\T$, let
\[\RRR_P^{\pm}(t):=\lim_{\T\setminus\Sigma\ni s\to t^{\pm}}\RRR_P(s).\]
If $t\not\in \Sigma$ then $\RRR_P^{\pm}(t)=\RRR_P(t)$ is a smooth external ray. If $t\in\Sigma$, then $\RRR_P^{\pm}(t)$ do not agree, and we say that they are \emph{non-smooth external ray}. By \emph{external rays} we mean either smooth or non-smooth external rays. An external ray is called \emph{landing} at $z$ if its accumulation set on $\JJJ_P$ is $\{z\}$. We denote by ${\rm arg}_P(z)$ the set of angles such that $\theta\in{\rm arg}_P(z)$ if and only if  an external ray $\RRR_P^\sigma(\theta)$ lands at $z$, with $\sigma\in\{+,-\}$.

\begin{definition}[supporting ray/argument]\label{def:support}
Let $U$ be a bounded Fatou component of a polynomial $P$, and $z \in \partial U$.  The external rays landing at $z$ (if existing) divide the
plane into finite regions. We label the arguments of these rays by $\theta_1,\ldots, \theta_k$ in counterclockwise cyclic order,
 so that $U$ belongs to the region delimited by $\RRR_P^{\sigma_1}(\theta_1)$
and $\RRR_P^{\sigma_2}(\theta_2)$ with $\sigma_1,\sigma_2\in\{+,-\}$. The ray $\RRR_P^{\sigma_1}(\theta_1)$ (resp. $\RRR_P^{\sigma_2}(\theta_2)$) is called the \emph{left-supporting} (resp. \emph{right-supporting}) ray of $U$ at $z$, and the argument $\theta_1$ (resp. $\theta_2$) is called the \emph{left-supporting} (resp. \emph{right-supporting}) argument of $U$ at $z$.
\end{definition}
\begin{figure}[htpb]
\centering
\includegraphics[width=2.6in]{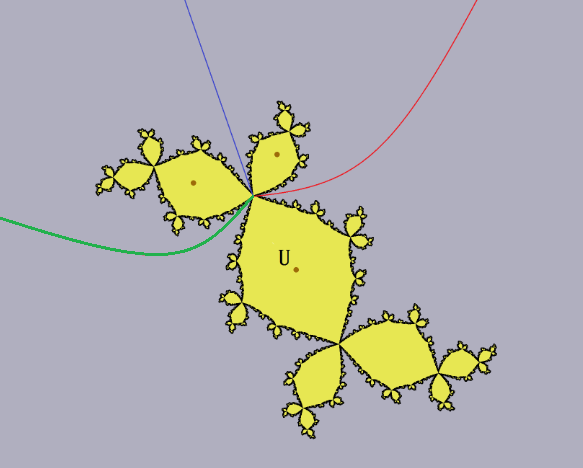}
\caption{The red and green rays are right and left supporting rays of the Fatou component $U$, but the blue one does not support $U$.} \label{supporting-ray}
\end{figure}

According to Levin and
Przytycki \cite{LP}, the landing theorem stated for connected Julia sets generalizes as follows.

\begin{proposition}\label{pro:landing-theorem}
Let $P$ be a polynomial. Then every periodic external ray lands at a parabolic or repelling periodic point. On the other hand, let $z$ be a parabolic or repelling periodic point. Then there
exists at least one external ray landing at $z$. Moreover, either
\begin{enumerate}
\item all the external rays, smooth and non-smooth, landing at $z$ are periodic of
the same period, or
\item the arguments of the external rays, smooth and non-smooth, landing at $z$
are irrational and form a Cantor set, and moreover, $\{z\}$ is a connected
component of $\JJJ_P$ and there are non-smooth rays landing at $z$;
furthermore, there exists a non-smooth ray containing a critical point
landing at a point in the orbit of $z$.
\end{enumerate}
\end{proposition}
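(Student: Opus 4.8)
This is the Douady--Hubbard landing theorem for repelling and parabolic cycles, extended to arbitrary polynomials (allowing disconnected $\JJJ_P$) by Levin and Przytycki \cite{LP}; I would reproduce their scheme, which is the classical connected-Julia-set argument transplanted to the setting of smooth/non-smooth external rays and the reduced basin $\Omega^*(P)$. Passing to an iterate is harmless here --- $g_{P^k}=g_P$, $\Omega^*(P^k)\supseteq\Omega^*(P)$, a $\tau_d$-periodic argument of period $k$ is $\tau_{d^k}$-fixed, and a periodic point becomes fixed --- so throughout I may assume the relevant period is $1$.

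\emph{Part 1 (periodic rays land at repelling/parabolic cycles).} Let $R=\RRR_P^\sigma(\theta)$ be a fixed ray and put $\Lambda(R):=\overline R\cap\JJJ_P$, a nonempty compact connected set with $P(\Lambda(R))=\Lambda(R)$. The first task is to show $\Lambda(R)$ is a single point $z$. Cut $R$ into fundamental arcs $R_j:=R\cap\{c_0/d^{j+1}\le g_P\le c_0/d^j\}$, so that $P(R_j)=R_{j-1}$ and $P|_{R_j}$ is injective; via $\phi_P:\Omega^*(P)\to D_P$ all the $R_j$ have equal length in the flat cylinder metric $\phi_P^{*}\!\left(|dz|/(|z|\log|z|)\right)$, for which $z\mapsto z^d$ is a translation. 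In the connected case a hyperbolic-contraction estimate in $\Omega(P)=\C\setminus\KKK_P$ makes $\operatorname{diam}R_j$ decay geometrically, whence $\operatorname{diam}\Lambda(R)=0$; in the disconnected case the same conclusion requires the more delicate analysis inside $\Omega^*(P)$ carried out in \cite{LP}. Once $\Lambda(R)=\{z\}$ with $P(z)=z$: since $z\in\JJJ_P$ it is neither attracting nor of Siegel type, and the Snail Lemma (\cite[Section 16]{Mil}) applied to the invariant arc $R$ landing at $z$ excludes the Cremer and remaining irrationally indifferent possibilities, leaving $z$ repelling or parabolic.

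\emph{Part 2 (repelling/parabolic fixed points are landing points).} Fix such a $z$ and a neighbourhood $N$ carrying a branch $g$ of $P^{-1}$ with $g(z)=z$: in the repelling case $g$ is a strict contraction by linearisation near $z$; in the parabolic case $g$ contracts into $N$ along the repelling petals of the Leau--Fatou flower. Choose $w\in N\cap\Omega^*(P)$ lying on some external ray $\RRR_P(s_0)$, and pull the relevant sub-arc back by $g^n$; the arcs converge to $z$, while on the level of arguments the pullback acts through contracting branches of $\tau_d^{-1}$, so the corresponding argument intervals shrink to a single $\theta^\ast$. The ray $\RRR_P^{\pm}(\theta^\ast)$ --- smooth if $\theta^\ast\notin\Sigma$, non-smooth if $\theta^\ast\in\Sigma$ --- then accumulates exactly on $z$, i.e.\ lands at $z$.

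\emph{Part 3 (the dichotomy).} The rays landing at $z$ are cyclically ordered and cut a punctured neighbourhood of $z$ into sectors on which $\tau_d$ acts compatibly with $P$ on the cycle of $z$. If some ray landing at $z$ is periodic, its argument is rational, hence has finite $\tau_d$-orbit; a combinatorial argument on the induced action on sectors forces every ray landing at the cycle to be periodic of one common period and there to be only finitely many of them --- this is alternative (1), and it always occurs when $\JJJ_P$ is connected or, more generally, when $\{z\}$ is not a component of $\JJJ_P$. Otherwise no landing ray is periodic; the argument set $A$ of rays landing at $z$ is closed, $\tau_d$-invariant and perfect, hence a Cantor set of irrationals, and $\{z\}$ is a singleton component of $\JJJ_P$. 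Each complementary gap of $A$ is bounded by non-smooth rays, and following such a boundary ray backward under $\tau_d$ one reaches in finitely many steps a non-smooth ray through a critical point of $P$, which lands at a point of the orbit of $z$; this is alternative (2). The two genuinely delicate points are (a) showing $\Lambda(R)$ is a point when $\JJJ_P$ is disconnected, where the hyperbolic-contraction argument of the classical case is unavailable and one must argue inside $\Omega^*(P)$, and (b) in alternative (2), showing that a gap-boundary non-smooth ray actually lands on the cycle of $z$ and carries a critical point rather than terminating at a precritical point in the basin; I expect (b) to be the heart of the matter and would follow \cite{LP} closely there.
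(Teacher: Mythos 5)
The paper does not prove this proposition: it is stated verbatim as a result of Levin and Przytycki and cited directly to \cite{LP} (``According to Levin and Przytycki, the landing theorem stated for connected Julia sets generalizes as follows''). There is therefore no proof in the paper against which to compare your argument; the statement is treated as a black box imported from the reference.

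Your sketch is a reasonable reconstruction of the Levin--Przytycki argument, and you correctly isolate the two genuinely delicate steps: (a) the diameter estimate $\operatorname{diam}\Lambda(R)\to 0$ when $\JJJ_P$ is disconnected, where the naive hyperbolic-contraction bound in $\Omega(P)$ is not available and one must work in $\Omega^*(P)$, and (b) in alternative (2), producing a non-smooth ray through a critical point landing on the orbit of $z$ rather than merely terminating at a precritical singularity of $g_P$. Your Part 2 (local inverse branch pullback, linearization in the repelling case, repelling petals in the parabolic case) and Part 3 (cyclic order of landing arguments, perfect closed $\tau_d$-invariant argument set in the non-periodic case) are the standard ingredients. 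Since the paper defers entirely to \cite{LP}, the right thing to say is that your outline is consistent with the cited source rather than with any proof given here; a complete write-up would need to fill in (a) and (b) following the technical estimates of \cite{LP}, exactly as you flag.
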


Recall that $\Omega^*(P)$ is the reduced basin of infinity of $P$. According to Levin and Sodin \cite{LS}, the set $\JJJ_P^*:=\partial \Omega^*(P)$ is called the \emph{extended Julia set} of $P$. It is known that $\JJJ^*_P$ is a connected set such that $\JJJ_P\subseteq\JJJ_P^*$ and $P^{-1}(\JJJ_P^*)\subseteq \JJJ_P^*$.
As all polynomials in the paper are sub-hyperbolic, we include here the following result (see \cite[Propositions 2.1,2.2]{LS}).

\begin{proposition}\label{pro:subhyperbolic}
Let $P$ be a sub-hyperbolic polynomial. Then every external ray, smooth and non-smooth, lands at the Julia set. Moreover, the extended Julia set $\JJJ^*_P$ is locally connected such that every external radius terminates or lands at a point in $\JJJ^*_P$.
\end{proposition}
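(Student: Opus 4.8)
The plan is to run the classical expanding-metric ``telescope'' argument of Douady--Hubbard, in the form Levin and Sodin adapted to the reduced basin: use sub-hyperbolicity to make $P$ uniformly expanding near $\JJJ_P$ in the orbifold metric, deduce that external rays land by pulling back arcs of uniformly bounded size, and then upgrade the same estimate to a continuous boundary extension of the B\"ottcher coordinate, from which local connectivity follows. By Lemma~\ref{lem:sub-hyperbolic}, sub-hyperbolicity supplies an orbifold metric $\omega$ on a neighbourhood of $\JJJ_P$ with respect to which $P$ expands by a fixed factor $\lambda>1$ away from the finitely many ramification points; these are precisely the postcritical points lying on $\JJJ_P$, because the orbits of the critical points in Fatou components converge to attracting cycles and stay in a fixed compact subset of the Fatou set, while the critical points in the basin of infinity $\Omega(P)$ escape to $\infty$. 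Choose $\epsilon_0>0$ so small that $N:=\{\,0<g_P\le\epsilon_0\,\}$ is contained in the domain of $\omega$, contains no critical point of $P$ (every critical point in $\Omega(P)$ has positive Green potential) and no ramification point of $\omega$ (these lie on $\JJJ_P=\{g_P=0\}$). Then $P$ maps $\{\,0<g_P\le\epsilon_0/d\,\}$ onto $N$ and is uniformly $\lambda$-expanding at every point of $N$.

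\emph{Landing of rays.} Let $t\in\T\setminus\Sigma$, so $\RRR_P^*(t)=\RRR_P(t)$ is a smooth ray containing no singularity of $g_P$; writing $t_n:=\tau^n(t)$, each forward image $\RRR_P(t_n)$ is again a smooth ray, hence $\RRR_P(t)$ and all its forward images lie in $N\cup\{g_P>\epsilon_0\}$ and meet no ramification point. If $\sigma_n$ denotes the arc of $\RRR_P(t)$ on which $0<g_P\le\epsilon_0/d^{\,n}$, then $P^n$ carries $\sigma_n$ homeomorphically onto the arc of $\RRR_P(t_n)$ with $0<g_P\le\epsilon_0$, expanding $\omega$-length by at least $\lambda^n$. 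Since the arcs $\RRR_P(s)\cap\{\,0<g_P\le\epsilon_0\,\}$ lie in a fixed compact region and have uniformly bounded $\omega$-length (even where they pass close to the saddle-type singularities of $g_P$), this gives $\operatorname{diam}_\omega\sigma_n\le L\lambda^{-n}\to0$ with $L$ independent of $t$. The $\sigma_n$ are nested and connected, so $\RRR_P(t)$ accumulates at a single point of $\JJJ_P$, i.e. it lands. A non-smooth ray $\RRR_P^\pm(t)$ is a one-sided limit of smooth rays whose landing points, by the uniform bound, form a Cauchy family, so it too lands on $\JJJ_P$; and a radius with $r>1$ terminates at a singularity of $g_P$ by definition. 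This proves the first assertion and the clause that every external radius terminates or lands at a point of $\JJJ_P\subseteq\JJJ_P^*$.

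\emph{Local connectivity.} The domain $D_P$ is starlike around $\infty$, hence simply connected in $\overline{\C}$, so $\Omega^*(P)$ is simply connected and, by Carath\'eodory's theorem, $\JJJ_P^*=\partial\Omega^*(P)$ is locally connected as soon as $\phi_P^{-1}:D_P\to\Omega^*(P)$ extends continuously to $\overline{D_P}$. Applying the telescoping estimate not just to arcs of rays but to the connected components of $P^{-n}$ of a fixed ``fundamental annulus'' $\{\,\epsilon_0/d\le g_P\le\epsilon_0\,\}$ (equivalently, to small crosscuts of $D_P$) shows that these components have $\omega$-diameter tending to $0$ uniformly, and this yields the continuous extension; its image is $\JJJ_P^*$. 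Concretely, $\partial D_P$ is itself a Peano continuum --- a circle carrying the ``hairs'' of the starlike domain $D_P$, only finitely many of which have size above any prescribed bound because only finitely many precritical points of $P$ lie above any given potential --- and $\JJJ_P^*$ is its continuous image, hence locally connected by the Hahn--Mazurkiewicz theorem.

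\emph{Main difficulty.} The delicate step is the uniform shrinking of the preimage pieces, as opposed to single ray arcs: such pieces come arbitrarily close to the ramification points of $\omega$ on $\JJJ_P$, where the branches of $P^{-n}$ are ramified, and they may wrap with high degree over the fundamental annulus. Both phenomena are handled by the standard Douady--Hubbard device of passing to the branched orbifold cover, where $\omega$ is smooth and genuinely expanding, so that every piece still has $\omega$-diameter $O(\lambda^{-n})$. The accompanying bookkeeping --- identifying $\partial D_P$ as a ``circle with hairs'' and matching its points with the terminating radii and with the Julia landing points --- is exactly where the \emph{reduced} basin $\Omega^*(P)$ must be used in place of $\Omega(P)$, and it is carried out in detail by Levin and Sodin in \cite{LS}.
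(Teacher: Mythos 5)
The paper does not prove this proposition; it introduces it with ``we include here the following result (see [LS, Propositions 2.1, 2.2])'' and delegates the entire argument to Levin--Sodin, so there is no in-paper proof to compare your sketch against. Your reconstruction does follow the route Levin--Sodin (and, in the connected case, Douady--Hubbard) actually take: sub-hyperbolicity gives uniform orbifold expansion of $\omega$ on the annulus $N=\{0<g_P\le\epsilon_0\}$ once $\epsilon_0$ is small enough to clear the finitely many Fatou critical points and the ramification locus on $\JJJ_P$; pulling back under $P^n$ shrinks the $\omega$-diameters of ray tails exponentially, which gives landing of smooth rays and then of non-smooth rays by one-sided limits; upgrading the same estimate from ray tails to whole preimage pieces of the fundamental annulus yields a continuous boundary extension of $\phi_P^{-1}$ and hence local connectivity of $\JJJ_P^*$. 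You also correctly isolate the one genuinely delicate step --- the uniform shrinking of preimage pieces that may wrap over ramified points, resolved in the orbifold cover --- and appropriately defer the bookkeeping to \cite{LS}. The only place where the sketch asserts slightly more than it earns is the claim of a uniform $\omega$-length bound on the arcs $\RRR_P(s)\cap N$ ``even where they pass close to the saddle-type singularities of $g_P$'': this is true (the orbifold singularities are integrable, of order $|z|^{-1+1/m}$, and only finitely many precritical singularities of $g_P$ lie above any fixed potential level, hence only finitely many long hairs in $\partial D_P$), but it is the load-bearing estimate for the whole telescope and deserves a sentence of justification rather than a parenthetical remark. On the whole this is a correct reconstruction of an argument the paper merely cites.
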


We remark that if $P$ has connected Julia set, the B\"{o}ttcher map $\phi_P$ extends to the whole $\Omega(P)$, so that $\Omega^*(P)=\Omega(P)$ and all external rays are smooth.

\subsubsection{Postcritically-finite polynomials and the Hubbard trees}
Let $P$ be a \pf polynomial.
Then it has connected and locally arc-connected filled-in Julia set.
Since $\KKK_P$ is arc-connected, given two points $z_1, z_2\in \KKK_P$, there is an arc $\gamma: [0,1]\to \KKK_P$ such that $\gamma(0)=z_1$ and $\gamma(1)=z_2$.  It is proved in \cite{DH1} that the arc $\g$ can be chosen in a unique way so that  the intersection with the closure of a Fatou component consists of segment of internal rays. We call such an arc \emph{regulated} and denote it by $[z_1,z_2]$.
By \cite[Proposition 2.7]{DH1}, the set
\[\HHH_P:=\cup_{p,q\in {\rm Post}_P\cup{\rm Crit}_P}[p,q] \]
 is a finite connected tree, called the \emph{Hubbard tree} of $P$. The vertex set $V(\HHH_P)$ consists of the critical/postcritical points of $P$ and the branched points of $\HHH_P$. It is well known that $\HHH_P$ is $P$-invariant and $P:\HHH_P\to\HHH_P$ is Markov (see \cite[Section 1]{Poi2}). Furthermore, Poirier proved that the \pf polynomials can be dynamically classified by their Hubbard trees.

Following Thurston, the \emph{core entropy} of $P$ is defined as $h(P):=h_{top}(P|_{\HHH_P})$. A regulated tree within $\KKK_P$ is called an \emph{extended Hubbard tree} if it is $P$-invariant and contains $\HHH_P$.
\begin{lemma}\label{lem:extended-tree}
Let $T$ be any extend Hubbard tree of a \pf polynomial $P$. Then we have $h_{top}(P|_T)=h(P)$.
\end{lemma}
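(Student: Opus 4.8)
The plan is to compare the dynamics on $T$ with the dynamics on $\HHH_P$ and show that the extra part $T\setminus\HHH_P$ contributes no entropy. The key observation is that $\HHH_P$ is a $P$-invariant subtree of the $P$-invariant tree $T$, so by Proposition \ref{Do2} (applied with $X_1=\HHH_P$, $X_2=\overline{T\setminus\HHH_P}$, after noting $f(X_2)\subseteq T$ but not necessarily into $X_2$ — so I will need to be a little careful and instead argue directly) we have $h_{top}(P|_T)\geq h_{top}(P|_{\HHH_P})=h(P)$ trivially, and the content is the reverse inequality. First I would endow both $T$ and $\HHH_P$ with Markov structures: give $T$ the vertex set consisting of $V(\HHH_P)$ together with all endpoints and branch points of $T$ and all points of $T$ mapping into this set, so that $P:T\to T$ becomes a Markov map whose edge set splits into edges lying in $\HHH_P$ and edges lying in $\overline{T\setminus\HHH_P}$.

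Next I would analyze the incidence matrix $D_{(T,P)}$ in block form with respect to this splitting. Because $\HHH_P$ is $P$-invariant, an edge contained in $\HHH_P$ is mapped to a union of edges contained in $\HHH_P$; hence in block form
\[
D_{(T,P)}=\begin{pmatrix} D_{(\HHH_P,P)} & * \\ 0 & B \end{pmatrix},
\]
where $B$ records how the edges of $\overline{T\setminus\HHH_P}$ cover one another. The eigenvalues of $D_{(T,P)}$ are thus those of $D_{(\HHH_P,P)}$ together with those of $B$, so by Proposition \ref{entropy-formula} it suffices to show that the spectral radius of $B$ is at most $\lambda(\HHH_P)=e^{h(P)}$ (in fact I expect the sharper statement that $B$ is nilpotent). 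To see this, I would use the regulated structure: a component of $T\setminus\HHH_P$ is a regulated tree attached to $\HHH_P$ at a single point, and since $T$ is a finite tree and $\HHH_P$ contains all critical and postcritical points, the forward orbit of any such component is eventually absorbed into $\HHH_P$. Concretely, the ``distance into the bushes'' cannot grow under iteration near $\HHH_P$ because there is no critical point off $\HHH_P$ to create folding, and the finiteness of $T$ forces every edge of $\overline{T\setminus\HHH_P}$ to be mapped, after finitely many steps, entirely into $\HHH_P$; this is exactly the statement that $B$ is nilpotent.

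An alternative and perhaps cleaner route, which I would present if the Markov bookkeeping gets heavy, is to invoke Proposition \ref{Do3} with $X=T$ and $Z=\HHH_P$: one must check that for every $x\in T$ the distance from $P^n(x)$ to $\HHH_P$ tends to $0$ uniformly on compact subsets of $T\setminus\HHH_P$. Since $\HHH_P$ is $P$-invariant and $T$ is a finite tree attached to $\HHH_P$ along finitely many points with no critical points in $T\setminus\HHH_P$, each point of $T$ either already lies in $\HHH_P$ or is pushed monotonically toward the attaching point and into $\HHH_P$ in finitely many steps; uniformity on compacta follows because a compact subset of $T\setminus\HHH_P$ meets only finitely many edges. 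Proposition \ref{Do3} then gives $h_{top}(P|_T)=h_{top}(P|_{\HHH_P})=h(P)$ directly. The main obstacle is the verification that the ``excess'' edges in $T\setminus\HHH_P$ are eventually swallowed by $\HHH_P$ — i.e. nilpotence of $B$, or equivalently the uniform-attraction hypothesis of Proposition \ref{Do3}; this rests on the combination of finiteness of $T$, $P$-invariance of both trees, and the absence of critical points outside $\HHH_P$, and it is where one genuinely uses that $\HHH_P$ already contains all of $\mathrm{Crit}_P\cup\mathrm{Post}_P$.
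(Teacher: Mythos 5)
Your second route — applying Proposition~\ref{Do3} with $Z=\HHH_P$ — is exactly the paper's argument; the paper's entire proof is one sentence citing that proposition. So in approach you and the paper agree.

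There are two points worth flagging. First, in route 2 you assert that each point of $T$ is ``pushed monotonically toward the attaching point and into $\HHH_P$ in finitely many steps,'' but this is false at the endpoints of $T$: an endpoint of $T$ not in $\HHH_P$ may be a repelling periodic point (e.g.\ the $\beta$-fixed point), which never enters $\HHH_P$. The paper's proof explicitly carves out this case (``every point, except the endpoints, of $T$''); you do not, and as written the uniformity hypothesis of Proposition~\ref{Do3} with $Z=\HHH_P$ would fail on compacta that accumulate at such an endpoint. The fix is to enlarge $Z$ to $\HHH_P$ together with the finite union of orbits of the endpoints of $T$ lying outside $\HHH_P$ — this $Z$ is closed and $P$-invariant, $h_{top}(P|_Z)=h(P)$ by Proposition~\ref{Do2}, and Proposition~\ref{Do3} then applies. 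Second, in route 1 the ``sharper statement'' that $B$ is nilpotent is simply wrong: for $P(z)=z^2-1$ with $\HHH_P=[-1,0]$ and $T=[-1,\beta]$ (where $\beta$ is the repelling fixed point), the terminal edge $[1,\beta]$ maps over itself, so $B$ has spectral radius $1$. What is true, and what your block decomposition actually needs, is only $\rho(B)\le e^{h(P)}$; but you offer no argument for that other than the (false) nilpotency claim, so route 1 as written has a genuine gap, whereas route 2 goes through once the endpoint orbits are accounted for.
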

\begin{proof}
Since every point, except the endpoints, of $T$ are iterated to $\HHH_P$, the lemma follows directly from Lemma \ref{Do3}.
\end{proof}

\subsection{Critical portraits and entropy algorithm}\label{sec:weak-critical-marking}
To compute the core entropy of  polynomials,
 W. Thurston
developed a purely combinatorially algorithm (avoid knowing the topology of Hubbard trees) using the
combinatorial data called \emph{critical portraits}.

A finite collection $\Theta:=\{\ell_1,\ldots,\ell_m\}$ of finite subsets of the unit circle is called a \emph{critical portrait of degree $d$} if
\begin{enumerate}
\item each $\tau_d(\ell_i)$ is a singleton, $1\leq i\leq m$;
\item the convex hulls $hull(\ell_i),hull(\ell_j)$ in the closed unit disk intersect at most at one point of $\T$ for any $i\not=j\in\{1,\ldots,m\}$;
\item each $\#\ell_i\geq 2$, and $\sum_{i=1}^{m}(\#\ell_i-1)=d-1$.
\end{enumerate}
\begin{figure}[http]
\begin{center}
 \includegraphics[scale=0.295]{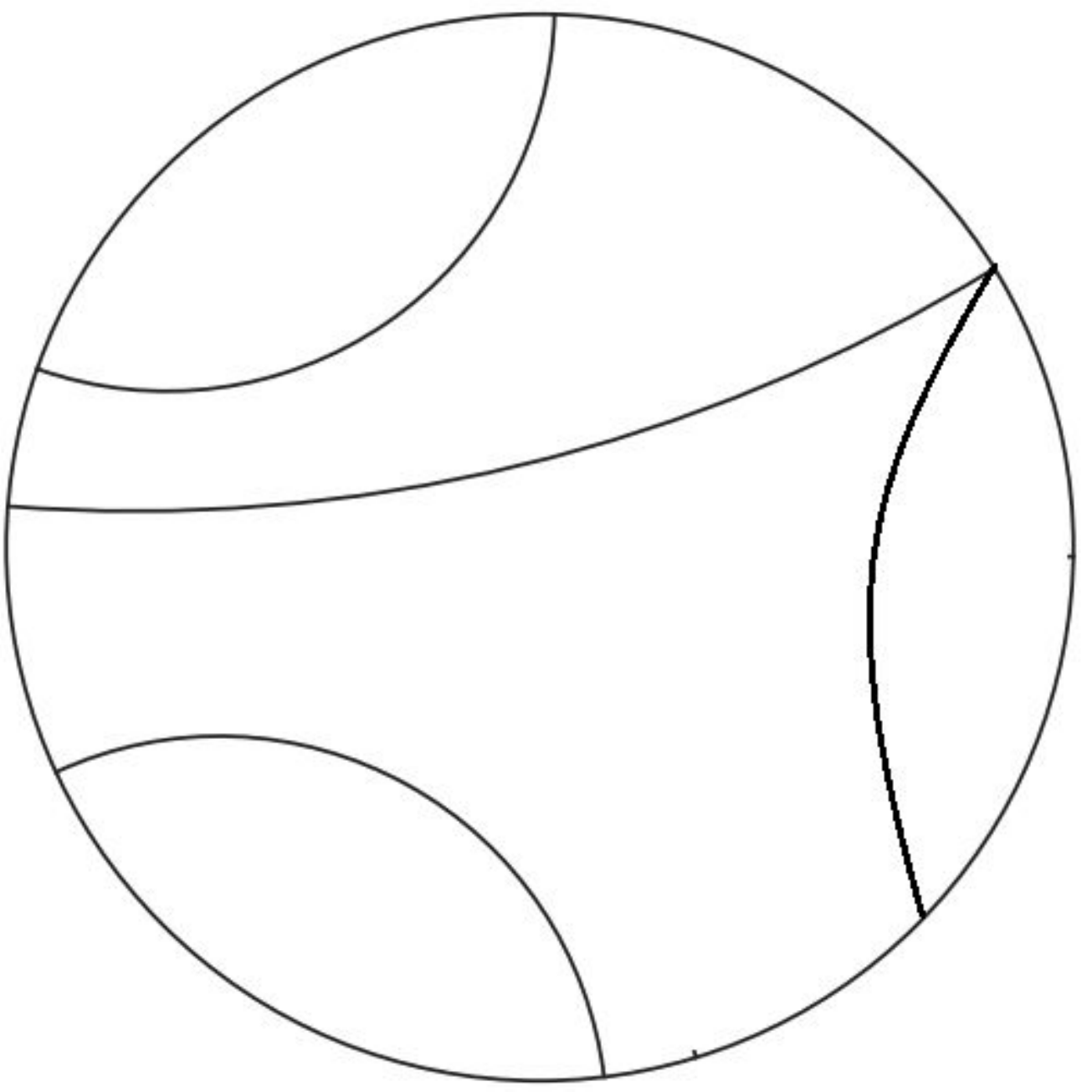}  \quad \includegraphics[scale=0.40]{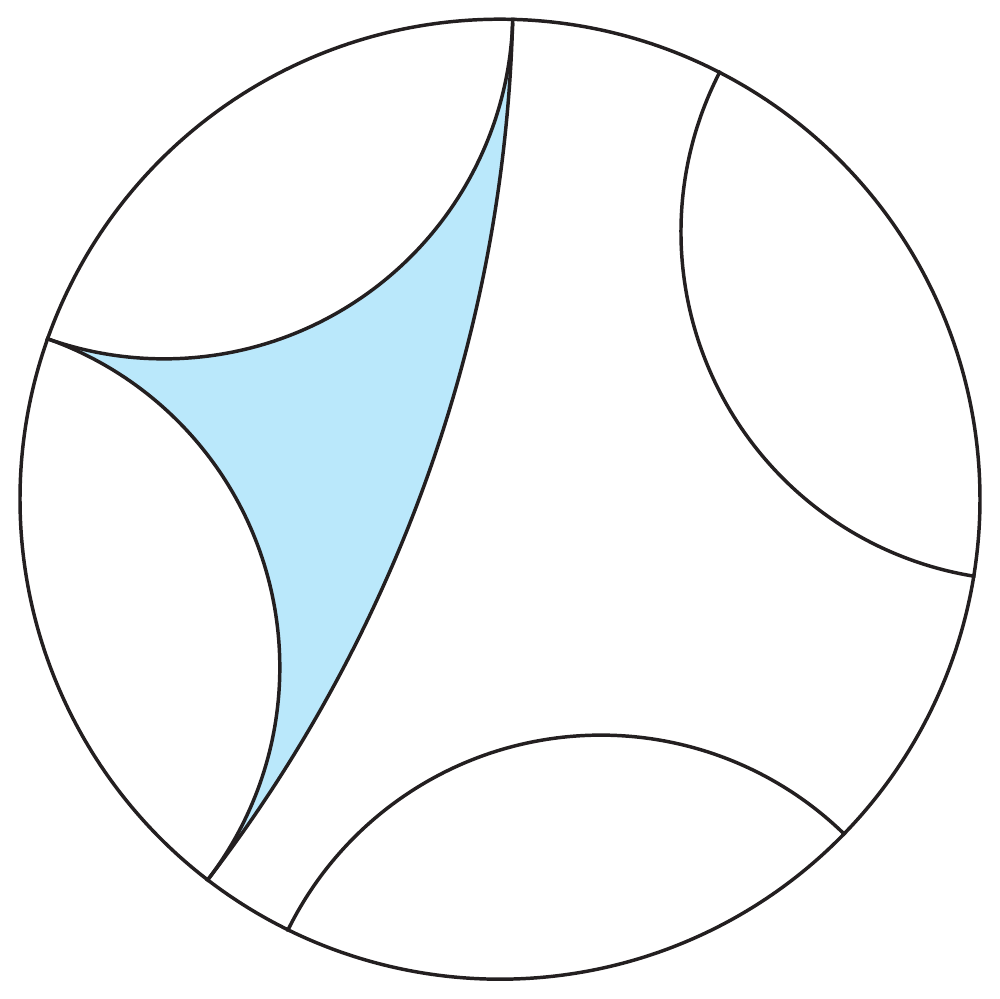}
  \caption{Two  critical portraits of degree $5$}\label{critical-portrait-5}
  \end{center}
\end{figure}
\emph{Thurston's entropy algorithm} endues every critical portrait $\Theta$ a non-negative real number $h(\Theta)$, called the \emph{output} of the algorithm (see \cite[Section 5.2]{G} for detail). Gao-Tiozzo prove that $h(\Theta)$ varies continuously with respect to $\Theta$.

\begin{proposition}[\cite{GT}, Theorem 1.1]\label{pro:GT}
Let $\{\Theta_n,n\geq1\}$ be a sequence of critical portraits Hausdorff converging to $\Theta$. Then $\lim\limits_{n\to\infty}h(\Theta_n)=h(\Theta)$.
\end{proposition}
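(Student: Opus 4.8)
The plan is to reduce the statement to the description of $h(\Theta)$ as the logarithm of a spectral radius and then prove upper and lower semicontinuity separately. Recall from \cite[Section 5.2]{G} that $h(\Theta)=\max\bigl(0,\log\lambda(\Theta)\bigr)$, where $\lambda(\Theta)$ is the exponential growth rate of Thurston's \emph{wedge graph} $\Gamma_\Theta$: its vertices are the unordered pairs of distinct angles lying in the forward $\tau_d$-orbit $O(\Theta)$ of the critical values $v_i:=\tau_d(\ell_i)$, and from a vertex $\{a,b\}$ one reads the finitely many leaves $\ell_{i_1},\dots,\ell_{i_k}$ of $\Theta$ separating $a$ from $b$ in cyclic order along the chord $\overline{ab}$, the out-edges then going to the consecutive pairs $\{\tau_d a,v_{i_1}\},\{v_{i_1},v_{i_2}\},\dots,\{v_{i_k},\tau_d b\}$. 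Equivalently, $1/\lambda(\Theta)$ is the smallest zero in $(0,1]$ of an associated Milnor--Thurston kneading determinant $F_\Theta(t)=1-\sum_{k\ge1}c_k(\Theta)t^k$, whose coefficients count (with signs) certain closed combinatorial loops of length $k$ and satisfy $0\le|c_k(\Theta)|\le C_d^{\,k}$ with $C_d$ depending only on $d$. Throughout I would freely use Propositions \ref{Do2} and \ref{Do4}, together with Proposition \ref{entropy-formula}, to pass between graphs, subgraphs and spectral radii.

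\emph{Lower semicontinuity.} Fix $\varepsilon>0$ and choose a finite strongly connected subgraph $\Gamma'\subseteq\Gamma_\Theta$ whose spectral radius is at least $\lambda(\Theta)-\varepsilon$; it involves only finitely many angles, finitely many leaves of $\Theta$, and finitely many iterates of $\tau_d$, and at the cost of a further $\varepsilon$ we may assume every separation recorded in $\Gamma'$ is \emph{strict} (both angles in the open complementary regions of the leaf). By Hausdorff convergence $\Theta_n\to\Theta$, for $n$ large each leaf of $\Theta$ used in $\Gamma'$ is approximated within $\delta$ by a leaf of $\Theta_n$, and $\tau_d(\ell_i^{(n)})\to v_i$; tracking the finitely many iterations one builds, for $n\gg0$, a subgraph of $\Gamma_{\Theta_n}$ carrying the same incidence pattern as $\Gamma'$ (one checks the required pairs actually lie in $O(\Theta_n)$ because $\Gamma'$ is assembled from forward images of postcritical pairs, of which only finitely many iterates are used). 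Hence $\lambda(\Theta_n)\ge\lambda(\Theta)-\varepsilon$ for $n\gg0$, giving $\liminf_n h(\Theta_n)\ge h(\Theta)$.

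\emph{Upper semicontinuity.} This is the genuinely hard direction, since the wedge graph is \emph{not} continuous in $\Theta$: leaves of $\Theta_n$ may collide in the limit and $O(\Theta_n)$ may be infinite even when $O(\Theta)$ is finite. The point to exploit is that separation can only degenerate in the benign direction: if $\overline{ab}$ is separated by a leaf of $\Theta_n$ for all large $n$, then in the limit $a,b$ lie in the \emph{closed} (possibly touching) complementary regions of a leaf of $\Theta$, so every edge of $\Gamma_{\Theta_n}$ converges either to an edge of $\Gamma_\Theta$ or to a degenerate transition (a pair with a repeated or leaf-endpoint coordinate) that carries no growth. Concretely I would fix $N$ and, for $n\ge n_0(N)$, construct a map from length-$N$ walks in $\Gamma_{\Theta_n}$ issued from postcritical pairs to walks of length $\le N$ in a \emph{fixed} finite graph $\Gamma_\Theta^{(N)}$ (a truncation of $\Gamma_\Theta$ augmented by its degenerate transitions), at most polynomially-in-$N$ to one; since $\Gamma_\Theta^{(N)}$ has growth rate $\le\lambda(\Theta)$, letting $n\to\infty$ and then $N\to\infty$ yields $\limsup_n h(\Theta_n)\le h(\Theta)$. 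The cleaner implementation I would actually push through is via the kneading determinant: show $c_k(\Theta_n)\to c_k(\Theta)$ for each fixed $k$ under the uniform bound $|c_k|\le C_d^{\,k}$, so $F_{\Theta_n}\to F_\Theta$ locally uniformly on $\{|t|<1/C_d\}$, and invoke a Hurwitz/normal-families argument to see that the relevant positive zero cannot drop in the limit; the case $\lambda(\Theta)\le1$ must be handled separately, arguing directly that the truncations of $M_{\Theta_n}$ become asymptotically nilpotent. Combining the two semicontinuities gives $\lim_n h(\Theta_n)=h(\Theta)$.

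\emph{Main obstacle.} The crux is precisely the upper bound: bounding the entropy of the perturbed wedge graphs $\Gamma_{\Theta_n}$ — which may be far larger, with infinite postcritical orbits — by that of the rigid limiting object $\Gamma_\Theta$, despite the collapse of leaves and angles. The two facts that make this work are the one-sided semicontinuity of ``separation'' and the stability (with domination) of the kneading coefficients, and verifying $c_k(\Theta_n)\to c_k(\Theta)$ uniformly in $k$ in the dominated sense is the technical heart of the argument.
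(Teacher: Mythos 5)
This proposition is not proved in the paper; it is quoted directly as Theorem~1.1 of Gao--Tiozzo \cite{GT}, so there is no ``paper's own proof'' for me to compare against. What follows is an assessment of your sketch on its own terms, measured against the strategy actually carried out in \cite{GT}.

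Your outline is in the right spirit and tracks the published approach fairly closely: both work with the wedge graph (there, in the guise of \emph{labeled wedges}), both establish lower semicontinuity by isolating a finite strongly connected subgraph that nearly realizes the growth rate and showing it persists under small Hausdorff perturbations of the critical portrait, and both identify upper semicontinuity as the delicate direction because of leaf collisions and the explosion of the postcritical orbit under perturbation. So this is not a genuinely different route, just a compression of the same one.

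That said, the compression hides the real work, and the sketch as written does not close the argument. Two specific gaps: (1) In the lower-semicontinuity step you ``may assume every separation in $\Gamma'$ is strict.'' That is not free: if a postcritical angle of $\Theta$ lies exactly on a leaf $\ell_i$, the relevant separations are boundary cases and you cannot simply jettison them at the cost of $\varepsilon$ without an argument (this is precisely where \cite{GT} must work with one-sided itineraries and $\RRR^{\pm}$-type conventions). (2) For upper semicontinuity, the claim ``$c_k(\Theta_n)\to c_k(\Theta)$ for each fixed $k$ under the uniform bound $|c_k|\le C_d^k$'' is asserted, not shown, and it is exactly the issue: when two leaves of $\Theta_n$ collide in the limit, the combinatorial loops contributing to $c_k(\Theta_n)$ do not converge one-to-one to loops of $\Gamma_\Theta$; some collapse, some merge, and the cancellations in the signed count must be tracked carefully. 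Likewise the Hurwitz argument only controls the zero set if one first rules out $F_{\Theta_n}\equiv$ something degenerate and handles the boundary case $\lambda(\Theta)=1$ separately, which you flag but do not resolve. In \cite{GT} this is handled by a careful analysis of how the labeled wedge degenerates and a separate argument showing the ``spurious'' transitions created in the limit contribute zero growth. As it stands your proposal correctly identifies where the difficulty lies but does not supply the mechanism that resolves it, so it should be regarded as an outline rather than a proof.
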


In fact, any \pf polynomial $P$ of degree $d$ induces a critical portrait $\Theta_P$ of degree $d$, which connects the core entropy of $P$ and the output $h(\Theta)$ of Thurton's entropy algorithm. To show the construction of $\Theta_P$, we first define $\Theta(U)$  as follows for each bounded critical Fatou component $U$.
Denote $\delta_U=\text{deg}(f|_U)$.

\begin{itemize}
\item \textbf{Case 1}: We first consider the case when $U$ is a periodic, critical Fatou component. Let
\[U\mapsto f(U)\mapsto\cdots\mapsto f^n(U)=U \]
be a critical Fatou cycle of period $n$. We will construct the associated set $\Theta(U')$ for every critical Fatou component $U'$ in this cycle simultaneously.
Let $z\in\partial U$ be a periodic point  with period less than or equal to $n$. Let $\theta$ denote the left-supporting argument of $U$ at $z$. Clearly, $\theta$ is periodic with period $n$. We call $\theta$ a \emph{preferred} angle for $U$.
 Note that this choice
naturally determines a left-supporting argument of each Fatou component $f^k(U)$ for $k\in\{0,\ldots,n-1\}$, which is called a \emph{preferred angle} of $f^k(U)$.
Let $U'$ be a critical Fatou component in the cycle and $\theta'$  its preferred angle.
We now define $\Theta(U')$ as any set of $\delta_{U'}$ angles such that:
\begin{itemize}
\item[(a)]
$\theta' \in \Theta(U')$;
\item[(b)]
the rays corresponding to the elements of $\Theta(U')$ land at $\delta_{U'}$ distinct points of $\partial U'$ and are inverse images
of $f(R(\theta'))$.
\end{itemize}
\item \textbf{Case 2}: $U$ is a strictly preperiodic Fatou component. Let $k$ be the minimal number such that $U'=f^k(U)$ is a critical Fatou component. We may assume that $\Theta(U')$ is already chosen, according to the previous case. Choose an angle $\theta' \in \Theta(U')$. We define $\Theta(U)$ to be the set of arguments of the $\delta_{U}$  rays landing at $\delta_U$ distinct points of $\partial U$ that are $k$-th inverse images
of $R(\theta')$.
\end{itemize}

\begin{definition}[weak critical marking]\label{def:weak-critical-marking}
A collection of finite subsets of $\T$
\begin{equation}\label{eq:weak-critical-marking}
\Theta_P:=\{\Theta_1(c_1),\ldots,\Theta_m(c_m);\Theta(U_1),\ldots,\Theta(U_s)\}
\end{equation}
is called a \emph{weak critical marking} of $P$ if
\begin{enumerate}
\item $\Theta_P$ is a critical portrait of degree $d$;
\item each $\Theta(U_k)$ is defined as above for $k\in\{1,\ldots,n\}$ ($U_1,\ldots,U_s$ are pairwise distinct);
\item each $c_j,1\leq j\leq m$, is a Julia critical point of $P$ ($c_1,\ldots,c_m$ are not necessary pairwise distinct), and $\#\Theta_j(c_j)\geq2$ for every $j=1,\ldots,m$;
\item for each critical point $c\in \JJJ_f$,
\[{\rm deg}(f|_c)-1=\sum_{c_j=c}\big(\#\Theta_j(c_j)-1\big). \]
\end{enumerate}
We call $\FFFF_P:=\{\Theta(U_i):1\leq i\leq s\}$ a \emph{weak Fatou critical marking} of $P$, and $\JJJJ_P:=\{\Theta_j(c_j):1\leq j\leq m\}$ a \emph{weak Julia critical marking} of $P$ (see Figure \ref{fig:weak-portrait}).
\end{definition}
\begin{figure}[http]
\centering
\begin{tikzpicture}
\node at (-4,0){\includegraphics[width=6.8cm]{}};
 \node at (3.5,0){\includegraphics[width=6.2cm]{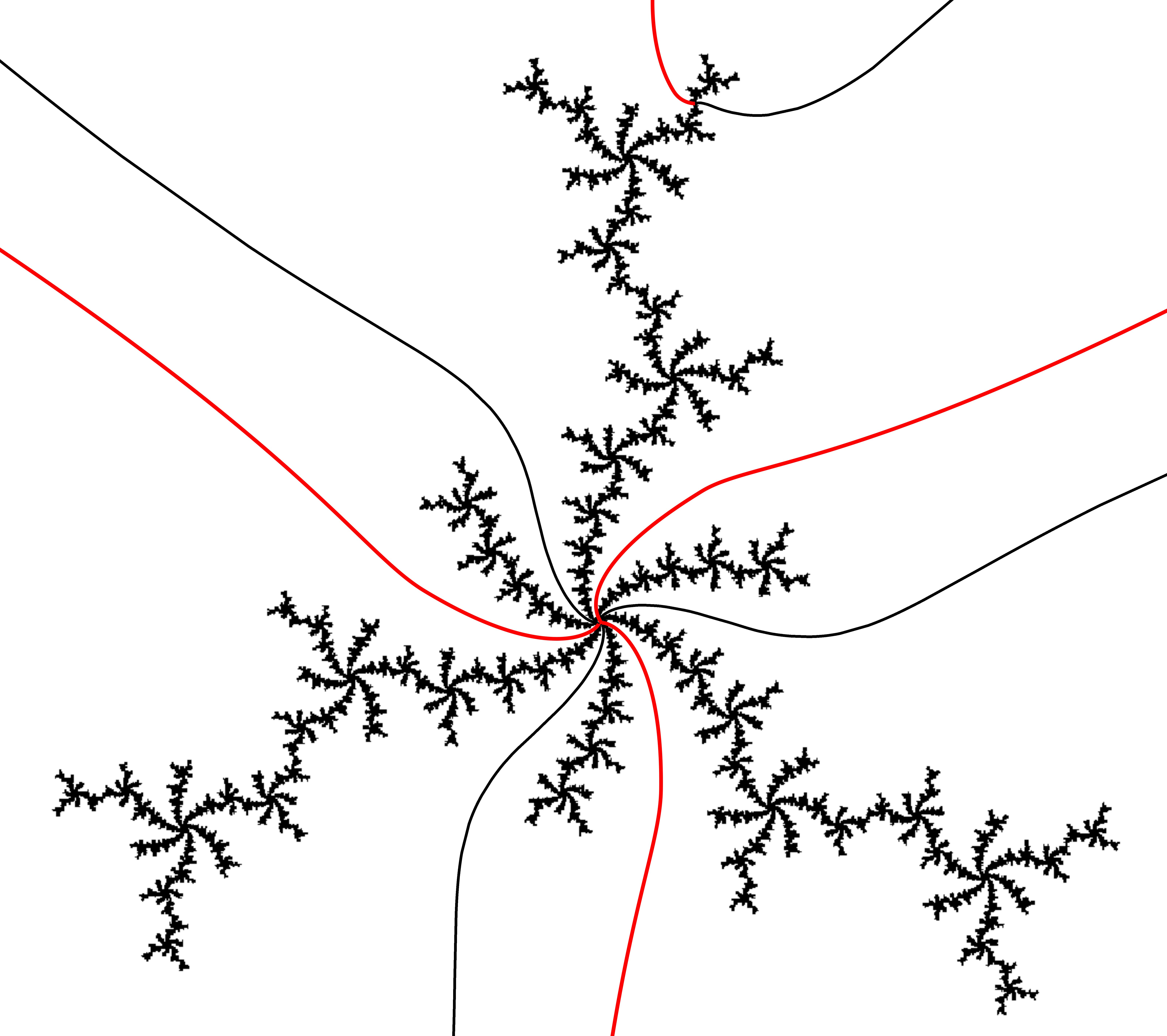}};
 \node at (-0.25,0){\footnotesize{$0$}};
 \node at (-5.25,2.5){\footnotesize{$\frac{1}{3}$}};
 \node at (-5.25,-2.5){\footnotesize{$\frac{2}{3}$}};
 \node at(3.6,2.5){\footnotesize{$\frac{17}{72}$}};
 \node at(5.5,2.5){\footnotesize{$\frac{11}{72}$}};
 \node at(0,2.5){\footnotesize{$\frac{83}{216}$}};
 \node at (0.5,1){\footnotesize{$\frac{89}{216}$}};
 \node at (2.5,-2.5){\footnotesize{$\frac{155}{216}$}};
 \node at (4,-2.5){\footnotesize{$\frac{161}{216}$}};
 \node at (6.75,0){\footnotesize{$\frac{11}{216}$}};
 \node at (6.75,1){\footnotesize{$\frac{17}{216}$}};
 \node at (-2.8,0){$U$};
\end{tikzpicture}
\caption{In the left picture, the polynomial $P(z)=z^3-3z^2/2$ has a fixed critical Fatou component $U$ and another critical point  $-1\in\partial U$. A weak critical marking of $P$ is $\Theta_P=\bigl\{\ \Theta(U)=\{0,1/3\},\ \Theta(-1)=\{1/3,2/3\}\ \bigr\}$. The right one shows the Julia set of $P(z)=z\mapsto z^3+0.22036+1.18612 i$, and the external rays landing at its critical point and critical value. One can choose $\Theta_P=\large\{\ \Theta_1(0):=\left\{11/216,83/216\right\},\Theta_2(0):=\left\{89/216,161/216\right\}\ \large\}$ as a weak critical marking of $P$.}
\label{fig:weak-portrait}
\end{figure}

The concept of critical markings of polynomials was introduced by Bielefeld-Fisher-Hubbard \cite{BFH} for \Mi \pf polynomials, and then generated by  Poirier \cite{Poi} to the general \pf case, to classify the dynamics of \pf polynomials. Our definition of weak critical marking,  first given in \cite{G} (also in \cite{GT}), is much less restrictive than that of critical marking (see \cite{Poi} or Section \ref{sec:critical-marking}) because we just use it to compute core entropy. The following result was assured by Thurston and proved by Gao.

\begin{proposition}[\cite{G}, Theorem 1.2]\label{pro:algorithm}
Let $P$ be a \pf polynomial and $\Theta_P$ a weak critical marking of $P$. Then the core entropy $h(P)$ of $P$ equals to the output $h(\Theta_P)$  in Thurston's entropy algorithm.
\end{proposition}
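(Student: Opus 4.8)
The plan is to read both quantities as exponential growth rates of finite Markov systems and to build a common model for them. By Lemma~\ref{lem:extended-tree} we may replace $\HHH_P$ by any extended Hubbard tree $T$; with its natural vertex set (critical, postcritical and branch points, plus finitely many extra preimages if convenient), $P|_T$ is Markov and Proposition~\ref{entropy-formula} gives $h(P)=h_{top}(P|_T)=\log\lambda$, where $\lambda$ is the spectral radius of the edge-incidence matrix $D_{(T,P)}$. On the other side, Thurston's algorithm outputs $h(\Theta_P)=\log\mu$, where $\mu$ is the spectral radius of the transition operator it attaches to $\Theta_P$, acting on the span of a finite set of ``leaves'' --- unordered pairs of angles generated by forward $\tau_d$-iteration of the angles in $\Theta_P$, together with the auxiliary arcs needed to encode the Fatou data $\Theta(U_i)$. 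It then suffices to show $\lambda=\mu$.

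The heart of the matter is a correspondence $e\mapsto\ell(e)$ from edges of $T$ to leaves. Each vertex $v$ of $T$ is (pre)periodic, so by the landing theorem for sub-hyperbolic polynomials (Propositions~\ref{pro:landing-theorem} and~\ref{pro:subhyperbolic}) when $v\in\JJJ_P$, and by the internal-ray and supporting-ray structure when $v$ lies on the boundary of a Fatou component, the rays attached to $v$ separate the germs at $v$ of the edges of $T$. For $e=[a,b]$ set $\ell(e)$ to be the pair made of the ray at $a$ and the ray at $b$ that bound, near their landing points, the side containing $e$. Since $P$ carries external rays to external rays by $\theta\mapsto\tau_d(\theta)$ and the landing relation is $P$-equivariant, the image path $P(e)=e_1\cdots e_k$ has the property that each $\ell(e_j)$ arises from $\ell(e)$ by precisely the transition rule of the algorithm: when $e$ contains no critical point in its interior, $k=1$ and $\ell(\cdot)$ is pushed forward by $\tau_d$ on both endpoints; when $e$ runs over a critical point, its two supporting rays lie, near their ends, on opposite sides of the convex hull $\mathrm{hull}(\ell_i)$ of a critical leaf, and $e$ maps onto a path covering at least two edges, matched by the several leaves the algorithm records for a chord straddling $\mathrm{hull}(\ell_i)$. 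The definition of $\Theta(U)$ through a preferred angle and inverse images of the corresponding image ray is exactly what makes this matching survive for edges that are internal-ray segments inside a critical Fatou component.

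This correspondence need not be bijective, nor need every algorithmic leaf be some $\ell(e)$. To close the argument I would show that the subspace spanned by $\{\ell(e): e\text{ an edge of }T\}$ is invariant under the transition operator, and that the operator there is conjugate to $D_{(T,P)}$, while the remaining leaves map into this invariant subspace and form among themselves a nilpotent block; by Perron--Frobenius the full operator and $D_{(T,P)}$ then have the same spectral radius. Equivalently, the edge-subdivision dynamics on $T$ is a factor, via a surjective semi-conjugacy with uniformly bounded fibres, of the sub-system carried by $\{\ell(e)\}$, so Proposition~\ref{Do4} gives equality of topological entropies. In either case $\lambda=\mu$, hence $h(P)=h(\Theta_P)$.

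The expected main obstacle is the verification in the second paragraph: that the geometric folding of an edge over a critical point corresponds term by term, uniformly in the edge and through all iterates, to the algorithm's leaf-transition rule. The two delicate points are (i) branch points of $T$ whose external angles need not lie in the finite angle set generated by $\Theta_P$, which forces either an enlargement of the vertex set of $T$ (harmless by Lemma~\ref{lem:extended-tree} and Proposition~\ref{Do3}) or a direct check that such vertices feed only the nilpotent block; and (ii) the accounting around critical Fatou components, where one must confirm that the preferred angles and the sets $\Theta(U)$ render the relevant portion of the tree map genuinely conjugate, not merely semi-conjugate, to $P$ on the corresponding internal-ray edges --- possibly after passing to a power via Proposition~\ref{Do1} to make a Fatou cycle fixed.
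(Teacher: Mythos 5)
The paper does not prove this proposition; it is imported verbatim from \cite{G} (stated there as Theorem 1.2), so there is no in-paper argument to measure your proposal against. What I can do is assess the sketch on its own terms.

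Your overall strategy — reduce both $h(P)$ and $h(\Theta_P)$ to logarithms of spectral radii of finite incidence matrices, build a correspondence $e\mapsto\ell(e)$ from edges of (an extended) Hubbard tree to the algorithm's leaves, verify that edge subdivision under $P$ matches leaf subdivision under $\tau_d$, and then absorb the non-edge leaves into a block that does not affect the spectral radius — is the standard and correct route, and it matches the spirit of what is actually done in \cite{G}. The two points you flag as the ``expected main obstacle'' are indeed where the real work lies.

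Two places deserve more care before this could be called a proof. First, the claim that ``the remaining leaves map into [the span of $\{\ell(e)\}$] and form among themselves a nilpotent block'' is stated as though it were obvious, but it is not: a leaf that does not arise as some $\ell(e)$ can persist under the transition rule (e.g., it may be carried along a periodic cycle of angle pairs landing at a common point, so the pair is ``degenerate'' geometrically but not eliminated combinatorially). You need either a geometric argument that every iterated image of every admissible leaf eventually coincides with a leaf of the form $\ell(e)$ (whence nilpotence of the complementary block), or a direct identification of which leaves the algorithm actually retains, showing these are exactly the $\ell(e)$ plus a forward-invariant nilpotent remainder. Second, at vertices that are \emph{centers} of bounded Fatou components (not boundary points), the adjacent edges are internal-ray segments and carry no external angle; you must choose, for each such edge, a supporting argument of the relevant Fatou component and then check that the choice is coherent under $P$, which is precisely what the definition of $\Theta(U)$ via a preferred angle and its inverse images is designed to make possible — but the verification that this gives a well-defined and $P$-equivariant assignment is itself nontrivial, and you gesture at it rather than carry it out. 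These are gaps, not errors; the architecture is sound and, with those two verifications supplied, the argument would close via Proposition~\ref{entropy-formula} and Proposition~\ref{Do4} as you say.
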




\subsection{Polynomial-like maps and renormalization}\label{sec:renormalization}
Polynomial-like maps
were introduced by Douady and Hubbard \cite{DH2} and have played an important
role in complex dynamics ever since.

A \emph{polynomial-like} map of degree $d\geq2$ is a triple $(g,U,V)$
where $U,V$ are topological disks in $\C$ with $\ov{U}\subseteq V$,
and $f:U\to V$ is a holomorphic proper map of degree $d$. The \emph{filled-Julia
set} of $g$ is the set of points in $U$ that never leave $V$ under iteration of $g$, i.e.,
$$K_g:=\bigcap_{n\geq0}g^{-n}(V),$$
and its \emph{Julia set} is defined as $J_g:=\partial K_g$. We can similarly define the \emph{postcritical points} of $g$ and the concept of \emph{postcritical-finite}. If $g$ is postcritically-finite, then its \emph{Hubbard tree} and \emph{extended Hubbard} tree is defined similar as the polynomial case.

Two polynomial-like maps $f$ and $g$ are \emph{hybrid equivalent} if there is a quasiconformal conjugacy $\psi$ between $f$ and $g$ that is defined on a neighborhood
of their respective filled-in Julia sets so that $\partial \ov{\psi}=0$ on $K_f$.
The crucial relation between polynomial-like maps and polynomials is
explained in the following theorem, due to Douady and Hubbard \cite{DH2}.

\begin{theorem}[Straightening Theorem]\label{thm:straigtening}
Let $f:U\to V$ be a polynomial-like map of degree $d\geq2$. Then $f$ is hybrid equivalent to a polynomial $P$ of the same degree. Moreover, if $K_f$ is connected, then $P$ is unique up to affine conjugation.
\end{theorem}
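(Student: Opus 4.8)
The plan is to run the quasiconformal-surgery argument of Douady and Hubbard: graft the rigid model $z\mapsto z^{d}$ near $\infty$ onto $f$, average the resulting Beltrami form, and straighten it by the Measurable Riemann Mapping Theorem.

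\emph{Construction of the polynomial.} After a standard reduction---replace $V$ by a smoothly bounded domain $V'$ with $\overline U\subset V'$, $\overline{V'}\subset V$, $V'$ containing the critical values of $f$, and replace $U$ by $f^{-1}(V')$ (this keeps $f^{-1}(V')$ connected and leaves $K_f$ unchanged)---I may assume $\partial U$, $\partial V$ are smooth and $A:=V\setminus\overline U$ is a smooth closed annulus. Then $f|_{\partial U}\colon\partial U\to\partial V$ is a smooth degree-$d$ covering of circles, hence conjugate to $z\mapsto z^{d}$, and I can choose $r>1$ together with a quasiconformal homeomorphism $\phi$ of $\overline{\C}\setminus\overline U$ onto $\overline{\C}\setminus\{|w|\le r\}$, fixing $\infty$, carrying $A$ onto the round annulus $\{r<|w|<r^{d}\}$ and satisfying $\phi(f(z))=\phi(z)^{d}$ for $z\in\partial U$. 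I then put $\widetilde f:=f$ on $\overline U$ and $\widetilde f:=\phi^{-1}(\phi(\cdot)^{d})$ on $\overline{\C}\setminus\overline U$; the two formulas agree on $\partial U$, so $\widetilde f$ is a continuous, quasiregular, degree-$d$ branched self-covering of $\overline{\C}$ that is holomorphic on $U$ (hence near $K_f$), has $\infty$ as a superattracting fixed point of local degree $d$, and satisfies $\widetilde f^{-1}(\infty)=\{\infty\}$. From the formula one reads off $\widetilde f(U)\subset V$, $\widetilde f(A)\subset\overline{\C}\setminus\overline V$ and $\widetilde f(\overline{\C}\setminus\overline V)\subset\overline{\C}\setminus\overline V$; consequently the forward orbit of every point of $\overline{\C}\setminus K_f$ meets $A$ at most once and thereafter stays outside $\overline V$, while $\widetilde f^{-k}(A)\subset U$ for every $k\ge1$, with the first $k$ iterates of any such preimage remaining in $U$.

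\emph{Averaging and straightening.} I next build a $\widetilde f$-invariant Beltrami coefficient $\mu$ on $\overline{\C}$: set $\mu:=0$ on $K_f$, $\mu:=\mu_{\phi}$ (the Beltrami coefficient of $\phi$) on $\overline{\C}\setminus\overline U$, and $\mu:=(\widetilde f^{\,k})^{*}\mu_{\phi}$ on $\widetilde f^{-k}(A)$ for each $k\ge1$. Then $\widetilde f^{*}\mu=\mu$ by construction, and the essential point is the bound $\|\mu\|_{\infty}<1$: on $\widetilde f^{-k}(A)$ the iterate $\widetilde f^{\,k}$ coincides with the \emph{holomorphic} map $f^{\,k}$ (the orbit stays in $U$), so pulling $\mu_{\phi}$ back by it does not increase the dilatation, and $\|\mu\|_{\infty}$ equals the (finite) dilatation of $\phi$. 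This is the crux of the proof: a generic invariant Beltrami form for a quasiregular map has dilatation blowing up along backward orbits, and it stays bounded here only because all the non-conformality of $\widetilde f$ sits on the single annulus $A$---visited once per escaping orbit---while near $K_f$ and along every backward orbit that matters $\widetilde f$ is genuinely holomorphic. Now the Measurable Riemann Mapping Theorem gives a quasiconformal $\psi\colon\overline{\C}\to\overline{\C}$ with $\mu_{\psi}=\mu$, normalized so that $\psi(\infty)=\infty$; then $P:=\psi\circ\widetilde f\circ\psi^{-1}$ has vanishing Beltrami coefficient, hence is a rational map of degree $d$, and since $\psi(\infty)$ is its only preimage of $\infty$, with local degree $d$ there, $P$ is a polynomial of degree $d$. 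Because $\mu\equiv0$ on $K_f$ we have $\partial\overline{\psi}=0$ a.e.\ on $K_f$, so $\psi$ restricted to a neighbourhood of $K_f$ is a hybrid equivalence between $f$ and $P$.

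\emph{Uniqueness when $K_f$ is connected.} If $f$ is hybrid equivalent to two polynomials $P_{1}$ and $P_{2}$, then $P_{1}$ and $P_{2}$ are hybrid equivalent to each other via some quasiconformal $h$ with $\partial\overline{h}=0$ on $K_{P_{1}}$. Connectedness of $K_f$ forces $K_{P_{1}}$ and $K_{P_{2}}$ to be connected, so the B\"{o}ttcher coordinates $\phi_{P_{i}}\colon\C\setminus K_{P_{i}}\to\{|w|>1\}$ are genuine conformal isomorphisms and $\Phi:=\phi_{P_{2}}^{-1}\circ\phi_{P_{1}}$ is a conformal conjugacy from $P_{1}$ to $P_{2}$ on the basins of $\infty$. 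I would glue $h$ (near $K_{P_{1}}$) to $\Phi$ (near $\infty$) across a fundamental annulus for the escape dynamics and then pull the glued map back repeatedly by $P_{1}$ and $P_{2}$; the iterates converge to a global quasiconformal conjugacy $H\colon\overline{\C}\to\overline{\C}$ from $P_{1}$ to $P_{2}$ which is conformal on $\C\setminus K_{P_{1}}$ (it is assembled from the conformal map $\Phi$ there) and satisfies $\partial\overline{H}=0$ a.e.\ on $K_{P_{1}}$ (the hybrid condition is preserved by the holomorphic pullbacks). Hence $\mu_{H}=0$ a.e.\ on $\overline{\C}$, so $H$ is a M\"{o}bius transformation fixing $\infty$, i.e.\ affine, and it conjugates $P_{1}$ to $P_{2}$. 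The obstacle in this half is precisely the gluing/pullback construction, for which connectedness of $K_f$ is indispensable: it is what makes the B\"{o}ttcher chart cover the entire basin of $\infty$.
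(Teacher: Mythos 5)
The paper does not reprove this theorem; it cites it from Douady--Hubbard \cite{DH2}, so there is no in-paper argument to compare against. Your proposal is a correct reconstruction of the original quasiconformal-surgery proof, and the crux you single out---that the invariant Beltrami form stays bounded because each escaping orbit crosses the non-conformal annulus $A$ exactly once and the relevant pullbacks are by the genuinely holomorphic iterates of $f$ inside $U$---is exactly the right point.

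One small precision on the uniqueness step: from the pullback construction you get $\mu_{H_n}\equiv0$ a.e.\ on $K_{P_1}$ for every $n$ and $H_n\to H$, but pointwise convergence of the maps does not by itself transfer the a.e.\ vanishing of the dilatations to the limit. The clean way to close this is to note that $H_n=h$ on $K_{P_1}$ for every $n$ (since $H_n=P_2^{-n}\circ H_0\circ P_1^{n}$ and $H_0=h$ conjugates on $K_{P_1}$), hence $H=h$ on $K_{P_1}$; and two quasiconformal maps that agree on a measurable set have equal derivatives, hence equal complex dilatations, at almost every Lebesgue density point of that set. Combined with $\bar\partial h=0$ a.e.\ on $K_{P_1}$ and $H=\Phi$ conformal off $K_{P_1}$, this gives $\mu_H=0$ a.e., and Weyl's lemma finishes. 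This is a standard completion of the step you left as ``the hybrid condition is preserved by the holomorphic pullbacks.''
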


\begin{definition}[renormalization]\label{def:renormalization}
Let $f$ be a rational map. A triple $\rho=(f^p,U,V)$ is called a \emph{renormalizaton triple} of $f$ if $(f^p,U,V)$ is a polynomial-like map with connected filled-in Julia set, and the number $p$ is called the \emph{renormalization period} of $\rho$.
\end{definition}
The filled Julia set of $\rho$ is denoted by $K_\rho$, the Julia set by $J_\rho$,  the critical/postcritical
sets by ${\rm Crit}_\rho/{\rm Post}_\rho$, and the Hubbard tree by $H_\rho$, respectively.

\subsection{Basic results of Newton maps}
Let $P$ be a complex polynomial, factored as
\[P(z)=(z-a_1)^{n_1}\cdots(z-a_d)^{n_d},\]
 and $a_1,\ldots,a_d$ ($d\geq3$) are distinct roots of $P$, with multiplicities
$n_1,\ldots,n_d\geq1$, respectively.

Its Newton map $$f=f_P:=z-\frac{P(z)}{P'(z)}$$ has degree $d$ and fixes each root $a_i$ with multiplier $f'(a_i)=(n_i-1)/n_i$. Therefore, each root $a_i$ of $P$ corresponds to an attracting fixed point of $f$
with multiplier $1-1/n_i$. One may verify that $\infty$ is a repelling fixed point of $f$ with multiplier $d/(d-1)$. This discussion shows that a degree $d$ Newton map has $d+1$
distinct fixed points with specific multipliers. On the other hand, a well-known theorem of Head states that the fixed points together with the specific
multipliers can determine a unique Newton map:

\begin{proposition}[Head]\label{pro:head}
A rational map $f$ of degree $d\geq3$ is a Newton
map  if and only if f has $d+1$ distinct fixed points $a_1,\ldots,a_d,\infty$,  such that for each fixed point $a_i$, the multiplier takes the form $1-1/n_i$ with $n_i\in\N, 1\leq i\leq d$.
\end{proposition}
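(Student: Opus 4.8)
The plan is to prove both implications by directly comparing the rational function $z\mapsto z-f(z)$ with the logarithmic-derivative data of $P$; the crux is that $\bigl(z-f(z)\bigr)^{-1}$ has, at a simple fixed point of multiplier $\rho$, a simple pole with residue $1/(1-\rho)$, and that $P'/P=\sum_i n_i/(z-a_i)$ is exactly the function with simple poles of residues $n_i$ at the $a_i$ and a zero at $\infty$.

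\textbf{The ``only if'' direction.} Suppose $f=f_P$ with $P(z)=\prod_{i=1}^{d}(z-a_i)^{n_i}$. Writing $P(z)=(z-a_i)^{n_i}Q_i(z)$ with $Q_i(a_i)\neq 0$, one gets $P/P'=(z-a_i)Q_i/\bigl(n_iQ_i+(z-a_i)Q_i'\bigr)$, which has a simple zero at $a_i$; hence $f$ extends holomorphically across $a_i$ with $f(a_i)=a_i$, and from $f'=PP''/(P')^2$ one computes $f'(a_i)=(n_i-1)/n_i=1-1/n_i$. Factoring $\prod_i(z-a_i)^{n_i-1}$ out of numerator and denominator of $f=(zP'-P)/P'$ gives $f=(zR-P_{\mathrm{red}})/R$ with $P_{\mathrm{red}}=\prod_i(z-a_i)$ and $R=\sum_j n_j\prod_{i\neq j}(z-a_i)$ of degree $d-1$; since $R(a_k)=n_k\prod_{i\neq k}(a_k-a_i)\neq0$ these two have no common factor, so $f$ has degree $d$ (the numerator has degree $d$, as $\sum_i n_i>1$) and fixes $\infty$. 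Thus $f$ has the $d+1$ distinct fixed points $a_1,\dots,a_d,\infty$; since a degree-$d$ rational map has exactly $d+1$ fixed points counted with multiplicity and each listed point is nondegenerate (its multiplier is $\neq1$), these are all of them, each simple, with the stated multipliers.

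\textbf{The ``if'' direction.} Conversely, let $f$ have degree $d\geq3$ with $d+1$ distinct fixed points $a_1,\dots,a_d,\infty$ whose multiplier at $a_i$ is $1-1/n_i$, $n_i\in\N$. Put $P(z):=\prod_{i=1}^d(z-a_i)^{n_i}$. It suffices to prove $z-f(z)=P(z)/P'(z)$, for then $f=z-P/P'=f_P$ is a Newton map, its degree being the number $d\geq3$ of distinct roots of $P$. Equivalently, we show the rational functions $\psi:=1/(z-f(z))$ and $P'/P=\sum_{i=1}^d n_i/(z-a_i)$ agree. As the $d+1$ distinct fixed points exhaust all fixed points (with multiplicity) of $f$, each is simple; in particular $a_1,\dots,a_d$ are exactly the finite fixed points of $f$, hence exactly the finite poles of $\psi$. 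Near $a_i$ the normal form $f(z)=a_i+(1-1/n_i)(z-a_i)+O((z-a_i)^2)$ gives $z-f(z)=\tfrac1{n_i}(z-a_i)\bigl(1+O(z-a_i)\bigr)$, so $\psi$ has a simple pole at $a_i$ with residue $n_i$, matching $P'/P$. Hence $\psi-P'/P$ has no finite poles and is a polynomial. Finally, $\infty$ being a simple fixed point of $f$ (multiplier $\mu\neq1$) forces $z-f(z)\to\infty$: in the coordinate $w=1/z$ one has $f(z)=\mu^{-1}z+O(1)$ when $\mu\neq0$, so $z-f(z)\sim(1-\mu^{-1})z$, while $|f(z)|$ grows faster than $|z|$ when $\mu=0$. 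Thus $\psi(\infty)=0=(P'/P)(\infty)$, so the polynomial $\psi-P'/P$ vanishes at $\infty$ and is identically zero. Therefore $z-f(z)=P(z)/P'(z)$ and $f=f_P$.

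\textbf{Main obstacle.} The delicate point is the behavior at $\infty$: one must know that $\infty$ is a \emph{simple} fixed point (so that $z-f(z)$ genuinely blows up there, i.e.\ $\psi$ has a zero rather than a pole at $\infty$) and that $a_1,\dots,a_d$ are \emph{all} the finite fixed points (so $\psi$ has no spurious poles) — both of which follow from the hypothesis of $d+1$ \emph{distinct} fixed points together with the count of fixed points of a degree-$d$ rational map. Alternatively, one can identify the multiplier of $f$ at $\infty$ as $N/(N-1)$ with $N=\sum_i n_i$ via the holomorphic fixed-point formula $\sum_{f(p)=p}1/(1-f'(p))=1$, which makes the matching of $\psi$ and $P'/P$ at $\infty$ completely explicit and is consistent with the forward computation.
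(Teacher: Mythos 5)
The paper does not actually prove Proposition~\ref{pro:head}: it is quoted as a known result and attributed to Head's thesis, so there is no paper proof to compare against. Your argument is a correct, self-contained proof. In the forward direction the factorization $f=(zR-P_{\rm red})/R$ with $R=\sum_j n_j\prod_{i\neq j}(z-a_i)$ cleanly gives the degree count and the simplicity of the $d+1$ fixed points, and the local computation $f'=PP''/(P')^2$ yields the multipliers $1-1/n_i$. In the converse direction, the comparison of $\psi=1/(z-f(z))$ with $P'/P$ is exactly the right move: the residue of $\psi$ at a simple fixed point of multiplier $\rho$ is $1/(1-\rho)=n_i$, the hypothesis of $d+1$ \emph{distinct} fixed points guarantees (via the fixed-point count for a degree-$d$ rational map) that every fixed point is simple, so $\psi$ has no extra poles, and the condition that the multiplier at $\infty$ is $\neq1$ makes $\psi$ vanish at $\infty$; hence $\psi-P'/P$ is a polynomial vanishing at $\infty$ and must be zero. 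This residue-comparison argument is the standard route to Head's theorem, and your remark that the holomorphic fixed-point formula pins the multiplier at $\infty$ to $N/(N-1)$ with $N=\sum_i n_i$ is a useful consistency check (and explains the paper's stated value $d/(d-1)$, the case $n_i\equiv1$). One cosmetic point: where you justify that the numerator $zR-P_{\rm red}$ has degree $d$, the leading coefficient is $N-1$, and the needed $N>1$ follows simply from $N\geq d\geq3$; it is worth saying this explicitly rather than ``$\sum_i n_i>1$.''
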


According to Shishikura \cite{Sh}, the Julia set of a Newton map is always
connected, or equivalently, all Fatou components are simply connected.

\section{Capture surgery for sub-hyperbolic rational maps}\label{sec:capture}
In the section, we develop a method, called \emph{capture surgery}, to perturb sub-hyperbolic rational maps such that some of the Julia critical points are captured by attracting cycles after the perturbation. This surgery will be used to construct sequences of polynomials or Newton maps on which the entropy function achieving the inferior (in the proof of Proposition \ref{pro:key} and Theorem \ref{thm:main2}). A similar idea is used in \cite{CT3,GZ} for a very special case.

Let $f$ be a sub-hyperbolic rational map of degree $d$ such that $\FFF_f\not=\emptyset$. 
Fix a vector $\c=(c_1,\ldots,c_r)$ of distinct Julia critical points of $f$. The perturbation of $f$ by capture surgery near $\c$ includes three steps.

\noindent \emph{I. The topological surgery.}

For each $c\in\c$, we choose a small open disk $D_c$ containing $c$ such that these $D_c's$ are pairwise disjoint and have iteration relation:
 if $f^m(c')=c$ for $c,c'\in \c$, then $D(c')$ is the component of $f^{-m}(D(c))$ containing $c'$.
We call the vector $\DD:=(D_c:c\in\c)$ a \emph{perturbation domain}.
In each $D_c$, we choose an open set (not necessarily connected) $Z_c\subseteq \FFF_f$ such that
\begin{enumerate}
\item if  $f^m(c')=c$ for $c,c'\in \c$, then $Z(c')=f^{-m}(Z(c))\cap D(c')$;
\item the forward orbit of any point in $Z_c$ never return to $D_c$.
\end{enumerate}
The vector $\ZZ:=(Z_c:c\in\c)$ is called an \emph{invariant domain}. Finally, we define a \emph{perturbation mapping} $\XX=(\chi_c:c\in\c)$ such that for each $c\in\c$,
\begin{enumerate}
\item $\chi_c:D_c\to f(D_c)$ is a \emph{quasi-regular map}, i.e, the composition of a rational map and a quasi-conformal map, of degree ${\rm deg}(f|_{c})$;
\item the map $\chi_c$ coincides with $f$ on the invariant domain $Z_c\subseteq D_c$, the boundary $\partial D_c$ and the point $c$;
\item the critical values of $\chi_c$ belong to $f(Z_c)=\chi_c(Z_c)$.
\end{enumerate}
The vector $\vec{\s}:=(\c,\DD,\ZZ,\XX)$ is called the \emph{perturbation data} for the capture surgery.

We now define the \emph{topological perturbation} $F=F_{\vec{\s}}$ of $f$ by capture surgery with a given perturbation date $\vec{\s}$, as
\begin{equation}\label{eq:perturbation}
F(z)=\left\{
    \begin{array}{ll}
      \chi_c(z), & \hbox{if $z\in D(c)$ for some $c\in\c$;} \\[5pt]
      f(z), & \hbox{otherwise.}
    \end{array}
  \right.
\end{equation}
By the definition, the perturbation map $F$ differs with $f$ only at $\cup_{c\in\c}D(c)$, and each  Julia critical point $c$ of $f$ in $\c$ splits into several critical points of $F$ captured by attracting cycles of $F$.

\noindent\emph{II. The rational realization.}

To realize the topological perturbation $F$ as a rational map, we need Thurston-Cui-Tan's result about the topological characterization of
sub-hyperbolic rational maps \cite{DH3,CT1}.

Let $G$ be a branched covering of degree $d\geq2$. Its \emph{postcritical set} is defined as
\[{\rm Post}_G=\ov{\{G^n(c):n\geq1,\text{ $c$ is a critical point of $G$}\}}.\]
We say that $G$ is a \emph{sub-hyperbolic semi-rational map} if the \emph{accumulation set} ${\rm Post}_G'$ of ${\rm Post}_G$ is finite (or empty); and in the
case ${\rm Post}_G'\not=\emptyset$, the map $G$ is holomorphic in a neighborhood of ${\rm Post}_G'$ and every periodic point
${\rm Post}_G'$ is (super-)attracting.

By a \emph{marked sub-hyperbolic, semi-rational map $(G,\QQQ)$}, we mean that $G:\ov{\C}\to\ov{\C}$ is a sub-hyperbolic, semi-rational map and the \emph{marked set} $\QQQ\subseteq \ov{\C}$ is a closed set such that ${\rm Post}_G\subseteq \QQQ$, $G(\QQQ)\subseteq \QQQ$ and $\#(\QQQ\setminus{\rm Post}_{G})<\infty$.

A Jordan curve $\g\subseteq \ov{\C}\setminus\QQQ$ is called \emph{peripheral} in $\ov{\C}\setminus \QQQ$ if one of its complementary components contains at most one point of ${\rm Post}_G$; and is otherwise called non-peripheral in $\ov{\C}\setminus\QQQ$. We say that $\G=\{\g_1,\ldots,\g_k\}$ is a \emph{multicurve} in $\ov{\C}\setminus\QQQ$, if each $\g_i$ is a
non-peripheral Jordan curve in $\ov{\C}\setminus\QQQ$, these curves are pairwise disjoint and
pairwise non-homotopic in $\C\setminus\QQQ$. Its \emph{$(G,\QQQ)$-transition matrix} $D_\G=(a_{ij})$ is
defined by:
\[a_{ij}=\sum_\alpha\frac{1}{{\rm deg}(G:\alpha\to\g_j)},\]
where the summation is taken over the components $\alpha$ of $G^{-1}(\g_j)$
homotopic to $\g_i$ in $\ov{\C}\setminus \QQQ$. The leading eigenvalue of $D_\G$ is also called the \emph{leading eigenvalue of $\G$}, denoted by $\lambda_\G$.

We say that a multicurve $\G\subseteq\ov{\C}\setminus\QQQ$ is $G$-stable if, for any curve $\g\in\G$, every component of $G^{-1}(\g)$ is either peripheral or homotopic in $\ov{\C}\setminus\QQQ$ to a curve in $\G$.
A multicurve $\G\subseteq \ov{\C}\setminus\QQQ$ is called a \emph{Thurston obstruction} of $(G,\QQQ)$ if it is $G$-stable and $\lambda_\G\geq1$.

Two marked sub-hyperbolic, semi-rational maps $(G_1,\QQQ_1)$ and $(G_2,\QQQ_2)$ are called \emph{c-equivalent}, if there is a
pair $(\phi,\psi)$ of homeomorphisms of $\ov{\C}$, and a neighborhood $U_0$ of $\QQQ_1'$ such that
\begin{itemize}
\item $\phi\circ G_1=G_2\circ \psi$;
\item $\phi$ is holomorphic in $U_0$;
\item the maps $\phi$ and $\psi$ are equal on $\QQQ_1$, and thus on $\QQQ_1\cup \ov{U_0}$  (by the isolated zero theorem);
\item the two maps $\phi$ and $\psi$ are isotopic to each other relatively to $\QQQ_1\cup \ov{U_0}$.
\end{itemize}
If $\QQQ_1={\rm Post}_{G_1}$ (hence $\QQQ_2={\rm Post}_{G_2}$), we say that $G_1$ is \emph{c-equivalent} to $G_2$.

\begin{theorem}[Thurston-Cui-Tan\cite{DH3,CT1}]\label{thm:cui-tan-Thurston}
Let $(G,\QQQ)$ be a sub-hyperbolic, semi-rational marked map, not Latt\`{e}s type. Then $(G,\QQQ)$ is c-equivalent to a rational marked map if and only if $(G,\QQQ)$ has no Thurston obstructions. In this case the
rational map is unique up to M\"{o}bius conjugation.
\end{theorem}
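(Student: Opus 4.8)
The plan is to view Theorem~\ref{thm:cui-tan-Thurston} as the sub-hyperbolic counterpart of Thurston's topological characterization of rational maps (the case $\QQQ'={\rm Post}_G'=\emptyset$ being Thurston's original theorem) and to run Thurston's pull-back iteration on a suitable Teichm\"uller space, after a truncation that turns $(G,\QQQ)$ into a branched covering of finite type. The structural fact to exploit is that $\QQQ'$ is finite, every cycle in it is (super-)attracting, and $G$ is holomorphic near $\QQQ'$. First I would choose a forward-invariant neighbourhood $W$ of $\QQQ'$, bounded by finitely many Jordan curves, on which $G$ acts by the standard attracting (B\"ottcher / linearizing) models; removing $W$ leaves a sphere $S:=\ov{\C}\setminus W$ with finitely many boundary curves, and $G$ restricts to a branched covering $S'\to S$ carrying only finitely many marked points $\QQQ^{*}$ (the points of $\QQQ$ lying outside $W$, together with a suitable marking along $\partial W$). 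The whole problem then reduces to realizing this finite-type marked covering and re-gluing the attracting models along $\partial W$.

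For the direction ``no obstruction $\Rightarrow$ $c$-equivalent to a rational map'', I would first check that a $G$-stable multicurve in $\ov{\C}\setminus\QQQ^{*}$ with leading eigenvalue $\geq1$ is the same datum as a Thurston obstruction of $(G,\QQQ)$: a peripheral curve of the truncated surface bounds either a once-marked disk or a boundary annulus, and neither affects transition matrices, so the two spectral conditions coincide. The hypothesis thus becomes exactly that of Thurston's theorem for the truncated covering. Next I would set up the Teichm\"uller space $\mathcal{T}$ of $(\ov{\C},\QQQ^{*})$ and the pull-back map $\sigma_G\colon\mathcal{T}\to\mathcal{T}$ (well defined since $G$ is holomorphic near $\QQQ'$), and invoke Thurston's length--area estimates together with the description of degenerating Teichm\"uller sequences via pinched multicurves to conclude that, absent a stable multicurve with $\lambda_\G\geq1$, the orbit $\{\sigma_G^n(\tau_0)\}$ stays inside a compact subset of $\mathcal{T}$. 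One then obtains a fixed point $\tau^{*}$; the complex structure underlying $\tau^{*}$ is $G$-invariant, so uniformization presents $G|_S$ as holomorphic, and re-gluing the attracting models along $\partial W$ (whose conformal structures match by construction) produces a rational map $R$ and a pair $(\phi,\psi)$ witnessing $c$-equivalence of $(G,\QQQ)$ with $(R,\QQQ)$.

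For the converse and for uniqueness I would run the classical ``rational maps carry no Thurston obstruction'' argument and Thurston rigidity, this time with the orbifold metric of Lemma~\ref{lem:sub-hyperbolic}. If $(G,\QQQ)$ is $c$-equivalent to a rational $R$ and $\G$ were a Thurston obstruction, I would transport $\G$ to $R$ and apply the standard length inequality comparing the orbifold lengths of $\G$ with those of the components of $R^{-1}(\G)$; uniform expansion near $\JJJ_R$ then forces $\lambda_\G<1$, a contradiction. Uniqueness is Thurston rigidity: two rational realizations give two fixed points of $\sigma_G$ on $\mathcal{T}$, but $\sigma_G$ is strictly contracting for the Teichm\"uller metric once the Latt\`{e}s locus is excluded---which is the standing hypothesis---so the realization is unique up to M\"obius conjugacy.

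I expect the truncation to be the main obstacle. One must choose $W$ so that $G$ genuinely acts on $\partial W$ by the model maps, verify that the boundary marking neither creates nor destroys obstructions, and confirm that Thurston's theorem---including the compactness estimate for $\sigma_G$---applies verbatim to the marked surface-with-boundary problem extracted from $(G,\QQQ)$. Once that reduction is in place, everything downstream is Thurston's original machinery for a finite marked set; the new content over the postcritically-finite case lies entirely in setting up the reduction.
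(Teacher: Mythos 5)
The paper does not prove this theorem; it is cited from the references \cite{DH3,CT1}, so there is no in-paper proof to compare against. Your sketch can only be measured against the actual Cui--Tan argument in \cite{CT1}, whose overall shape you correctly identify (reduce the infinite-marked-set problem to a finite-type Thurston iteration, realize, re-glue the attracting models), and you rightly flag the reduction as the step carrying all the substance.

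The gap is in how you carry out that reduction. You propose to excise an invariant neighbourhood $W$ of $\QQQ'$ and run Thurston's pull-back on the Teichm\"uller space of the resulting surface with boundary $S=\ov{\C}\setminus W$. Thurston's compactness estimate---the pull-back orbit stays in a compact part of Teichm\"uller space unless a $G$-stable multicurve with leading eigenvalue $\ge 1$ exists---is a theorem about closed marked spheres, and its surface-with-boundary analogue does not come for free: you must control the boundary moduli, show the pull-back is well-posed up to isotopy rel $\partial S$, and verify that degeneration still forces a stable multicurve. Cui--Tan never leave the sphere: they iterate on the finite-dimensional subspace of $\mathcal{T}(\ov{\C},\QQQ)$ of complex structures that agree, near $\QQQ'$, with the fixed holomorphic one given by $G$; this subspace is invariant under pull-back precisely because $G$ is holomorphic near $\QQQ'$ and the cycles there are attracting, and the classical compactness estimate on the sphere then applies directly. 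In that formulation your re-gluing along $\partial W$ is already built into the space rather than being a separate final step, which is what makes the argument close. Your remaining pieces (identification of obstructions under the reduction, the orbifold-metric proof of the converse, and rigidity via the non-Latt\`es hypothesis---which is automatic once $\QQQ'\ne\emptyset$ since the Fatou set is then nonempty) are consistent with \cite{CT1}, but the truncation as you have set it up would need the surface-with-boundary version of the pull-back machinery to be established before the rest of the argument could run.
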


According to the requirements on the perturbation data,  any topological perturbation $F$ of a sub-hyperbolic rational map $f$ by capture surgery is sub-hyperbolic, semi-rational. We believe that $F$ is c-equivalent to a rational map in general case. But in this paper, we just prove this point in the polynomial and Newton-map cases, since these are the only cases we encounter and the argument is relatively simple. We will deal with the polynomial case in this section, and leave the Newton-map case to Section \ref{sec:capture-N}.

\begin{lemma}\label{lem:no-Thurston-obstruction}
Let $f$ be a sub-hyperbolic polynomial, and the invariant domain $\ZZ$ belong to the basin of infinity.  Then the topological perturbation $F$ of $f$ defined in Step I is c-equivalent to a polynomial.
\end{lemma}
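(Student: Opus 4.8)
The plan is to invoke the Thurston–Cui–Tan characterization (Theorem \ref{thm:cui-tan-Thurston}): since any topological perturbation $F$ of a sub-hyperbolic polynomial by capture surgery is automatically sub-hyperbolic and semi-rational, and clearly not Lattès type (it has a fixed critical point at $\infty$ with a full preimage, so it is topologically a polynomial of degree $d$), it suffices to show that $(F,\mathrm{Post}_F)$ carries no Thurston obstruction. So the whole content reduces to ruling out $F$-stable multicurves with leading eigenvalue $\geq 1$.

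First I would record the key geometric fact that makes this work: because the invariant domains $\ZZ=(Z_c:c\in\c)$ lie in the basin of infinity $\Omega(f)$, the perturbation mapping $\chi_c$ agrees with $f$ outside $\cup_{c\in\c} D_c$, and the new critical values of $\chi_c$ lie in $f(Z_c)\subseteq\Omega(f)$, the new postcritical points that are created all lie in $\Omega(f)=\Omega(F)$ (one checks $\Omega(f)=\Omega(F)$ since $F=f$ near $\infty$ and on the forward orbits of the $Z_c$'s, which never re-enter the $D_c$'s). Hence $\mathrm{Post}_F$ splits as the old postcritical set restricted to the filled Julia set, which equals $\mathrm{Post}_f\cap\KKK_f=\mathrm{Post}_f\cap\KKK_F$ (the points of $\mathrm{Post}_f$ in $\KKK_f$ are untouched by the surgery), together with finitely many points in the basin of infinity that eventually land on attracting cycles. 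In particular $\mathrm{Post}_F\cap\KKK_F=\mathrm{Post}_f\cap\KKK_f$.

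Next I would set up the standard argument that a polynomial admits no Thurston obstruction: suppose $\Gamma=\{\gamma_1,\dots,\gamma_k\}$ is an $F$-stable multicurve. A non-peripheral curve $\gamma_i$ in $\ov{\C}\setminus\mathrm{Post}_F$ separates $\mathrm{Post}_F$ into two pieces each with at least two points; since $\infty\in\mathrm{Post}_F$ (it is a fixed critical point), one complementary disk of $\gamma_i$ contains $\infty$ and the other, call it $D_i$, is a bounded disk containing $\geq 2$ postcritical points. The crucial observation is that $D_i$ must meet $\KKK_F$ in at least two postcritical points: a point of $\mathrm{Post}_F\cap\Omega(F)$ is eventually mapped into an attracting Fatou cycle, and the attracting cycles of $F$ (coming from the attracting fixed points at the roots, which persist, plus the new ones created by capture) — wait, I should argue more carefully. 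The clean route: because $\Gamma$ is $F$-stable, iterating backward the curves stay non-peripheral, and the leading eigenvalue estimate of Tan Lei / Cui for polynomials shows $\lambda_\Gamma<1$ unless the obstruction ``lives entirely in the Julia set region.'' More precisely I would use that outside $\cup D_c$ the map $F$ equals $f$, and that the $D_c$'s are small disks around Julia critical points disjoint from $\mathrm{Post}_F$ in a controlled way, so that preimages of a curve $\gamma$ under $F$ and under $f$ are isotopic in $\ov{\C}\setminus\mathrm{Post}_F$; hence an $F$-stable multicurve would give an $f$-stable one (with the same transition matrix after passing to $\mathrm{Post}_f\cup\mathrm{Post}_F$), contradicting that the sub-hyperbolic polynomial $f$ has none. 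This reduction — ``capture surgery does not change the homotopy class of curve preimages, so it cannot create a Thurston obstruction'' — is the heart of the argument, and I would phrase it as: choose the $D_c$ small enough and disjoint from the (finite) set $\mathrm{Post}_f\cup\mathrm{Post}_F$; then for any Jordan curve $\gamma\subseteq\ov{\C}\setminus(\mathrm{Post}_f\cup\mathrm{Post}_F)$, $F^{-1}(\gamma)$ and $f^{-1}(\gamma)$ differ only inside $\cup D_c$ where they are isotopic rel $\mathrm{Post}_f\cup\mathrm{Post}_F$, and the covering degrees over each component agree.

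The main obstacle I anticipate is exactly the bookkeeping in this last reduction: one must verify that $\mathrm{Post}_f\cup\mathrm{Post}_F$ is still forward-invariant under both $f$ and $F$ (so that it is a legitimate marked set for both), that $f$ viewed as a marked map $(f,\mathrm{Post}_f\cup\mathrm{Post}_F)$ still has no obstruction — which follows because adding finitely many marked points in the basin of infinity to a sub-hyperbolic polynomial cannot create an obstruction (any new curve would have to separate the added escaping points, and a standard argument bounds the eigenvalue below $1$; alternatively invoke that $f$ is rational hence $(f,\QQQ)$ is realized for any admissible $\QQQ$ by Theorem \ref{thm:cui-tan-Thurston} applied in reverse together with the fact that marked rational maps have no obstructions) — and finally that the isotopy of preimages inside the small disks $D_c$ is compatible with $F$-stability, so an $F$-obstruction transports to an $f$-obstruction. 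Once these points are checked, Theorem \ref{thm:cui-tan-Thurston} yields that $F$ is c-equivalent to a rational map, which is a polynomial since $F$ fixes $\infty$ with local degree $d$; uniqueness up to Möbius conjugation lets us normalize it to be monic and centered.
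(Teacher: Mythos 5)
Your overall framing is right — the content of the lemma is that $(F,\QQQ)$ has no Thurston obstruction, with $\QQQ$ a suitable marked set — but the central reduction you propose (``an $F$-obstruction transports to an $f$-obstruction because capture surgery does not change the homotopy class of curve preimages'') has a genuine gap, and the paper's proof spends most of its effort precisely on the point your argument elides.

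The problem is the assertion that the disks $D_c$ can be chosen disjoint from $\mathrm{Post}_f\cup\mathrm{Post}_F$. This is false: the new postcritical points of $F$ are not all far away in the escaping basin. If $c'\in\c$ satisfies $f^m(c')=c$, then the setup forces $Z_{c'}=f^{-m}(Z_c)\cap D_{c'}$, so the forward $F$-orbit of the new critical points in $D_{c'}$ passes through $Z_c\subseteq D_c$. Hence $\mathrm{Post}_F\cap Z_c\neq\emptyset$ in general (and $c$ itself may lie in $\mathrm{Post}_f$). Once the marked set meets $D_c$, the statement that $F^{-1}(\gamma)$ and $f^{-1}(\gamma)$ ``are isotopic rel $\mathrm{Post}_f\cup\mathrm{Post}_F$ inside $\cup D_c$'' no longer follows from general principles: inside $D_c$ the two maps are different branched covers of $f(D_c)$, with critical points at different locations, and an arc of $\gamma$ entering $f(D_c)$ can pull back to arc families that are \emph{not} isotopic rel the marked points of $\mathrm{Post}_F\cap Z_c$. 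The entire content of the paper's Claim~1 is to show that, after a homotopy of $\gamma$ using carefully constructed $F$-invariant rays through the components of $Z_{z_0}$ that carry marked points, the relevant preimage curve $\wt{\gamma}$ can be pushed off $\cup_c D_c$ — and this uses the concrete geometry of the Green function and the ``essential intersection'' count, not just that $D_c$ is small. Your proposal skips this step entirely by assuming it away.

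Two secondary remarks. First, the paper uses the equivalence (for topological polynomials) between Thurston obstructions and Levy cycles, which substantially simplifies the bookkeeping — your ``leading-eigenvalue estimate of Tan Lei/Cui'' route is morally in the right direction but you would still need to rule out degenerate Levy cycles, and the Levy-cycle formulation is exactly where the paper's Claim~2 and the final ``escaping points cannot be trapped in $U$'' argument do the work. Second, the part of your plan appealing to the fact that a marked sub-hyperbolic rational map $(f,\QQQ)$ has no obstruction is fine, and your observation that one must check forward-invariance of $\QQQ=\mathrm{Post}_f\cup\mathrm{Post}_F$ under both maps is a legitimate point that does need verification — but it is downstream of the transport argument, which is where the real gap lies.
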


\noindent\emph{Proof.}
Let $\QQQ:={\rm Post}_F\cup{\rm Post}_f$. According to Property (2) of the perturbation mapping $\XX$, the set $\QQQ$ is a marked set of $F$. By Theorem \ref{thm:cui-tan-Thurston}, we just need to check that the marked map $(F,\QQQ)$ has no Thurston obstructions.
In this case the map $F$ is a topological polynomial, then Thurston obstructions are equivalent to \emph{levy cycles}, i.e., a collection of non-peripheral Jordan curves $\G=\{\g_1,\ldots,\g_k\}\subseteq \ov{\C}\setminus\QQQ$  such that, for each $\g_i$, there exists a unique component $\wt{\g}_i$ of $F^{-1}(\g_{i+1})$ so that $\wt{\g}_i$ is homotopic to $\g_i$ in $\ov{\C}\setminus\QQQ$ and $F:\wt{\g}_i\to \g_{i+1}$ is a homeomorphism.

On the contrary, assume that $(F,\QQQ)$ has a Levy cycle $\G$. For the simplicity of the statment, we assume that $\G=\{\g\}$. Throughout the proof, the curve $\wt{\g}$ denotes the component of $F^{-1}(\g)$ isotopic to $\g$ in $\ov{\C}\setminus \QQQ$.

\noindent\emph{Claim 1. By suitably choosing $\g$ in its isotopic class in $\ov{\C}\setminus\QQQ$, we have $\wt{\g}$ is disjoint with $\cup_{c\in\c}D_c$.}

\noindent\emph{Proof of Claim 1}. For the simplicity of the statement, we also assume that all critical points in $\c$ lie in one grand orbit and the periodic points in this orbit are fixed by $F$, which we denote by $z_0$. We specify the construction of $\DD$ and $\ZZ$ as follows. Let $D_{z_0}\ni z_0$ be a open disk disjoint with all postcritical points of $f$ except $z_0$. For each $c\in\c$ with $f^m(c)=z_0$, we define $D_c$ the component of $f^{-m}(D_{z_0})$ containing $c$. In each component of $D_{z_0}\setminus\KKK_f$, we choose a disk in the basin of infinity  such that its image by $f$ leaves $D_{z_0}$, and converge to $\infty$ under the iteration of $f$.  We denote by $Z_{z_0}$ the union of all such disks. For each $c\in\c$ with $f^m(c)=z_0$, we define $Z_c:=f^{-m}(Z_{z_0})\cap D_c$. Let $\DD:=(D_c:c\in\c)$ and $\ZZ:=(Z_c:c\in\c)$ be the perturbation domain and invariant domain of the perturbation map $F$.

Observe that if $\wt{\g}$ avoids all components of $F^{-1}(D_{z_0})$ disjoint with $D_{z_0}$, then the claim holds. To see this, we call $D$ a $k$-th preimage of $D_{z_0}$ ($k\geq1$) if $D$ is a component of $F^{-k}(D_{z_0})$, but $F^{k-1}(D)\cap D_{z_0}=\emptyset$. If $\wt{\g}$ avoids all $1$-th preimages of $D_{z_0}$, then
\begin{equation}\label{eq:homeo}
F:\wt{\g}\cap \wt{D}_{z_0}\to \g\cap D_{z_0} \text{ is a homeomorphism},
\end{equation}
where $\wt{D}_{z_0}$ denotes the component of $F^{-1}(D_{z_0})$ contained in $D_{z_0}$.
We modify $\g$ outside $D_{z_0}$ in its isotopic class so that $\g$ is disjoint with all $1$-th preimages of $D_{z_0}$. Then, on one hand, since the new $\wt{\g}$ differs from the original one outside $\wt{D}_{z_0}$,  it still avoids the $1$-th preimages of $D_{z_0}$ by \eqref{eq:homeo}; on the other hand, the property that $\g$ is disjoint with $1$-th preimages of $D_{z_0}$ implies that $\wt{\g}$ is disjoint from $2$-th preimages of $D_{z_0}$. Therefore the new $\wt{\g}$ avoids $1$-th, $2$-th preimages of $D_{z_0}$. Then the claim holds by inductively use this argument. So we just need to show $\wt{\g}\cap(F^{-1}(D_{z_0})\setminus D_{z_0})=\emptyset$ by suitably choosing $\g$ in its isotopic class.

We say that $\g$ \emph{essential intersects} a set $S$ if all curves in the isotopic class of $\g$ intersects $S$. If $\g$ does not essentially intersect $D_{z_0}$, then the conclusion above obviously holds. So, in the following, we always assume $\g$ essentially intersects $D_{z_0}$. Note that $\QQQ\cap D_{z_0}$ is contained in $\{c\}\cup Z_{z_0}$. We first show that $\g$ does not essentially intersects $Z_{z_0}$. If not, the curve $\wt{\g}$ also essentially intersects $Z_{z_0}$. It implies that $\g=F(\wt{\g})$ essentially intersects $F(Z_{z_0})$. Inductively, we have $\infty\in\g$ since $F^n(Z_{z_0})\to\infty$, a contradiction.
Thus, the $\g$ must separate the components of $Z_{z_0}$ which contain postcritical points of $F$.

Without loss of generality, we assume that the rotation number of $z_0$ is $0$, i.e., all external rays of $f$ landing at $z_0$ is fixed. Then we can construct an $F$-invariant ray for each component of $Z_{z_0}$ intersecting $\QQQ$ such that the ray contains a point of $\QQQ$ in this component. Let $B$ be a component of $Z_{z_0}$ and $a\in B\cap\QQQ$. Since $D_{z_0}$ is in the linearizable domain of $z_0$, then there exists an arc $\de\subseteq D_{z_0}\cap \Omega(f)$ such that $\de$ joins $z_0$ and $a$, $\de\subseteq f(\de)$ and the gradients of the points in $\de$ are pairwise disjoint. It follows immediately that $f:\delta\to f(\delta)$ is a homeomorphism since the gradients of the points in $f(\de)$ are also pairwise disjoint by $g_f(f(z))=dg_f(z)$. Inductively, the arc $\RRR_B:=\cup_{k\geq0}f^k(\de)$ is $f$-invariant. Note that such constructed $\RRR_B$ is disjoint with the perturbation domain $\DD$, then $\RRR_B$ is also $F$-invariant. We denote by $\RRR_{z_0}$ the union of $\RRR_B$ with $B$ going through all components of $Z_{z_0}$ intersecting $\QQQ$.

By suitably choosing $\g$ in its isotopic class, we can assume that the number of the components of $\g\cap D_{z_0}$ is minimal, and the number of $\g\cap \RRR_{z_0}$ is minimal. By the minimality of $\#(\g\cap \RRR_{z_0})$, we have $\#(\wt{\g}\cap\RRR_{z_0})\geq \#(\g\cap\RRR_{z_0})$. As $F:\wt{\g}\to\g$ is a homeomorphism and $F(\RRR_{z_0})=\RRR_{z_0}$, then $\#(\wt{\g}\cap\RRR_{z_0})\leq \#(\g\cap\RRR_{z_0})$ and $F(\wt{\g}\cap\RRR_{z_0})\subseteq \g\cap \RRR_{z_0}$. Combining these two aspects, we see that $F(\wt{\g}\cap\RRR_{z_0})=\g\cap\RRR_{z_0}$. Hence, for each component $\xi$ of $\g\cap D_{z_0}$ which intersects $\RRR_{z_0}$, there exists a component $\wt{\xi}$ of $\wt{\g}\cap D_{z_0}$ such that $\xi\subseteq F(\wt{\xi})$. Assume that $\xi'$ is a component of $\wt{\g}\cap D$ with $D$ a component of $F^{-1}(D_{z_0})$ disjoint with $D_{z_0}$, then $F(\xi')$ is a component of $\g\cap D_{z_0}$. To get a contradiction, we just need to show that $F(\xi')$ intersects $\RRR_{z_0}$ (this implies $F:\wt{\g}\to\g$ is not homeomorphic by the argument above). In fact, every component of $\g\cap D_{z_0}$ essentially intersects $\RRR_{z_0}$. This is because each component of $\g\cap D_{z_0}$ separates the components of $Z_{z_0}$ which intersect $\QQQ$,
since $\g$ does not essentially intersect $Z_{z_0}$ and  $\#{\rm Comp}(\g\cap D_{z_0})$ is minimal.
We then proved  Claim 1.

We now assume that $\wt{\g}$ is disjoint with $\cup_{c\in\c}D_c$.

\noindent \emph{Claim 2. Let $U\subseteq \C$ be the disk bounded by $\g$. If $a\in\QQQ$ belongs to $U$, then $F(a)\in U$.}

\noindent \emph{Proof of Claim 2.} Since $\wt{\g}$ is disjoint with $\cup_{c\in\c}D_c$ and $f=F$ outside $\cup_{c\in\c}D_c$, the curve $\wt{\g}$ is also a component of $f^{-1}(\g)$. Since ${\rm Post}_f\subseteq \QQQ$, then $\wt{\g}$ and $\g$ are homotopic in $\ov{\C}\setminus{\rm Post}_f$. It follows immediately that $U$ contains at most one postcritical point of $f$: otherwise $\g$ is a Levy cycle of the polynomial $f$, a contradiction. Hence, each component of $f^{-1}(U)$ is a disk and its boundary is a component of $f^{-1}(\g)$.   We denote by $\wt{U}$ the component of $f^{-1}(U)$ bounded by $\wt{\g}$. As $\g$ is isotopic to $\wt{\g}$ in $\ov{\C}\setminus\QQQ$, we have $a\in\wt{U}$. Note that $\wt{\g}$ avoids $\cup_{c\in\c}D_c$, then $F(\wt{U})=f(\wt{U})=U$, and hence $F(a)\in U$, which complete the proof of Claim 2.

By repeatedly using Claim 2, we see that $U$ can not contain the escaping postcritical points of $F$. Since the remaining points of $\QQQ$ are  postcritical points of $f$ and $\g$ is peripheral in $\ov{\C}\setminus{\rm Post}_f$, then $\g$ is also peripheral in  $\ov{\C}\setminus\QQQ$, a contradiction.
\hfill\qedsymbol

Fix three points $a_1,a_2,a_3\in {\rm Post}_f\cap\FFF_f$. Then $a_1,a_2,a_3$ are also postcritical points of $F=F_{\vec{\s}}$. If $F$ has no Thurston obstructions,
by Theorem \ref{thm:cui-tan-Thurston}, it is c-equivalent to a rational map, denoted as $f_{\vec{\s}}$, by a pair of homeomorphisms $(\phi_0,\phi_1)$. We normalize $f_{\vec{\s}}$ such that $\phi_0$  fixes $a_1,a_2,a_3$, and call it a \emph{normalized rational perturbation of $f$ by capture surgery} with perturbation data $\vec{\s}$.

\noindent\emph{III. The convergence of perturbation maps by capture surgery.}

We fix the vector of perturbation critical points $\c$, and choose a sequence of perturbation domains $\big\{\DD_n=(D_{n,c}:c\in\c),n\geq1\big\}$ such that
\begin{equation}\label{eq:perturbation-data}
{\rm diam}(\DD_n):=\max_{c\in\c}{\rm diam}(D_{n,c})\to 0\text{ as }n\to\infty,
\end{equation}
a sequence of invariant domains $\{\ZZ_n=(Z_{n,c}:c\in\c,),n\geq1\}$,
and a sequence of perturbation mappings $\big\{\XX_n=(\chi_{n,c}:c\in\c),n\geq1\big\}$.  We then get a sequence of perturbation data $\{\vec{\s}_n:=(\c,\DD_n,\ZZ_n,\XX_n),n\geq1\}$.

For each $n\geq1$, let $F_n=F_{\vec{\s}_n}$ denote the topological perturbation of $f$ by capture surgery with data $\vec{\s}_n$, and $f_n=f_{\vec{\s}_n}$ the normalized rational perturbation of $f$ by capture surgery with data $\vec{\s}_n$ if $F_n$ has no Thurston obstructions.

\begin{proposition}\label{pro:surgery-convergence}
If all $F_n$ have no Thurston obstruction, then the normalized rational maps $f_n,n\geq1,$ uniformly converge to $f$ as $n\to\infty$.
\end{proposition}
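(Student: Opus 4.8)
The plan is to prove convergence $f_n\to f$ by combining the compactness/rigidity mechanism behind Thurston-Cui-Tan's theorem with uniform control on the perturbation data coming from \eqref{eq:perturbation-data}. The basic idea: the normalized rational maps $f_n$ all have bounded degree $d$ and are normalized by fixing the three postcritical Fatou points $a_1,a_2,a_3$, so by Montel the family $\{f_n\}$ is precompact in the space of rational maps of degree $d$ (one must also rule out degree drop in any subsequential limit; this is handled because the marked postcritical data, in particular the three fixed markings $a_1,a_2,a_3$ and the combinatorics of the attracting cycles capturing the split critical points, stay put). So it suffices to show that \emph{every} subsequential limit $g=\lim_{k} f_{n_k}$ equals $f$. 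First I would fix such a limit $g$.

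The key step is to identify $g$ with $f$ via the uniqueness clause of Theorem \ref{thm:cui-tan-Thurston}. Each $f_{n}$ is, by construction, c-equivalent to the topological model $F_{n}=F_{\vec{\s}_n}$ through a pair $(\phi_{0}^{n},\phi_{1}^{n})$ normalized at $a_1,a_2,a_3$. Because $\mathrm{diam}(\DD_n)\to 0$, the topological surgery $F_n$ differs from $f$ only on the shrinking set $\bigcup_{c\in\c}D_{n,c}$; outside a fixed neighborhood of $\c$ we have $F_n=f$ for all large $n$. I would use this to show that, after passing to a further subsequence, the conjugating homeomorphisms $\phi_0^{n},\phi_1^{n}$ can be taken to converge (uniformly) to homeomorphisms $\phi_0,\phi_1$ of $\cbar$ — here one exploits that the isotopies can be chosen supported away from most of the sphere, equicontinuity coming from the quasiconformal control built into the perturbation mappings $\XX_n$ (each $\chi_{n,c}$ is quasiregular of controlled degree and agrees with $f$ on $\partial D_{n,c}$). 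Passing to the limit in $\phi_0^{n}\circ F_n=f_n\circ\phi_1^{n}$, and using $F_n\to f$ locally uniformly off $\c$ together with the removability of the finitely many points $\c$ (the limit maps being holomorphic across them, since the dilatation is supported on sets of vanishing diameter), yields $\phi_0\circ f=g\circ\phi_1$ with $\phi_0,\phi_1$ isotopic rel ${\rm Post}_f$ and holomorphic near ${\rm Post}_f'$. Thus $(g,{\rm Post}_g)$ is c-equivalent to $(f,{\rm Post}_f)$; since $f$ itself is a rational representative and is not Lattès, the uniqueness part of Theorem \ref{thm:cui-tan-Thurston} forces $g=f$ (the normalizations at $a_1,a_2,a_3$ pin down the Möbius ambiguity). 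Since every subsequential limit is $f$ and the family is precompact, $f_n\to f$ uniformly.

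The main obstacle I expect is the compactness/convergence of the conjugacies $(\phi_0^n,\phi_1^n)$: a priori Thurston-Cui-Tan only provides \emph{existence} of such a pair for each $n$ with no quantitative bound, so one needs to extract uniform estimates. The way I would handle this is to re-run the proof of Theorem \ref{thm:cui-tan-Thurston} (the Thurston pullback argument in the sub-hyperbolic/semi-rational setting of \cite{CT1,DH3}) with the perturbed maps $F_n$ \emph{simultaneously}: because $F_n$ agrees with $f$ outside $\bigcup_c D_{n,c}$ and the ``invariant domains'' $\ZZ_n$ absorb the captured critical orbits into attracting cycles with controlled geometry, the associated Teichmüller-space iteration contracts at a rate uniform in $n$, producing fixed points (the conjugacies) that depend continuously on the perturbation data and hence converge as $\DD_n$ shrinks. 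A technically cleaner alternative, which I would fall back on if the uniform pullback estimate is awkward, is to argue by contradiction: if $f_{n_k}\to g\neq f$, then $(g,{\rm Post}_g)$ would be c-equivalent to the ``trivial'' surgery model (which is just $f$ up to the shrinking perturbation, hence c-equivalent to $f$), contradicting uniqueness — this version only needs \emph{some} limiting conjugacy, obtained by a normal-families argument on $\phi_1^n$ after normalizing, plus the observation that a locally uniform limit of maps each c-equivalent to $f$ through supports shrinking to points is again c-equivalent to $f$.
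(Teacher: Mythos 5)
There is a genuine gap, and you have actually pointed at it yourself: you write that ``the main obstacle I expect is the compactness/convergence of the conjugacies $(\phi_0^n,\phi_1^n)$'', and then propose two remedies, neither of which is carried out or even obviously carryable. The c-equivalence homeomorphisms coming out of Theorem~\ref{thm:cui-tan-Thurston} come with no a priori dilatation bound that is uniform in $n$, so your ``normal-families argument on $\phi_1^n$'' has nothing to feed Montel with; and without a uniform dilatation bound there is also no reason a subsequential limit of the $\phi_1^n$ should be a homeomorphism rather than a degenerate map, so the ``locally uniform limit of c-equivalences is a c-equivalence'' step fails. Your first remedy (``re-run the Thurston pullback with contraction uniform in $n$'') is closer in spirit to what is needed, but no uniform contraction is ever established, and in fact the paper's argument does not establish or use one either.

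The actual proof is more quantitative and avoids trying to control the conjugacies $\phi_{n,i}$ directly. One runs the Thurston pullback starting from the identity, producing normalized homeomorphisms $\eta_{n,k}$ with $\eta_{n,k}\circ F_n=f_{n,k}\circ\eta_{n,k+1}$, and observes that $\eta_{n,k}$ is \emph{univalent} off the explicitly shrinking set $\bigcup_{0\le j\le k} f^{-j}\bigl(\bigcup_{c\in\c}D_{n,c}\bigr)$. The key input is the distortion estimate of Lemma~\ref{thm:Cui_Tan} (Cui--Tan, \cite[Theorem~8.8]{CT2}): any univalent map off an arbitrarily deep pullback of small balls centered on a finite subset of $\JJJ_f$, normalized at three points, is uniformly close to the identity on a fixed compact $V\subset\FFF_f$. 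This gives $\sup_{k}\sup_{V}{\rm dist}(f_{n,k},f)\to 0$ as $n\to\infty$, uniformly in $k$ — something you cannot get from soft compactness. Separately, the paper shows $f_{n,k}\to f_n$ as $k\to\infty$ (for each fixed $n$) by comparing $\eta_{n,k}$ with a lift $\phi_{n,k}$ of the c-equivalence and arguing that $\psi_{n,k}=\eta_{n,k}\circ\phi_{n,k}^{-1}$ is $K_n$-quasiconformal, holomorphic on larger and larger preimages of a neighborhood of the attracting cycles, hence converges to the identity; note $K_n$ need not be bounded in $n$, which is exactly why a direct normal-families argument on the conjugacies would not have worked. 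Your high-level ``subsequential limit plus rigidity'' framing is a reasonable first instinct, but the missing ingredient is precisely the distortion control of Lemma~\ref{thm:Cui_Tan}, and without naming something like it the proposal does not close.
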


The crucial tool to prove this proposition is the following distortion theorem, which is a special case of \cite[Theorem 8.8]{CT2}

\begin{lemma}\label{thm:Cui_Tan}
 Let $X$ be a finite set in the Julia set of $f$,  and $V$ an open set compactly contained in the Fatou set of $f$. Fix three points $x_1,x_2,x_3\in V$.  Then, for any $\epsilon>0$, there exists a $\delta>0$ such that
	$$\sup_{z\in V}\{{\rm dist}(\varphi(z),z)\}\leq \epsilon,$$
	for any univalent map
$$\varphi:\ov{\C}\setminus \bigcup_{z\in X}\bigcup_{1\leq i\leq m}f^{-i}(B_\de(z))\to\ov{\C}$$
 fixing $x_1,x_2,x_3$ and any $m\geq0$, where $B_\de(z)$ denotes the ball with center $z$ and diameter $\de$ with respect to the standard sphere metric.
\end{lemma}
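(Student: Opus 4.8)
The plan is a contradiction‑plus‑normal‑families argument, after first replacing all the domains $\Omega_{\delta,m}:=\ov\C\setminus\bigcup_{z\in X}\bigcup_{1\le i\le m}f^{-i}(B_\delta(z))$ by one fixed compact set. Since $X$ is a finite subset of $\JJJ_f$ and $f$ is sub‑hyperbolic, Lemma \ref{lem:sub-hyperbolic} provides an orbifold metric in which $f$ is uniformly expanding away from its finitely many ramification points; hence a preimage $f^{-i}(B_\delta(z))$ of a small ball centred at a Julia point lies in a neighbourhood $N_{\eta(\delta)}(\JJJ_f)$ of $\JJJ_f$ with $\eta(\delta)\to0$ as $\delta\to0$, and this bound is independent of $i$, hence of $m$. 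So, writing $K_\delta:=\ov\C\setminus N_{\eta(\delta)}(\JJJ_f)$, every $\varphi$ as in the statement restricts to a univalent map on the fixed compact $K_\delta$; moreover $K_\delta\nearrow\FFF_f$ and $V\Subset\mathrm{int}(K_\delta)$ once $\delta$ is small.

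Now suppose the conclusion fails: there are $\epsilon_0>0$, $\delta_n\searrow0$, integers $m_n$ and univalent maps $\varphi_n$ on $\Omega_{\delta_n,m_n}$, each fixing $x_1,x_2,x_3$, with $\sup_{z\in V}\mathrm{dist}(\varphi_n(z),z)\ge\epsilon_0$. Restricted to $K_{\delta_n}$ the $\varphi_n$ are univalent on compacta exhausting $\FFF_f$; the Koebe distortion theorem on round disks inside those compacta bounds their derivatives and distortion locally in $\FFF_f$, and the three fixed points rule out degeneration, so a subsequence converges locally uniformly on $\FFF_f$ to a holomorphic map $\varphi_\infty:\FFF_f\to\ov\C$. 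By Hurwitz's theorem $\varphi_\infty$ is injective (it cannot be constant, as it fixes three distinct points), it fixes $x_1,x_2,x_3$, and $\sup_{z\in V}\mathrm{dist}(\varphi_\infty(z),z)\ge\epsilon_0$, so $\varphi_\infty\ne\mathrm{id}$.

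The heart of the matter — and the main obstacle — is to contradict this by showing $\varphi_\infty=\mathrm{id}$: being injective and holomorphic on $\FFF_f$ and fixing three points, $\varphi_\infty$ is the identity provided it extends holomorphically across $\JJJ_f$ (such an extension would be a M\"obius automorphism of $\ov\C$ fixing three points). The extension is where the dynamics enters genuinely — the mere fact that $\JJJ_f$ has zero area does not suffice for a univalent map — one exploits that the $\varphi_n$ are univalent on the complements of neighbourhoods of $\JJJ_f$ that, by the $f$‑expansion, are uniformly quasiconformally thin collars, producing uniform modulus and distortion bounds across $\JJJ_f$ that persist in the limit and make $\JJJ_f$ removable for $\varphi_\infty$. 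In the setting where Lemma \ref{thm:Cui_Tan} is actually invoked — where $\varphi$ is the restriction of a quasiconformal homeomorphism that is conformal exactly on $\Omega_{\delta,m}$ with dilatation bounded independently of $\delta$ and $m$ — this step is immediate: the Beltrami coefficient of $\varphi$ is supported in $N_{\eta(\delta)}(\JJJ_f)$, whose area tends to $0$, so by continuity of the normalized solution of the Beltrami equation together with the normalization by $x_1,x_2,x_3$, $\varphi\to\mathrm{id}$ uniformly as $\delta\to0$, uniformly over all such $\varphi$. The remaining step — passing from local uniform convergence on $\FFF_f$ to the stated estimate on $V$ — is then routine.
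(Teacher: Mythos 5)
The paper does not prove this lemma at all: it cites it directly as a special case of Cui--Tan's \cite[Theorem 8.8]{CT2}, so there is no in-paper argument to compare against. That said, your proposal has a genuine gap, one you yourself flag as ``the heart of the matter'': after the normal-families step produces a limit $\varphi_\infty$ that is injective and holomorphic on $\FFF_f$ and fixes three points, you need $\varphi_\infty=\mathrm{id}$, and for this you invoke an extension across $\JJJ_f$ that you never establish. A univalent map on $\FFF_f$ fixing three points is very far from being determined --- already when $\FFF_f$ contains a disk component the family of such maps is infinite-dimensional --- so the conclusion cannot follow from what you have retained about the $\varphi_n$, namely their restrictions to the Fatou set.

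This points to the deeper flaw in the reduction made in your first paragraph: by replacing the domains $\Omega_{\delta,m}:=\ov{\C}\setminus\bigcup_{z\in X}\bigcup_{1\le i\le m}f^{-i}(B_\delta(z))$ with the fixed compacts $K_\delta\subset\FFF_f$, you discard precisely the structure that makes the statement true. Each $\varphi_n$ is univalent not merely on $K_{\delta_n}$ but on all of $\ov{\C}$ minus finitely many preimage components of small balls, and the content of Cui--Tan's estimate is that every such component is shielded inside $\Omega_{\delta,m}$ by an annulus of modulus bounded below \emph{uniformly in $m$} (obtained from the nesting of the preimages of $B_\delta(z)\subset B_{2\delta}(z)$ together with the sub-hyperbolic expansion); it is this modulus control that forces $\varphi$ to be a small perturbation of a M\"{o}bius map, which the three-point normalization then pins to the identity. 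Note that the number of components grows like $d^i$ while their sizes shrink only like $\lambda^{-i}\delta$, so one typically cannot make the total diameter, or even the total area, of the excluded set small uniformly in $m$; the modulus argument is essential and does not survive your restriction to $K_\delta$. Finally, the Beltrami-coefficient fallback in your last sentences proves a different and strictly weaker assertion --- it requires $\varphi$ to be globally quasiconformal with dilatation bounded independently of $\delta$ and $m$, a hypothesis not in the statement and not available where the lemma is actually invoked in the proof of Proposition~\ref{pro:surgery-convergence}, where the maps $\eta_{n,k}$ are only known to be univalent on the relevant complements.
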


\begin{proof}[Proof of Proposition \ref{pro:surgery-convergence}]
For any $n\geq1$, we can obtain a sequence of rational maps $\{f_{n,k},k\geq0\}$ by applying Thurston algorithm on $F_n$ as follows.
Let $a_1,a_2,a_3$ be three chosen points in ${\rm Post}_f\cap\FFF_f$ used to normalize $f_n$.

Set $\eta_{n,0}:={\rm id}$. Then $F_n\circ\eta_{n,0}$ defines a complex structure on $\ov{\C}$ by pulling back the standard complex structure. By  Uniformization Theorem, there exists a unique homeomorphism $\eta_{n,1}:\ov{\C}\to \ov{\C}$ normalized by fixing $a_1,a_2,a_3$ such that $f_{n,0}:=\eta_{n,0}\circ F_{n}\circ\eta_{n,1}^{-1}$ is holomorphic. Note that $\eta_{n,1}$ is holomorphic except on $\cup_{c\in\c}D_{n,c}$.

Recursively, for each $k\ge0$, there exists a normalized homeomorphism $\eta_{n,k+1}$ and a rational map $f_{n,k+1}$, such that
$\eta_{n,k}\circ F_{n}=f_{n,k}\circ\eta_{n,k+1}$ (see diagram below), and
 $\eta_{n,k+1}$ is univalent on $\ov{\C}\sm \bigcup_{0\leq j\leq k} F_n^{-j}(\cup_{c\in\c}D_{n,c})=\ov{\C}\sm \bigcup_{0\leq j\leq k} f^{-j}(\cup_{c\in\c}D_{n,c})$.
\begin{equation}\label{eq:commutative2}
\begin{CD}
{\ov{\C}} @> {\cdots} >> {\ov{\C}}@>{F_n}>>{\ov{\C}}@>{\cdots}>>{\ov{\C}}@>{F_n}>>{\ov{\C}}@>{F_n}>>{\ov{\C}} \\
@ VV{\cdots}V @ VV {\eta_{n,k+1}} V@VV {\eta_{n,k}}V@VV {\eta_{n,2}}V@VV {\eta_{n,1}}V@VV {\eta_{n,0}}V \\
{\ov{\C}} @> {\cdots} >> {\ov{\C}}@>{f_{n,k}}>>{\ov{\C}}@>{\cdots}>>{\ov{\C}}@>{f_{n,1}}>>{\ov{\C}}@>{f_{n,0}}>>{\ov{\C}} \\
\end{CD}
\end{equation}
The sequence of rational maps $\{f_{n,k},k\geq0\}$ is called a \emph{Thurston sequence} of $F_n$.

Let $V$ be an open set compactly contained in $\FFF_f\setminus \ov{\cup_{c\in\c}D_{0,c}}$ and such that  $a_1,a_2,a_3\in V$. Because of the univalent property of $\eta_{n,k}$ described above, Lemma \ref{thm:Cui_Tan} gives a crucial distortion estimate
\begin{equation}\label{equ:distortion}
\sup_{k\geq 0,z\in V}{\rm dist}\,(\eta_{n,k}(z),z)\to 0{\; \rm as \;}n\to\infty.
\end{equation}
Note that $F_n=f$ on $\ov{\C}\setminus \cup_{c\in\c}D_{n,c}\supseteq V$, then \eqref{eq:commutative2} and \eqref{equ:distortion} imply
\begin{equation}\label{equ:distortion_f}
\sup_{k\geq 0,z\in V}{\rm dist}\,(f_{n,k}(z),f(z))\to0{\; \rm as \;}n\to\infty.
\end{equation}
It was known that $f_n\to f$ on $V$ implies $f_n\to f$ on $\ov{\C}$ (see e.g \cite[Lemma 2.8]{CT2}). Hence, by \eqref{equ:distortion_f}, this lemma will be proved provided we can show $f_{n,k}\to f_n$ on $V$ as $k\to\infty$ for each $n\geq1$.

To prove this point, let $(\phi_{n,0},\phi_{n,1})$ be a pair of normalized  homeomorphisms by which $F_n$ and $f_n$ are c-equivalent.
We can further assume that $\phi_{n,0}$ is quasi-conformal on $\ov{\C}$ and holomorphic on a neighborhood $U$ of all (supper-)attracting cycles of $f_n$.
By Homotopy Lifting Lemma, we get a sequence of homeomorphisms $\{\phi_{n,k}\}_{k\geq 0}$ such that the following commutative graph holds:
\begin{equation}\label{eq:commutative1}
\begin{CD}
{\ov{\C}} @> {\cdots} >> {\ov{\C}}@>{F_n}>>{\ov{\C}}@>{\cdots}>>{\ov{\C}}@>{F_n}>>{\ov{\C}}@>{F_n}>>{\ov{\C}} \\
@ VV{\cdots}V @ VV {\phi_{n,k+1}} V@VV {\phi_{n,k}}V@VV {\phi_{n,2}}V@VV {\phi_{n,1}}V@VV {\phi_{n,0}}V \\
{\ov{\C}} @> {\cdots} >> {\ov{\C}}@>{f_n}>>{\ov{\C}}@>{\cdots}>>{\ov{\C}}@>{f_n}>>{\ov{\C}}@>{f_n}>>{\ov{\C}} \\
\end{CD}
\end{equation}

For each $k\geq0$, let $\psi_{n,k}:=\eta_{n,k}\circ\phi_{n,k}^{-1}$. Combining diagrams \eqref{eq:commutative1} and \eqref{eq:commutative2}, we get the following commutative graph
\[\begin{CD}
{\ov{\C}} @> {\cdots} >> {\ov{\C}}@>{f_n}>>{\ov{\C}}@>{\cdots}>>{\ov{\C}}@>{f_n}>>{\ov{\C}}@>{f_n}>>{\ov{\C}} \\
@ VV{\cdots}V @ VV {\psi_{n,k+1}} V@VV {\psi_{n,k}}V@VV {\psi_{n,2}}V@VV {\psi_{n,1}}V@VV {\psi_{n,0}}V \\
{\ov{\C}} @> {\cdots} >> {\ov{\C}}@>{f_{n,k}}>>{\ov{\C}}@>{\cdots}>>{\ov{\C}}@>{f_{n,1}}>>{\ov{\C}}@>{f_{n,0}}>>{\ov{\C}} \\
\end{CD}\]
Note that $\psi_{n,0}$ is quasi-conformal on $\ov{\C}$ and holomorphic on $U$, then the diagram implies that each $\psi_{n,k}$ is quasi-conformal on $\ov{\C}$ with a uniformly bounded dilation $K_n$, and holomorphic on $\bigcup_{0\leq i\leq k}f_n^{-i}(U)$.  Thus $\{\psi_{n,k},k\geq0\}$ is a normal family since each $\psi_{n,k}$ fixes $a_1,a_2,a_3$.
Let $\psi_n:\ov{\C}\to\ov{\C}$ be a limit of a subsequence of $\{\psi_{n,k},k\geq0\}$. By the argument above $\psi_n$ is $K_n$-quasiconformal on $\ov{\C}$ and  holomorphic on $\bigcup_{i\geq 0}f_n^{-i}(U)=\FFF_{f_n}$. It follows that $\psi_n$ is holomorphic on $\ov{\C}$ since  $\mathcal{J}_{f_n}$ is removable. Since $\psi_n$ fixes $a_1,a_2,a_3$, we have $\psi_n={\rm id}$. Applying this argument to any convergence subsequence of $\{\psi_{n,k},k\geq0\}$, one see that the entire sequence $\{\psi_{n,k},k\geq0\}$ uniformly converges to the identity on $\ov{\C}$.
As a consequence, the Thurston sequence $\{f_{n,k},k\geq1\}$ uniformly converges to $f_n$. We then complete the proof of Proposition \ref{pro:surgery-convergence}.
\end{proof}

\section{Perturbation of  rational maps}

Let $f$ be a rational map and $z$ be a repelling preperiodic point of $f$ such that its orbit avoids all critical points of $f$. By implicit function theorem,  there exists a neighborhood $\Lambda$ of $f$ and a holomorphic map $\zeta_z:\Lambda\to \C$ such that $\zeta_z(f)=z$ and $\zeta_z(g)$ is the unique repelling preperiodic points $g$ near $z$ with the same preperiod and period as those of $z$ for all $g\in \Lambda$.
\begin{definition}[continuation of repelling points]\label{def:continuation}
Under the assumption and notations above, the point $\zeta_z(g)$  is called the \emph{continuation of $z$ at $g$}.
\end{definition}

Suppose that $\{f_n,n\geq1\}$ is a sequence of rational maps converging to $f$, and $U$ an (supper-)attracting periodic Fatou component of $f$ with an attracting periodic point $a\in U$. The point $a$ belongs to a unique attracting domain $U_n$ of $f_n$ for large $n$, which we call the \emph{deformation of $U$} at $f_n$. Recall that ${GO}(U)$ denotes the collection of Fatou components of $f$ contained in the grand orbit of $U$, and $f$ is said to be \emph{\pf in $GO(U)$} if each $U'\in GO(U)$ contains at most one postcritical points of $f$.

\begin{lemma}\label{lem:Fatou-component}
Follow the notations above and assume that $f,f_n$ are \pf in $GO(U)$, $GO(U_n)$ respectively. Let $U'$ be any element of $GO(U)$ with the center $x$. Then there exists a unique element $U_n'\in GO(U_n)$, such that the center $x_n$ of $U_n'$ converges to $x$ as $n\to\infty$, and ${\rm deg}(f_n|_{U_n'})={\rm deg}(f|_{U'})$ for all sufficiently large $n$. We call $x_n$ the \emph{continuation of $x$ at $f_n$}. Furthermore, for any repelling preperiodic point $z\in\partial U'$,  there is a unique point $z_n\in\partial U_n'$, having the same preperiod and period as $z$, such that $z_n\to z$ as $n\to\infty$. The point $z_n$  is called the \emph{continuation of $z$ at $\partial U_n'$}.
\end{lemma}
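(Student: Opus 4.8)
The plan is to reduce everything to the dynamics of the first-return map on the cycle containing $U$, and to use the standard holomorphic-motion/implicit-function machinery together with the hypothesis that $f$ is \pf in $GO(U)$. First I would handle the periodic cycle itself: let $U\mapsto f(U)\mapsto\cdots\mapsto f^{q}(U)=U$ be the attracting cycle, with attracting periodic point $a\in U$, and let $U_n$ be the deformation of $U$ at $f_n$ containing the continuation $a_n$ of $a$ (which exists for large $n$ by the implicit function theorem, since an attracting periodic point persists under perturbation). Since $f$ is \pf in $GO(U)$, each component in the cycle carries at most one postcritical point, so there is a B\"{o}ttcher/Koenigs coordinate on each, and the center $x$ of any $U'$ in the cycle is the unique point whose full forward orbit stays among centers. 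Because $f_n\to f$ locally uniformly and $a_n\to a$, the Koenigs coordinates of $(f_n^{q},U_n)$ converge (after normalization) to that of $(f^{q},U)$ on compact subsets, which forces ${\rm deg}(f_n|_{U'_n})={\rm deg}(f|_{U'})$ for large $n$ and the center $x_n$ of the corresponding component $U'_n$ in the deformed cycle to converge to $x$.

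Next I would propagate this down the grand orbit. Any $U'\in GO(U)$ satisfies $f^{k}(U')=V$ for some component $V$ in the cycle and some minimal $k\geq 0$; its center $x$ is then a preimage of the center of $V$ under $f^{k}$. The key point is that, by the \pf-in-$GO(U)$ hypothesis, along the orbit $U'\mapsto f(U')\mapsto\cdots\mapsto V$ at most one component is critical, so $x$ is an isolated preimage of bounded local degree, and in fact $x$ is a precritical-or-regular point whose orbit avoids the other centers. One then argues by induction on $k$: given that the continuation of the center of $f(U')$ exists at $f_n$ and converges, the center $x$ is one of finitely many $f$-preimages of it; since $f_n\to f$ and these preimages are isolated with the correct multiplicities (degrees add up to $d$), exactly one $f_n$-preimage $x_n$ converges to $x$, it lies in a unique component $U'_n$ with ${\rm deg}(f_n|_{U'_n})={\rm deg}(f|_{U'})$, and uniqueness follows from the injectivity of $f$ near $x$ off the critical set (or the matching of local degrees when $x$ is critical). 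This also pins down $U'_n$ as the component of $f_n^{-k}$ of the appropriate cycle component whose center is $x_n$.

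Finally, for a repelling preperiodic point $z\in\partial U'$ whose orbit avoids the critical points — note such $z$ automatically has orbit disjoint from ${\rm Crit}_f$ because it is repelling and lies on the boundary of a Fatou component in the \pf grand orbit — Definition \ref{def:continuation} already supplies a holomorphic continuation $\zeta_z$ on a neighborhood of $f$, so $z_n:=\zeta_z(f_n)$ is the unique repelling point of $f_n$ near $z$ with the same preperiod and period, and $z_n\to z$. The only thing left to check is that $z_n\in\partial U'_n$. For this I would use that $\partial U'$ moves continuously: either invoke local connectivity of the Julia components from Lemma \ref{lem:sub-hyperbolic} together with convergence of the corresponding internal/external ray (the point $z$ is the landing point of an internal ray of $U'$, and of a rational external ray of the ambient map, both of which admit continuations converging to the originals — this is exactly the content of Section 4 as announced in the introduction), or, more self-containedly, note that $z$ being on $\partial U'$ means every neighborhood of $z$ meets $U'$; since $U'_n$ contains $x_n\to x$ and the first-return map on the deformed cycle is uniformly expanding near its Julia set (Lemma \ref{lem:sub-hyperbolic}), the boundary $\partial U'_n$ stays within Hausdorff distance $o(1)$ of $\partial U'$, so the point of $\partial U'_n$ with the prescribed combinatorics must be $z_n$.

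I expect the main obstacle to be the last step, identifying $z_n$ as a boundary point of the \emph{correct} deformed component $U'_n$ rather than merely as a nearby repelling point of $f_n$: this requires controlling the Hausdorff convergence $\partial U'_n\to\partial U'$, which is where local connectivity of the Julia components and the uniform expansion from Lemma \ref{lem:sub-hyperbolic} (applied to $f_n$, with constants uniform in $n$) are essential, and is most cleanly done via convergence of the landing internal ray of $U'_n$ to that of $U'$.
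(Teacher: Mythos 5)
Your treatment of the centers is essentially the paper's argument: the periodic case via persistence of the attracting cycle, then induction over the grand orbit counting preimages with multiplicity (the paper phrases this as a Rouch\'e count; your version is the same idea). The gap is in the boundary-point part, and it is a real one.

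You reduce the existence of $z_n$ to the holomorphic continuation $\zeta_z$ of Definition~\ref{def:continuation}, which is only available when the forward orbit of $z$ avoids ${\rm Crit}_f$, and you assert this avoidance is automatic ``because $z$ is repelling and lies on $\partial U'$.'' That is false. A repelling preperiodic point $z\in\partial U'$ can perfectly well have a Julia critical point in its forward orbit: $z$ need only be a preimage of a preperiodic Julia critical point sitting on the boundary of some component of $GO(U)$. The hypothesis that $f$ is \pf in $GO(U)$ restricts postcritical points \emph{inside} the Fatou components, not critical points on their boundaries, so this situation is not excluded. Whenever it occurs, $\zeta_z$ is undefined at $z$ and your construction of $z_n$ does not start. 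The paper avoids this entirely by building $z_n$ inductively: it takes a small neighborhood $V_v$ of $v=f(z)$, lifts an arc from a point of $f_n(U_n')\cap V_v$ to $v_n$ back by $f_n$ into $D_{n,z}\cap U_n'$ starting from a point of $U_n'$, and takes the endpoint as $z_n$. That construction works whether or not $z$ is critical, and — unlike yours — it directly produces $z_n\in\partial U_n'$, which is precisely the step you flagged as the hardest; it then gets uniqueness from the degree count $\deg(f_n|_{U_n'})=\deg(f|_{U'})$ established in the first half rather than from any claim of Hausdorff convergence $\partial U_n'\to\partial U'$. Your appeal to ``uniform expansion with constants uniform in $n$'' and to Lemma~\ref{lem:sub-hyperbolic} for $f_n$ is also not something you can simply cite: the expansion constants of the $f_n$ near their Julia sets are not a priori uniform as $n\to\infty$, and establishing that would itself require work comparable to the lemma. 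To repair the proposal, replace the $\zeta_z$ step by the path-lifting construction inside $U_n'$ and derive uniqueness from the degree equality, rather than from boundary convergence.
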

\begin{proof}
Let $x,x_n$ be the centers of $U',U_n'$ respectively.
If $x$ is periodic, the continuation $y_n$ of $x$ at $f_n$ is an attracting periodic point contained in $U_n'$. Hence $y_n=x_n$.
Let us now deal with the preperiodic case by induction. Let us assume that $f_n(x_n)\to f(x)$ as $n\to\infty$: we need to show that
$x_n\to x$ and ${\rm deg}(f_n|_{U_n'})=\text{deg}(f|_{U'})$ as $n\to\infty$.
Set $\delta:={\rm deg}(f|_{U'})$. By Rouch\'{e}'s theorem, any given small neighborhood of $x$  contains exactly $\delta$ preimages by $f_n$ of $f_n(x_n)$ (counting with multiplicity) for every sufficiently large $n$. Note that all these preimages belong to $U_n$, and are the centers of some Fatou components of $f_n$. So these preimages must coincide with $x_n$. It follows that $x_n\to x$ as $n\to\infty$ and $\text{deg}(f_n|_{U_n})=\delta$ for all large $n$.

For the remaining result of this lemma, we first assume that $z$ is periodic. In this case, the conclusion holds by Goldberg and Milnor's proof in \cite[Appendix B]{GM}.
Now, let $z\in\partial U'$ be a preperiodic point. Set $v:=f(z)\in \partial f(U')$. Inductively, we assume that $v_n$ is the unique preperiodic point of $f_n$ in $\partial f_n(U_n')$ such that $v_n$ has the same preperiod and period as $v$, and $v_n\to v$ as $n\to\infty$.  Since $f_n$ uniformly converges to $f$, given any small disk neighborhood $W_z$ of $z$, there is a disk neighborhood $V_v$ of $v$ such that the component of $f^{-1}_n(V_v)$ that contains $z$, denoted by $D_{n,z}$, belong to $W_z$, for all sufficiently large $n$. Given any sufficiently large $n$, choose a point $a_n\in D_{n,z}\cap U_n'$ and set $b_n:=f_n(a_n)$. Then $b_n\in V_v\cap f_n(U_n')$.
By the inductive assumption, the point $v_n$ belongs to $\partial f_n(U_n')\cap V_v$. One can then choose an arc $\g_n\subset f_n(U_n')\cap V_v$ joining $b_n$ and $v_n$. Lifting $\g_n$ by $f_n$ with the starting point $a_n$, we get an arc $\wt{\g}_n\subset D_{n,z}\cap U_n'$. Its ending point, denoted by $z_n$,  belongs to $\partial U_n'$ and satisfies that $f_n(z_n)=v_n$.
By the argument above, we in fact proved that for any point $z'\in\partial U'$ with $f(z')=v$, and any small neighborhood $W_{z'}$ of $z'$, there exists a point $z_n'\in\partial U_n'$ with the property that $z_n'\in W_{z'}$ and $f_n(z_n')=v_n$ for all large $n$. Since $\text{deg}(f_n|_{U_n'})=\text{deg}(f|_{U'})$ , the points which have the same properties as $z_n'$ are unique.
This completes the proof of the lemma.
\end{proof}

Let now $\{S_n\}\subset \C$ be a sequence of sets. We denote as
$\limsup_{n\to\infty} S_n$
the set of points $z\in\C$ such that every neighborhood of $z$ intersects infinitely many $S_n$. It follows immediately from the definition that $\limsup S_n$ is closed.

\begin{lemma}\label{lem:internal-ray}
Under the assumption of Lemma \ref{lem:Fatou-component}, for each large $n$, let $I_n$ be an preperiodic internal ray of $f_n$ in $U_n'$ with fixed preperiod $k\geq0$ and period $p\geq1$. If the landing point $z_n$ of $I_n$ converge to $z$, then $\limsup_{n\to\infty}I_n=I,$ where $I$ is the internal ray of $f$ in $U'$ landing at $z$.
\end{lemma}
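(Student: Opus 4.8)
The plan is to exploit the uniform expansion of $f$ near its Julia set (Lemma \ref{lem:sub-hyperbolic}) together with the fact that internal rays are determined by the B\"ottcher coordinates supplied in Section \ref{sec:polynomials}. First I would set up notation: let $\phi_{U'}:\D\to U'$ and $\phi_{U_n'}:\D\to U_n'$ be the Riemann maps from the normalized Riemann map system attached to the Fatou component, so that each periodic/preperiodic internal ray is the image of a radial segment under the appropriate iterate. Using Lemma \ref{lem:Fatou-component}, the centers $x_n$ converge to $x$ and $\deg(f_n|_{U_n'})=\deg(f|_{U'})$ for large $n$; this lets me normalize the Riemann maps (say by fixing the center and a boundary direction) so that $\phi_{U_n'}\to\phi_{U'}$ locally uniformly on $\D$. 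Consequently, on any compact subdisk, the preperiodic internal rays $I_n$ converge uniformly on compacta to $I$; the only issue is the behavior near the landing point, i.e.\ controlling the ``tails'' of the rays uniformly in $n$.

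The core of the argument will be a uniform-shrinking estimate for preimages near the Julia set. The rough idea: because $f$ is sub-hyperbolic, by Lemma \ref{lem:sub-hyperbolic} there is an orbifold metric in a neighborhood $N$ of $\JJJ_f$ in which $f$ is uniformly expanding with factor $\lambda>1$ away from ramification points, and $f_n\to f$ means $f_n$ is also uniformly expanding (with a slightly worse but still $>1$ factor) on a slightly smaller neighborhood for all large $n$. A periodic internal ray landing at the repelling periodic point $\zeta$ sits, near $\zeta$, inside a linearizing neighborhood of $\zeta$ for $f^p$; its preimages under backward iteration of $f_n^p$ (which is how the ray is built) shrink geometrically. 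Concretely I would: (i) fix a small disk $W$ around the limiting landing point $z$ inside $N$; (ii) show that for large $n$ the portion of $I_n$ outside $W$ is exactly the image of the portion of the ray at the appropriate forward iterate under a well-controlled branch of $f_n^{-k-jp}$, and these branches contract by a definite factor, so the part of $I_n$ inside $W$ that is ``close to $z_n$'' has diameter $\to 0$ uniformly; (iii) conclude $\limsup_n I_n$ is contained in $I\cup\{z\}=I$, using that $z_n\to z$ so the tails cannot escape $W$. For the reverse inclusion, any point of $I$ is a limit of points on $I$ bounded away from $z$, hence (by the locally uniform convergence $\phi_{U_n'}\to\phi_{U'}$ on compacta) is approximated by points of $I_n$; so $I\subseteq\limsup_n I_n$.

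The main obstacle I anticipate is making the uniform contraction estimate genuinely uniform in $n$, since the relevant repelling periodic point (the landing point of the periodic ray underlying $I_n$) and its multiplier move with $n$, and one must rule out that a ray ``wanders'' far before landing. The clean way to handle this is to invoke the continuation of repelling periodic points (Definition \ref{def:continuation}) plus Goldberg--Milnor-type stability of the landing of periodic rays already cited in the proof of Lemma \ref{lem:Fatou-component}: the periodic internal ray landing at the continuation of $\zeta$ depends continuously (in the Hausdorff sense) on $f_n$, which handles the periodic case; the preperiodic case then follows by pulling back under the finitely many extra iterates, using the uniform expansion on $N$ to get the geometric shrinking of the pullback pieces. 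A secondary subtlety is that when the orbit of $z$ passes through a ramified point the orbifold metric degenerates; but since by hypothesis $I_n$ has fixed preperiod $k$ and period $p$ and $f$ is sub-hyperbolic, only finitely many such crossings occur along a fixed-length backward orbit, and each contributes a bounded distortion that can be absorbed, so the geometric estimate survives. Once these uniform estimates are in place, the two inclusions above give $\limsup_{n\to\infty}I_n=I$.
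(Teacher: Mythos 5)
Your overall strategy is workable but is a genuinely different, and much heavier, route than the one the paper takes. You both start the same way: the Riemann maps $\phi_{U_n'}\to\phi_{U'}$ converge locally uniformly on $\D$, so the interior part $\limsup I_n\cap U'$ is exactly the internal ray $I'$ of the correct angle; and the reverse inclusion $I\subseteq\limsup I_n$ follows from this. The divergence is in how the tail near the boundary is controlled. You propose a hard hyperbolic-estimate argument: orbifold metric, uniform expansion of $f_n$ on a fixed neighborhood of $\JJJ_f$, linearizing coordinates at the repelling landing point of the periodic ray, geometric shrinking of backward pull-back pieces, and a bounded-distortion patch at ramified points. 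The paper instead runs a soft induction on the preperiod $k$, with the base case $k=0$ cited to Goldberg--Milnor (as you do): assuming $\limsup f_n(I_n)=f(I)$, it observes that $\limsup I_n\cap\partial U'$ is a compact connected set containing $z$ whose $f$-image lands inside $\limsup f_n(I_n)\cap\partial f(U')$, a singleton by the inductive hypothesis; since $f$ is open and discrete with finite fibers, a connected set mapped to a point is a point, so $\limsup I_n\cap\partial U'=\{z\}$ and hence $\limsup I_n=I$. That one-line connectedness/finite-fiber argument completely replaces your entire second and third paragraphs and sidesteps precisely the uniformity difficulties you flag yourself (multipliers moving with $n$, rays ``wandering,'' distortion near ramified points). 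Your sketch is plausible, but several of its uniformity claims — that the tail of $I_n$ inside $W$ has diameter $\to 0$ uniformly in $n$, and that the distortion contributed by ramified crossings ``can be absorbed'' — are asserted rather than proved and would need real work; as written they are the genuine gaps in your argument. If you want to complete a proof along your lines, I'd suggest at least replacing the vague distortion claim with an explicit statement that the landing point's orbit meets ramified points in at most a bounded (independent of $n$) number of places and controlling those steps with the Koebe distortion theorem; but I'd also recommend noticing that the soft ``connected set sent to a singleton'' trick makes all of this unnecessary.
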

\begin{proof}
If $I$ is period, the conclusion holds by Goldberg and Milnor's proof in \cite[Appendix B]{GM}. By induction, it suffices to prove $\limsup I_n=I$ provided that $\limsup f_n(I_n)=f(I)$. Since $f_n\to f$, we can choose  B\"{o}ttcher coordinates $\varphi$ of $U'$ and $\varphi_n$ of $U_n$ such  that $\varphi_n^{-1}:D\to U_n$ converge
uniformly on compact sets to $\varphi^{-1}:D\to U$. It follows that $I':=\limsup I_n\cap U$ is an internal ray of $U$. On the other hand, note that $\limsup I_n\cap \partial U$ is compact, connected and contains the point $z$. The map $f$ sends $\limsup I_n\cap \partial U$ into the set $\limsup f_n(I_n)\cap\partial f(U)$, which is by induction a singleton. Then we get $\limsup I_n\cap \partial U=\{z\}$, and hence $I'=I$.
\end{proof}

\begin{lemma}[Goldberg-Milnor \cite{GM}]\label{lem:perturbation1}
Consider a monic polynomial $P$ and an external ray $\RRR_P(\theta)$ which lands at a repelling preperiodic point $z$ such that the orbit of $z$ avoids the critical points of $P$. Then $\RRR_{Q}(\theta)$ lands at the analytic continuation of $z$, for all monic polynomials $Q$ in a sufficiently small neighborhood of $P$, and $\limsup_{Q\to P}\ov{\RRR_{\wt{P}}(\theta)}=\ov{\RRR_P(\theta)}$. 
\end{lemma}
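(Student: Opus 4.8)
The plan is to reduce the statement to the well-understood periodic case by induction on the preperiod, exactly as in the pattern used in Lemmas \ref{lem:Fatou-component} and \ref{lem:internal-ray}. First I would recall the periodic case: if $z$ is a repelling periodic point whose orbit avoids the critical set, then $\RRR_P(\theta)$ is periodic and its analytic continuation $\RRR_Q(\theta)$ lands at the analytic continuation $\zeta_z(Q)$ of $z$, with $\limsup_{Q\to P}\overline{\RRR_Q(\theta)}=\overline{\RRR_P(\theta)}$; this is precisely the content of \cite[Appendix B]{GM}. For the preperiodic case, write $w:=P(z)$, which is again repelling preperiodic with smaller preperiod and whose orbit also avoids the critical points, and set $\eta:=\tau_d(\theta)$, so that $\RRR_P(\eta)$ lands at $w$. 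By the inductive hypothesis, for $Q$ near $P$ the ray $\RRR_Q(\eta)$ lands at $\zeta_w(Q)$ and $\limsup_{Q\to P}\overline{\RRR_Q(\eta)}=\overline{\RRR_P(\eta)}$.

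Next I would lift. Since the orbit of $z$ avoids $\mathrm{Crit}_P$, the point $z$ is not a critical point, so $P$ is a local biholomorphism near $z$; by Rouché's theorem there is a unique preimage $\zeta_z(Q)$ of $\zeta_w(Q)$ under $Q$ near $z$, and it is repelling preperiodic with the correct combinatorics. The ray $\RRR_Q(\theta)$ is the connected component of $Q^{-1}(\RRR_Q(\eta))$ with argument $\theta$, hence it is a lift of $\RRR_Q(\eta)$; because $\zeta_w(Q)$ avoids critical values for $Q$ close to $P$, this lift lands at the unique preimage of $\zeta_w(Q)$ in the appropriate sector, which is $\zeta_z(Q)$. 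For the $\limsup$ statement, I would argue that $\limsup_{Q\to P}\overline{\RRR_Q(\theta)}$ is a closed connected set, its image under $P$ is contained in $\limsup_{Q\to P}\overline{\RRR_Q(\eta)}=\overline{\RRR_P(\eta)}$ by continuity of $(Q,z)\mapsto Q(z)$, and it accumulates only on $z$ on $\JJJ_P$ (since its intersection with $\JJJ_P$ is mapped by $P$ into the singleton $\{w\}$ and, away from the Julia set, convergence of Böttcher-type parametrizations forces $\limsup_{Q\to P}\RRR_Q(\theta)\cap\Omega(P)=\RRR_P(\theta)\cap\Omega(P)$, as in Lemma \ref{lem:internal-ray}); hence $\limsup_{Q\to P}\overline{\RRR_Q(\theta)}=\overline{\RRR_P(\theta)}$.

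The main obstacle I anticipate is the careful bookkeeping near the landing point when passing to the lift: one must ensure that the relevant preimage of $\RRR_Q(\eta)$ really is the ray of argument exactly $\theta$ (and not a neighboring lift), that the landing point does not escape to a different preimage of $\zeta_w(Q)$ as $Q\to P$, and that the hypothesis ``orbit of $z$ avoids critical points'' is used to guarantee the local degree-one behavior of $P$ (and of $Q$ for $Q$ near $P$) all along the orbit, so that the analytic continuations $\zeta_{P^j(z)}(Q)$ exist and stay disjoint from critical values. Once the combinatorics of the lift are pinned down, uniform convergence $Q\to P$ on compact subsets of $\Omega(P)$ together with the periodic base case makes the $\limsup$ estimate routine.
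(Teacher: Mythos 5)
The paper does not prove this lemma; it only cites Goldberg--Milnor \cite{GM}, whose Appendix B treats the periodic case. Your argument supplies exactly the missing preperiodic extension, and it follows the same inductive pattern the author uses for the neighboring Lemmas \ref{lem:Fatou-component} and \ref{lem:internal-ray} (base case ``periodic, from \cite[Appendix B]{GM}'', then induct on preperiod by pushing forward along $P$ and lifting through the local biholomorphism near $z$), so it is in the spirit intended. The outline is correct: the only places I would tighten are (i) justify that $\RRR_Q(\theta)$ actually \emph{lands} for $Q$ near $P$ (use that $\zeta_z(Q)$ stays repelling with uniformly controlled linearizing neighborhood, so the lifted ray is eventually trapped in a fundamental annulus and converges), and (ii) in the $\limsup$ argument, from $P\bigl(\limsup\ov{\RRR_Q(\theta)}\cap\JJJ_P\bigr)\subseteq\{w\}$ you only get containment in the finite fiber $P^{-1}(w)$; you then need connectedness of $\limsup\ov{\RRR_Q(\theta)}\cap\JJJ_P$ (as in the appendix proof of Lemma \ref{lem:convergence3}) plus the fact that it contains $z=\lim\zeta_z(Q)$ to conclude it equals $\{z\}$. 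With those two points made explicit, the proof is complete.
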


\section{Partial \pf polynomials}

\subsection{Hubbard forest and core entropy of partial \pf polynomials}
\

Let $P$ be any polynomial of degree $d$. A critical/postcritical point of $P$ is called \emph{bounded} if it lies in $\KKK_P$, and \emph{escaping} if it belongs to $\Omega(f)$. The polynomial $P$ is called \emph{partial \pf} if each bounded critical point has finite orbit.

Let $P$ be any partial \pf polynomial. Then it is hyperbolic or sub-hyperbolic.  By Lemma \ref{lem:sub-hyperbolic},  every component of $\KKK(P)$ is a full, locally-connected compact set. As a consequence, given any two points $z,w$ in a component of $\KKK_P$, there exists a unique arc $[z,w]$ within this component joining $z,w$ such that its intersection with every bounded Fatou components consisting of (segments) of internal rays. Such an arc is called a \emph{regular} arc. If $z,w$ lie in different components of $\KKK_P$, denote $[z,w]:=\emptyset$. We define a forest as
\[\HHH_P=\bigcup\{[z,w]\mid \text{$z,w$ are bounded critical/postcritical points of $P$}\},\]
and call it the \emph{Hubbard forest} of $P$.

It is easily check that $\HHH_P$ is a $P$-invariant forest with the vertex set $V(\HHH_P)$ consisting of bounded critical/postcritical points of $P$ and the branched points of $\HHH_P$,
and the map $P:\HHH_P\to\HHH_P$ is Markov.
Note that if $P$ has connected Julia set , i.e., $P$ is postcritically-finite, then $\HHH_P$ is just the Hubbard tree of $P$.

 We define the \emph{core entropy} of $P$ as the topological entropy of $P$ restricted on its Hubbard forest, i.e., $h(P):=h_{top}(P|_{\HHH_P})$. Then the defining domain of the entropy function $h$ is enlarged from \pf polynomials to partial \pf polynomials.

\subsection{Critical markings of partial \pf polynomials}\label{sec:critical-marking}
The objective here is to generalize the combinatorial data ``critical markings'' from the \pf case to the partial \pf case.

A partial \pf polynomial is called \emph{visible} if every escaping critical point is terminated by several external radii (see Figture \ref{fig:visible}, compare with \cite[Definition 3.1]{Kiwi2}).
 \begin{figure}[http]
 \includegraphics[scale=0.6]{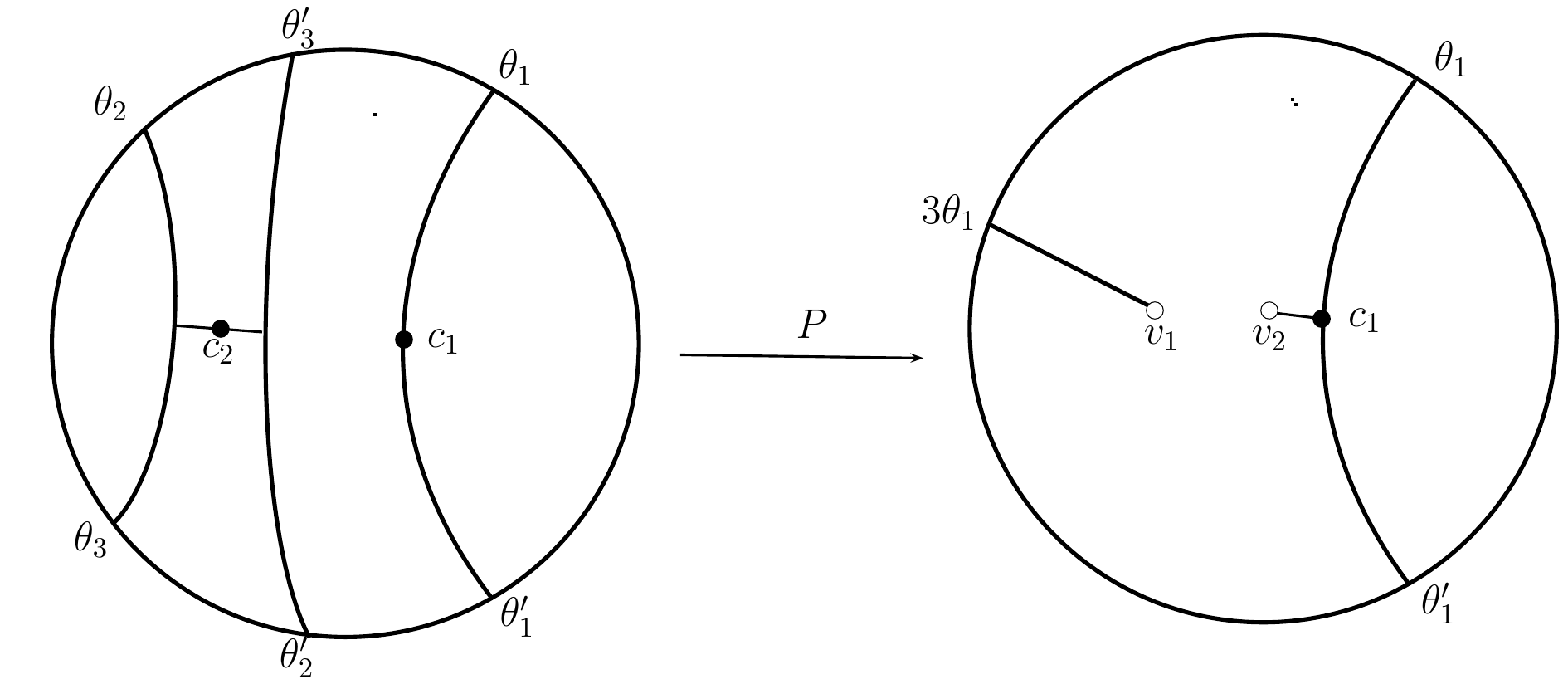}
 \caption{A non-visible example: on the left, the possible configurations of gradient flow lines connecting the
critical points of a cubic polynomial, and
on the right, their image under $\tau_3$; here $f(c_i)=v_i$, $\theta_1=3\theta_2=3\theta_2'$, $\theta_1'=3\theta_3=3\theta_3'$.}
 \label{fig:visible}
 \end{figure}
 The number of external radii terminating at $c$ is an integer multiple of the the local degree ${\rm deg}(P|_c)$, and the multiple is $1$ if and only if the orbit of $c$ avoids critical points of $P$.


Let $P$ be a visible partial \pf polynomial, such that $U_1,\ldots,U_s$ are pairwise distinct bounded critical Fatou components, and $c_1,\ldots,c_m$ are  pairwise distinct Julia or escaping critical points. A \emph{critical marking} of $P$ is a collection
$$\Theta_P:=\{\Theta(U_1),\ldots,\Theta(U_s),\Theta(c_1),\ldots,\Theta(c_r)\}$$
of finite subsets of $\R/\Z$ constructed below:

{\noindent\emph{The construction of $\Theta(U)$}.}

 We first consider that $U$ is  periodic. Let
\[U\mapsto P(U)\mapsto\cdots\mapsto P^n(U)=c \]
be the critical cycle containing $U$. We will construct the associated set $\Theta(U')$ for every critical Fatou component $U'$ in this cycle simultaneously.
Let $z\in\partial U$ be a \emph{root} of $U$, i.e., a periodic point  with period less than or equal to $n$.
Note that this choice
naturally determines a root $P^k(z)$ for each Fatou component $P^k(U)$ for $0\leq k\leq n-1$, which is called the \emph{preferred root} of $P^k(U)$.
Let  $z'$ be the preferred root of $U'$. We denote $\theta'$  the left-supporting angle of the component $U'$ at $z'$, and call it the \emph{preferred angle} of $U'$. The angle $\theta'$ is periodic by Lemma \ref{pro:landing-theorem}, and has period $n$. We define $\Theta(U')$ the set of arguments of the rays in $P^{-1}\big(P(\RRR_P^\sigma(\theta))\big)$ which left-support $U'$, where $\RRR^\sigma_P(\theta)$ denote the external ray left-supporting $U'$ at $z'$ with $\sigma\in\{+,-\}$.

 In the case that $U$ is a strictly preperiodic Fatou component, let $k$ be the minimal number such that $P^k(U)=:U'$ is a critical Fatou component with $\Theta(U')$ and the preferred angle $\theta'\in \Theta(U')$  chosen.  We define $\Theta(U)$ as the set of arguments of the rays in $P^{-k}(\RRR_P^\sigma(\theta'))$ which left-support $U$, where $\RRR_P^\sigma(\theta)$ denote the external ray left-supporting $U'$. We choose an angle $\theta\in\Theta(U)$ as the preferred angle of $U$.

{\noindent \emph{The construction of $\Theta(c)$ for Julia critical points.}}

If the orbit of $c$ avoids all critical points, we choose an angle $\theta\in{\rm arg}_P(c)$ as the preferred angle of $c$ and define $\Theta(c)$ the set of arguments of rays in $P^{-1}(P(\RRR_P^\sigma(\theta)))$ which land at $c$, where $\RRR_P^{\sigma}(\theta)$ lands at $c$. Otherwise, let $k$ be the minimal number such that $c':=f^k(c)$ is a critical point with $\Theta(c')$ and the preferred angle $\theta'\in\Theta(c')$ chosen. We define $\Theta(c)$  the set of arguments of the rays in $P^{-k}(\RRR_P^\sigma(\theta'))$ which land at $c$, where $\RRR_P^\sigma(\theta)$ denote the external ray landing at $c'$. We choose an angle $\theta\in\Theta(c)$ as the preferred angle of $c$.

{\noindent \emph{The construction of $\Theta(c)$ for escaping critical points.}}

If the orbit of $c$ avoids all critical points, by the visible assumption, there are ${\rm deg}(f|_c)$ external radii terminating at $c$. We define $\Theta(c)$ the set of arguments of all these radii and choose an angle $\theta\in\Theta(c)$ as a preferred angle. Otherwise, let $k$ be the minimal number such that $c':=f^k(c)$ is a critical point with $\Theta(c')$ and the preferred angle $\theta'\in\Theta(c')$ chosen. Then there are ${\rm deg}(P|_c)$ external radii terminating at $c$ such that they are mapped by $P^k$  to the external radius $\RRR_P^*(\theta')$. We define $\Theta(c)$  the set of arguments of these radii, and choose an angle $\theta\in\Theta(c)$ as the preferred angle of $c$.

In the content below, we will use the following notations. We write a critical marking $\Theta_P$ as $\Theta_P:=\{\FFFF,\LLLL\}$ such that
\begin{itemize}
\item $\FFFF=\FFFF_P:=\{\Theta(U): \text{$U$ is a bounded Fatou component}\}$;
\item $\JJJJ=\JJJJ_P:=\{\Theta(c):\text{$c$ is a Julia critical point}\}$;
 \item $\EEEE=\EEEE_P:=\{\Theta(c):\text{$c$ is an escaping critical point}\}$;
\end{itemize}
and $\LLLL=\LLLL_P:=\JJJJ\cup\EEEE$.
Note that the arguments participating in $\FFFF$ are rational by Proposition \ref{pro:landing-theorem}, but those in $\LLLL$  are not necessary.

\subsection{The properties of critical markings}\label{sec:admissible}
Let $P$ be a visible partial \pf polynomial with a critical marking $\Theta=\{\FFFF,\LLLL\}$ such that
\begin{equation}\label{eq:critical-marking}
\begin{split}
\FFFF&=\{\FFFF_1,\ldots,\FFFF_s\}\\
\LLLL&=\{\LLLL_1,\ldots,\LLLL_m\}
\end{split}
\end{equation}
According to the construction, it is not difficult to check that $\Theta$ is a critical portrait of degree $d$. To make $\Theta$ satisfying some expected properties, we add some restrictions on $P$.

\begin{definition}\label{def:admissible}
A partial \pf polynomial $P$ is called \emph{admissible} if it is visible and  the following properties hold:
\begin{enumerate}
\item the escaping critical points have no iteration relation, and the external radii terminating at these points have strictly preperiodic angles;
\item  let $c,c'$ be two escaping critical points such that $\theta\in\Theta(c),\theta'\in\Theta(c')$, then the landing points of $\RRR^\sigma_P(\theta)$ and $\RRR^{\sigma'}_P(\theta')$ have disjoint orbits for any $\sigma,\sigma'\in \{+,-\}$.
\item let $c$ be an escaping critical point with $\theta\in\Theta(c)$, and $c'$ a Julia critical point, then the landing points of $\RRR^\sigma_P(\theta)$ and the point $c'$ have disjoint orbits for any $\sigma\in\{+,-\}$.
\end{enumerate}
\end{definition}

\begin{lemma}\label{lem:1-5}
Let $P$ be an admissible partial \pf polynomial with a critical marking $\Theta$ having the form as \eqref{eq:critical-marking}. Then $\Theta$ satisfies the following properties:
\begin{itemize}
\item [(C1)] All arguments which participate $\Theta$ are rational.
\item [(C2)] $\FFFF$ and $\LLLL^-$ are \emph{unlinked}, i.e., for  $\epsilon>0$ small enough, the collection of sets
\[\FFFF_1,\ldots,\FFFF_s,\LLLL_1-\epsilon,\ldots,\LLLL_m-\epsilon\]
have pairwise disjoint convex hulls in $\ov{\D}$.
\item [(C3)] $\FFFF$ (resp.$\LLLL$) is \emph{hierarchic}, i.e.,  for any two elements $\theta,\theta'\in\R/\Z$ that participate
in $\FFFF$ (resp. $\LLLL$) such that $\tau^i(\theta)$ and $\tau^j(\theta')$
lie in $\FFFF_k$ (resp. $\LLLL_k$), for some $i,j>0$; we have that $\tau^i(\theta)=\tau^j(\theta')$.
\item [(C4)] Given an argument that participates in $\FFFF$, there exists a periodic argument $\tau^i(\theta)$
which also participates in $\FFFF$.
\item [(C5)] None of the arguments that participate in $\LLLL$ are periodic.
\end{itemize}
\end{lemma}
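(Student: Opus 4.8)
\textbf{Proof proposal for Lemma \ref{lem:1-5}.}

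The plan is to verify the five properties (C1)--(C5) essentially by unwinding the construction of the critical marking $\Theta=\{\FFFF,\LLLL\}$ in Section \ref{sec:critical-marking}, using the sub-hyperbolic landing theory of Propositions \ref{pro:landing-theorem} and \ref{pro:subhyperbolic}, and invoking the three admissibility conditions of Definition \ref{def:admissible} precisely where they are needed. For (C1): the Fatou arguments in $\FFFF$ are rational because each $\Theta(U)$ is built from left-supporting rays at periodic points (preferred roots) and their iterated preimages, and by Proposition \ref{pro:landing-theorem}(1) these rays are periodic, hence their arguments are rational; for $\LLLL$, admissibility condition (1) forces the external radii terminating at escaping critical points to have (strictly preperiodic) rational angles, and for Julia critical points the arguments in $\JJJJ$ come from rays landing at preperiodic Julia points, which again are rational by Proposition \ref{pro:landing-theorem}. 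So (C1) reduces to ``every ray used in the construction lands at a (pre)periodic point,'' which is immediate from how preferred angles are chosen.

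For (C2) (unlinkedness of $\FFFF$ and $\LLLL^-$): I would argue that rays in $\FFFF$ land at boundary points of bounded Fatou components while rays (or radii) associated to $\LLLL$ land at (or terminate near) Julia or escaping critical points, and a small negative perturbation $\LLLL_i - \epsilon$ separates the $\LLLL$-data from the Fatou components they might otherwise touch; the key point is that the left-supporting convention in the definition of $\Theta(U)$ guarantees that the component $U$ sits strictly on one side of each $\FFFF$-ray, so the chords do not cross. The disjointness among distinct $\FFFF_k$'s and among distinct $\LLLL_k - \epsilon$'s uses that distinct Fatou components and distinct critical points have disjoint closures in $\KKK_P$ (or disjoint terminating-radius data), together with admissibility conditions (2) and (3) which prevent the orbits of landing points of escaping-critical rays from colliding with each other or with Julia critical points. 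For (C3) (hierarchy): if $\tau^i(\theta)$ and $\tau^j(\theta')$ both participate in $\FFFF_k$ — i.e. both are angles of left-supporting rays of the same Fatou component $U_k$ landing at the same boundary point — then by the construction each $\Theta(U_k)$ contains exactly one ray landing at each relevant boundary point (the $\delta_{U_k}$ rays land at $\delta_{U_k}$ \emph{distinct} points), forcing the landing points to coincide, and since the left-supporting argument at a given point of $\partial U_k$ is unique, $\tau^i(\theta)=\tau^j(\theta')$; the same bookkeeping works for $\LLLL$, using admissibility to rule out accidental orbit coincidences that would spoil uniqueness.

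For (C4): an argument participating in $\FFFF$ is, by construction, an iterated preimage (under $\tau_d$, via the preimage-taking steps in the preperiodic case) of a preferred angle of a \emph{periodic} critical Fatou component, and that preferred angle is periodic by Proposition \ref{pro:landing-theorem}(1); hence some forward iterate $\tau^i(\theta)$ is that periodic angle, which participates in $\FFFF$. For (C5): no argument in $\LLLL$ is periodic — for escaping critical points this is exactly admissibility condition (1) (the terminating radii have strictly preperiodic angles, and preimages of strictly preperiodic angles are strictly preperiodic), and for Julia critical points one must check that the chosen preferred angles land at \emph{strictly} preperiodic points; here I would use that a periodic external ray lands at a repelling/parabolic \emph{periodic} point (Proposition \ref{pro:landing-theorem}), so if the preferred angle of a Julia critical point $c$ were periodic then $c$ itself would be periodic, contradicting that $c$ is a critical point with finite but (as a Julia point of a partial \pf, hence sub-hyperbolic, polynomial) necessarily strictly preperiodic orbit — a periodic critical point would be super-attracting, not in the Julia set. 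I expect the main obstacle to be (C2): carefully checking that after the $-\epsilon$ shift all the convex hulls are genuinely pairwise disjoint requires a somewhat delicate case analysis separating Fatou-adjacent data from Julia/escaping data and invoking each of the three admissibility conditions in the right place, whereas (C1), (C3), (C4), (C5) are more or less direct consequences of the construction together with the landing theorem.
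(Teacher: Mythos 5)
Your overall strategy matches the paper's: verify each condition by unwinding the construction of $\Theta$ from preferred angles, invoking Propositions \ref{pro:landing-theorem} and \ref{pro:subhyperbolic}, and using the admissibility conditions. (C1), (C2), (C4), (C5) are handled essentially as in the paper. However, your argument for (C3) has a genuine gap.

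For (C3) you write that if $\tau^i(\theta)$ and $\tau^j(\theta')$ both lie in $\FFFF_k$ then they ``both are angles of left-supporting rays of the same Fatou component $U_k$ landing at the same boundary point,'' and then conclude from uniqueness of the left-supporting argument at that point. But the hypothesis does \emph{not} say they land at the same boundary point: $\FFFF_k=\Theta(U_k)$ contains $\delta_{U_k}$ distinct angles landing at $\delta_{U_k}$ \emph{distinct} boundary points, so two elements of $\FFFF_k$ generically land at different points, and nothing in your argument ``forces the landing points to coincide.'' The missing observation --- which is the whole engine of the paper's proof of (C3), (C4), (C5) --- is that for any angle $\theta$ participating in $\FFFF_\ell$ and any $i\geq1$, the iterate $\tau^i(\theta)$ is precisely the \emph{preferred} angle of the Fatou component $P^i(U_\ell)$: all elements of $\Theta(U_\ell)$ share the same image under $\tau$, namely $\tau$ of the preferred angle, and the construction propagates preferred angles along the orbit. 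Since a single external ray left-supports at most one Fatou component, $\tau^i(\theta)\in\FFFF_k$ with $i>0$ then forces $P^i(U_\ell)=U_k$ and $\tau^i(\theta)$ to be the unique preferred angle of $\FFFF_k$; likewise for $\tau^j(\theta')$, giving $\tau^i(\theta)=\tau^j(\theta')$. The same bookkeeping handles $\LLLL$ without the extra appeal to admissibility that you invoke there, and it also yields (C4) and (C5) directly, which is why the paper disposes of (C3)--(C5) in a single short paragraph rather than treating (C2) as the bottleneck.
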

\begin{proof}
 According to Definition \ref{def:admissible}.(1), the arguments participating in $\EEEE$ are rational. It then follows from Proposition \ref{pro:landing-theorem} that the arguments participating $\FFFF$ and $\JJJJ$ are also rational,  hence property (C1) holds.
Note that there is one preferred angle in every $\FFFF_\ell,1\leq \ell\leq s,$ and $\LLLL_t,1\leq t\leq m$, and any argument participating in $\Theta$ is iterated to a preferred angle. Moreover, the preferred angle of $\FFFF_\ell$ or $\LLLL_t$ is periodic if and only if the critical point corresponding to $\FFFF_\ell$ or $\LLLL_t$ is periodic. It follows that properties (C3),(C4),(C5) hold.
By the construction of $\Theta$, we have that the elements within each of $\FFFF,\JJJJ$ and $\EEEE$ have pairwise disjoint convex hulls in $\ov{\D}$. Definition \ref{def:admissible}.(3)  further assure that the elements in $\LLLL=\JJJJ\cup\EEEE$ have pairwise disjoint convex hulls in $\ov{\D}$. Then (C2) follows from the fact that all arguments participating in $\FFFF$ are left-supporting.
\end{proof}

In fact, this critical marking $\Theta$ also satisfies two other properties. To present these two properties, we need to introduce some notations.
Let $\Theta=\{\ell_1,\ldots,\ell_m\}$ be a critical portrait of degree $d$. We say that $\theta$ and $\eta$ are in the same \emph{${\Theta}$-unlinked class} if and only if $\theta$
and $\eta$ lie in the same connected component of $\T\setminus \ell_i$ for all $i=1,\ldots,m$.

\begin{lemma}[\cite{Kiwi1},Lemma 6.6]\label{lem:d-parts}
There are exactly $d$ $\Theta$-unlinked classes, each of which can be  written as the union of finitely many open intervals:
\[L=(\theta_0,\theta_1)\cup\ldots\cup(\theta_{2p-1},\theta_{2p})\]
with subscripts mod $2p$ and respecting cyclic order. The total length of these intervals is $1/d$. Additionally, $\tau(\theta_{2i})=\tau(\theta_{2i-1})$ for $i=1,\ldots,p$. Furthermore, $\tau:L\to\tau(L)$ is a
cyclic order preserving bijection and $\tau(L)$ consists of $\T$ with the finite set of points
$\{\tau(\theta_0),\ldots,\tau(\theta_{2p-2})\}$ removed (refer to Figure \ref{critical-portrait-5}).
\end{lemma}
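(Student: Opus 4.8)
The plan is to identify the $\Theta$-unlinked classes with the complementary regions of the finite lamination $\mathcal{L}(\Theta):=\bigcup_{i=1}^{m}\mathrm{hull}(\ell_i)$ that meet $\T$, and then to read off the stated combinatorics from the geometry of this picture. First I would note that two angles are $\Theta$-unlinked precisely when no $\mathrm{hull}(\ell_i)$ separates them in $\overline{\D}$: by condition (2) in the definition of a critical portrait the sets $\mathrm{hull}(\ell_i)$ have pairwise disjoint interiors and meet at most at a single point of $\T$, so the connected components of $\overline{\D}\setminus\mathcal{L}(\Theta)$ are well behaved, and the unlinked class of $\theta$ equals $G\cap\T$, where $G$ is the component containing $\theta$; this is the standard correspondence between unlinked classes and gaps of a lamination. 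Since there are only finitely many polygons, $L:=G\cap\T$ is a finite union of open arcs, which I label $(\theta_0,\theta_1)\cup\cdots\cup(\theta_{2p-2},\theta_{2p-1})$ in cyclic order. Between two consecutive arcs of $L$ the boundary $\partial G$ runs through the \emph{missing} arc $(\theta_{2j-1},\theta_{2j})$ inside the open disk; because sides of distinct polygons meet only on $\T$ and sides of one polygon only at its vertices, this stretch of $\partial G$ must be a single chord $\overline{\theta_{2j-1}\theta_{2j}}$, hence a side of some $\mathrm{hull}(\ell_k)$, so $\theta_{2j-1},\theta_{2j}\in\ell_k$ and, by condition (1), $\tau(\theta_{2j-1})=\tau(\theta_{2j})$. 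This already yields the relation $\tau(\theta_{2i})=\tau(\theta_{2i-1})$.

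Next I would count the $\T$-meeting components by adjoining the polygons one at a time. With no polygon there is a single component, meeting $\T$ in all of $\T$. When $\mathrm{hull}(\ell_i)$ is added, its interior is disjoint from the polygons already placed, hence lies in a single existing component $R$; adjoining the $k_i$-gon $\mathrm{hull}(\ell_i)$ (with $k_i:=\#\ell_i$) replaces $R$ by the $k_i$ \emph{lunes} it cuts off, each meeting $\T$, together with, if $k_i\geq 3$, the polygon interior, which does not meet $\T$, while every other component is untouched. Thus each step raises the number of $\T$-meeting components by $k_i-1$, and after all $m$ steps the total is $1+\sum_{i=1}^{m}(\#\ell_i-1)=1+(d-1)=d$ by condition (3). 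The only genuinely delicate point here is to verify that shared vertices, which are permitted by condition (2), do not spoil this bookkeeping; a short case analysis, or alternatively a direct Euler-characteristic count for the planar graph $\T\cup\bigcup_i\partial\,\mathrm{hull}(\ell_i)$, takes care of it.

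Finally, to pin down the action of $\tau$ on a class $L$, I would travel once around $\T$, meeting $\theta_0,\theta_1,\dots,\theta_{2p-1}$ in cyclic order — alternately crossing an arc $(\theta_{2j},\theta_{2j+1})$ of $L$ and a missing arc $(\theta_{2j-1},\theta_{2j})$ — and track the image under $\tau$. Along each arc of $L$ the map $\tau$ is an orientation-preserving local homeomorphism of derivative $d$, and across each missing arc the two endpoints have equal $\tau$-image by the first paragraph; passing to the universal cover $\R\to\T$, the image of this circuit is therefore a nondecreasing loop that is strictly increasing along the arcs of $L$ and constant across the gaps, so its total increase equals its winding number $\nu$. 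That total increase is also $d\cdot\mathrm{length}(L)$, whence $\nu=d\cdot\mathrm{length}(L)$ is a positive integer and $\mathrm{length}(L)\geq 1/d$; summing over the $d$ classes, whose total length is $1$, forces $\mathrm{length}(L)=1/d$ and $\nu=1$ for every class. Winding number one together with strict monotonicity along the arcs of $L$ makes $\tau|_{L}$ injective and cyclic-order preserving, and its image is the union of the $p$ consecutive open arcs $\tau((\theta_{2j},\theta_{2j+1}))$, namely $\T$ with the $p$ junction points $\tau(\theta_0),\tau(\theta_2),\dots,\tau(\theta_{2p-2})$ deleted. I expect the counting step, and in particular the shared-vertex bookkeeping, to be the main obstacle; once the identification ``unlinked class $=$ complementary region'' is in hand, the rest is routine.
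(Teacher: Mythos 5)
The paper does not prove this lemma --- it is quoted directly from Kiwi \cite{Kiwi1} --- so there is no internal proof to compare against and your argument must stand on its own. Your overall strategy (identify $\Theta$-unlinked classes with the $\T$-meeting complementary components of the union of convex hulls $hull(\ell_i)$, count these components by adjoining one polygon at a time, then extract the length $1/d$ and the cyclic-order-preserving bijection from a winding-number argument) is the standard route, and the counting step and the winding-number step are both sound as written.

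There is, however, a real gap in the first paragraph. You claim that the stretch of $\partial G$ spanning a missing arc $(\theta_{2j-1},\theta_{2j})$ ``must be a single chord, hence a side of some $hull(\ell_k)$,'' and you deduce $\tau(\theta_{2j-1})=\tau(\theta_{2j})$ from $\{\theta_{2j-1},\theta_{2j}\}\subseteq\ell_k$. This is false when distinct hulls share a vertex on $\T$, which condition (2) of the definition of a critical portrait explicitly permits. Concretely, take $d=3$ with $\ell_1=\{0,1/3\}$ and $\ell_2=\{1/3,2/3\}$: the class $L=(2/3,1)$ has $p=1$, and the bounding stretch of $\partial G$ across the single gap $(0,2/3)$ is the two-chord chain consisting of the chord joining $0$ and $1/3$ followed by the chord joining $1/3$ and $2/3$, so $\theta_1=0$ and $\theta_0=2/3$ lie in \emph{different} sets $\ell_k$. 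The identity $\tau(\theta_{2j-1})=\tau(\theta_{2j})$ is still true, but for a different reason: in general the stretch of $\partial G$ is a finite chain $\theta_{2j-1}=v_0,v_1,\dots,v_s=\theta_{2j}$ of chords whose intermediate endpoints are vertices on $\T$, each consecutive pair $\{v_{t-1},v_t\}$ lies in a common $\ell_{k_t}$, hence $\tau(v_{t-1})=\tau(v_t)$ by condition (1), and the identity follows by telescoping. You flag shared vertices as a delicacy for the inductive component count but overlook that they also falsify the single-chord assertion; with the telescoping repair the rest of the proof, including the winding-number computation in the final paragraph (which uses only the identity $\tau(\theta_{2i-1})=\tau(\theta_{2i})$, not the single-chord structure), goes through.
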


Throughout we denote by $L_1,\ldots,L_d$ the ${\Theta}$-unlinked classes.

\begin{definition}[itinerary of angle]
For any $t\in \T$, we call the symbol sequence $\ell_{{\Theta}}^+(t)=i_0i_1\ldots i_n\ldots$ \emph{right itinerary} of $t$ (associated to $\Theta$) if
for every $n\geq0$, there exists $\epsilon>0$ such that $(\tau^n(t),\tau^n(t)+\epsilon)\subseteq L_{i_n}$; and call the sequence  $\ell_{{\Theta}}^-(t)=j_0j_1\ldots j_n\ldots$ \emph{left itinerary} of $t$ if
for every $n\geq0$, there exists $\epsilon>0$ such that $(\tau^n(t)-\epsilon,\tau^n(t))\subseteq L_{j_n}$.
\end{definition}

Note that $\ell_{{\Theta}}^+(\theta)=\ell_{{\Theta}}^-(\theta)$ if and only if the orbit of $\theta$ under $\tau$ avoids every $\ell_i$.

\begin{lemma}\label{lem:itinarary}
Let $P$ be a visible partial \pf polynomial with a critical marking $\Theta$. Let $\theta_1,\theta_2\in\T$ and $\sigma_1,\sigma_2\in\{+,-\}$. If $\ell_{{\Theta}}^{\sigma_1}(\theta_1)=\ell_{{\Theta}}^{\sigma_2}(\theta_2)$, then $\RRR_P^{\sigma_1}(\theta_1),\RRR_P^{\sigma_2}(\theta_2)$ land at a common point. 
\end{lemma}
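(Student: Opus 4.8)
The plan is to reduce the statement, via the symbolic dynamics of external rays, to the classical fact that two external radii with the same itinerary land together in the connected (postcritically-finite) case, using the extended Julia set $\JJJ_P^*$ and its locally connected structure from Proposition \ref{pro:subhyperbolic} as the geometric backbone. First I would recall that $P$ is visible and sub-hyperbolic, so by Proposition \ref{pro:subhyperbolic} every external radius, smooth or non-smooth, lands or terminates at a point of the locally connected extended Julia set $\JJJ_P^*$, and the B\"ottcher coordinate $\phi_P:\Omega^*(P)\to D_P$ conjugates $P$ to $z\mapsto z^d$. The key structural input is Lemma \ref{lem:d-parts}: the $\Theta$-unlinked classes $L_1,\dots,L_d$ partition $\T$ (up to a finite set) into $d$ pieces, and $\tau$ restricted to each $L_i$ is an order-preserving bijection onto its image. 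Because $\Theta$ is the critical marking of $P$, the arguments participating in $\Theta$ are precisely (the $\tau$-orbit of) the arguments of the radii terminating at critical points and left-supporting critical Fatou components; these are exactly the "cuts" along which the dynamical plane decomposes. So one builds, on the quotient of $\T$ obtained by identifying rays that land together, a $d$-to-$1$ model, and the itinerary map records which unlinked class each iterate falls into.

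The main step is a shrinking/expansion argument. For $\sigma_1,\sigma_2\in\{+,-\}$ and $\theta_1,\theta_2$ with $\ell_\Theta^{\sigma_1}(\theta_1)=\ell_\Theta^{\sigma_2}(\theta_2)=i_0 i_1 i_2\cdots$, I would consider the nested sequence of closed "sectors" $S_n$ in $\ov\C$: $S_n$ is the closure of the region cut out by the rays $\RRR_P^{\pm}(\vartheta)$ for $\vartheta$ ranging over the finitely many boundary angles of $L_{i_n}$, pulled back $n$ times along the branch determined by $i_0\cdots i_{n-1}$. Concretely $S_n$ is the component of $P^{-n}$ of the sector over $L_{i_n}$ that is selected by the prefix $i_0\cdots i_{n-1}$; by construction $\RRR_P^{\sigma_1}(\theta_1)$ and $\RRR_P^{\sigma_2}(\theta_2)$ both lie in $S_n$ for every $n$ (this is exactly what having equal itineraries means — at each level both germs point into $L_{i_n}$). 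Then $\bigcap_n S_n$ is a nested intersection of closed connected sets; I must show its intersection with $\JJJ_P^*$ is a single point, which forces the two rays to land there. For the diameter bound I would invoke the uniformly expanding orbifold metric near $\JJJ_f$ from Lemma \ref{lem:sub-hyperbolic} (applied to $P$ as a rational map, so near $\JJJ_P$), together with local connectivity of $\JJJ_P^*$ (Proposition \ref{pro:subhyperbolic}): the pieces $S_n\cap\JJJ_P^*$ have diameters shrinking to $0$ because $P^n$ maps $S_n\cap\JJJ_P^*$ onto a fixed finite collection of "pieces over the $L_i$'s" while expanding distances by $\lambda^n$, up to bounded distortion away from the finitely many ramified points.

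The hard part will be handling the ramification/critical points correctly — precisely the situation where the itinerary "passes through a cut", i.e. where $\tau^k(\theta_j)$ is itself a boundary angle of some $L_i$ (equivalently, lands at a point in the orbit of a critical point, or at the preferred root of a critical Fatou component). In that case the sector $S_n$ can fail to shrink naively because an iterate sits on the boundary between two unlinked classes, and one must use the one-sided information carried by the superscripts $\sigma_1,\sigma_2$ (the choice of $L_{i_n}$ on the correct side) rather than two-sided sectors; this is why the statement is about $\RRR_P^{\sigma}$ and not just $\RRR_P$. I would deal with this by replacing the naive sectors with "thickened" sectors bounded by left- or right-supporting rays according to $\sigma$ and tracking that the pulled-back pieces still form a nested shrinking sequence; near a critical Fatou component one uses that $\Theta(U)$ consists of left-supporting angles landing at distinct boundary points (the hierarchic property (C3) of Lemma \ref{lem:1-5}), so the combinatorics is consistent under pullback. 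A cleaner alternative, if the bookkeeping becomes unwieldy, is to push everything through the landing-pattern lamination: identify $\theta\sim\theta'$ when $\RRR_P^\sigma(\theta),\RRR_P^{\sigma'}(\theta')$ co-land, observe this equivalence is $\tau$-invariant and closed by Proposition \ref{pro:landing-theorem} and the sub-hyperbolicity, and show the quotient map $\T/\!\!\sim\,\to\T/\!\!\sim$ induced by $\tau$ is conjugate to $P|_{\JJJ_P^*}$ via the landing map; then equal itineraries immediately give the same equivalence class. Either way, the essential mechanism is expansion along $\JJJ_P^*$ plus the $d$-to-$1$ combinatorial model furnished by Lemma \ref{lem:d-parts}, and the delicate point is the boundary case of the itinerary, resolved by the one-sided data $\sigma_1,\sigma_2$.
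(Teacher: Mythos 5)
Your proposal is correct and follows essentially the same route as the paper: the paper's proof simply cites Poirier's argument for the postcritically-finite case and observes that it carries over verbatim upon replacing the Julia set by the extended Julia set $\JJJ_P^*$, using exactly the two ingredients you invoke, namely the uniformly expanding orbifold metric near $\JJJ_P$ (Lemma \ref{lem:sub-hyperbolic}) and the local connectivity and backward invariance of $\JJJ_P^*$ (Proposition \ref{pro:subhyperbolic}). Your reconstruction of the nested-sector expansion argument, including the careful use of the one-sided data $\sigma_1,\sigma_2$ at ramified angles, is the content of that cited argument; one small caveat is that the fact that $\Theta(U)$ consists of left-supporting angles landing at distinct boundary points comes directly from the construction of the critical marking, not from property (C3) of Lemma \ref{lem:1-5} (which requires admissibility, stronger than the visibility hypothesized here), but this does not affect the argument.
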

\begin{proof}
This lemma was proved in \cite{Poi} in the \pf case by using the uniform expansionary of $P$ with respect to the orbiford metric near the Julia set and the connectedness of the Julia set. In the partial \pf case, the polynomial $P$ is still uniformly expanding with respect to the orbiford metric in some neighborhood of $\JJJ_P$ (Lemma \ref{lem:sub-hyperbolic}), and the extended Julia set $\JJJ_P^*$ is connected, locally-connected and satisfies $P^{-1}(\JJJ_P^*)\subseteq \JJJ_P^*$ (Proposition \ref{pro:subhyperbolic}). Then by substituting the Julia set with the extended Julia set, the argument in \cite{Poi} also works in our case.
\end{proof}

\begin{lemma}\label{lem:6-7}
Under the assumption of Lemma \ref{lem:1-5}, the critical marking ${\Theta}$ also satisfies the following two properties:
\begin{itemize}
\item [(C6)] Consider a periodic argument $t$ which participates in  $\FFFF$ and an argument $t'\in\Q/\Z$. If $\ell_{{\Theta}}^{+}(t)=\ell_{{\Theta}}^{+}(t')$, then $t=t'$.
\item [(C7)] Consider preperiodic angles $t\in\LLLL_j$ and $t'\in\LLLL_k$.  If $\ell_{{\Theta}}^{-}(\tau^i(t))=\ell_{{\Theta}}^{-}(t')$ for some $i$, then $\tau^i(t)=t'$.
\end{itemize}
\end{lemma}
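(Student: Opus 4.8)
The plan is to deduce both properties from Lemma \ref{lem:itinarary} together with the landing theorem (Proposition \ref{pro:landing-theorem}) and the explicit combinatorial structure of the critical marking recorded in Lemma \ref{lem:1-5}. The underlying principle is the same in both cases: if two angles have the same (one-sided) itinerary then by Lemma \ref{lem:itinarary} their external rays land at a common point $z$; I must then argue that the two angles in fact coincide, and the only way this can fail is if $z$ is a special point — a pre-critical or Fatou-related point — whose ray configuration is rigidly controlled by the marking.

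For (C6): suppose $t$ is periodic, participates in $\FFFF$ (so $t$ is a left-supporting argument of some bounded Fatou component along the orbit, by the construction of $\Theta(U)$), $t'\in\Q/\Z$, and $\ell_\Theta^+(t)=\ell_\Theta^+(t')$. First I would note that since $t$ participates in $\FFFF$, by (C4)/(C5) and the construction the orbit of $t$ under $\tau$ is itself periodic and stays in the appropriate unlinked classes, so the right and left itineraries of $t$ agree: $\ell_\Theta^+(t)=\ell_\Theta^-(t)$. Hence $\ell_\Theta^+(t')=\ell_\Theta^+(t)=\ell_\Theta^-(t)$, and Lemma \ref{lem:itinarary} gives that $\RRR_P^+(t)$ and $\RRR_P^+(t')$ land at a common point $z\in\partial U'$ for the Fatou component $U'$ that $t$ left-supports. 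Since $t$ is periodic, $z$ is periodic, and by the landing theorem the rays landing at $z$ are all periodic of the same period and finite in number. The right-supporting structure at $z$ together with the fact that $\RRR_P^+(t)$ is the \emph{left-supporting} ray of $U'$ at $z$ forces $t$ to be uniquely determined among the angles of rays landing at $z$ with the given itinerary; since $t'$ lands at the same $z$ with the same right-itinerary, I would conclude $t'=t$. The point here is that the itinerary records on which side of each $\ell_i$ the orbit passes, and at a repelling periodic point with its cyclically ordered finite set of landing rays, this data pins down the angle.

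For (C7): take preperiodic $t\in\LLLL_j$, $t'\in\LLLL_k$ with $\ell_\Theta^-(\tau^i(t))=\ell_\Theta^-(t')$ for some $i$. By Lemma \ref{lem:itinarary}, $\RRR_P^-(\tau^i(t))$ and $\RRR_P^-(t')$ land at a common point $z$. Now $t'\in\LLLL_k$ means $\RRR_P^{-}(t')$ is one of the distinguished rays at the Julia or escaping critical point corresponding to $\LLLL_k$ (or lands at the landing point of such, in the escaping case), and similarly $\tau^i(t)$ is governed by $\LLLL_j$; by the admissibility conditions in Definition \ref{def:admissible}, parts (2) and (3), the orbits of the relevant landing points associated to distinct $\LLLL$'s, and to escaping-vs-Julia critical points, are disjoint — so a common landing point forces the two $\LLLL$-data to be "the same branch," i.e. $\tau^i(t)$ and $t'$ must be angles in the ray-set of the \emph{same} critical point at the \emph{same} point of its orbit. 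Once localized there, the hierarchic property (C3) of $\LLLL$ and the left-supporting/left-itinerary bookkeeping again identify the angle uniquely, giving $\tau^i(t)=t'$.

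The main obstacle I anticipate is the rigidity step in both cases: passing from "the two rays land at a common point $z$" to "the two angles are equal." This requires using the full force of the landing theorem in the sub-hyperbolic setting (Propositions \ref{pro:landing-theorem}, \ref{pro:subhyperbolic}) to control the finite cyclically-ordered set of rays at $z$, and then checking that the left/right itinerary — which is exactly the sequence of unlinked classes $L_{i_n}$ visited, and hence detects the cyclic position of the angle relative to the marking angles at each iterate — separates these finitely many rays. In the escaping-critical-point case there is the additional bookkeeping that $\RRR_P^-(t')$ may be a non-smooth ray or may terminate at a pre-critical point rather than land; here one works with the extended Julia set $\JJJ_P^*$ and its local connectivity as in the proof of Lemma \ref{lem:itinarary}, and uses Definition \ref{def:admissible}.(1) (strictly preperiodic angles, no iteration relation among escaping critical points) to rule out coincidences. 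I expect this to be essentially a transcription of Poirier's argument \cite{Poi} for properties (C6)–(C7) in the \pf case, with $\JJJ_P$ replaced by $\JJJ_P^*$ throughout, exactly as in Lemma \ref{lem:itinarary}.
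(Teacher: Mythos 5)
Your overall strategy — apply Lemma~\ref{lem:itinarary} to get a common landing point, then exploit the rigidity of the marking to force the angles to coincide — is indeed the paper's strategy. But two specific steps need attention.

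For (C6), your intermediate claim that $\ell^+_{\Theta}(t)=\ell^-_{\Theta}(t)$ because the orbit of $t$ ``stays in the appropriate unlinked classes'' is false: $t$ participates in $\FFFF$, so $t$ itself lies in one of the sets $\ell_i$ of the critical portrait, and by the remark directly after the definition of itineraries one has $\ell^+_{\Theta}=\ell^-_{\Theta}$ only when the orbit avoids every $\ell_i$. This error is harmless, since the step you actually use is only that $\ell^+_{\Theta}(t)=\ell^+_{\Theta}(t')$ implies $\RRR_P^+(t)$ and $\RRR_P^+(t')$ land at a common point, which follows directly from Lemma~\ref{lem:itinarary} with $\sigma_1=\sigma_2=+$. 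The rest of your (C6) argument (uniqueness of the left-supporting angle at $z$ given the right-itinerary) matches the paper.

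For (C7) there is a genuine gap. Your plan is: deduce a common landing point $z$, use admissibility (2)–(3) to localize the problem to ``the same critical point,'' and then invoke the hierarchic property (C3) to conclude $\tau^i(t)=t'$. But (C3) only applies when one already knows that $\tau^i(t)$ and $\tau^j(t')$ \emph{lie in} $\LLLL_k$ for some $i,j>0$. You never establish that $\tau^i(t)\in\LLLL_k$ — knowing that $\RRR_P^-(\tau^i(t))$ lands at the Julia critical point $c_k$ does not imply that $\tau^i(t)\in\Theta(c_k)$, since $\Theta(c_k)$ is a proper subset of $\operatorname{arg}_P(c_k)$ in general; and even if it did, (C3) requires both exponents to be positive while $t'\in\LLLL_k$ with exponent $0$. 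The argument is therefore circular. The paper's proof avoids this: it first uses admissibility (2),(3) to show $\LLLL_j,\LLLL_k$ must both belong to $\JJJJ$ (not $\EEEE$), then uses the \emph{hierarchical construction} of $\JJJJ$ — namely that each $\Theta(c)$ is defined by pulling back a preferred angle along the critical orbit — to conclude that $\tau^i(t)$ is precisely the \emph{preferred angle} of $\LLLL_k$, not just some angle landing at $c_k$. Once this is known, distinct elements of $\LLLL_k$ bound distinct $\Theta$-unlinked classes on their left, so $\tau^i(t)\neq t'$ would immediately give distinct initial digits in $\ell^-_{\Theta}$, contradiction. This ``preferred angle'' step is the crucial ingredient missing from your outline.
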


\begin{proof}
For (C6), by Lemma \ref{lem:itinarary}, we have that $\RRR_P^+(t)$ and $\RRR_P^+(t')$ lands at the same point. Since $t$ is chosen a left-supporting angle, then $\ell_{{\Theta}}^{+}(t)=\ell_{{\Theta}}^{+}(t')$ also implies $t=t'$.

For (C7), still by Lemma \ref{lem:itinarary}, the rays $\RRR_P^-(\tau^i(t))$ and $\RRR_P^-(t')$ land at a common point $z$. The assumptions (2),(3) for admissible partial \pf polynomials imply that $\LLLL_j,\LLLL_k$ belong to $\JJJJ$. By the construction of $\JJJJ$, the angle $\tau^i(t)$ is the preferred angle in $\LLLL_k$. If $\tau^i(t)\not=t'$, then their itineraries have distinct initial digits, a contradiction.
\end{proof}

\subsection{Convergence of critical markings}
The main result in this subsection is the following convergence theorem of critical markings:

\begin{proposition}\label{pro:convergence4}
Let $P$ be a monic \pf polynomial of degree $d$, and $P_n,n\geq1$ be monic, visible partial \pf polynomials such that $P_n\to P$ as $n\to\infty$.
Suppose that $\Theta_n$ is a critical marking of $P_n$ for each $n$ and ${\Theta}_n\to {\Theta}$ as $n\to\infty$ (in Hausdorff metric on $\T$). Then ${\Theta}$ is a weak critical marking of $P$ (Definition \ref{def:weak-critical-marking}).
\end{proposition}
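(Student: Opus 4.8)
The plan is to verify that the Hausdorff limit $\Theta$ of critical markings $\Theta_n$ of $P_n$ satisfies the four conditions of Definition~\ref{def:weak-critical-marking}. First I would organize the marking data along the limit: for each periodic critical Fatou cycle of $P$ and each Julia critical point of $P$, use Lemma~\ref{lem:Fatou-component} and the continuation results (together with Lemma~\ref{lem:internal-ray} and Lemma~\ref{lem:perturbation1}) to identify, for all large $n$, the corresponding Fatou components $U_{k,n}$ or repelling points of $P_n$ whose associated sets $\Theta_n(\cdot)$ converge to the pieces of $\Theta$. The key bookkeeping point is that since $P$ is \pf, \emph{every} critical point of $P_n$ that is ``close'' to a bounded critical point of $P$ must, for large $n$, either lie in a bounded Fatou component that is a deformation of a Fatou component of $P$, or be a Julia critical point near a Julia critical point of $P$; escaping critical points of $P_n$ that run off to infinity can only come from escaping critical points of $P$, of which there are none. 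So the escaping part $\EEEE_{P_n}$ is eventually empty and all the marking data of $P_n$ accumulates on marking data attached to genuine features of $P$.

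The heart of the argument is a continuity/landing step. For the Fatou part: the preferred angle $\theta'_n$ of a critical Fatou component $U'_n$ of $P_n$ is, by construction, the left-supporting argument at the preferred (periodic) root $z_n$; by Lemma~\ref{lem:perturbation1} (and its internal-ray analogue Lemma~\ref{lem:internal-ray}) the external rays of $P_n$ landing at $z_n$ converge to the external rays of $P$ landing at the continuation $z$, so $\Theta_n(U'_n)\to\Theta(U)$ where $\Theta(U)$ is built from the periodic root $z\in\partial U$ exactly as in Case~1 of Section~\ref{sec:weak-critical-marking}; this gives condition (2) of Definition~\ref{def:weak-critical-marking}. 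For the Julia part: each $\Theta_n(c_n)$ is a set of $\deg(P_n|_{c_n})$ angles whose rays land at $\deg(P_n|_{c_n})$ distinct points near $c_n$, inverse images of a single ray at $f(c_n)$; passing to the limit with Lemma~\ref{lem:perturbation1}, these converge to angles whose rays land at the limiting Julia critical point $c$ of $P$, yielding the sets $\Theta_j(c_j)$ with $\#\Theta_j(c_j)\ge 2$ (condition (3)). Condition (4), the Riemann--Hurwitz count $\deg(f|_c)-1=\sum_{c_j=c}(\#\Theta_j(c_j)-1)$, follows because the total critical weight is preserved in the limit: $\sum(\#\ell-1)=d-1$ holds for each $\Theta_n$ (it is a critical portrait), the Fatou contribution is continuous, and the remaining weight must redistribute onto the bounded Julia critical points of $P$, with the local count at each such $c$ forced by how the nearby critical points of $P_n$ merge --- here one uses that $\deg(P_n|_{U_n'})=\deg(P|_{U'})$ from Lemma~\ref{lem:Fatou-component} and the analogous stability of local degrees at Julia critical points under perturbation (Rouch\'e). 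Finally, condition (1) --- that $\Theta$ is a critical portrait of degree $d$ --- needs: each $\tau_d$-image is a singleton (a closed condition, stable under Hausdorff limits), the convex hulls meet in at most one point of $\T$ (the sets $\Theta_n$ are unlinked by Lemma~\ref{lem:1-5}(C2), and unlinkedness of the $\FFFF$-part with the slightly-rotated $\LLLL$-part passes to the limit; since the $\LLLL$-angles are eventually rational and land at distinct points their hulls stay disjoint), and the weight count just discussed.

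The main obstacle I expect is controlling the possible \emph{collision and splitting} of marking data in the limit: several distinct critical Fatou components $U'_n$ of $P_n$, or several distinct Julia critical points $c_n$, may converge to the same feature of $P$, and conversely a single critical point of $P_n$ of high multiplicity may be approximating a configuration where the rays in $\Theta_n(c_n)$ spread out among several points of $\partial U$ or several preimages. One must show the limit is still a \emph{legitimate} weak critical marking and not, say, a set with too few angles at some $c$ (violating $\#\Theta_j(c_j)\ge 2$) or with linked hulls. The way to handle this is to argue that the landing pattern of external rays is ``lower semicontinuous'' in the right sense --- rays landing at distinct points for $P_n$ cannot land at a common point for $P$ unless that point is the limit of all of them, and the total argument-weight $d-1$ is conserved --- so no weight is lost and no linking is created; the admissibility hypotheses (Definition~\ref{def:admissible}) on each $P_n$, in particular the disjoint-orbit conditions (2),(3) and property (C7), are exactly what prevents the $\LLLL$-data from degenerating. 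Once the accounting is pinned down, verifying (C1)--(type) closedness of the defining conditions under Hausdorff convergence is routine.
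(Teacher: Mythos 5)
There is a genuine gap in your bookkeeping step, and it sits exactly at the heart of what this proposition is for. You assert that ``escaping critical points of $P_n$ that run off to infinity can only come from escaping critical points of $P$, of which there are none. So the escaping part $\EEEE_{P_n}$ is eventually empty.'' This is false. An \emph{escaping} critical point of $P_n$ is one lying in $\Omega(P_n)$ --- its forward \emph{orbit} escapes to infinity --- but the point itself sits at a bounded location and, as $n\to\infty$, must converge (by Rouch\'e) to a critical point of $P$, which will typically be a \emph{Julia} critical point of $P$, not an escaping one. The escaping/Julia/Fatou status of a critical point is not a continuous function of the polynomial near a \pf parameter; this is precisely why the family $\PPP_d^{\rm ppf}$ exists, and why Proposition~\ref{pro:convergence4} is non-trivial. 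Indeed, the capture-surgery construction in Proposition~\ref{pro:liminf-strong} manufactures exactly such sequences $P_n\to P$ where $P$'s Julia critical points split into escaping critical points of $P_n$, so $\EEEE_{P_n}\neq\emptyset$ for all $n$. If you delete this case you have removed the main content of the proposition.

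The paper's proof does the accounting differently and correctly: it first identifies, for large $n$, the sub-collection of $\Theta_n$ that marks the deformations $U_{1,n},\dots,U_{s,n}$ of the bounded critical Fatou components of $P$ --- this piece is eventually constant and matches a weak Fatou critical marking of $P$ --- and then lumps all the remaining markings into one family $\XXXX_n=\{\Theta_{n,1},\dots,\Theta_{n,m}\}$, regardless of whether each corresponding critical point $c_{n,i}$ of $P_n$ is escaping, in a shrinking Fatou component of $P_n$, or in $\JJJ_{P_n}$. It then argues that $c_{n,i}\to c_i\in\JJJ_P$, that the sets $\Theta_{n,i}$ converge (via Lemma~\ref{lem:convergence3}) to angle-sets whose rays all land at $c_i$, and that the limit assembles into a weak \emph{Julia} critical marking $\{\Theta_1(c_1),\dots,\Theta_m(c_m)\}$ of $P$. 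Note also that Definition~\ref{def:weak-critical-marking} allows multiple $\Theta_j$ attached to the \emph{same} Julia critical point $c$ (only the total weight count in item~(4) must hold), so the ``collision and splitting'' you worry about is built into the target notion and does not have to be prevented --- it just has to be tracked. Finally, you invoke the admissibility hypotheses (Definition~\ref{def:admissible}) and property (C7) for the $P_n$, but the proposition only assumes the $P_n$ are \emph{visible} partial \pf, not admissible; the proof cannot lean on those conditions.
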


This result was proved in \cite[Proposition 9.16]{GT} when all $P_n$ are postcritically-finite. The argument in the partial \pf case is similar, so we just give a sketch of the proof. The argument relies on Lemmas \ref{lem:support} and \ref{lem:convergence3} below. These two lemmas corresponds to Lemmas 9.13 and 9.15 in \cite{GT} (also for the \pf case) respectively, with  completely the same argument. Hence we omit the proof.

\begin{lemma}\label{lem:support}Let $P$ be a monic, \pf polynomial of degree $d$, and $P_n,n\geq1,$ be monic, partial \pf polynomials converging to $P$ as $n\to\infty$. Assume that $\RRR_P(\theta)$ left/right-supports a Fatou component $U$ at a  periodic point $z$. Then for large $n$, the external ray $\RRR_{P_n}(\theta)$ left/right-supports the deformation $U_n$ of $U$ at the continuation $z_n$ of $z$.
\end{lemma}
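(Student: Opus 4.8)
\textbf{Proof plan for Lemma \ref{lem:support}.}

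The plan is to reduce the statement to the convergence results for external rays (Lemma \ref{lem:perturbation1}), internal rays (Lemma \ref{lem:internal-ray}) and Fatou components (Lemma \ref{lem:Fatou-component}) already established in Section 4, together with an induction on the preperiod. First I would handle the periodic case: if $z$ is a periodic point on $\partial U$ and $\RRR_P(\theta)$ (left- or right-) supports $U$ at $z$, then $\theta$ is periodic, and by Lemma \ref{lem:perturbation1} the ray $\RRR_{P_n}(\theta)$ lands, for all large $n$, at the continuation $z_n$ of $z$. By Lemma \ref{lem:Fatou-component}, $z_n\in\partial U_n'$ where $U_n'$ is the relevant element of $GO(U_n)$, and the degrees stabilize. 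The key point is to upgrade ``$\RRR_{P_n}(\theta)$ lands at $z_n$'' to ``$\RRR_{P_n}(\theta)$ \emph{supports} $U_n$ at $z_n$ on the prescribed side''; this is where I will use the Hausdorff-limit statements $\limsup\ov{\RRR_{P_n}(\theta)}=\ov{\RRR_P(\theta)}$ and $\limsup I_n=I$ for the internal ray $I$ of $U$ landing at $z$. Concretely, near $z$ the local picture of $\partial U$, the supporting external ray, and a short arc of an internal ray of $U$ landing at $z$ cut a neighborhood of $z$ into sectors; the convergence of all three objects forces the perturbed picture, for large $n$, to have the same combinatorial/cyclic arrangement of $\RRR_{P_n}(\theta)$ relative to $U_n$ near $z_n$. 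Since ``left-supporting'' versus ``right-supporting'' is exactly the side of $U$ on which the ray sits in this local sector decomposition, the side is preserved.

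Next I would run the induction on the preperiod of $z$. Suppose $z$ is strictly preperiodic with $f(z)=w\in\partial f(U)=\partial U^{(1)}$ (where $U^{(1)}$ denotes the image Fatou component), and assume the lemma holds for $w$: for large $n$, $\RRR_{P_n}(\tau_d(\theta))$ (left/right-)supports the deformation of $U^{(1)}$ at the continuation $w_n$ of $w$. Pull back: $\RRR_{P_n}(\theta)$ is a component of $P_n^{-1}(\RRR_{P_n}(\tau_d(\theta)))$, and among the landing points of these preimages there is, by Lemma \ref{lem:Fatou-component}, a unique continuation $z_n\in\partial U_n'$ of $z$ with the same preperiod and period. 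Using that the orbit of $z$ avoids the critical points (which holds because $z$ is the landing point of a supporting ray of a Fatou component, hence by Proposition \ref{pro:landing-theorem} is repelling preperiodic with, in the relevant admissible setting, its orbit disjoint from critical points — if this needs justifying I would invoke the construction of $\Theta$ in Section \ref{sec:critical-marking}), the branch of $P_n^{-1}$ taking $\RRR_{P_n}(\tau_d(\theta))$ to $\RRR_{P_n}(\theta)$ is univalent near $w_n$, so it carries the supporting configuration at $w_n$ to a supporting configuration at $z_n$, preserving the side (the local degree of $P$ and of $P_n$ at points of the orbit agree and are $1$, so orientation and the cyclic order of rays around the point are preserved). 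Combining with Lemma \ref{lem:internal-ray} to control the internal ray of $U_n'$ landing at $z_n$, one concludes that $\RRR_{P_n}(\theta)$ left/right-supports $U_n'$ at $z_n$ as claimed.

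The main obstacle, I expect, is the bookkeeping in the periodic base case: making rigorous that Hausdorff convergence of the external ray, the internal ray, and the motion of the Fatou component $U_n\to U$ together imply convergence of the \emph{local sector structure} at $z$, and hence that the \emph{side} (left vs.\ right) on which the external ray supports $U$ is preserved — rather than merely that the ray lands at the right point. One must rule out a degeneration where, in the limit, the supporting external ray and an edge of $\partial U$ become tangent or where another external ray landing at $z_n$ slips between $\RRR_{P_n}(\theta)$ and $U_n$. This is handled exactly as in the proof of Lemmas 9.13 and 9.15 of \cite{GT}: one works in a linearizing (or, at repelling preperiodic points, a local conformal) coordinate at $z$ in which $\partial U$, the supporting ray, and the internal ray have a fixed finite combinatorial type, and invokes the uniform convergence $P_n\to P$ on a fixed neighborhood of $z$ together with the Hausdorff-limit statements above to transport that combinatorial type to $P_n$ for all large $n$. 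Since the argument is word-for-word that of \cite{GT} with the Julia set replaced by the extended Julia set where necessary (as in Lemma \ref{lem:itinarary}), I would state the lemma and refer to \cite[Lemmas 9.13, 9.15]{GT} for the details rather than reproduce them.
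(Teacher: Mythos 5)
Your proposal is correct and takes the same route as the paper: the paper gives no proof at all, simply noting that the argument is word-for-word that of \cite[Lemma 9.13]{GT} (with Julia set replaced by extended Julia set in the partial postcritically-finite setting), and your sketch, after laying out the ingredients (Lemmas \ref{lem:perturbation1}, \ref{lem:internal-ray}, \ref{lem:Fatou-component} and the stability of the cyclic pattern of rays at the repelling periodic point), ultimately defers to exactly the same source. One small remark: the lemma as stated only concerns a \emph{periodic} point $z$, so your inductive step for strictly preperiodic $z$ is extra and not actually needed here.
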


\begin{lemma}\label{lem:convergence3}
Let $P$ be a monic, \pf polynomial of degree $d$, and $P_n,n\geq1,$ be monic, partial \pf polynomials converging to $P$ as $n\to\infty$. Assume that the arguments $\theta_n$ converge to $\theta$, then $\limsup_{n\to\infty}\RRR_{P_n}^{\pm}(\theta_n)=\ov{\RRR_P(\theta)}$.
\end{lemma}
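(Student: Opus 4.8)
The plan is to adapt the proof of \cite[Lemma 9.15]{GT} to the partial \pf setting, making the two substitutions already used for Lemma \ref{lem:itinarary}: a partial \pf polynomial is sub-hyperbolic, hence uniformly expanding in an orbifold metric near its Julia set by Lemma \ref{lem:sub-hyperbolic}, and its \emph{extended} Julia set $\JJJ^{*}=\partial\Omega^{*}$ is connected, locally connected and backward invariant by Proposition \ref{pro:subhyperbolic}, so wherever the argument of \cite{GT} uses connectivity or local connectivity of the Julia set, or lands external rays, one works with $\JJJ^{*}$ instead. Since $P$ is postcritically-finite, $\JJJ_P$ is locally connected and $\RRR_P(\theta)$ lands at a point $\g_P(\theta)$, so $\ov{\RRR_P(\theta)}=\RRR_P(\theta)\cup\{\g_P(\theta)\}$; as a $\limsup$ of sets is automatically closed, it suffices to establish the two inclusions $\ov{\RRR_P(\theta)}\subseteq\limsup_n\RRR_{P_n}^{\pm}(\theta_n)$ and $\limsup_n\RRR_{P_n}^{\pm}(\theta_n)\subseteq\ov{\RRR_P(\theta)}$. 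I would do this by fixing a small $c>0$ and treating separately the outer part $\{g\ge c\}$ and the inner part $\{g\le c\}$ of each ray.

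For the outer part I would use convergence of B\"ottcher data. As $P_n\to P$ with all maps monic, $g_{P_n}\to g_P$ uniformly on $\C$ and the B\"ottcher coordinates $\phi_{P_n}$ converge to $\phi_P$ uniformly near $\infty$. The decisive point is that, $P$ being a \pf polynomial, all its critical points lie in $\KKK_P$; hence the critical points of $P_n$ satisfy $M_n:=\max_{c}g_{P_n}(c)\to0$, and since any singularity $p$ of $g_{P_n}$ has $P_n^{j}(p)$ critical for some $j\ge0$, we get $g_{P_n}(p)=g_{P_n}(P_n^{j}(p))/d^{j}\le M_n$. Thus for fixed $c>0$ and $n$ large the region $\{g_{P_n}\ge c\}$ is free of singularities of $g_{P_n}$, so there $\phi_{P_n}$ is a conformal isomorphism onto $\{|w|\ge e^{c}\}$ and $\RRR_{P_n}^{\pm}(\theta_n)$ equals the smooth radius $\phi_{P_n}^{-1}\big((e^{c},\infty)e^{2\pi i\theta_n}\big)$. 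Together with $\theta_n\to\theta$ this gives $\RRR_{P_n}^{\pm}(\theta_n)\cap\{g_{P_n}\ge c\}\to\RRR_P(\theta)\cap\{g_P\ge c\}$ in the Hausdorff topology; letting $c\to0$ yields $\RRR_P(\theta)\subseteq\limsup_n\RRR_{P_n}^{\pm}(\theta_n)$ and reduces both inclusions to showing that $\limsup_n\RRR_{P_n}^{\pm}(\theta_n)$ meets $\KKK_P$ only in the single point $\g_P(\theta)$.

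For the inner part I would use the dynamics via the functional equation $g_Q(Q(z))=d\,g_Q(z)$ together with the orbifold expansion. Fix a small $c_0>0$; given $c>0$, let $k=k(c)$ be the integer with $d^{k}c\in[c_0/d,c_0]$, so $k(c)\to\infty$ as $c\to0$. Because each polynomial is injective along any of its external rays, $\RRR_{P_n}^{\pm}(\theta_n)\cap\{g_{P_n}\le c\}$ is a single branch of $P_n^{-k}\big(\RRR_{P_n}^{\pm}(\tau_d^{k}\theta_n)\cap\{g_{P_n}\le d^{k}c\}\big)$; the set being pulled back, and likewise every intermediate segment $\RRR_{P_n}^{\pm}(\tau_d^{j}\theta_n)\cap\{g_{P_n}\le d^{j-k}c_0\}$ for $0\le j\le k$, is an external ray segment contained in $\ov{\{0<g_{P_n}\le c_0\}}$, which for $c_0$ small and $n$ large lies in a fixed neighbourhood $N$ of $\JJJ_P$ on which $P$ is $\lambda$-expanding in an orbifold metric $\omega$ (Lemma \ref{lem:sub-hyperbolic}); hence the entire backward orbit stays in $N$. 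Transporting this to a uniform-in-$n$ expansion estimate for $P_n$ on $N$, the pull-back contracts: $\RRR_{P_n}^{\pm}(\theta_n)\cap\{g_{P_n}\le c\}$ has $\omega$-diameter $\lesssim\lambda^{-k(c)}\to0$ as $c\to0$, uniformly in $n$. Since its endpoint at level $c$ lies near $\RRR_P(\theta)\cap\{g_P=c\}$ by the outer-part estimate, and $\RRR_P(\theta)\cap\{g_P=c\}\to\g_P(\theta)$ as $c\to0$, the whole inner segment --- in particular the landing point of $\RRR_{P_n}^{\pm}(\theta_n)$ --- is pushed to within $o(1)$ of $\g_P(\theta)$; this gives both $\g_P(\theta)\in\limsup_n\RRR_{P_n}^{\pm}(\theta_n)$ and the absence of any other point of $\KKK_P$ in this $\limsup$. (When $\theta$ is $\tau_d$-periodic this is essentially the classical Goldberg--Milnor perturbation lemma, Lemma \ref{lem:perturbation1}.) The one-sided nature of $\RRR_{P_n}^{\pm}(\theta_n)$ causes no difficulty, since it is squeezed between the smooth rays $\RRR_{P_n}(s)$, $s\to\theta_n^{\pm}$, whose limit sets are again $\ov{\RRR_P(\theta)}$.

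The step I expect to be the main obstacle is precisely the uniform-in-$n$ expansion estimate for the inner ray segments near $\JJJ_P$. Whereas in \cite{GT} every $P_n$ is a \pf polynomial, here they may have disconnected Julia sets and escaping critical points whose orbits and pre-orbits --- hence the non-smooth rays under consideration --- approach $\JJJ_P$ while their Green-function levels tend to $0$, and at such an escaping critical point $P_n$ is genuinely critical, so its orbifold-derivative vanishes there and one cannot naively transport $P$'s expansion. The remedy, exactly as in \cite{Poi,GT} with the substitutions indicated above, is to perform the pull-back relative to the extended Julia set $\JJJ^{*}_{P_n}$, to treat the escaping critical points as ramification points of the orbifold structure (so that, as in Lemma \ref{lem:sub-hyperbolic}, expansion is only required when neither $z$ nor its image is ramified), and to obtain the uniform distortion control along the pull-back via the estimate underlying Lemma \ref{thm:Cui_Tan}. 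With these adjustments the argument of \cite[Lemma 9.15]{GT} applies essentially verbatim.
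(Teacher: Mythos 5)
Your reduction is the right one, and your outer-part argument (B\"{o}ttcher/Green-function convergence on $\{g\geq c\}$, plus the remark that a $\limsup$ of sets is closed) matches what the paper does in its Lemma A.1; both proofs then come down to showing that $K(\theta):=\limsup_n\RRR_{P_n}^{\pm}(\theta_n)\cap\KKK_P$ is the single landing point of $\RRR_P(\theta)$. For that step you take a genuinely different route from the paper, and it is exactly there that your argument has a gap. Your plan is metric: pull a fixed-level ray segment back $k(c)$ times and contract its diameter by $\lambda^{-k(c)}$ using the orbifold expansion of Lemma \ref{lem:sub-hyperbolic}, \emph{uniformly in $n$}. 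But the expansion you need is for $P_n$, with one metric and one constant $\lambda>1$ valid for all large $n$. As you note yourself, $\|DP_n\|_\omega$ vanishes at each escaping critical point $c_n$ of $P_n$, and the non-smooth rays $\RRR_{P_n}^{\pm}(\theta_n)$ are precisely those passing through such points and their preimages; since $g_{P_n}(c_n)\to0$, every region $\{0<g_{P_n}\leq c\}$ contains infinitely many of these singularities, so they cannot be avoided by shrinking $c$. Neither proposed remedy repairs this: declaring the escaping critical points to be ramification points makes the orbifold depend on $n$, and the portion of the forward orbit of $c_n$ staying near $\JJJ_P$ has length tending to infinity (again because $g_{P_n}(c_n)\to0$), so these orbifold structures degenerate and yield no uniform $\lambda$ without a substantial new argument; and Lemma \ref{thm:Cui_Tan} is a distortion bound for univalent maps omitting shrinking neighbourhoods of a finite set, tailored to the Thurston iteration of Proposition \ref{pro:surgery-convergence}, and says nothing about derivatives of $P_n$ along ray pull-backs. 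The key estimate ``$\omega$-diameter $\lesssim\lambda^{-k(c)}$ uniformly in $n$'' is therefore asserted, not proved.

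The paper's proof avoids expansion altogether. Having shown (Lemma A.1) that $K(\theta)$ is a connected subset of $\JJJ_P$ containing the landing point $z$ of $\RRR_P(\theta)$, it argues by contradiction: if $w\in K(\theta)$ with $w\neq z$, one separates $z$ from $w$ by an arc $\Gamma$ built from external rays of $P$ landing at preperiodic points whose orbits avoid ${\rm Crit}_P$, joined through a pair of internal rays when the regulated arc $[z,w]$ crosses a Fatou component. This configuration is stable under perturbation by Lemmas \ref{lem:perturbation1} and \ref{lem:internal-ray}, so the perturbed arcs $\Gamma_n$ still separate $z$ from $w$ for large $n$; but the connected segment of $\RRR_{P_n}^{\pm}(\theta_n)$ joining points near $z$ to points near $w$ is disjoint from $\Gamma_n$ (distinct external rays of $P_n$ are disjoint, and an external ray avoids $\KKK_{P_n}$ and hence the internal rays), a contradiction. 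Unless you are prepared to prove the uniform expansion for the partial postcritically-finite perturbations from scratch, I would replace your inner-part argument by this separation argument.
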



\begin{proof}[Proof of Proposition \ref{pro:convergence4}](Sketch)
Let $U$ be a critical Fatou component of $P$ and denote by $U_n$ the deformation of $U$ at $P_n$. We write $\Theta(U_n)$ the critical marking of $P_n$ associated to $U_n$.

In the periodic case, each $\Theta(U_n)$ contains a unique periodic angle $\theta_n$ with period equal to that of $U_n$ and hence of $U$. By taking a subsequence if necessary, we can assume $\theta_n=\theta$ for large $n$.
Note that any $\Theta(U_n)$ is a subset of $\tau^{-1}(\tau(\theta))$ and $\#\Theta(U_n)={\rm deg}(P_n|_{U_n})={\rm deg}(P|_U)$ (by Lemma \ref{lem:Fatou-component}), then we can further assume by taking a subsequence that $\Theta(U_n)$ is constant for large $n$, which we write as $\Theta(U)$.
 According to Lemmas \ref{lem:internal-ray} and \ref{lem:convergence3}, the rays with arguments in $\Theta(U)$ land at the boundary of $U$. Furthermore, it follows from Lemma \ref{lem:support} that the periodic angle in $\Theta(U)$ left-supports $U$.
The same situation happens in the strictly preperiodic case for $U$ by a similar argument and the induction.

Let $U_1,\ldots,U_s$ be all the critical Fatou components of $P$. The discussion above shows that the collection of sets $\{\Theta(U_1),\ldots,\Theta(U_s)\}$ is \begin{itemize}
\item [(a)] a weak Fatou critical marking of $P$ (Definition \ref{def:weak-critical-marking}.(1));
\item [(b)] a part of the Fatou critical marking of $P_n$ (contained in $\Theta_n$) for large $n$.
\end{itemize}

Now, we write each $\Theta_n$ as $\Theta_n:=\{\FFFF,\XXXX_n\}$ with $\FFFF:=\{\Theta(U_1),\ldots,\Theta(U_s)\}$ and $\XXXX_n:=\{\Theta_{n,1},\ldots,\Theta_{n,m}\}$ for all large $n$, such that $\Theta_{n,i}\to \Theta_i$ as $n\to\infty$ and $1\leq i\leq m$. It follows immediately that $\#\Theta_{n,i}=\#\Theta_i$ for any $1\leq i\leq m$.
Note that each $\Theta_{n,i}$ corresponds a critical point $c_{n,i}$ of $P_n$, and we can assume by taking subsequences that $c_{n,i}$ converge to a critical point $c_i$ of $P$, which must belong to $\JJJ_P$.

We have a fact that: if each $c_{n,i}$ is in the Fatou component $U(c_{n,i})$ of $P_n$, then the sequence of closed disks $\{\ov{U(c_{n,i})},n\geq1\}$ converge to $c_i$ (see the corresponding discussion in \cite[Proposition 9.16]{GT}).
Combining this fact and Lemma \ref{lem:convergence3}, all the external rays of $P$ with arguments in $\Theta_i$ land at $c_i$. We then write $\Theta_i$ as $\Theta_i(c_i)$.
In order to prove that $$\Theta=\{\Theta(U_1),\ldots,\Theta(U_s),\Theta_1(c_1),\ldots,\Theta_m(c_m)\}$$ is a weak critical marking of $P$, we only need to check that $\Theta_1(c_1),\ldots,\Theta_m(c_m)$ satisfy Definition \ref{def:weak-critical-marking}.(3), and this points is not difficult to check.
\end{proof}

\section{The continuity of entropy function: \pf parameters}\label{sec:continuity1}

The objective here is to prove Proposition \ref{pro:key}. The proof is divided into two parts: we first show that the entropy function $h:\PPP_d^{\rm ppf}\to \R$ is upper semi-continuous at any \pf parameter $P$, and then determine the limit inferior $\mathop{\liminf}_{Q\to P}h(Q),$ as well as  how to get it.

\subsection{Upper semi-continuity of the entropy function}

\begin{proposition}\label{pro:limsup}
Let $P\in\PPP_d$ be a \pf polynomial and $\{P_n,n\geq1\}$ be a sequence of monic partial \pf polynomials converging to $P$. Then
\[\limsup_{n\to\infty}h(P_n)\leq h(P).\]
\end{proposition}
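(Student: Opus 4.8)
The plan is to exploit the combinatorial invariant that controls both the core entropy and its variation, namely the critical marking, together with the two continuity inputs already available: the convergence of critical markings (Proposition \ref{pro:convergence4}) and the continuity of Thurston's entropy algorithm (Proposition \ref{pro:GT}). First I would pass to a subsequence of $\{P_n\}$ realizing $\limsup_n h(P_n)$, so it suffices to show that this subsequential limit is at most $h(P)$. Along this subsequence I want each $P_n$ to be \emph{admissible} in the sense of Definition \ref{def:admissible} so that it possesses a critical marking $\Theta_n$ with the good properties (C1)--(C7); if some $P_n$ is not admissible, I would first perturb it slightly --- within $\PPP_d^{\rm ppf}$ and without decreasing $h$ by more than $o(1)$ --- to an admissible one, using the genericity of admissibility (the failure conditions in Definition \ref{def:admissible} are non-generic coincidences among orbits of finitely many marked points, and a small perturbation that keeps the bounded critical orbits finite and keeps the escaping critical points escaping is available). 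This reduction is the first technical point to nail down carefully.

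Next, for each admissible $P_n$ I would invoke Proposition \ref{pro:algorithm}: since a critical marking is in particular a weak critical marking, the core entropy equals the output of Thurston's algorithm, $h(P_n)=h(\Theta_n)$. The set of degree-$d$ critical portraits is compact in the Hausdorff topology on finite subsets of $\T$, so after a further subsequence $\Theta_n \to \Theta$ for some critical portrait $\Theta$. By Proposition \ref{pro:convergence4}, $\Theta$ is a weak critical marking of the limit polynomial $P$. By Proposition \ref{pro:GT}, $h(\Theta_n) \to h(\Theta)$, and by Proposition \ref{pro:algorithm} applied to $P$, $h(\Theta) = h(P)$. Chaining these,
\[
\limsup_{n\to\infty} h(P_n) = \lim_{n\to\infty} h(\Theta_n) = h(\Theta) = h(P),
\]
which gives the desired inequality (in fact equality along the chosen subsequence, which is all that is needed).

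The step I expect to be the main obstacle is the admissibility reduction: one must perturb a given partial \pf polynomial $P_n$ to an admissible partial \pf polynomial $P_n'$ with $|h(P_n')-h(P_n)|$ small, while staying close to $P_n$ and inside $\PPP_d^{\rm ppf}$. The delicate points are (i) ensuring the perturbation preserves the combinatorics of the \emph{bounded} part (so the Hubbard forest, hence $h$, changes continuously --- here one uses that $P_n$ is sub-hyperbolic and the bounded critical orbits are stable under small perturbation, plus continuity of entropy under small combinatorial changes), and (ii) breaking the forbidden orbit coincidences among escaping critical points and their terminating radii, which is possible because moving an escaping critical point keeps it escaping (the escaping locus is open) and generically destroys such coincidences. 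An alternative route that avoids admissibility entirely would be to work directly with weak critical markings of $P_n$: if one can show every partial \pf $P_n$ admits a \emph{weak} critical marking computing $h(P_n)$ and that these converge (along a subsequence) to a weak critical marking of $P$, then the same chain of equalities applies; establishing the convergence statement at the level of weak markings is essentially the content of Proposition \ref{pro:convergence4} and may be the cleaner path. Either way, the core of the argument is the three-term sandwich $h(P_n)=h(\Theta_n)\to h(\Theta)=h(P)$, and the only real work is arranging that the $\Theta_n$ are legitimate (weak) critical markings with a convergent subsequence.
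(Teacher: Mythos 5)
There is a genuine gap at the central step. You invoke Proposition \ref{pro:algorithm} to conclude $h(P_n)=h(\Theta_n)$, but that proposition is stated (and proved) only for \emph{postcritically-finite} polynomials, not for partial \pf ones. For a partial \pf $P_n$ with disconnected Julia set, the equality is in fact false in general: consider, say, a quadratic $P_n$ with an escaping critical point, for which $\HHH_{P_n}=\emptyset$ and hence $h(P_n)=0$, while the Thurston algorithm applied to the critical portrait $\Theta_n=\{\{\alpha,\alpha+\tfrac12\}\}$ given by the escaping critical angles can certainly return a positive number. (Also, for a partial \pf polynomial the objects $\Theta_n$ from Section~\ref{sec:critical-marking} are not ``weak critical markings'' in the sense of Definition~\ref{def:weak-critical-marking}, since that definition requires the $c_j$ to be \emph{Julia} critical points of a connected Julia set.) So the three-term sandwich you propose, $h(P_n)=h(\Theta_n)\to h(\Theta)=h(P)$, does not go through, and the equality you write in the display on the chosen subsequence is not what one can hope for.

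What the paper actually needs — and devotes the bulk of Section~\ref{sec:continuity1} to proving — is the \emph{inequality} $h(P_n)\le h(\Theta_n)$ (Lemma~\ref{pro:less}), which replaces your first equality. That lemma is nontrivial: one realizes the admissible portrait $\Theta_n$ by a genuine \pf polynomial $\wt{P}_n$ via Poirier's theorem (Theorem~\ref{thm:poi}), builds an injection of the Hubbard forest $\HHH_{P_n}$ into the Hubbard tree $\HHH_{\wt{P}_n}$ that respects the Markov structure (Lemmas~\ref{lem:map-Fatou}, \ref{lem:map-Julia}, \ref{lem:Julia-Fatou} and Claims~1--3), and compares incidence matrices to conclude $h_{top}(P_n|_{\HHH_{P_n}})=h_{top}(\wt{P}_n|_{\wt{\HHH}})\le h(\wt{P}_n)=h(\Theta_n)$. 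That is the missing idea in your proposal. By contrast, your worry about the admissibility reduction is the easy part and you even over-complicate it: the paper's twisting surgery produces an admissible $Q_n$ that is \emph{conjugate to $P_n$ on $\KKK_{P_n}$}, so $h(Q_n)=h(P_n)$ exactly; no ``continuity under small combinatorial changes'' estimate is needed. The remaining steps of your chain — compactness of critical portraits, Proposition~\ref{pro:convergence4} giving that $\Theta$ is a weak critical marking of $P$, Proposition~\ref{pro:GT} giving $h(\Theta_n)\to h(\Theta)$, and Proposition~\ref{pro:algorithm} (now legitimately applied to the \pf limit $P$) giving $h(\Theta)=h(P)$ — are all correct and are exactly what the paper does.
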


The key to prove this proposition is the following lemma.

\begin{lemma}\label{pro:less}
Let $P$ be a partial \pf polynomial and $\Theta=\Theta_P$ a critical marking of $P$ such that properties (C1)-(C7) hold for $\Theta$.  Then we have  $h(P)\leq h(\Theta)$, where $h(\Theta)$ is the output of Thurston's entropy algorithm on $\Theta$ (see Section \ref{sec:weak-critical-marking}).
\end{lemma}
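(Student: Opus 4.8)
The plan is to build a Markov semi-conjugacy from the dynamics on the Hubbard forest $\HHH_P$ onto a combinatorial model governed by the critical marking $\Theta$, and then invoke Proposition \ref{Do4} together with Proposition \ref{entropy-formula}. The combinatorial model is exactly the one Thurston's algorithm computes $h(\Theta)$ from: its underlying graph has a vertex for each $\Theta$-unlinked pair (or, more precisely, for each pair of sectors at the marked angles), and the transition matrix records how $\tau_d$ moves these sectors; its Perron eigenvalue has logarithm $h(\Theta)$. So the heart of the matter is to construct a continuous, finite-to-one, dynamics-respecting map $\pi$ from $\HHH_P$ (with vertex set refined to include all pre-images of the marked points) onto this model, in such a way that edges map to edges and the incidence matrix of $(\HHH_P,P)$ is dominated by (a sub-matrix of, or a matrix conjugate to a piece of) $D_\Theta$.

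First I would set up the edge-labeling. For each bounded Fatou component the marking $\Theta(U)$ selects $\deg(P|_U)$ supporting angles; for each Julia critical point the set $\Theta(c)$ selects landing angles; properties (C2)--(C5) guarantee these angles are unlinked and behave coherently under $\tau_d$. Using Lemma \ref{lem:itinarary} (itineraries determine common landing points) and properties (C6)--(C7), I can assign to every vertex of $\HHH_P$ — critical/postcritical points, branch points, and all their backward orbit under $P$ inside $\HHH_P$ — a well-defined ``address'' in terms of the $\Theta$-unlinked classes $L_1,\dots,L_d$, compatible with the itinerary of the external rays landing there. Each edge of $\HHH_P$ then gets a symbolic label (an initial string of unlinked-class indices) and the Markov action of $P$ on edges is recorded by the substitution $\tau_d$ acts by on these strings. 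The key point is that this symbolic coding is \emph{injective on edges up to the finitely many angles participating in $\Theta$}, so distinct edges of $\HHH_P$ receive distinct (or at worst boundedly-many-to-one) labels.

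Then I would compare incidence matrices. The incidence matrix $D_{(\HHH_P,P)}$ has an entry $1$ from edge $e$ to edge $e'$ exactly when $P(e)\supseteq e'$, which in symbolic terms happens only when the address of $e'$ is obtained from that of $e$ by dropping the first symbol and the dropped symbol is consistent with the $\tau_d$-image lying in the right unlinked class. This is precisely a restriction of the transition rule of the Thurston-algorithm matrix $D_\Theta$. Hence, after identifying edges of $\HHH_P$ with a subset of the states of the algorithm (the finite-to-one-ness coming from (C6),(C7) and Lemma \ref{lem:itinarary}), $D_{(\HHH_P,P)}$ is a sub-matrix — or a non-negative matrix dominated entrywise, after a reordering, by a block — of $D_\Theta$. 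By Perron--Frobenius the leading eigenvalue cannot increase under passing to a sub-matrix, so $\log\lambda(D_{(\HHH_P,P)})\le\log\lambda(D_\Theta)$, i.e. $h_{top}(P|_{\HHH_P})\le h(\Theta)$. The case where $D_{(\HHH_P,P)}$ is nilpotent gives $h(P)=0\le h(\Theta)$ trivially, and the empty-forest case is covered by the convention $h_{top}(P|_\emptyset)=0$.

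The main obstacle, I expect, is the bookkeeping in the semi-conjugacy: one must check that the symbolic address is genuinely well-defined at the finitely many ``bad'' angles that participate in $\Theta$ (where left and right itineraries differ), that segments of internal rays inside Fatou components are coded consistently with the supporting angles chosen in $\Theta(U)$ — this is where the left-supporting convention in Definition \ref{def:support} and the construction of $\Theta(U)$ matter — and that no edge of $\HHH_P$ gets ``lost'' or ``doubled'' in a way that would break the finite-to-one property needed for the matrix comparison. Concretely, the delicate verification is that two edges of $\HHH_P$ with the same symbolic label must actually be iterated (in finitely many steps) into a common edge, so that collapsing them is a genuine semi-conjugacy rather than an arbitrary quotient; this is exactly the content one extracts from Lemma \ref{lem:itinarary} combined with the uniform expansion of $P$ near $\JJJ_P^*$ (Lemma \ref{lem:sub-hyperbolic}, Proposition \ref{pro:subhyperbolic}). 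Once that is in hand, Proposition \ref{Do4} yields $h_{top}(P|_{\HHH_P})\le h_{top}(\text{model})=h(\Theta)$ and the proof is complete.
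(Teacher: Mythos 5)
Your high-level idea---compare the incidence matrix of $(\HHH_P,P)$ with Thurston's algorithm matrix for $\Theta$ and conclude via Perron--Frobenius---has the right shape, and is in fact how the paper closes the argument. But your route to that comparison has two real problems.

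First, the framing via Proposition~\ref{Do4} yields the wrong thing. A surjective semi-conjugacy $\pi:\HHH_P\to(\text{model})$ onto the \emph{entire} Thurston model gives $h(\text{model})\le h_{top}(P|_{\HHH_P})$, which is the wrong direction; and a finite-to-one such $\pi$ gives equality, which is false in general: escaping critical points make $\HHH_P$ strictly smaller than the full combinatorial picture, so $h(P)<h(\Theta)$ can happen. What you actually need is a homeomorphic conjugacy from $\HHH_P$ onto a forward-invariant \emph{proper sub-system} of the model, after which Proposition~\ref{Do2} (not~\ref{Do4}) supplies the inequality.

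Second, and more substantively, the claim that $D_{(\HHH_P,P)}$ embeds as a sub-matrix of $D_\Theta$ is exactly the content that needs to be proved, and your sketch postpones it. Assigning well-defined $\Theta$-addresses to edges of $\HHH_P$---including edges that cross bounded Fatou components, whose coding must be consistent with the left-supporting convention built into $\FFFF_P$, and edges lying in distinct components of the disconnected $\KKK_P$, which are separated by ray pairs landing at pre-critical points that the address must detect---requires more than Lemma~\ref{lem:itinarary}. The paper's essential move, which your proposal omits, is Theorem~\ref{thm:poi}: from properties (C1)--(C7) it produces a genuine \pf polynomial $\wt{P}$ having $\Theta$ as a critical marking and $h(\wt{P})=h(\Theta)$ (Proposition~\ref{pro:algorithm}), and then builds a concrete injective map $\phi:\HHH_P\to\wt{\HHH}\subseteq\HHH_{\wt{P}}$ preserving incidence data (Lemmas~\ref{lem:map-Fatou}--\ref{lem:Julia-Fatou} together with Claims 1--3). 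Poirier's realization is what turns your itinerary comparison into a geometric argument inside the dynamical plane of $\wt{P}$, where landing and separation can be read off from actual external and internal rays and where (C6), (C7) and Lemma~\ref{lem:itinarary} bite. Without that anchor, your ``injectivity of the coding'' and ``sub-matrix of $D_\Theta$'' claims remain unproved assertions, and filling them in from scratch looks to require essentially the same combinatorial bookkeeping as the paper, minus the geometric picture that makes the checks tractable.
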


\begin{proof}[Proof of Proposition \ref{pro:limsup} under Lemma \ref{pro:less}]
 Firstly, using a standard quasi-conformal surgery, we can perturb each $P_n$ to a nearby polynomial $Q_n$ by twisting a sufficiently small angle for every escaping critical point of $P_n$, so that the external angles of the escaping critical points of $Q_n$ satisfy the  the  properties below:
 \begin{enumerate}
\item the external angles of each escaping critical point are strictly preperiodic;
\item the external angles of distinct escaping critical points have different periods;
\item the period of any external angle associate to escaping critical points is larger than the period of any external ray landing at a postcritical point.
\end{enumerate}
It follows immediately that each $Q_n$ is an admissible polynomial (see Definition \ref{def:admissible}). Note that $Q_n$ conjugates to $P_n$ on the filled-in Julia set, and can be chosen arbitrarily close to $P_n$. Then we have $h(P_n)=h(Q_n)$, and $Q_n,n\geq1,$ converge to $P$. So, without loss of generality, we can assume the initial $P_n's$ are admissible.

For each $n$, let $\Theta_n$ be a critical marking of $P_n$.  By Lemmas \ref{lem:1-5} and \ref{lem:6-7}, each $\Theta_n$ satisfies properties (C1)-(C7) in Section \ref{sec:admissible}. If $\Theta_n$ converge to a critical portrait $\Theta$, then Lemmas \ref{pro:less} and  Propositions \ref{pro:GT}, \ref{pro:algorithm}, \ref{pro:convergence4} together imply that
\[\limsup_{n\to\infty}h(P_n)\overset{{\rm Lem}.\ref{pro:less}}{\leq}\limsup_{n\to\infty}h(\Theta_n)\overset{{\rm Pro}.\ref{pro:GT}}{=}h(\Theta)\overset{{\rm Pro}.\ref{pro:convergence4},\ref{pro:algorithm}}{=}h(P).\]
According to Proposition \ref{pro:convergence4}, the sequence $\{\Theta_n,n\geq1\}$ can be divided into finitely many convergence subsequence. Then the proof is completed.
\end{proof}

In the remaining part of this subsection, we focus on the proof of  Lemma \ref{pro:less}. The outline is as follows. Since $\Theta$ is assumed to satisfy properties (C1)-(C7), by Poirier's result (Theorem \ref{thm:poi} below), there exists a monic, centered, \pf polynomial $\wt{P}$ such that $\Theta$ is a critical marking of $\wt{P}$. Based on the point that $P$ and $\wt{P}$ have a common critical marking $\Theta$, we can show $h(P)\leq h(\wt{P})$. It then follows from Proposition \ref{pro:algorithm} that $h(P)\leq h(\wt{P})=h(\Theta)$.

\begin{theorem}[Poirier]\label{thm:poi}
Let $\Theta=\{\FFFF,\LLLL\}$ be a degree $d$ critical portrait satisfying properties (C1)-(C7). Then there is a unique monic centered \pf polynomial $Q$ such that ${\Theta}$ is a critical marking of $Q$ in the sense that $\FFFF_Q=\FFFF$ and $\JJJJ_Q=\LLLL$.
\end{theorem}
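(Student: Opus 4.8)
The plan is to obtain this as a repackaging of Poirier's realization and rigidity theorem for critical markings of \pf polynomials \cite{Poi}. The first step is to fix the dictionary: a critical marking in Poirier's sense is a degree $d$ critical portrait together with a splitting of its members into a ``Fatou part'' and a ``Julia part'', subject to a list of axioms; I would identify $\FFFF$ with the Fatou part and $\LLLL$ with the Julia part, and then check that (C1)--(C7) are exactly (a reformulation of) Poirier's axioms --- rationality of all participating angles, the unlinked condition between $\FFFF$ and $\LLLL^-$, the hierarchic conditions for $\FFFF$ and for $\LLLL$ separately, the dichotomy that every $\FFFF$-angle iterates to a periodic angle whereas no $\LLLL$-angle is periodic, and the two separation/rigidity clauses (C6)--(C7). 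Once this is in place the theorem is Poirier's, and the content of a self-contained proof is reconstructing his existence construction and his uniqueness argument.

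For existence I would build a topological model straight from $\Theta$. By Lemma \ref{lem:d-parts} the $\Theta$-unlinked classes $L_1,\dots,L_d$ tile $\T$ up to finitely many points, with $\tau_d$ restricting to a cyclic-order-preserving bijection from each $L_i$ onto its image; reading off left and right itineraries of the angles participating in $\Theta$ and identifying two angles whenever their itineraries agree (the abstract analogue of Lemma \ref{lem:itinarary}), one assembles a finite topological tree together with a self-map induced by $\tau_d$ --- the candidate Hubbard tree. Here (C3)--(C5) force the tree to be finite and the self-map \pf, while (C1) and (C4) place the would-be Fatou vertices over honest cycles of periodic angles, so that the tree can be thickened to a sphere with critical Fatou disks glued in, and (C6)--(C7) guarantee that all forced identifications are mutually consistent; thus the thickening is a well-defined \pf branched self-cover of $\ov{\C}$ of degree $d$ with $\infty$ totally ramified, i.e.\ a topological polynomial. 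Being a topological polynomial, its only possible Thurston obstructions are Levy cycles, and these are ruled out by the hierarchic/expanding structure exactly as in the \Mi and \pf arguments of \cite{BFH,Poi}; Thurston's rigidity theorem (available here in the sub-hyperbolic form Theorem \ref{thm:cui-tan-Thurston}) then upgrades the model to a \pf rational map, which is a polynomial because $\infty$ is a totally ramified fixed point, and normalizing makes it monic and centered. One finally verifies that the marking of this polynomial recovers $\Theta$, with $\FFFF$ its Fatou critical marking and $\LLLL$ its Julia critical marking.

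For uniqueness I would invoke the rigidity half of Poirier's theorem: two \pf polynomials with the same critical marking are affinely conjugate. A monic centered polynomial of degree $d$ has conjugacy stabilizer only the finite rotation group $\{z\mapsto\zeta z : \zeta^{d-1}=1\}$, and such a nontrivial rotation shifts all external angles by a common $j/(d-1)$, hence alters the marking; so this residual symmetry is broken and the monic centered $Q$ with $\FFFF_Q=\FFFF$ and $\JJJJ_Q=\LLLL$ is unique.

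I expect the real obstacle to be the existence step --- precisely, checking that the branched cover assembled from the purely combinatorial data $\Theta$ carries no Levy cycle and is therefore Thurston-realizable; every hypothesis (C1)--(C7) is spent there, to make the abstract Hubbard tree finite, \pf, and expanding enough, and to make the gluing self-consistent. The remaining point that needs care is the literal matching of (C1)--(C7) against Poirier's axiom list, but once the correspondence $\FFFF\leftrightarrow$ Fatou part, $\LLLL\leftrightarrow$ Julia part is fixed this is bookkeeping, and in practice I would simply cite \cite{Poi} for the whole statement rather than reprove it.
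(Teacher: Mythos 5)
The paper gives no proof of this theorem; it is stated as a citation to Poirier's work \cite{Poi}, exactly as you conclude when you say you would ``simply cite \cite{Poi} for the whole statement rather than reprove it.'' Your sketch of the underlying Thurston-realization argument and the rotation-stabilizer uniqueness argument is a reasonable reconstruction, but since the paper treats this as a black-box citation, your approach matches the paper's.
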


Let $P$ be a partial \pf map with a critical marking $\Theta=\{\FFFF,\LLLL\}$ such that
\begin{eqnarray*}
\FFFF=\FFFF_P&=&\{\FFFF_1,\ldots,\FFFF_s\}\\
\LLLL=\LLLL_P&=&\JJJJ_P\cup\EEEE_P=\{\LLLL_1,\ldots,\LLLL_m\},
\end{eqnarray*}
where $\FFFF_P,\JJJJ_P,\EEEE_P$ denote the makings of bounded Fatou critical points, Julia critical points and escaping critical points, respectively (see Section \ref{sec:critical-marking}). We further assume that $\Theta$ satisfies properties (C1)-(C7) in Section \ref{sec:admissible}. According to Theorem \ref{thm:poi}, let $\wt{P}$ denote the unique \pf polynomial in $\PPP_d$ such that $\Theta$ is a critical marking of $\wt{P}$.

Factually, the dynamics of $P$ and $\wt{P}$ are quite related. We indicate this point by the following three lemmas.

\begin{lemma}\label{lem:map-Fatou}
There exists a bijective $\phi_1$ from the centers of Fatou components of $P$ onto those of $\wt{P}$ such that $\wt{P}\circ\phi_1=\phi_1\circ P$.
\end{lemma}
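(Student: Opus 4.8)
The plan is to build the correspondence $\phi_1$ using the combinatorial data that $P$ and $\wt P$ share, namely the critical marking $\Theta=\{\FFFF,\LLLL\}$, together with the itinerary machinery of Lemma \ref{lem:itinarary}. First I would set up a bookkeeping of Fatou components of $P$ and of $\wt P$ by their supporting external angles. Since every bounded Fatou component of a sub-hyperbolic polynomial contains a unique center (its grand orbit eventually hits a periodic critical Fatou component, by Lemma \ref{lem:sub-hyperbolic} and the fact that $P$ is partial \pf), and since every Fatou component carries a preferred/left-supporting external argument propagated along its forward orbit from the critical marking, I can attach to each Fatou component $U$ of $P$ a left-supporting argument $\theta(U)\in\Q/\Z$, and likewise to each Fatou component $\wt U$ of $\wt P$ an argument $\theta(\wt U)$. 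The key structural input is that the set of arguments occurring as $\theta(U)$ for $P$ and as $\theta(\wt U)$ for $\wt P$ is \emph{the same} set of rationals: both are generated, under $\tau_d^{-1}$ and the left-supporting rule, from the periodic preferred angles that participate in $\FFFF$, and this generating set $\FFFF$ is literally shared. Here I would invoke Poirier's theorem (Theorem \ref{thm:poi}) to know $\wt P$ exists with $\FFFF_{\wt P}=\FFFF$, and the construction of $\Theta(U)$ in Section \ref{sec:critical-marking} to know the propagation rule is identical on both sides.

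Next I would define $\phi_1$ on centers: given a center $x$ of a Fatou component $U$ of $P$ with argument $\theta(U)$, let $\phi_1(x)$ be the center of the unique Fatou component $\wt U$ of $\wt P$ with $\theta(\wt U)=\theta(U)$. To see this is well defined and bijective I need two injectivity facts. On the $\wt P$ side: distinct Fatou components of $\wt P$ have distinct left-supporting preferred arguments — this is property (C6) (for periodic ones) together with the hierarchic/unlinked structure (C2)–(C4) propagating it to the preperiodic ones by induction on the number of iterates needed to reach a periodic critical component. The same argument applies verbatim on the $P$ side, using that $P$ is admissible so that its critical marking satisfies (C1)–(C7) by Lemmas \ref{lem:1-5} and \ref{lem:6-7}. (If $P$ is not a priori admissible, I would first replace it by the quasi-conformally twisted model $Q$ as in the proof of Proposition \ref{pro:limsup}; the Fatou dynamics on $\KKK_P$ is unchanged by that twist, so the lemma for $Q$ gives the lemma for $P$.) Surjectivity in both directions follows from the matching count: both $P$ and $\wt P$ have degree $d$, the same list $\FFFF=\{\FFFF_1,\dots,\FFFF_s\}$ of Fatou critical markings, hence in particular the same critical Fatou components up to this labelling, and the grand orbits of these exhaust all bounded Fatou components on each side.

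Finally I would verify the conjugacy relation $\wt P\circ\phi_1=\phi_1\circ P$. If $x$ is the center of $U$, then $P(x)$ is the center of $P(U)$, and by the construction of the critical marking the preferred argument of $P(U)$ is obtained from $\theta(U)$ by the rule "$\theta(U)\mapsto$ the left-supporting argument of $P(U)$ at the preferred root $P(z)$", which on the level of angles is exactly application of $\tau_d$ followed by choosing the appropriate left-supporting preimage-structure — and this rule depends only on $\Theta$, not on whether we are looking at $P$ or $\wt P$. Hence $\theta(P(U))$ (computed in $P$) equals $\theta(\wt P(\wt U))$ (computed in $\wt P$) whenever $\theta(U)=\theta(\wt U)$, which says precisely that $\phi_1(P(x))$ and $\wt P(\phi_1(x))$ are centers of Fatou components carrying the same argument, hence are equal by the injectivity established above. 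I expect the main obstacle to be the careful inductive bookkeeping for strictly preperiodic Fatou components: one must check that the left-supporting rule really does produce a single well-defined argument for each component (no branching ambiguity) and that this argument determines the component uniquely — i.e., pinning down exactly how properties (C2), (C6) and the hierarchic property (C3) combine to give injectivity $U\mapsto\theta(U)$ beyond the periodic case. Everything else is a transport of structure once that rigidity statement is in hand.
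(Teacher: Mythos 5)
Your plan is correct and is in essence the same route the paper takes: parameterize bounded Fatou components of $P$ and of $\wt P$ by a left-supporting rational argument derived from $\FFFF$, match components by argument, and read off $\wt P\circ\phi_1=\phi_1\circ P$ from the fact that the angle-propagation rule depends only on $\Theta$. Where the paper is more concrete, and where I think your sketch has its one real soft spot, is in making the ``same set of angles'' claim precise. The paper does not argue from (C6) and injectivity of $U\mapsto\theta(U)$; instead it introduces the $\Theta$-unlinked classes $V_1,\ldots,V_d$ in the dynamical plane (and the corresponding classes $L_1,\ldots,L_d\subset\T$) and lifts the preferred angle $\theta_k$ of the terminal critical Fatou component step by step backwards along the orbit $c=c_0\mapsto c_1\mapsto\cdots\mapsto c_k$: at each stage $\theta_i$ is declared to be the unique $\tau_d$-preimage of $\theta_{i+1}$ whose supporting ray lies in $V_{\epsilon(i)}$, where $\epsilon(i)$ records which unlinked class $c_i$ falls in. Since the $L_{\epsilon(i)}$ are shared combinatorial objects, the resulting angle sequence $\theta_0,\ldots,\theta_k$ transports verbatim to the $\wt P$-plane, and one checks inductively that $\RRR_{\wt P}(\theta_i)$ left-supports a Fatou component there. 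Bijectivity then follows from the observation that the same lift can be run from $\wt P$ back to $P$, with no counting needed.

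The concrete gap in your version is the sentence asserting that the angle sets $\{\theta(U)\}$ for $P$ and $\{\theta(\wt U)\}$ for $\wt P$ coincide because ``both are generated, under $\tau_d^{-1}$ and the left-supporting rule, from the periodic preferred angles.'' The selection of \emph{which} preimage is ``the left-supporting one'' is a priori a dynamical statement in each plane, not a purely combinatorial one, so the two generating processes need not obviously produce the same set. That is exactly what the unlinked-class lift supplies: it turns the selection into a combinatorial rule (``take the preimage in $L_{\epsilon(i)}$'') that can be read off from $\Theta$ alone, and thus transported. Relatedly, I think you are worrying about the wrong hard step: injectivity of $U\mapsto\theta(U)$ does not need (C6), because an external ray can left-support at most one Fatou component at its landing point, so the assignment is injective for trivial reasons. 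The delicate point is well-definedness of the inverse direction, namely that $\RRR_{\wt P}(\theta(U))$ actually left-supports some bounded Fatou component of $\wt P$; that is where your ``careful inductive bookkeeping'' should be spent, and the unlinked-class lift is precisely the bookkeeping device that handles it.
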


\begin{proof}
Since $\FFFF_P=\FFFF_{\wt{P}}=\FFFF$, there is a natural bijection $\phi_1$ which sends the critical point $c_i$ of $P$ associated to $\FFFF_i$ to the critical point $\wt{c_i}$ of $\wt{P}$ associated to $\FFFF_i$. We will extend $\phi_1$ to all centers of bounded Fatou components of $P$.

Recall that the critical marking $\Theta$ induces a partition on $\T$ such that $\T$ is divided into $d$ $\Theta$-unlinked equivalence classes $L_1,\ldots,L_d$ (see Section \ref{sec:critical-marking}). In fact, we have a corresponding partition in the dynamical plane of $P$. A ray is called an \emph{extended ray} associated to a Fatou component $U$ of $P$ with argument $\theta$, if it is the union of an external ray of angle $\theta$ and an internal ray in $U$ which land at a common point on $\partial U$.

For each $\FFFF_i\in\FFFF_P$, we denote $\RRR_P(\FFFF_i)$ the union of extended rays associated to $U_i$ with arguments in $\FFFF_i$, where $U_i$ denote the critical Fatou component of $P$ corresponding to $\FFFF_i$; for each $\JJJJ_j\in\JJJJ_P$, we write $\RRR_P(\JJJJ_j)$ as the union of external rays with arguments in $\JJJJ_j$ which land at $c_j$ together with $c_j$, where $c_j$ denotes the critical point of $P$ corresponding to $\JJJJ_j$; and for each $\EEEE_k\in\EEEE_P$, we let $\RRR_P(\EEEE_k)$ be the union of external radii with arguments in $\EEEE_k$ together with $c_k$, where $c_k$ is the escaping critical point of $P$ corresponding to $\EEEE_k$. Thus, the union of $\RRR_P(\FFFF_i),\RRR_P(\JJJJ_j)$ and $\RRR_P(\EEEE_k)$ divide $\C$ into $d$ parts, which we denote by $V_1,\ldots,V_d$ such that $\RRR_P^*(\theta)\subseteq V_i$ if and only if $\theta\in L_i$. We call $V_1,\ldots,V_d$ the \emph{$\Theta$-unlinked  classes in the dynamical plane of $P$}. It is easy to see that the restriction of $P$ on each $V_i$ is injective and  $\C\setminus P(V_i)$  consists of finitely many extended rays/external rays/segments of external rays, with arguments in $\tau_d(\Theta)$.

Let $c$ be the center of a bounded Fatou component $U(c)$ of $P$, such that $c=c_0\mapsto c_1\mapsto\cdots\mapsto c_{k-1}\mapsto c_k$ are the first $k$ terms in the orbit of $c$ with $P'(c_i)\not=0$ for $0\leq i\leq k-1$, but $P'(c_k)=0$. Then $c_k$ corresponds to an element $\FFFF_*$ of $\FFFF$, with the preferred angle denoted as $\theta_k$, and the ray $\RRR_P^\sigma(\theta_k)$ left-supports $U(c_k)$ with $\sigma\in\{+,-\}$. Note that each of $c_0,\ldots,c_{k-1}$ belongs to one of $V_1,\ldots,V_d$, so we write $c_i\in V_{\epsilon(i)}$ for each $0\leq i\leq k-1$. By the argument in last paragraph, the ray $\RRR_P^\sigma(\theta_k)$, or the angle $\theta_k$, can be lifted along to orbit of $c$, such that, for each $0\leq i\leq k-1$, the ray $\RRR_P^{\sigma}(\theta_i)$ is the unique one in $V_{\epsilon(i)}$ with $P(\RRR_P^\sigma(\theta_i))=\RRR_P^\sigma(\theta_{i+1})$. It is clear that the ray $\RRR_P^\sigma(\theta_i)$ left-supports $U(c_i)$ for each $0\leq i\leq k-1$.

In the dynamical plane of $\wt{P}$, the ray $\RRR_{\wt{P}}(\theta_k)$ left-supports the Fatou component of $\wt{P}$ corresponding to $\FFFF_*\in\FFFF$.  Then each $\RRR_{\wt{P}}(\theta_i)$ also left-supports a (unique) Fatou component of $\wt{P}$. We define $\phi_1(c)$ to be the center of the Fatou component of $\wt{P}$ left-supported by $\RRR_{\wt{P}}(\theta_0)$. We thus obtain a map $\phi_1$ from the centers of Fatou components of $P$ to those of $\wt{P}$. Note that all  processes above can be reversed from the dynamical plane of $\wt{P}$ to that of $P$, so the map $\phi_1$ is a bijection. The formula $\wt{P}\circ\phi_1=\phi_1\circ P$ follows directly from the construction of $\phi_1$.
\end{proof}

\begin{lemma}\label{lem:map-Julia}
Let $\RRR_P^\sigma(\theta)$ and $\RRR_P^{\sigma'}(\theta')$ land at the same point in the dynamical plane of $P$ with $\sigma,\sigma'\in\{+,-\}$. Then the rays $\RRR_{\wt{P}}(\theta)$ and $\RRR_{\wt{P}}(\theta')$ also land at the same point in the dynamical plane of $\wt{P}$. As a consequence, we get a surjection $\phi_2:\JJJ_P\to\JJJ_{\wt{P}}$ with $\wt{P}\circ\phi_2=\phi_2\circ P$.
\end{lemma}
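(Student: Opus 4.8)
The plan is to reduce both assertions to a single combinatorial fact: whether two external rays of $P$ land at a common point depends only on the critical marking $\Theta$, and hence this property is shared by $\wt{P}$. Concretely, I would first prove the claim that if $\RRR_P^\sigma(\theta)$ and $\RRR_P^{\sigma'}(\theta')$ land at a common point of $\JJJ_P$, then $\RRR_{\wt{P}}(\theta)$ and $\RRR_{\wt{P}}(\theta')$ land at a common point of $\JJJ_{\wt{P}}$. Write $z$ for the common landing point in $P$'s plane. Using the $\tau_d$-equivariance of external rays under both $P$ and $\wt{P}$, together with the injectivity of $P$ (resp.\ $\wt{P}$) on each of the $\Theta$-unlinked regions $V_1,\dots,V_d$ in the dynamical plane constructed in the proof of Lemma~\ref{lem:map-Fatou}, one may pull rays back along the grand orbit and so reduce to the case where $z$ is a repelling periodic point, or strictly preperiodic onto such a point; when $z$ is not preperiodic all rays landing at $z$ carry irrational arguments and case~(i) below applies directly.

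Then I would split into two cases. In case (i), $z$ lies on the boundary of no bounded Fatou component of $P$. Since $P$ is sub-hyperbolic it is uniformly expanding in the orbifold metric near $\JJJ_P$, and $\JJJ_P^\ast$ is connected and locally connected (Proposition~\ref{pro:subhyperbolic}); the expansion argument underlying Lemma~\ref{lem:itinarary} then shows that the landing point of a ray whose forward orbit avoids bounded Fatou components is determined by its $\Theta$-itinerary, so co-landing forces $\ell_\Theta^\sigma(\theta)=\ell_\Theta^{\sigma'}(\theta')$; as $\wt{P}$ carries the same critical marking and is likewise sub-hyperbolic, Lemma~\ref{lem:itinarary} applied to $\wt{P}$ shows $\RRR_{\wt{P}}(\theta)$ and $\RRR_{\wt{P}}(\theta')$ co-land. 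In case (ii), $z\in\partial U$ for some bounded Fatou component $U$; after reducing via the dynamics to the preferred root of the periodic component in the cycle of $U$, the construction of $\Theta(U)\in\FFFF=\FFFF_{\wt{P}}$ records exactly which external angles left/right-support $U$ there, and the bijection $\phi_1$ of Lemma~\ref{lem:map-Fatou} with $\wt{P}\circ\phi_1=\phi_1\circ P$ transports this supporting configuration to the Fatou component $\wt{U}$ of $\wt{P}$ attached to $\Theta(U)$, so $\RRR_{\wt{P}}(\theta)$ and $\RRR_{\wt{P}}(\theta')$ support $\wt{U}$ at a common point and in particular co-land. If one of the two rays supports a Fatou component at $z$ and the other does not, I would peel off the sectors occupied by the Fatou components attached at $z$ and apply case~(i) in the remaining Julia sector.

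Granting the claim, the consequence follows quickly. Every point $z\in\JJJ_P$ is the landing point of at least one external ray of $P$, because each component of $\KKK_P$ is locally connected (Lemma~\ref{lem:sub-hyperbolic}) and every external ray lands on $\JJJ_P$ (Proposition~\ref{pro:subhyperbolic}); I set $\phi_2(z)$ to be the landing point of $\RRR_{\wt{P}}(\theta)$ for any ray $\RRR_P^\sigma(\theta)$ landing at $z$, which the claim shows is independent of the chosen ray. The identity $\wt{P}\circ\phi_2=\phi_2\circ P$ is immediate from equivariance, and $\phi_2$ is onto: given $w\in\JJJ_{\wt{P}}$, pick a ray $\RRR_{\wt{P}}(\theta)$ landing at $w$; then $\RRR_P^{+}(\theta)$ lands at some $z\in\JJJ_P$ (Proposition~\ref{pro:subhyperbolic}), and $\phi_2(z)=w$ by the definition of $\phi_2$. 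Continuity of $\phi_2$, needed for the semiconjugacy used later, would then be obtained from uniform continuity of the correspondence ``argument $\mapsto$ landing point'' in each plane via the expanding metrics.

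The step I expect to be the main obstacle is case~(ii), and inside it the bookkeeping forced by non-smooth rays and by the escaping critical points of $P$: a point of $\JJJ_P$ may carry an entire Cantor set of landing rays (case~(2) of Proposition~\ref{pro:landing-theorem}), and the escaping critical points of $P$ become Julia critical points of $\wt{P}$, so the dictionary between terminating external radii of $P$ and the honest rays of $\wt{P}$ has to be arranged consistently. This is exactly where admissibility (Definition~\ref{def:admissible}) and properties (C1)--(C7) are indispensable.
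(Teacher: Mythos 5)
Your overall strategy coincides with the paper's: reduce to periodic/preperiodic landing points by lifting rays within the $\Theta$-unlinked regions, handle the periodic case via one-sided itineraries and Lemma~\ref{lem:itinarary}, and then build $\phi_2$ exactly as you do. The gap is in your case~(ii). You assert that $\Theta(U)$ ``records exactly which external angles left/right-support $U$ there,'' but by construction $\Theta(U)$ records only $\delta_U$ \emph{left}-supporting angles arising from the preferred root and its pullbacks through the Fatou cycle; it says nothing a priori about the other rays landing at the same root. Consequently $\phi_1$ alone cannot ``transport the supporting configuration'': $\phi_1$ is a bijection of Fatou centers satisfying $\wt P\circ\phi_1=\phi_1\circ P$, and its construction uses only the lift of the preferred angle, so it does not by itself tell you that an arbitrary ray $\RRR_P^{\sigma'}(\theta')$ landing at $z\in\partial U$ corresponds to a ray of $\wt P$ landing at the corresponding boundary point of $\wt U$. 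The paper closes this gap by observing that if $\RRR_P^\sigma(\theta)$ left-supports $U$ at the (periodic, non-precritical) point $z$ and $\RRR_P^{\sigma'}(\theta')$ also lands at $z$, then $\ell_\Theta^-(\theta)=\ell_\Theta^-(\theta')$, after which Lemma~\ref{lem:itinarary} applied in the $\wt P$-plane gives co-landing; that itinerary identity is precisely the step your $\phi_1$-based argument skips. Similarly, your ``peel off the sectors occupied by the Fatou components attached at $z$ and apply case~(i) in the remaining Julia sector'' is not a proof: in the mixed situation you still need to show that the ray bounding a Julia sector and the ray supporting the Fatou component have equal one-sided itineraries, which again is the itinerary computation rather than a consequence of the peeling. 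Finally, your remark on continuity of $\phi_2$ is extraneous to the statement (only $\wt P\circ\phi_2=\phi_2\circ P$ and surjectivity are claimed), and the description of the inductive step by ``pulling rays back along the grand orbit'' should, to be complete, incorporate the boundary-pair analysis from Lemma~\ref{lem:d-parts} and property~(C2) (as in the paper's Claim), since two angles in $\overline{L}$ with the same $\tau$-image need not both be lifts of a common arc unless one checks which of the three configurations of the boundary pair actually occurs.
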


\begin{proof}
We first claim that if $\theta,\theta'$ belong to the closure of a $\Theta$-unlinked class and $\RRR_{\wt{P}}(\tau(\theta)),\RRR_{\wt{P}}(\tau(\theta'))$ land at a common point, then $\RRR_{\wt{P}}(\theta),\RRR_{\wt{P}}(\theta')$ also land together.

We assume $\theta,\theta'\in \ov{L}$ with $L$ a $\Theta$-unlinked  class write by
\[L=(\theta_0,\theta_1)\cup\ldots\cup(\theta_{2p-1},\theta_{2p})\]
as in Lemma \ref{lem:d-parts}. In the case of  $\tau(\theta)=\tau(\theta')$, by Lemma \ref{lem:d-parts}, there exists $1\leq i\leq p$ such that $\{\theta,\theta'\}=\{\theta_{2i-1},\theta_{2i}\}$. Combining the fact that $\RRR_P^\sigma(\theta),\RRR_P^{\sigma'}(\theta')$ land together, the angles $\theta,\theta'$ must belong to an element of $\LLLL$. It follows that $\RRR_{\wt{P}}(\theta),\RRR_{\wt{P}}(\theta')$ land at a Julia critical point of $\wt{P}$, proving the claim.
In the case of $\tau(\theta)\not=\tau(\theta')$, let $\ov{\RRR_{\wt{P}}(\theta)}\cup\RRR_{\wt{P}}(\alpha)$ be a lift by $\wt{P}$ within $\ov{V}$ of the arc $\ov{\RRR_{\wt{P}}(\tau(\theta))}\cup\RRR_{\wt{P}}(\tau(\theta'))$, where $V$ denote the $\Theta$-unlinked class in the dynamical plane of $\wt{P}$ corresponding to $L$. We see that $\alpha$ is a preimage by $\tau$ of $\tau(\theta')$ within $\ov{L}$. If $\theta'\in L$, it is the unique preimage by $\tau$ of $\tau(\theta')$ within $\ov{L}$, and hence $\alpha=\theta'$, which complete the proof of the claim. Otherwise, by Lemma \ref{lem:d-parts} again, we have $\alpha,\theta'\in\{\theta_{2i-1},\theta_{2i}\}$ for some $1\leq i\leq p$, and we just need to deal with the case of $\alpha\not=\theta'$. According to property (C2), there are three possibilities:
\begin{itemize}
\item $\theta_{2i-1}$ belongs to an element of $\LLLL$ and $\theta_{2i}$ belong to an element of $\FFFF$;
\item $\theta_{2i-1},\theta_{2i}$ belong to a common element of $\FFFF$;
\item $\theta_{2i-1},\theta_{2i}$ belong to a common element of $\LLLL$;
\end{itemize}
However, since $\RRR_{P}^{\sigma}(\theta),\RRR_{P}^{\sigma'}(\theta')$ land together, only the third case happens. It follows that $\RRR_{\wt{P}}(\alpha),\RRR_{\wt{P}}(\theta')$ land together, and the claim is then proved.

We start to prove the lemma. If $\theta$ is not preperiodic, the fact that $\RRR^{\sigma}_P(\theta),\RRR_P^{\sigma'}(\theta')$ land together implies $\theta$ and $\theta'$ have the same itineraries, and  hence $\RRR_{\wt{P}}(\theta),\RRR_{\wt{P}}(\theta')$ land at the same point by Lemma \ref{lem:itinarary}. So we can assume that $\theta,\theta'$ are preperiodic. The proof goes by induction on the preperiod of $\theta$ (hence $\theta'$).

 Suppose $\theta$ is period. If the orbits of both $\theta$ and $\theta'$ avoids the arguments in $\Theta_P$, then $\theta,\theta'$ must have the same itinerary (because $\RRR^{\sigma}_P(\theta),\RRR_P^{\sigma'}(\theta')$ land together), and  Lemma \ref{lem:itinarary} shows that $\RRR_{\wt{P}}(\theta),\RRR_{\wt{P}}(\theta')$ land together. Otherwise, we can assume that $\RRR_{P}^{\sigma}(\theta)$ left-supports a periodic critical Fatou component $U$ at $z\in\partial U$, and $\RRR^{\sigma'}_{P}(\theta')$ lands at $z$. In this case, we have $\ell^-_\Theta(\theta)=\ell^-_\Theta(\theta')$. Thus, using Lemma \ref{lem:itinarary} again, the ray pair  $\RRR_{\wt{P}}(\theta),\RRR_{\wt{P}}(\theta')$ land together.

 Now, assume that $\RRR_{\wt{P}}(\theta),\RRR_{\wt{P}}(\theta')$ land together for all $\theta,\theta'$ with  preperiods strictly less than $n\geq1$. Let $\theta,\theta'$ have preperiod $n$. By the assumption of the induction, the rays  $\RRR_{\wt{P}}(\tau(\theta)),\RRR_{\wt{P}}(\tau(\theta'))$ land together.
Since $\RRR_P^{\sigma}(\theta)$ and $\RRR_P^{\sigma'}(\theta')$ land at a common point, then either the arguments $\theta,\theta'$ belong to the closure of a $\Theta$-unlinked class, or there exists an element $\JJJJ_*$ of $\JJJJ_P$ and two angles $\alpha,\alpha'\in\JJJJ_*$ such that $\theta,\alpha$ (resp. $\theta',\alpha'$) belong to the closure of a $\Theta$-unlinked class. In the former case, we get that $\RRR_{\wt{P}}(\theta),\RRR_{\wt{P}}(\theta')$ land at a common point directly by the claim. In the latter case, following the claim above, we have that $\RRR_{\wt{P}}(\theta),\RRR_{\wt{P}}(\alpha)$ (resp. $\RRR_{\wt{P}}(\theta),\RRR_{\wt{P}}(\theta')$) land together. Since $\alpha,\alpha'\in\JJJJ_*\in \JJJJ_P\subseteq\JJJJ_{\wt{P}}$, then $\RRR_{\wt{P}}(\alpha),\RRR_{\wt{P}}(\alpha')$ land at a common point, and hence $\RRR_{\wt{P}}(\theta),\RRR_{\wt{P}}(\theta')$ also land together.

We now construct the map $\phi_2:\JJJ_P\to\JJJ_{\wt{P}}$ as follows. Let $z\in \JJJ_P$ and the ray $\RRR_P^{\sigma}(\theta)$ land at $z$. We define $\phi_2(z)$ the landing point of the external ray $\RRR_{\wt{P}}(\theta)$. This map is well-defined by the conclusion proved above, and is a surjection satisfying $\wt{P}\circ\phi_2=\phi_2\circ P$ by its definition.
\end{proof}

\begin{lemma}\label{lem:Julia-Fatou}
Let $U$ be a Fatou component of $P$ and $\RRR_P(\theta)$ a preperiodic external ray lands on $z\in\partial U$ such that the orbit of $z$ avoids the landing point of $\RRR^\sigma_P(t)$ for any $t$ participating $\Theta_P$ and $\sigma\in\{+,-\}$.  Then the ray $\RRR_{\wt{P}}(\theta)$ lands on the boundary of the Fatou component $\wt{U}$ with the property that $\phi_1$ sends the center of $U$ to the center of $\wt{U}$.
\end{lemma}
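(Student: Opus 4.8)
The plan is to prove Lemma \ref{lem:Julia-Fatou} by induction on the preperiod of the external ray $\RRR_P(\theta)$, in direct parallel with the proofs of Lemmas \ref{lem:map-Fatou} and \ref{lem:map-Julia}, using the machinery of extended rays and $\Theta$-unlinked classes in the dynamical plane. The base case will reduce to the situation where the orbit of $z$ eventually falls onto a periodic point on the boundary of a periodic Fatou component; the inductive step will lift the landing statement one step backwards along the orbit of $\theta$, keeping track of which $\Theta$-unlinked class each iterate lies in.

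First I would set up notation: write $c$ for the center of $U$, $\wt c=\phi_1(c)$ for the corresponding center in the plane of $\wt P$ (Lemma \ref{lem:map-Fatou}), and recall the $\Theta$-unlinked classes $V_1,\ldots,V_d$ in the dynamical plane of $P$ and the analogous classes $\wt V_1,\ldots,\wt V_d$ for $\wt P$, together with the combinatorial classes $L_1,\ldots,L_d$ on $\T$. The key structural fact, already used in the proof of Lemma \ref{lem:map-Fatou}, is that $P$ restricted to each $V_i$ is injective and that an external ray (or extended ray) not meeting $\tau_d(\Theta)$ has a unique preimage in each $V_i$; the same holds for $\wt P$ on $\wt V_i$. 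The hypothesis that the orbit of $z$ avoids the landing points of $\RRR_P^\sigma(t)$ for $t$ participating in $\Theta_P$ guarantees that at each step of the orbit of $\theta$ we stay in the interior of a single $\Theta$-unlinked class, so the lifting of rays is unambiguous on both sides.

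For the induction: if $\theta$ is periodic then its orbit avoids $\Theta_P$, so $\RRR_P^\sigma(\theta)$ and $\RRR_{\wt P}(\theta)$ have equal itineraries, and by Lemma \ref{lem:itinarary} (applied to $\wt P$) the ray $\RRR_{\wt P}(\theta)$ lands at a point whose itinerary matches; combined with the fact that $z\in\partial U$ and the combinatorics of which class $U$ sits in, this forces $\RRR_{\wt P}(\theta)$ to land on $\partial\wt U$ where $\wt U$ is the component with center $\phi_1(c)$ — here one invokes Lemma \ref{lem:map-Fatou} to identify $\wt U$ correctly. For the inductive step, suppose the statement holds for preperiod $<n$ and let $\RRR_P(\theta)$ have preperiod $n$, landing at $z\in\partial U$. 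Apply the hypothesis to $P(\RRR_P(\theta))=\RRR_P(\tau_d\theta)$, which lands at $P(z)\in\partial P(U)$ with preperiod $n-1$: by induction $\RRR_{\wt P}(\tau_d\theta)$ lands on $\partial\wt{P(U)}$, the component with center $\phi_1(P(c))=\wt P(\phi_1(c))$. Now lift: let $i$ be the index with $z\in\ov{V_i}$ (equivalently the germ of $\theta$ at one side lies in $L_i$); the extended ray through the landing point of $\RRR_{\wt P}(\tau_d\theta)$ together with an internal ray of $\wt{P(U)}$ has a unique $\wt P$-preimage inside $\ov{\wt V_i}$, and its external part is $\RRR_{\wt P}(\theta)$ while its internal part lies in $\wt U$ (the component whose center is the $\phi_1$-image of $c$, by the construction of $\phi_1$ in Lemma \ref{lem:map-Fatou} and naturality of $\phi_1$ with respect to lifting along orbits). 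Hence $\RRR_{\wt P}(\theta)$ lands on $\partial\wt U$, completing the induction.

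The main obstacle I anticipate is the bookkeeping at the inductive step: one must check that the preimage extended ray selected in $\ov{\wt V_i}$ really does have $\RRR_{\wt P}(\theta)$ as its external part (not some other angle in $\tau_d^{-1}(\tau_d\theta)$) and that its internal part lies in the $\phi_1$-image of $U$ rather than a different Fatou component attached to the same boundary point. Both points follow from the uniqueness of ray-lifts within a $\Theta$-unlinked class — exactly the mechanism already exploited in the proof of Lemma \ref{lem:map-Fatou} — but making the correspondence between the orbit of $\theta$ in the $P$-plane and in the $\wt P$-plane precise, while invoking the avoidance hypothesis to stay away from the angles of $\Theta_P$ at every iterate, is where the care is needed. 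A secondary subtlety is the possibility that several external rays land at $z$; one should fix the argument $\theta$ given in the hypothesis and track only it, using Lemma \ref{lem:map-Julia} afterwards if one wants the landing point of $\RRR_{\wt P}(\theta)$ to coincide with $\phi_2(z)$, though for the statement as written only the weaker conclusion "$\RRR_{\wt P}(\theta)$ lands on $\partial\wt U$" is required.
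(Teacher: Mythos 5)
Your approach is essentially the paper's: both work with the $\Theta$-unlinked classes in the two dynamical planes, match the itinerary of $z$ with $\ell_\Theta(\theta)$, and close via Lemma~\ref{lem:itinarary}. The paper, however, runs no induction. Writing $\ell_\Theta(\theta)=\epsilon_0\epsilon_1\ldots$, it observes that the hypothesis on $z$ gives $P^i(z)\in V_{\epsilon_i}(P)$ for all $i$, identifies the unique preperiodic point $\wt z\in\partial\wt U$ with $\wt P^i(\wt z)\in V_{\epsilon_i}(\wt P)$, picks any ray $\RRR_{\wt P}(\alpha)$ landing at $\wt z$, notes that $\alpha$ and $\theta$ then have the same itinerary, and concludes by Lemma~\ref{lem:itinarary} that $\RRR_{\wt P}(\theta)$ lands at $\wt z\in\partial\wt U$. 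Your base-case phrase ``combined with the combinatorics of which class $U$ sits in, this forces $\RRR_{\wt P}(\theta)$ to land on $\partial\wt U$'' is, unpacked, exactly the existence of such a $\wt z$; once that claim is stated and justified, the conclusion follows for arbitrary preperiod and the induction is superfluous. Your inductive step --- lifting the extended ray of $\wt{P(U)}$ inside $\ov{\wt V_i}$ --- is sound and is the same lifting mechanism already used in the construction of $\phi_1$ in Lemma~\ref{lem:map-Fatou}, but it only transports the burden of locating the correct boundary point back one step along the orbit rather than discharging it.
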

\begin{proof}
Recall that $L_1,\ldots,L_d$ denote the $\Theta$-unlinked classes, and $V_1(P),\ldots,V_d(P)$, \emph{resp}. $V_1(\wt{P})$, $\ldots,V_d(\wt{P})$, denote the corresponding $\Theta$-unlinked classes in the dynamical plane of $P$ (resp. $\wt{P}$).  The assumption on $z$ implies that any point in the orbit of $z$ belongs to one of $V_1(P),\ldots,V_d(P)$. Then $z$ has the same itinerary as that of $\theta$ in the sense that, if $\ell_\Theta(\theta)=\epsilon_0\epsilon_1\ldots$, then $P^i(z)\in V_{\epsilon_i}(P)$ for any $i\geq0$.

Let $\wt{U}$ be the Fatou component of $\wt{P}$ such that $\phi_1$ sends the center of $U$ to the center of $\wt{U}$. Since $P$ and $\wt{P}$ have the same critical marking $\Theta$, it is not difficult to see that there is a unique preperiodic point $\wt{z}\in\partial \wt{U}$ such that $\wt{P}^i(\wt{z})\in V_{\epsilon_i}(\wt{P})$ for any $i\geq0$. Let $\RRR_{\wt{P}}(\alpha)$ be a external ray landing at $\wt{z}$. It then follows that $\alpha$ and $\theta$ have the same itineraries. By Lemma \ref{lem:itinarary}, the ray $\RRR_{\wt{P}}(\theta)$ lands at $\wt{z}\in\partial \wt{U}$.
\end{proof}


\begin{proof}[Proof of Lemma \ref{pro:less}]

Let $\HHH=\HHH_P$ denote the Hubbard forest of $P$, with the vertex set $V(\HHH)$ consisting of bounded critical/postcritical points of $P$ and the branched points of $\HHH$.
We establish a map $\phi:V(\HHH)\to\C$ such that
\[\phi(z)=\left\{
            \begin{array}{ll}
              \phi_1(z), & \hbox{if $z\in\FFF_P$;} \\
              \phi_2(z), & \hbox{if $z\in\JJJ_P$,}
            \end{array}
          \right.
\]
where $\phi_1,\phi_2$ are maps given in Lemmas \ref{lem:map-Fatou}, \ref{lem:map-Julia} respectively. It follows that $\wt{P}\circ\phi(z)=\phi\circ P(z)$.

For the discussion below, we represent each point in $V(\HHH)$ and $\phi(V(\HHH))$ by an angle. Let $v\in V(\HHH)$. If $v\in\FFF_P$, we define $\theta(v)$  the unique iterated preimage by $\tau$ of preferred angles participating $\FFFF$ such that $\RRR_P^\sigma(\theta(v))$ left-supports $U(v)$ where $\sigma\in\{+,-\}$ and $U(v)$ the Fatou component of $P$ containing $v$. According to the argument in Lemma \ref{lem:map-Fatou}, the ray $\RRR_{\wt{P}}(\theta(v))$ left-supports the Fatou component $\wt{U}(\phi(v))$ of $\wt{P}$ which contains $\phi(v)$, and we represent $\phi(v)$ also by $\theta(v)$. If $v\in\JJJ_P$, let $\theta(v)$ be an arbitrary angle in ${\rm arg}_v(P)$. By Lemma \ref{lem:map-Julia}, the angle $\theta(v)$ belong to ${\rm arg}_{\phi(v)}(\wt{P})$, and we represent $\phi(v)$ also by $\theta(v)$.

For every component $H$ of $\HHH$, let $V(H):=V(\HHH)\cap H$ denote the vertex of $H$. We define by $\wt{H}$  the regulated hull of $\phi(V(H))$ within $\KKK_{\wt{P}}$. It is a tree in the dynamical plane of $\wt{P}$, and the vertex set $V(\wt{H})$ of $\wt{H}$ is defined as the union of $\phi(V(H))$ and the branched points of $\wt{H}$.

\noindent\emph{Claim 1. The restriction of $\phi$ on $V(H)$ is injective}.

\begin{proof}
By Lemma \ref{lem:map-Fatou}, the restriction of $\phi$ on $V(H)\cap\FFF_P$ is injective. Let $z,w$ be any distinct points in $V(H)\cap \JJJ_P$. If the segment $[z,w]$ intersects a Fatou component $U$ of $P$, we can find two preperiodic angles $\alpha,\beta$ such that $\RRR_P(\alpha),\RRR_P(\beta)$ land at the boundary of $U$, the periods of $\alpha,\beta$ are larger than the angles participating $\Theta$ and those in ${\rm arg}_z(P),{\rm arg}_w(P)$, and the arc
\[\G:=\RRR_P(\theta)\cup I_\alpha\cup I_\beta\cup \RRR_P(\beta)\]
separates $z,w$, with $I_\alpha,I_\beta$ the internal rays in $U$ which have the same landing points as $\RRR_P(\alpha),\RRR_P(\beta)$ respectively. This arc also separates the rays $\RRR_P(\theta(z))$ and $\RRR_P(\theta(w))$ with $\theta(z),\theta(w)$ the angles representing $z,w$.

In the dynamical plane of $\wt{P}$, Lemma \ref{lem:Julia-Fatou} implies that $\RRR_{\wt{P}}(\alpha),\RRR_{\wt{P}}(\beta)$ land on the same Fatou component $\wt{U}$ of $\wt{P}$ where $\phi$ sends the center of $U$ to the center of $\wt{U}$. Moreover, the arc
\[\wt{\G}:=\RRR_{\wt{P}}(\theta)\cup \wt{I}_\alpha\cup \wt{I}_\beta\cup \RRR_{\wt{P}}(\beta)\]
separates $\RRR_{\wt{P}}(\theta(z))$ and $\RRR_{\wt{P}}(\theta(w))$, with $\wt{I}_\alpha,\wt{I}_\beta$ the internal rays in $\wt{U}$ which have the same landing points as $\RRR_{\wt{P}}(\alpha),\RRR_{\wt{P}}(\beta)$ respectively. Since the periods of $\alpha,\beta$ are chosen larger than $\theta(z),\theta(w)$, then the landing points of $\RRR_{\wt{P}}(\theta(z)),\RRR_{\wt{P}}(\theta(w))$, which are exactly $\phi(z),\phi(w)$, are disjoint with the $\wt{\G}$. It means that $\wt{\G}$ separates $\phi(z),\phi(w)$ and hence $\phi(z)\not=\phi(w)$.

If the segment $[z,w]\subseteq\JJJ_P$, there exist preperiodic angles $\alpha,\beta$ such that $\RRR_P(\alpha),\RRR_P(\beta)$ land at the same point $x\in(z,w)$, the preperiods of $\alpha,\beta$ are larger than  those of $\theta(z),\theta(w)$, and the arc
\[\G:=\RRR_P(\theta)\cup\{x\}\cup \RRR_P(\beta)\]
separates $z,w$. In the dynamical plane of $\wt{P}$, Lemma \ref{lem:map-Julia} implies that $\RRR_{\wt{P}}(\alpha),\RRR_{\wt{P}}(\beta)$ land on the same point $\wt{x}$, and the arc
\[\wt{\G}:=\RRR_{\wt{P}}(\theta)\cup \{\wt{x}\}\cup \RRR_{\wt{P}}(\beta)\]
separates $\RRR_{\wt{P}}(\theta(z))$ and $\RRR_{\wt{P}}(\theta(w))$. Since the preperiods of $\alpha,\beta$ are chosen larger than those of  $\theta(z),\theta(w)$, then the landing points of $\RRR_{\wt{P}}(\theta(z)),\RRR_{\wt{P}}(\theta(w))$, which are exactly $\phi(z),\phi(w)$, are disjoint with  $\{\wt{x}\}$. It means that $\phi(z)\not=\phi(w)$. We complete the proof of the injection of $\phi|_{V(H)}$.
\end{proof}

The map $\phi|_{V(H)}:V(H)\to V(\wt{H})$ can be extended to a map,  denoted by $\phi_H$, from $H$ to $\wt{H}$.
Let $e=e(x,y)$ be an edge of $H$ with endpoints $x,y$. We define its image $\phi_H(e)$ to be the regulated arc $[\phi(x),\phi(y)]$ within $\wt{H}$. Note that the endpoints of $\wt{H}$ belong to $\phi(V(H))$ (since $\wt{H}$ is defined as the regulated hull  of $\phi(V(H))$ within $\KKK_{\wt{P}}$), then the map $\phi_H:H\to\wt{H}$ is surjective.

\noindent \emph{Claim 2. The map $\phi_H:H\to\wt{H}$ is a homeomorphism.}

In the proof of Claim 1, we represent each vertex of $V(H)$ by an angle. To prove Claim 2, we will represent each edge of $H$ by a subset of $\T$ consisting of two open arcs.

Let $e=e(z_1,z_2)$ be an arc of $H$ with $z_1,z_2\in V(H)$.  In the case of $z_1\in\FFF_P$, we choose two preperiodic angles $\alpha_1,\alpha_1'$ with periods larger than those of the angles participating $\Theta$, such that $\RRR_P(\alpha_1),\RRR_P(\alpha_1')$ land on the boundary of $U(z_1)$, and the region $W(\alpha_1,\alpha_1')$ contains all edges of $H$ starting from $z_1$ except $e$, where $\wh{\RRR}_P(\alpha_i)$ denote the extended ray of $P$ associated to $U(z_1)$ with argument $\alpha_i,i=1,2$, and $W(\alpha_1,\alpha_1')$ is bounded by $\wh{\RRR}_P(\alpha_1),\wh{\RRR}_P(\alpha_2)$ such that $\RRR_P(\alpha)\in W(\alpha_1,\alpha_1')$ for all $\alpha\subseteq(\alpha_1,\alpha_1')$.  In the case of $z_1\in\JJJ_P$, let $\alpha_1,\alpha_1'\in {\rm arg}_z(P)$ such that  $\ov{W(\alpha_1,\alpha_1')}$ contains all external rays landing at $z_1$, where $W(\alpha_1,\alpha_1')$ is bounded by ${\RRR}^{\sigma_1}_P(\alpha_1),{\RRR}^{\sigma_1'}_P(\alpha_1')$ such that $\RRR^{\pm}_P(\alpha)\in W(\alpha_1,\alpha_1')$ for all $\alpha\subseteq(\alpha_1,\alpha_1')$ (see Figure \ref{fig:represent}). We similarly choose a pair of angles $\alpha_2,\alpha_2'$ for the other endpoint $z_2$ of $e$.
\begin{figure}[http]
 \includegraphics[scale=0.63]{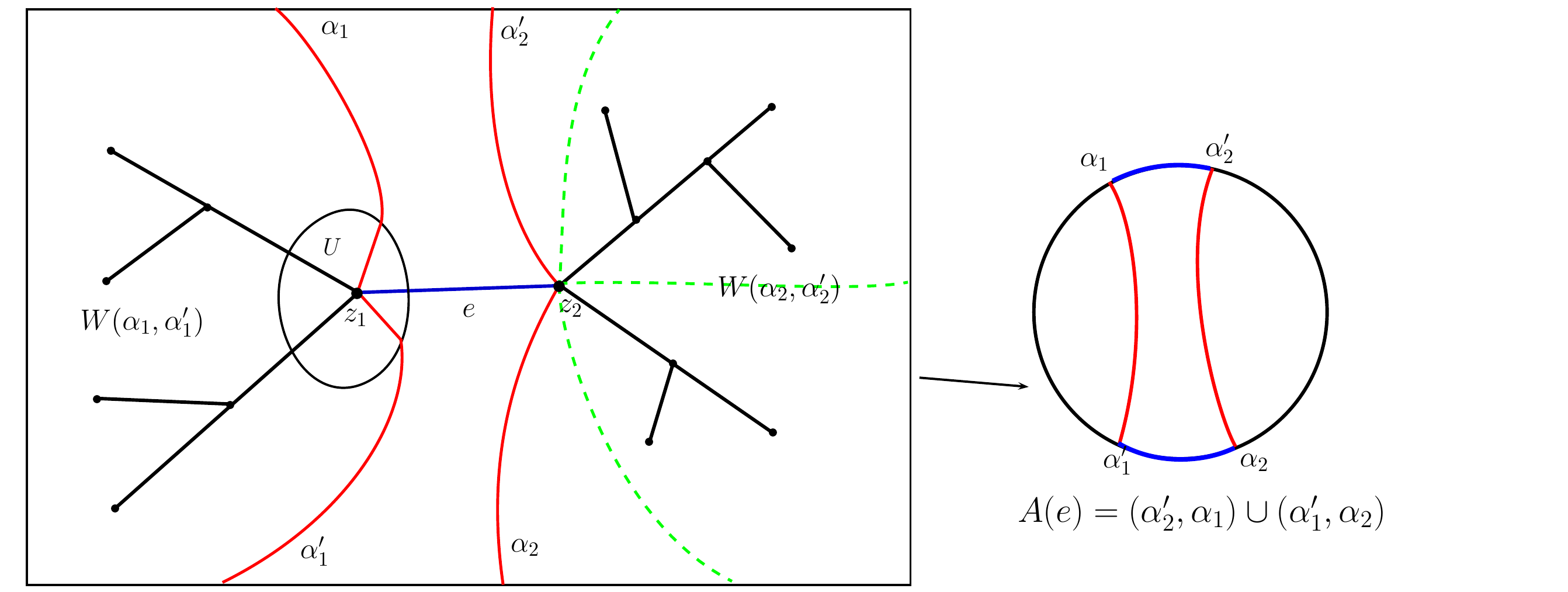}
\caption{The construction of $W(\alpha_1,\alpha_1'), W(\alpha_2,\alpha_2')$ and $A(e)$. Here we assume that $z_1$ is the center of a Fatou component $U$ and $z_2\in\JJJ_{P}$.}
\label{fig:represent}
\end{figure}

Now, we define $A(e)$ the union of two disjoint arcs $(\alpha_1',\alpha_2),(\alpha_2',\alpha_1)$, which is called the \emph{representation of $e$}. By the construction, it is easy to see that

{\noindent{\bf Fact:} \emph{the representing angle $\theta(v)$ of a vertex $v\in V(H)$ belongs to $A(e)$ only if $v\in \FFF_P$ and $v$ equals to either $z_1$ or $z_2$}.}

\begin{proof}[Proof of Claim 2]
Since $\phi_H:H\to\wt{H}$ is surjective and injective on $V(H)$,  it is enough to show that $\phi_H$ maps an edge of $H$ to an edge of $\wt{H}$. Let $e=e(z_1,z_2)$ be an edge of $H$ with its representation $A(e)=(\alpha_1',\alpha_2)\cup(\alpha_2,\alpha_1)$. By Lemmas \ref{lem:map-Julia} and \ref{lem:Julia-Fatou}, the ray pair $\RRR_{\wt{P}}(\alpha_i),\RRR_{\wt{P}}(\alpha_i')$ either land on the boundary of the Fatou component $U(\phi_H(z_i))$ of $\wt{P}$ or on the common point $\phi_H(z_i)$, according to $z_i$ belongs to $\FFF_P$ or not, for $i=1,2$. We then obtain a region $W$ bounded by the four external or extended rays of $\wt{P}$ with arguments $\alpha_1,\alpha_1',\alpha_2,\alpha_2'$. Clearly, the open segment $(\phi_H(z_1),\phi_H(z_2))$ belongs to $W$.

On the contrary, if $[\phi_H(z_1),\phi_H(z_2)]$ is not an edge of $\wt{H}$, there must be a point $z\in V(H)$ distinct with $z_1,z_2$ such that $\phi_H(z)\in W$. It implies the representation $\theta(z)$ of $\phi_H(z)$ belongs to $A(e)$, a contradiction to the Fact above.
\end{proof}

\noindent\emph{Claim 3. Let $H_1,H_2$ be two distinct connected components of $\HHH$. Then their images $\wt{H_1},\wt{H_2}$ by $\phi_{H_1},\phi_{H_2}$ intersect only possibly at their endpoints}.

\begin{proof}
Since $H_1,H_2$ belongs to different components of $\KKK_P$, there exist a pair of angles $\eta_1,\eta_2$ such that the external radii of $P$ with arguments $\eta_1,\eta_2$ terminates at a pre-critical point $z$, the arc $\RRR^*_P(\eta_1)\cup\{z\}\cup\RRR^*_P(\eta_2)$ separates $H_1,H_2$, and the rays $\RRR^+_P(\eta_1),\RRR^-_P(\eta_2)$ land together. As a consequence, the angles representing $V(H_1)$ and those representing $V(H_2)$ are contained in different components of $\T\setminus\{\eta_1,\eta_2\}$ except possibly the ones coincide with $\eta_1,\eta_2$.

In the dynamical plane of $\wt{P}$,  the external rays of $\wt{P}$ with arguments $\eta_1,\eta_2$ land at a common point according to Lemma \ref{lem:map-Julia}, and the union of these two rays separates the vertices of $\wt{H_1}$ and those of $\wt{H_2}$ except possibly the one coincide with the landing point. Then the claim is proved.
\end{proof}

Now, let $\wt{\HHH}$ denote the union of all $\wt{H}$ with $H$ going though the components of $\HHH$. By Claims 1, 2 and 3, the set $\wt{\HHH}$ is a forest contained in the Hubbard tree $\HHH_{\wt{P}}$, with its vertex set $V(\wt{\HHH})$ equal to $\phi(V(\HHH))$, and we have the formula
\begin{equation}\label{eq:00}
\wt{P}\circ\phi=\phi\circ P\text{ on $V(\HHH)$}.
\end{equation}
By this formula, we have $\wt{P}(V(\wt{\HHH}))\subseteq V(\wt{\HHH})$ and $\wt{P}(\wt{\HHH})\subseteq \wt{\HHH}$. According to Claim 3, the interior of any edge of $\wt{\HHH}$ contains no critical points of $\wt{P}$, then $\wt{P}:\wt{\HHH}\to \wt{\HHH}$ is a Markov map.

It is known from Proposition \ref{pro:algorithm} that $h(\wt{P})=h(\Theta)$, and from Lemma \ref{Do2} that $h_{top}(\wt{P}|_{\wt{\HHH}})\leq h(\wt{P})$. Thus,
to complete the proof of Lemma \ref{pro:less}, we only need to show $h(P):=h_{top}(P|_\HHH)=h_{top}(\wt{P}|_{\wt{\HHH}})$.

By enumerating the edges of $\HHH$, we obtain an incidence matrix $D=D_{(\HHH,P)}$ of $(\HHH,P)$, and the topological entropy $h_{top}(P|_\HHH)$ equals to $\log\rho(D)$ with $\rho(D)$ the leading eigenvalue of $D$ (see Section \ref{sec:topological-entropy}). By Claims 2 ,3, we get a bijection $\phi_*:E(\HHH)\to E(\wt{\HHH})$ such that $\phi_*(e)=\phi_H(e)$ if $e$ is an edge of $\HHH$ contained in a component $H$ of $\HHH$. Thus, the enumeration on $E(\HHH)$ induces an enumeration on $E(\wt{\HHH})$ such that $e$ and $\phi_*(e)$ have the same label.

Let $e=e(x,y)$ be any edge of $\HHH$ with endpoints $x,y\in V(\HHH)$. Since $P:\HHH\to\HHH$ is a Markov map, then $P(e)=[P(x),P(y)]$ consists of several edges of $\HHH$, denoted as $e_1,\ldots,e_r$. The edge $\phi_*(e)$ of $E(\wt{H})$ has the endpoints $\phi(x),\phi(y)$. Since the map $\wt{P}:\wt{\HHH}\to\wt{\HHH}$ is also Markov,  the image $\wt{P}(\phi(e))$ of $\phi(e)$ equals to $[\wt{P}\circ\phi(x),\wt{P}\circ\phi(y)]$. Using Claim 2 successively on $e_1,\ldots,e_r$, we get that the segment $\wt{P}(\phi(e))$ consists of exactly the edges $\phi_*(e_1),\ldots,\phi_*(e_r)$ of  $E(\wt{\HHH})$. It means that the two Markov maps $(\HHH,P)$ and $(\wt{\HHH},\wt{P})$ have the same incidence matrix, and hence the same topological entropy.  We then complete the proof of Lemma \ref{pro:less}.
\end{proof}

\subsection{The limit inferior of the entropy function}
According to Proposition \ref{pro:limsup}, the continuity of the entropy function at a \pf parameter $P$ depends only on the relation of $h(P)$ and the limit inferior $\liminf_{Q\to P}h(Q)$: the entropy function is continuous at $P$ if and only if $h(P)\leq \liminf_{Q\to P}h(Q)$. In this subsection, we will find this limit inferior by the dynamics of $P$ and prove Proposition \ref{pro:key}.




Let $P$ be a monic \pf polynomial, and $\JJJJ:=\{\Theta_1(c_1),\ldots,\Theta_r(c_r)\}$ a weak Julia critical marking of $P$ (see Definition \ref{def:weak-critical-marking}).  
We are able to construct a puzzle induced by $\JJJJ$. Let $W:=\{z\in\C:G_P(z)<2\}$, and $\RRR_i$ denote the union of external rays of $P$ with arguments in $\Theta_i(c_i)$ and the critical points $c_i$ for every $i=1,\ldots,m$.
We say that a subset $B\subseteq \ov{W}$ is a \emph{$\JJJJ$-puzzle piece of level $0$} if $B$ is maximal such that any two points in $B$ are not separated by $\RRR_i$ for all $i=1,\ldots,m$.
Then each $\JJJJ$-puzzle piece $B$ of level $0$
 is a full continuum (i.e., a non-trivial, connected, compact set in $\ov{\C}$ such that its complementary is also connected), with boundary consisting of segments of external rays and equipotential line of potential $2$,
and $P(B)=\{z\in\C:g_P(z)\leq 2^d\}$ (see the left one in Figure \ref{fig:puzzle}).
\begin{figure}[http]
\begin{tikzpicture}
\node at (-4,0) {\includegraphics[width=7.5cm]{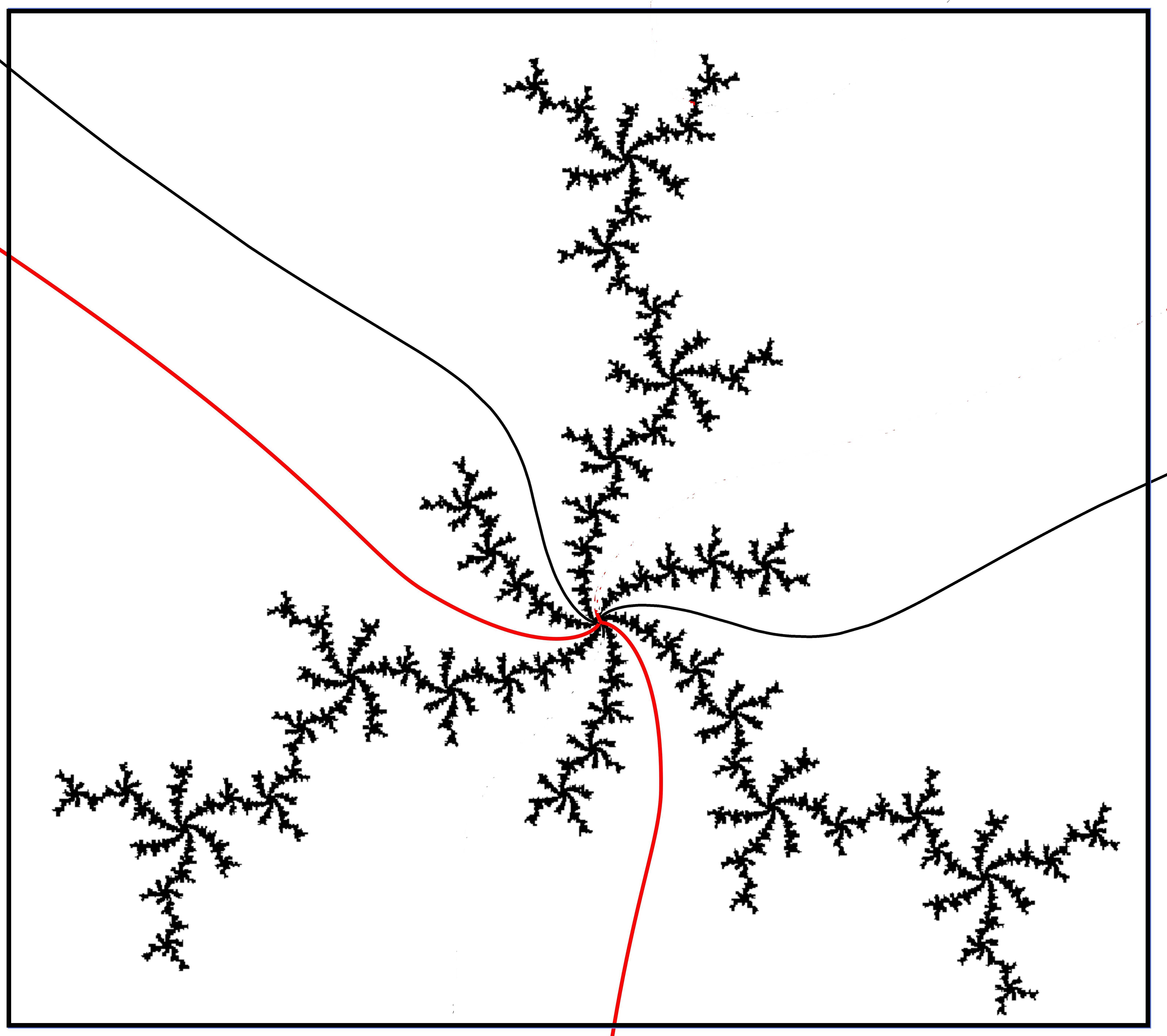}};
\node at(4,0){\includegraphics[width=7.5cm]{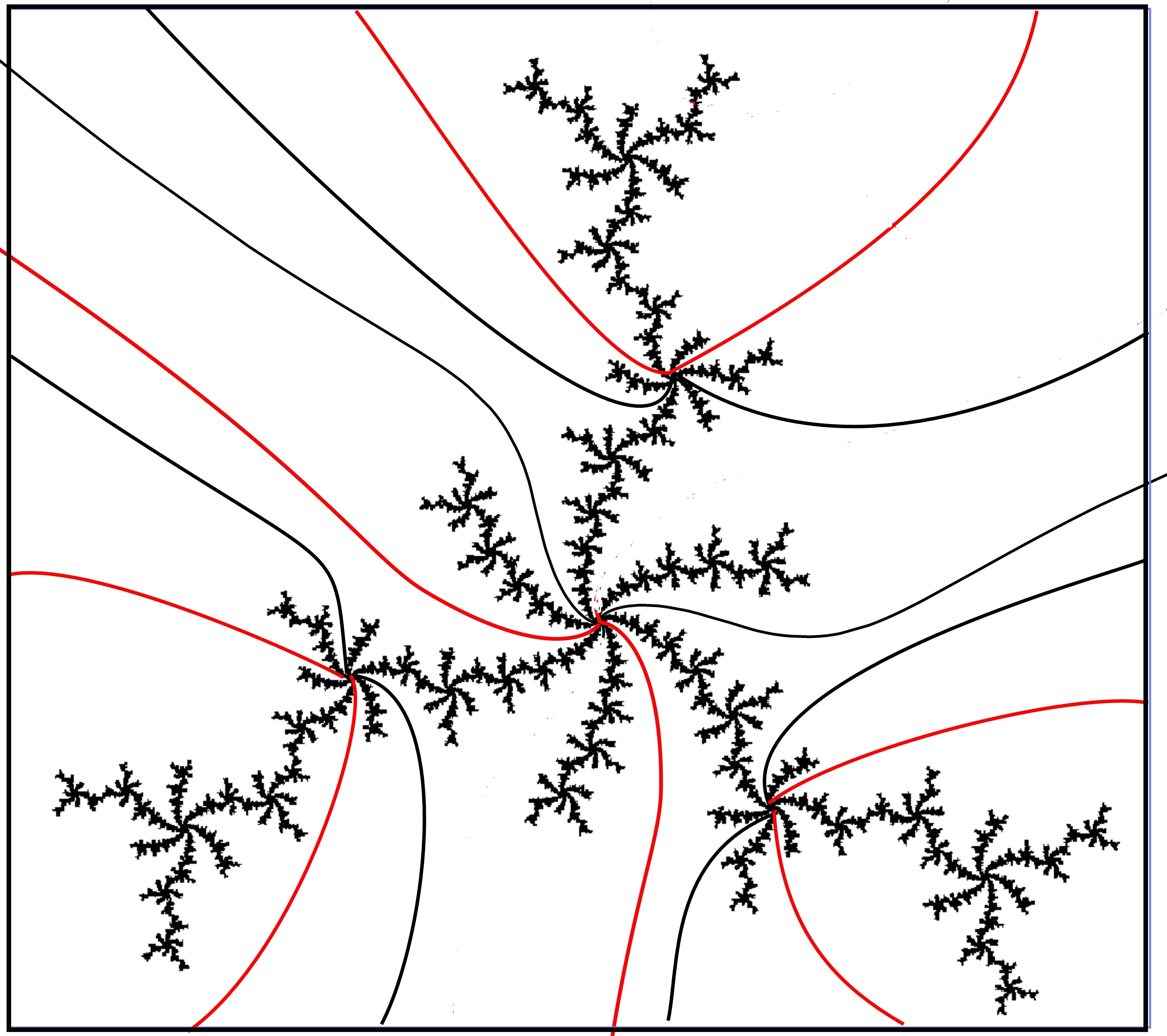}};
\node at(-7,2.75){\footnotesize{$\frac{83}{216}$}};
\node at(-7.25,1.75){\footnotesize{$\frac{89}{216}$}};
\node at(-4,-3){\footnotesize{$\frac{161}{216}$}};
\node at(-0.75,0.4){\footnotesize{$\frac{11}{216}$}};
\node at(-6.75,-0.5){$B_1$};
\node at (-6.25,1.25){$B_2$};
\node at(-1.25,-1.25){$B_2$};
\node at(-1.5,2){$B_3$};
\node at(1,-1){\footnotesize{$\textcircled{$\mathbf{1}$}$}};
\node at(1,0){\footnotesize{$\textcircled{$\mathbf{2}$}$}};
\node at(2.5,-2.5){\footnotesize{$\textcircled{$\mathbf{2}$}$}};
\node at(3.5,-1.5){\footnotesize{$\textcircled{$\mathbf{3}$}$}};
\node at(2,1){\footnotesize{$\textcircled{$\mathbf{4}$}$}};
\node at(5.5,-1){\footnotesize{$\textcircled{$\mathbf{4}$}$}};
\node at(7,-0.8){\footnotesize{$\textcircled{$\mathbf{5}$}$}};
\node at(5,-3){\footnotesize{$\textcircled{$\mathbf{5}$}$}};
\node at(6,-2.75){\footnotesize{$\textcircled{$\mathbf{6}$}$}};
\node at(5,0.25){\footnotesize{$\textcircled{$\mathbf{7}$}$}};
\node at(3,2.25){\footnotesize{$\textcircled{$\mathbf{8}$}$}};
\node at(6.26,1.25){\footnotesize{$\textcircled{$\mathbf{8}$}$}};
\node at(5.5,2.5){\footnotesize{$\textcircled{$\mathbf{9}$}$}};
\end{tikzpicture}
\caption{Consider the polynomial $P(z)=z\mapsto z^3+0.22036+1.18612 i$ in Figure \ref{fig:weak-portrait}. Then $\JJJJ=\large\{\ \{11/216,83/216\},\{89/216,161/216\}\ \large\}$ is a Julia weak critical marking of $P$. Its $\JJJJ$-puzzle pieces of level $0$ are label in the left figure by $B_1,B_2,B_3$ (including the boundary rays). In the right figure, the closure of each region labeled by $\textcircled{1},\ldots,\textcircled{9}$ corresponds to a $\JJJJ$-puzzle piece of level 1.}
\label{fig:puzzle}
\end{figure}

The $\JJJJ$-puzzle pieces of level $1$ is defined as follows. Let $B,B'$ be any pair of $\JJJJ$-puzzle piece of level $0$. Since $P(B)$ covers $B'$, we call each component of $P|_B^{-1}(B')$  a $\JJJJ$-puzzle piece of level $1$ (see the right one in Figure \ref{fig:puzzle}). By this definition, the following results hold:
\begin{enumerate}
\item each $\JJJJ$-puzzle piece of level $1$ is a full continuum, with boundary consisting of segments of external rays and equipotential line of potential $2/d$, and different puzzle pieces of level $1$ have pairwise disjoint interiors;
\item each puzzle piece of level $1$ is contained in a puzzle piece of level $0$;
\item the map $P$ sends a puzzle piece of level $1$ onto a puzzle piece of level $0$.
\end{enumerate}
Inductively, we can define the $\JJJJ$-puzzle pieces of level $k+1$ ($k\geq1$) as the components of $(P|_{B_k})^{-1}(B_k')$ for all pairs $B_k,B_k'$ of puzzle pieces of level $k$ with $B_k'\subseteq P(B_k)$, and the following properties can be inductively checked:
\begin{enumerate}
\item each $\JJJJ$-puzzle piece of level $k+1$ is a full continuum, with boundary consisting of segments of external rays and equipotential line of potential $2/d^k$, and different puzzle pieces of level $k+1$ have pairwise disjoint interiors;
\item each puzzle piece of level $k+1$ is contained in a puzzle piece of level $k$;
\item the map $P$ sends a puzzle piece of level $k+1$ onto a puzzle piece of level $k$;
\end{enumerate}

Let $\{B_n,n\geq0\}$ be a sequence of nested $\JJJJ$-puzzle pieces such that $B_n$ has level $n$. The intersection $E:=\cap_{n\geq0}B_n$ is called a \emph{$\JJJJ$-end}. The $\JJJJ$-ends can be characterized by \emph{$\JJJJ$-itinerary}.  We say that $z,w\in \C$ have the same \emph{$\JJJJ$-itinerary} if the pair $\{P^k(z),P^k(w)\}$ are not separated by $\RRR_i$ for all $k\geq0$ and $1\leq i\leq m$.

\begin{lemma}[properties of $\JJJJ$-ends]\label{lem:end-property}
Let $E$ be a $\JJJJ$-end. Then we have
\begin{enumerate}
\item the end $E$ is a full continumm in $\KKK_P$ with its boundary in $ \JJJ_P$;
\item the image $P(E)$ is  an $\JJJJ$-end;
\item $E$ is a maximal connected set in $\KKK_P$ with the same $\LLLL$-itinerary, i.e., if $E'\subseteq \KKK_P$ is a connected set containing $E$ such that any two points in $E'$ have the same $\LLLL$-itinerary, then $E=E'$.
\item the end $E$ is non-trivial if and only if it contains Fatou components of $P$.
\end{enumerate}
\end{lemma}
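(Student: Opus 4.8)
The plan is to establish the four properties by working directly from the nested puzzle-piece structure, using the already-recorded facts about the individual puzzle pieces: each $\JJJJ$-puzzle piece of level $k$ is a full continuum with boundary made of segments of external rays plus an equipotential arc of potential $2/d^{k-1}$, the pieces of a given level have pairwise disjoint interiors, each piece of level $k+1$ sits inside one of level $k$, and $P$ carries a level-$(k+1)$ piece onto a level-$k$ piece. For (1), write $E=\bigcap_{n\geq 0}B_n$ with $B_n\supseteq B_{n+1}$ of level $n$. Each $B_n$ is a full continuum, hence so is the nested intersection $E$ (a decreasing intersection of full continua in $\ov{\C}$ is a full continuum); to see $E\subseteq\KKK_P$ with $\partial E\subseteq\JJJ_P$, note the equipotential part of $\partial B_n$ has potential $2/d^{n-1}\to 0$, so every point of $E$ has Green's function value $0$, i.e.\ $E\subseteq\KKK_P$; and any boundary point of $E$ is a limit of boundary points of the $B_n$ lying on external rays, which accumulate on $\JJJ_P$ by Proposition \ref{pro:subhyperbolic} (applied to the sub-hyperbolic polynomial $P$, noting $\Omega^*(P)=\Omega(P)$ since $\JJJ_P$ is connected). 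For (2), observe $P(B_n)$ is a puzzle piece of level $n-1$ and $P(B_{n+1})\subseteq P(B_n)$, so $\{P(B_n)\}_{n\geq 1}$ is a nested sequence of puzzle pieces of strictly descending levels; hence $P(E)=P\big(\bigcap_n B_n\big)=\bigcap_n P(B_n)$ (the last equality because $P$ is a proper, hence closed, map and the $B_n$ are compact and nested) is a $\JJJJ$-end.

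For (3), the content is that a $\JJJJ$-end is \emph{exactly} a maximal connected set of constant $\JJJJ$-itinerary inside $\KKK_P$. First, two points of $E$ have the same $\JJJJ$-itinerary: by construction of level-$k$ pieces via $P|_{B}^{-1}$, the level-$k$ piece $B_k$ containing $z$ is characterized by the fact that, for $0\leq j\leq k-1$, the pair $\{z, w\}$ with $w\in B_k$ is not separated by any $\RRR_i$ after applying $P^j$; letting $k\to\infty$ gives that $z,w\in E$ are never separated, i.e.\ share the $\JJJJ$-itinerary. Conversely, suppose $E'\subseteq\KKK_P$ is connected, contains $E$, and has constant $\JJJJ$-itinerary. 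Since $E'$ is connected and not separated by any $\RRR_i$, it lies in a single level-$0$ puzzle piece; since additionally $P^j(E')$ is not separated by any $\RRR_i$ for every $j$, an inductive argument (pulling back, using that $E'$ is connected so it cannot meet two components of a preimage) shows $E'\subseteq B_n$ for every $n$, whence $E'\subseteq\bigcap_n B_n=E$. Therefore $E'=E$.

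For (4), ``non-trivial'' means $E$ is not a single point. If $E$ contains a Fatou component $U$ of $P$, then certainly $\#E>1$. For the converse — the point I expect to be the main obstacle — suppose $E$ contains at least two points $z\neq w$; I want to produce a Fatou component inside $E$. The idea is the standard puzzle shrinking dichotomy: either the levels $B_n$ shrink to a point (contradiction), or there is a nonzero lower bound on the ``size'' of the $B_n$, which for sub-hyperbolic polynomials forces $E$ to contain a bounded Fatou component. Concretely, combine Lemma \ref{lem:sub-hyperbolic} (uniform expansion of the orbifold metric near $\JJJ_P$, and local connectivity of $\KKK_P$) with the fact that the orbits $P^j(E)$ all avoid the rays $\RRR_i$ and hence the critical points $c_i$: if the orbit of $E$ never hits a bounded critical Fatou component, the expansion estimate shows $\mathrm{diam}(B_n)\to 0$, so $E$ is a point, contradicting $z\neq w$. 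Hence the orbit of $E$ meets a critical Fatou component; tracing back, $E$ itself contains a Fatou component. Using local connectivity of $\KKK_P$ and the regulated-arc structure, one sees the Fatou component is entirely contained in $E$ because its closure is not separated from any of its interior points by any $\RRR_i$ at any level (the supporting rays participating in $\JJJJ$ land on its boundary and bound, not cut, the component). This completes the four parts.
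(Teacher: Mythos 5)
Your overall strategy matches the paper's, and parts (1) and (2) are essentially the same (your nested-compactness argument for $P(\cap B_n)=\cap P(B_n)$ is exactly what the paper's preimage-picking argument amounts to). For part (3), the paper instead locates the minimal level $k$ at which $E'\not\subseteq B_k$ and exhibits a ray-arc $\Gamma$ with $P^k(\Gamma)\subseteq\RRR_i$ separating $E'$; this does the same work as your inductive nesting argument, although your claim that $E'\subseteq B_n$ for the \emph{specific} $B_n$ defining $E$ would need a word when $E$ sits on shared boundary of two level-$n$ pieces. That is a minor point.

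The real gap is in part (4). You assert that "the orbits $P^j(E)$ all avoid the rays $\RRR_i$ and hence the critical points $c_i$." This is false. The $\JJJJ$-puzzle pieces are closed and include the ray arcs $\RRR_i$ in their boundary, so a $\JJJJ$-end can, and frequently does, contain $c_i$ or an iterated preimage of some $c_j$. Constant $\JJJJ$-itinerary prevents $E$ from being \emph{separated} by an $\RRR_i$; it does not prevent $E$ from containing $c_i$. The paper's proof explicitly flags this: it notes that "some image $P^j([z,w])$ possibly intersects critical points of $P$." Because of this, your shrinking-diameter dichotomy does not go through: the usual puzzle-shrinking argument needs the forward orbit to stay away from \emph{all} critical points, which you cannot guarantee here. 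The paper's route avoids the issue entirely. Assuming $E\subseteq\JJJ_P$ and $E$ is non-trivial, pick a regulated arc $[z,w]\subseteq E$. Constant $\JJJJ$-itinerary forces each $P^j$ to be injective on $[z,w]$: even when the arc passes through a critical point $c_i$, it stays in the closure of a single complementary sector of $\RRR_i$, where $P$ is injective. Lemma \ref{lem:sub-hyperbolic} then gives uniform expansion in the orbifold metric near $\JJJ_P$, so the lengths of $P^j([z,w])$ tend to infinity, contradicting compactness. You should replace the diameter-shrinking argument by this arc-length argument, which is both correct and shorter.
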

\begin{proof}
Let $\{B_n,n\geq1\}$ be a sequence of nested $\JJJJ$-puzzle pieces such that $E=\cap_{n\geq1}B_n$. Point (1) follows directly from Property (1) of the $\JJJJ$-puzzle pieces. By Property (3) of the $\JJJJ$-puzzle pieces, $\{P(B_n),n\geq1\}$ is also a sequence of nested $\JJJJ$-puzzle pieces. Then $P(E)\subseteq E':=\cap_{n\geq1}P(B_n)$. To get the equality, let $y$ be a point in $E'$ and $x_n\in B_n$ a preimage of $y$ by $P$. Since $x_n$ has finitely many choices, and $\{B_n,n\geq1\}$ a nested sequence, there exists a preimage $x$ by $P$ of $y$ which belongs to $B_n$ for all large $n$. Then point (2) is proved.

To see point (3), we first show that any two points in $E$ have a common $\LLLL$-itinerary. Let $x,y$ be any two distinct points in $E$. Then $x,y\in B_n$ for any $n\geq1$. According to Properties (2),(3) of the $\JJJJ$-puzzle pieces, the pair $\{P^i(x),P^i(y)\}$ is not separated by any $\RRR_i$ for $i=0,\ldots,n$. As $n$ is arbitrary, the points $x,y$ have the same $\JJJJ$-itinerary.

 Let $E'\subseteq \KKK_P$ be a connected set such that $E\subseteq E'$ and $E'\setminus E\not=\emptyset$. Then there exists a minimal level $k$ such that $E$ belongs to a $\JJJJ$-puzzle pieces $B_k$ of level $k$ and $E'\setminus B_k\not=\emptyset$. Then there exists an arc $\G$ consisting of two external rays and their common landing point such that $\G$ separates $E'$, i.e., both components of $\C\setminus\G$ intersects $E'$, and  $P^k:\G\to\RRR_i$ is injective for some $1\leq i\leq m$. It follows that $P^k(\G)\subseteq \RRR_i$ separates $P^k(E')$, which complete the proof of point (3).

For (4), we just need to prove that if $E\subseteq\JJJ_P$, then $E$ is a singleton. On the contrary, let $z,w\subseteq E$ be distinct points such that $[z,w]\subseteq E$. For each $j\geq0$, the set $P^j(E)$ belongs to $\JJJ_P$ and is not separated by $\RRR_1,\ldots,\RRR_m$. Then, for each $j\geq1$, the map $P^j:[z,w]\to P^j([z,w])$ is a homeomorphism although some image $P^j([z,w])$ possibly intersects critical points of $P$. By the uniform expansionary of $P$ near $\JJJ_P$ (Lemma \ref{lem:sub-hyperbolic}), the length of $P^j([z,w])=[P^j(z),P^j(w)]$ converge to $\infty$ as $j\to\infty$, a contradiction.
\end{proof}

Let $\EEE_\JJJJ$ denotes the collection of periodic $\JJJJ$-ends which contain Fatou critical/postcritical points of $P$. Then $\EEE_\JJJJ$ is $P$-invariant. By Lemma \ref{lem:end-property}.(4), the set $\EEE_\JJJJ$ is empty if and only if $\KKK_P=\JJJ_P$, i.e., $P$ is \Mi. For each element $E$ of $\EEE_\JJJJ$, we define the tree $H_E$ as the regulated convex hull within $\KKK_P$ of the Fatou critical/postcritical points contained in $E$. We claim that $H_E\subseteq E$:
because for each $\JJJJ$-puzzle piece $B$ containing $E$, the tree $H_E$ lie in $B$ by Property (1) of $\JJJJ$-puzzle pieces, and then $H_E\subseteq E$.
The tree $H_E$ is a subtree of the Hubbard tree $\HHH_P$, and we define its vertex set $V(H_E)$ as the union of  critical/postcritical points of $P$ contained in $H_E$ and the branched points of $H_E$.

By the definition, if $E,E'$ are two elements of $\EEE_\JJJJ$ with $P(E)=E'$, then we have $P(V(H_E))\subseteq V(H_{E'})$ and $P(H_E)\subseteq H_{E'}$. So each $H_E$ is periodic under $P$. We claim that $E_1\cap E_2=\emptyset$ for any distinct  $E_1,E_2\in\EEE_\JJJJ$. Otherwise, the set $E_1\cup E_2$ is connected and contains $E_1,E_2$. Thus, by Lemma \ref{lem:end-property}.(3), there exists a minimal $k$ such that $P^k(E_1)$ and $P^k(E_2)$ are separated by $\RRR_i$ for some $1\leq i\leq m$. Without loss of generality, we assume that $k=0$. Then the intersection $E_1\cap E_2$ coincides with $c_{i}$.
On the other hand, since $E_1,E_2$ are periodic, there exists $p$ such that $P^p(E_j)\subseteq E_j$ for $j=1,2$. It follows that $P^p(c_{i})$ belongs to $E_1\cap E_2$, and hence equal to $c_{i}$. This is impossible because $c_{i}$ is strictly preperiodic.


Let $\HHH_\JJJJ$ denote the union of $H_E$ for all elements $E$ of $\EEE_\JJJJ$. Here we allow $\HHH_\JJJJ=\emptyset$, and this happens if and only if $\EEE_\JJJJ=\emptyset$, and if and only if $\JJJ_P=\KKK_P$ by Lemma \ref{lem:end-property}.(4). By the claim above, each $H_E$ with $E\in\EEE_\JJJJ$ is a component of $\HHH_\JJJJ$. So we define the vertex set $V(\HHH_\JJJJ)$ of $\HHH_\JJJJ$ as the union of $V(H_E)$ for all $E\in\EEE_\JJJJ$. Moreover, the forest $\HHH_\JJJJ$ is a $P$-invariant, contained in $\HHH_P$, and the map $P:\HHH_\JJJJ\to\HHH_\JJJJ$ is Markov.

Let $\{P_n,n\geq1\}$ be a sequence of partial \pf polynomials converging to $P$. We say  the sequence $\{P_n,n\geq1\}$ has \emph{type $\JJJJ$} if there exists a subset $\XXXX_n$ of a critical marking $\Theta_n$ of $P_n$ such that $\XXXX_n\to\JJJJ$ as $n\to\infty$. We call $\{P_n,n\geq1\}$ a \emph{maximal-hyperbolic} sequence if each $P_n$ is hyperbolic and the bounded critical points of $P_n$ converge to Fatou critical points of $P$.

\begin{proposition}\label{pro:liminf}
Let $\{P_n,n\geq1\}$ be a sequence of monic partial \pf polynomials converging to $P$ with type $\JJJJ$. Then
\begin{equation}\label{eq:11}
h_{top}(P|_{\HHH_\JJJJ})\leq h(P_n),
\end{equation}
for large $n$, and the equality holds if $\{P_n,n\geq1\}$ is maximal-hyperbolic. As a consequence, let $\mu_P$ denote the minimality of $h_{top}(P|_{\HHH_\JJJJ})$ with $\JJJJ$ going through all weak Julia critical markings of $P$, then we have
$\mu_P\leq \liminf\limits_{Q\to P}h(Q)$ with $Q$ chosen in monic partial \pf polynomials.
\end{proposition}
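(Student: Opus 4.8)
The plan is to derive the concluding inequality $\mu_P\le\liminf_{Q\to P}h(Q)$ from \eqref{eq:11} by a compactness argument, and to prove \eqref{eq:11} itself by planting a combinatorial copy of $(\HHH_\JJJJ,P)$ inside the Hubbard forest of $P_n$. Since by Propositions \ref{Do2} and \ref{entropy-formula} the entropy $h_{top}(P|_{\HHH_\JJJJ})$ is $\log\rho(D)$, where $D$ is the incidence matrix of the Markov map $(\HHH_\JJJJ,P)$, it suffices to produce, for all large $n$, a $P_n$-invariant subforest $T_n\subseteq\HHH_{P_n}$ on which $P_n$ is Markov and such that $(T_n,P_n)$ contains $(\HHH_\JJJJ,P)$ as a forward-invariant subsystem; then $\rho(D_{(T_n,P_n)})\ge\rho(D)$ by monotonicity of the Perron root under principal submatrices, and inclusion $T_n\subseteq\HHH_{P_n}$ together with Proposition \ref{Do2} gives $h(P_n)=h_{top}(P_n|_{\HHH_{P_n}})\ge h_{top}(P_n|_{T_n})\ge h_{top}(P|_{\HHH_\JJJJ})$, which is \eqref{eq:11}.

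The forest $T_n$ should be built from the data certifying type $\JJJJ$. Fix critical markings $\Theta_n$ of $P_n$ and subsets $\XXXX_n=\{\Theta_{n,1},\dots,\Theta_{n,r}\}\subseteq\Theta_n$ with $\Theta_{n,j}\to\Theta_j(c_j)$. Exactly as $\JJJJ$ partitions the dynamical plane of $P$ into $\JJJJ$-puzzle pieces and $\JJJJ$-ends, $\XXXX_n$ partitions that of $P_n$ into $\XXXX_n$-puzzle pieces and $\XXXX_n$-ends with the same structural properties as in Lemma \ref{lem:end-property}; let $T_n$ be the union of the regulated hulls, inside $\KKK_{P_n}$, of the bounded critical/postcritical points of $P_n$ lying in the periodic $\XXXX_n$-ends. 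To identify the combinatorics of $(T_n,P_n)$ with those of $(\HHH_\JJJJ,P)$, I would invoke: the continuations at $P_n$ of the Fatou centers appearing as vertices of $\HHH_\JJJJ$ (Lemma \ref{lem:Fatou-component}); the convergence of the internal rays, external rays and supporting rays cutting out $\HHH_\JJJJ$ and the $\JJJJ$-puzzle (Lemmas \ref{lem:internal-ray}, \ref{lem:perturbation1}, \ref{lem:convergence3}, \ref{lem:support}); and the fact that $\Theta_{n,j}\to\Theta_j(c_j)$ pins down the limits of the separating rays of the $\XXXX_n$-puzzle. From this one concludes that, for $n$ large, each periodic end $E\in\EEE_\JJJJ$ persists as a periodic $\XXXX_n$-end carrying bounded critical/postcritical points of $P_n$, its regulated hull does not degenerate, and the Markov actions match edge for edge. \emph{This identification is the main obstacle:} since $\KKK_{P_n}$ may be disconnected and the Fatou components of $P_n$ may shrink to points of $\JJJ_P$ under the perturbation, one must exploit that each $E\in\EEE_\JJJJ$ is a full continuum in $\KKK_P$ with nonempty interior and a well-defined itinerary, in order to rule out the loss of a vertex of $\HHH_\JJJJ$ or the collapse of an edge.

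For the equality claim, suppose $\{P_n\}$ is maximal-hyperbolic. Being hyperbolic and partial \pf, each $P_n$ has all its bounded critical points contained in bounded super-attracting cycles (an attracted-but-not-periodic bounded critical point would have infinite orbit, contradicting partial postcritical finiteness), and by hypothesis these converge to Fatou critical points of $P$; hence every bounded critical/postcritical point of $P_n$ is periodic and is among the points used to build $T_n$. Therefore $\HHH_{P_n}$ differs from $T_n$ only by preperiodic arcs all of whose points are iterated into $T_n$, which by Proposition \ref{Do3} carry no entropy, so $h(P_n)=h_{top}(P_n|_{T_n})$; combined with the now exact identification of incidence matrices from the previous step (the vertex sets of $\HHH_\JJJJ$ and of $T_n$ being in bijection), this gives $h(P_n)=h_{top}(P|_{\HHH_\JJJJ})$, which together with \eqref{eq:11} is the asserted equality.

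Finally, for the consequence, pick monic partial \pf polynomials $Q_k\to P$ with $h(Q_k)\to\liminf_{Q\to P}h(Q)$. As in the proof of Proposition \ref{pro:limsup}, a small quasiconformal twist replaces each $Q_k$ by an admissible (hence visible) polynomial within distance $1/k$ of it and with unchanged core entropy, so each $Q_k$ now carries a critical marking $\Theta_k$. The space of degree $d$ critical portraits being compact for the Hausdorff metric, after passing to a subsequence $\Theta_k\to\Theta$; by Proposition \ref{pro:convergence4}, $\Theta$ is a weak critical marking of $P$, and its Julia part $\JJJJ$, in the notation of Definition \ref{def:weak-critical-marking}, is a weak Julia critical marking of $P$. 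Taking $\XXXX_k\subseteq\Theta_k$ to be the part converging to $\JJJJ$, this subsequence has type $\JJJJ$, so \eqref{eq:11} gives $h_{top}(P|_{\HHH_\JJJJ})\le h(Q_k)$ for $k$ large; letting $k\to\infty$ yields $h_{top}(P|_{\HHH_\JJJJ})\le\liminf_{Q\to P}h(Q)$, and since $\mu_P\le h_{top}(P|_{\HHH_\JJJJ})$ by the definition of $\mu_P$, we conclude $\mu_P\le\liminf_{Q\to P}h(Q)$.
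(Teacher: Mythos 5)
Your overall plan — transport a combinatorial copy of $(\HHH_\JJJJ,P)$ into the Hubbard forest of $P_n$, compare incidence matrices (you use Perron--Frobenius monotonicity under principal submatrices, whereas the paper actually shows the two incidence matrices are \emph{equal}, but either suffices for \eqref{eq:11}), and finish the $\mu_P$ inequality with compactness of the critical-portrait space together with Proposition~\ref{pro:convergence4} — is the same as the paper's. But the heart of the proof is precisely the identification you flag as ``the main obstacle'' and then leave to a list of lemmas. The paper spends six separate claims on it (Claims~1--6): showing that Fatou and Julia vertices of $\HHH_\JJJJ$ have continuations with matching $\XXXX_n$-itineraries, that all of them land in a single component of $\KKK_{P_n}$, that the resulting map on each end is a homeomorphism onto a subtree of $\HHH_{P_n}$, that images of distinct ends stay disjoint, and that the Markov actions agree edge-for-edge. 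Your write-up does not establish any of these; it names the tools and then assumes the conclusion. That is a genuine gap, not just a compressed exposition.

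There is also an error in the maximal-hyperbolic equality step. You assert that a bounded critical point of a hyperbolic, partial \pf polynomial ``would have infinite orbit'' unless it were periodic, and conclude that all bounded critical/postcritical points of $P_n$ are periodic. This is false: a bounded critical point may be strictly preperiodic, landing after finitely many steps on a superattracting cycle, and still have finite orbit (e.g.\ a cubic $P$ with one superattracting fixed critical point $c_1$ and a second critical point $c_2$ with $P(c_2)=c_1$). So ``every bounded critical/postcritical point of $P_n$ is periodic and is among the points used to build $T_n$'' does not follow, and with it the claim that $\HHH_{P_n}$ differs from $T_n$ only by preperiodic arcs is unproved. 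The fact you actually need — that every non-trivial \emph{periodic component} of $\HHH_{P_n}$ is contained in $\HHH_\JJJJ^n$ for $n$ large — is exactly what the paper proves at the end of its argument, and it does so by a different and correct route: it takes the Fatou-type vertices of such a component, looks at the limiting $\JJJJ$-end in the plane of $P$, and uses the convergence of the separating rays (Lemma~\ref{lem:convergence3}) to rule out that these vertices lie in different $\JJJJ$-ends. You should replace the periodicity shortcut by such a non-escape argument.
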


\noindent\emph{Proof.}
 The idea of the proof is similar to that of Proposition \ref{pro:limsup}: we will embed $\HHH_\JJJJ$ into the Hubbard forest $\HHH_{P_n}$ by an injective  $\phi_n:\HHH_\JJJJ\to\HHH_{P_n}$ for all large $n$, and prove that $h_{top}(P|_{\HHH_\JJJJ})=h_{top}(P_n|_{\phi_n(\HHH_\JJJJ)})$. As explained in the proof of Proposition \ref{pro:limsup}, we can assume these $P_n's$ are visible so that their Julia critical markings can be defined.

We first show the inequality \eqref{eq:11}. If $\HHH_\JJJJ=\emptyset$, the conclusion is obvious (because $h_{top}(P|_{\HHH_\JJJJ})=0$). So we assume in the following that $\HHH_\JJJJ\not=\emptyset$.

Since the sequence $\{P_n,n\geq1\}$ has type $\JJJJ=\{\Theta_i(c_i):1\leq i\leq m\}$, there exist a critical marking $\Theta_n:=\{\FFFF_n,\XXXX_n\}$ of $P_n$ with
\begin{equation}\label{eq:99}
\FFFF_n:=\{\Theta(U_{n,1}),\ldots,\Theta(U_{n,s})\};\ \XXXX_n:=\{\Theta(c_{n,1}),\ldots,\Theta(c_{n,m})\},
\end{equation}
such that $\Theta(c_{n,i})\to\Theta_i(c_i)$ for each $1\leq i\leq m$;
where $U_1,\ldots,U_s$ denote all critical Fatou components of $P$, and $U_{n,j}$ is the continuation of $U_j$ at $P_n$ for $j=1,\ldots,s$.
In the dynamical plane of $P_n$, we define $\RRR_{n,i}$ as the closures of the union of external rays/extended rays/external radii of $P_n$ with arguments in $\Theta(c_{n,i})$ which land  or terminate at $c_{n,i}$, for $i=1,\ldots,m$.
Two points $z,w\in \C$ is said to have the same \emph{$\XXXX_n$-itineraries} if $P_n^k(z),P_n^k(w)$ are not separated by $\RRR_{n,i}$ for all $k\geq0$ and $1\leq i\leq m$.

Fix an element $E$ of $\EEE_\JJJJ$.

\noindent\emph{Claim 1. Let $x,y$ be the centers of distinct Fatou components in $E$, and $x_n,y_n$ denote their continuations at $P_n$ (Lemma \ref{lem:Fatou-component}). Then $x_n,y_n$ have a common $\XXXX_n$-itinerary for large $n$}.

 \noindent \emph{Proof of Claim 1.} On the contrary, assume that there exist  infinite $n$ such that $x_n,y_n$ have different $\XXXX_n$-itinerary. For each such $n$, there exists a minimal number $k_n$ such that $P_n^{k_n}(x_n),P_{n,E}^{k_n}(y_n)$ are separated by
some $\RRR_{n,i_n}$.
Since all $x_n$ (resp. $y_n$) have the same preperiod and period, the number $k_n$ is uniformly bounded above. So, without loss of generality and by taking subsequences, we can assume that $x_n,y_n$ are separated by $\RRR_{n,i}$ for some $1\leq i\leq m$ and all large $n$. Note that $x_n\to x, y_n\to y$ and $\RRR_{n,i}\to \RRR_i$ (by Proposition \ref{pro:convergence4}), then $\RRR_i$ separates $x,y$,  contradicting that $x,y$ belong to a common $\JJJJ$-end.
\hfill\qedsymbol

 We denote by $M_E$ the Fatou-type vertices of $H_E$. For each large $n$, we define a map $\phi_{n,E}:M_E\to\C$ as $\phi_{n,E}(z)=:z_n$ with $z_n$ the continuation of $z$ at $P_n$. Claim 1 implies the points in $\phi_{n,E}(M_E)$ have a common $\XXXX_n$-itinerary.

\noindent\emph{Claim 2. Let $z$ be a Juila preperiodic point in $E$. Then there exists a unique point $z_n$ such that $z_n$ has the same preperiod and period by $P_n$ as those of $z$ by $P$, $z_n\to z$ as $n\to\infty$ and $z_n$ has the same $\XXXX_n$-itinerary as the points in $\phi_{n,E}(M_E)$.}

\noindent\emph{Proof of Claim 2}. We first assume that $z$ is periodic. Since $z$ is repelling, then there exists a unique continuation $z_n$ of $z$ by Implicity Function Theorem. We just need to show that $z_n$ has the same $\XXXX_n$-itinerary as the points in $\phi_{n,E}(M_E)$. On the contrary, assume that there exists $x\in M_E$ such that $\phi_{n,E}(x)$ and $z_n$ have different $\XXXX_n$-itineraries for all large $n$. As explained in the proof of Claim 1, one can assume that $\phi_{n,E}(x)$ and $z_n$ are separated by $\RRR_{n,i}$ for some $1\leq i\leq m$. Since $\phi_{n,E}(x)\to x,z_n\to z$ and $\RRR_{n,i}\to\RRR_i$, then either  $z=c_i$, or $\RRR_i$ separates $x,z$. Both of the cases lead to a contradiction: in the former case, this is because  $z$ is periodic but $c_i\in \JJJ_P$ is strictly preperiodic; and in the latter case, it contradict that $x,z$ belong to a common $\JJJJ$-end.

Without loss of generality, we assume that $E$ is $P$-invariant.  Let $z'$ be a Julia preperiodic point in $E$. Inductively, we will prove that Claim 2 holds for $z'$ under the assumption that Claim $2$ holds for $z:=P(z')$.

Let $z_n$ be the unique continuation of $z$ satisfying the requirements in Claim 2. If $z'$ is not a critical point of $P$, then $M_E$ and $z'$ are disjoint with $\cup_{i=1}^m\RRR_i$ and not separated by it. In this case, there is a unique preimage $z_n'$ of $z_n$ by $P_n$ such that $z_n'\to z'$. As $\cup_{i=1}^n\RRR_{n,i}\to\cup_{i=1}^n\RRR_i$, then $z_n'$ and $\phi_{n,E}(M_E)$ are not separated by $\cup_{i=1}^m\RRR_{n,i}$. Combining the assumption on $z_n$, then the point $z_n'$ have the same $\XXXX_n$-itinerary as $\phi_{n,E}(M_E)$.

Now let $c=z'$ be a Julia critical point of $P$, and denote $I(c)=\{i:1\leq i\leq m,c_i=c\}$. The plane $\C$ is divided into ${\rm deg}(P|_c)$ closed sets, called $\JJJJ(c)$-puzzle pieces, such that the points in each of the puzzle pieces are not separated by $\cup_{i\in I(c)}\RRR_i$.
\begin{figure}[http]
\includegraphics[scale=0.6]{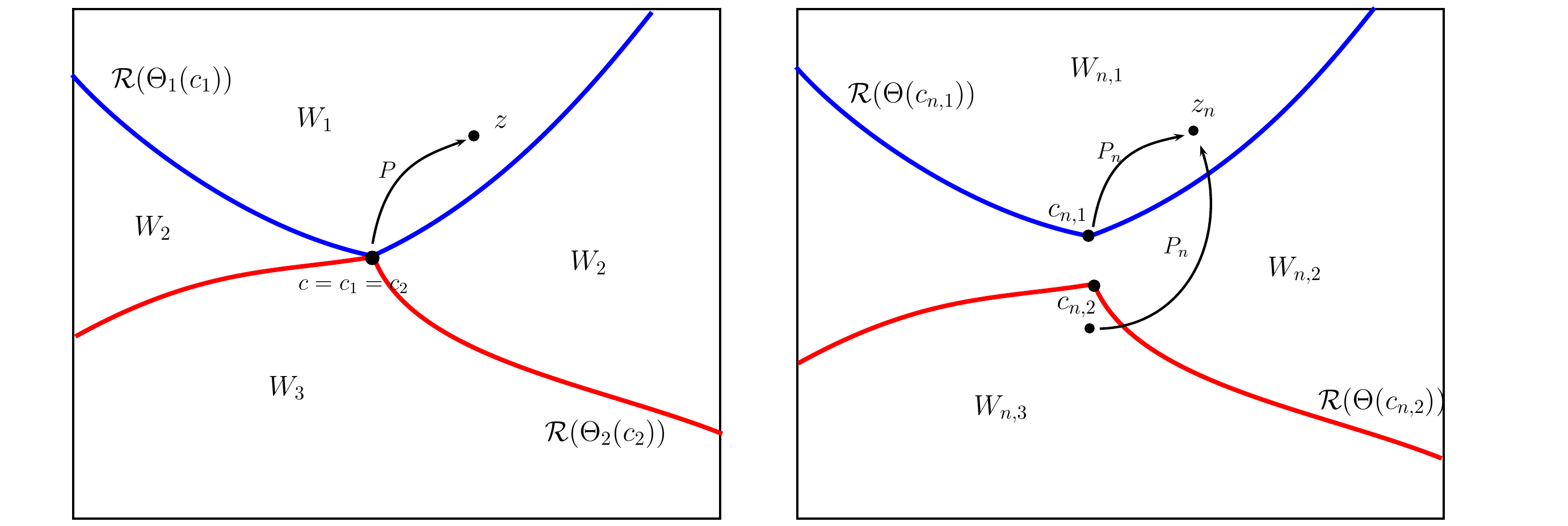}
\caption{$\JJJJ(c)$-puzzle pieces and $\XXXX_n(c)$-puzzle pieces. Here $I(c)=\{1,2\}$, ${\rm deg}(P|_c)=3$, $\Theta(c_{n,i})\to\Theta_i(c_i)$ for $i=1,2$. The $\JJJJ(c)$-puzzle pieces of $P$ are $\ov{W_1},\ov{W_2},\ov{W_3}$; and the $\XXXX_n(c)$-puzzle pieces of $P_n$ are $\ov{W_{n,1}},\ov{W_{n,2}},\ov{W_{n,3}}$.}
\end{figure}
Note that $z=P(c)$ has exactly one preimage by $P$ near $c$ in each  $\JJJJ(c)$-puzzle piece, and all these preimages coincide with $c$. In the dynamical plane of $P_n$, the plane $\C$ can be also divided into ${\rm deg}(P|_c)$ closed sets, called $\XXXX_n(c)$-puzzle pieces, such that the points in each of the puzzle pieces are not separated by $\cup_{i\in I(c)}\RRR_{n,i}$. Since $\phi_{n,E}(M_k)$ is not separated by $\cup_{i=1}^m\RRR_{n,i}$, there exists a unique $\XXXX_n(c)$-puzzle pieces $A_n$ with $\phi_{n,E}(M_k)\subseteq A_n$. Similarly, the point $z_n$ has exactly one preimage by $P_n$ near $c$ in each $\XXXX_n(c)$-puzzle piece, and we denote the one in $A_n$ by $z_n'$. Then $z_n'$ and $\phi_{n,E}(M_k)$ are not separated by $\cup_{i=1}^m\RRR_{n,i}$. Combining the assumption on $z_n$, the point $z_n'$ has the same $\XXXX_n$-itinerary as $\phi_{n,E}(M_E)$.
\hfill\qedsymbol

By Claim 2, we can extend $\phi_{n,E}$ to $V(H_E)$ such that $\phi_{n,E}(v):=v_n$ satisfy the properties in Claim 2 for all $v\in V(H_E)$. Then all points in $\phi_{n,E}(V(H_E))$ have a common $\XXXX_n$-itinerary.

\noindent\emph{Claim 3. The set $\phi_{n,E}(V(H_E))$ is contained in a component of $\KKK_{P_n}$ for large $n$.}

\noindent\emph{Proof of Claim 3.} We first observe that, for any distinct Fatou centers $x,y\in E$, their continuations $x_n,y_n$ are contained in the same component of $\KKK_{P_n}$. On the contrary, by taking subsequences if necessary, one can find an arc $\G_n$ consisting of two external radii and their common terminating point, such that $\G_n$ separates $x_n,y_n$, and $P_n^k:\G_n\to \RRR_{n,i}$ is injective for some $k\geq0$ and some $i\in\{1,\ldots,m\}$. By Lemma \ref{lem:convergence3}, the arc $\G_n$, by taking subsequences if necessary, converges to an arc $\G$ which consisting of two external rays of $P$ and their common landing point denoted by $z$, and the map $P^k:\G\to \RRR_i$ is injective. Since $z\not\in\{x,y\}$, the arc $\G$ separates $E$, and thus $\RRR_i$ separates $P^k(E)$, a contradiction.

This observation shows that $\phi_{n,E}(M_E)$ belong to a common component of $\KKK_{P_n}$, denoted as $K_{n,E}$. Let $z$ be a Julia vertex of $H_E$. If $z$ is on the boundary of a Fatou component in $E$, then $\phi_{n,E}(z)\in K_{n,E}(z)$ according to the observation above. We now assume that $z$ is not on the boundary of Fatou components in $E$ and $\phi_{n,E}(z)\not\in K_{n,E}$ for large $n$. Then by taking subsequences if necessary, one can find an arc $\G_n$ consisting of two external radii and their common terminating point, such that $\G_n$ separates $K_{n,E},\phi_{n,E}(z)$, and $P_n^k:\G_n\to \RRR_{n,i}$ is injective for some $k\geq0$ and some $i\in\{1,\ldots,m\}$. For simplicity, we assume $P(E)\subseteq E$.

Let $\Theta=\{\Theta_1,\ldots,\Theta_r\}$ be any weak critical marking of $P$. We denote by $\RRR(\Theta_i)$ the union of extended/external rays of $P$ with arguments in $\Theta_i$, and $\RRR(\Theta):=\cup_{i=1}^r\RRR(\Theta_i)$. By the assumption on $z$, one can choose a Fatou center $a\in E$ sufficiently near $z$, such that the pair $\{P^j(a),P^j(z)\}$ is not separated by $\RRR(\Theta)$ for any $j\in\{0,\ldots,k\}$ and any weak critical marking $\Theta$ of $P$. By the observation, we get $\phi_{n,E}(a)\in K_{n,E}$.

Recall \eqref{eq:99}, for every $i\in\{1,\ldots,s\}$, we denote $\RRR_n(U_{n,i})$ the union of extended rays of $P_n$ with arguments in $\Theta(U_{n,i})$. We claim that for each $0\leq j\leq k$, the points $\phi_{n,E}(P^j(a)),\phi_{n,E}(P^j(z))$ are not separated by $\cup_{i=1}^s\RRR_n(U_{n,i})$ for large $n$. If not, by taking subsequences, we have $\Theta(U_{n,i})\to\Theta_i$ for $1\leq i\leq s$. By Proposition \ref{pro:convergence4}, the collection $\{\Theta_1,\ldots,\Theta_s\}$ is a Fatou weak critical marking of $P$, and $\RRR_n(U_{n,i})\to \RRR(\Theta_i)$ for $i=1,\ldots,s$. Since $\phi_{n,E}(P^j(a)),\phi_{n,E}(P^j(z))$ is supposed to be separated by $\cup_{i=1}^s\RRR_n(U_{n,i})$ for infinite $n$, and $z$ is not on the boundary of Fatou components in $E$, it follows that $P^j(a),P^j(z)$ are separated by $\cup_{i=1}^s\RRR_n(\Theta_i)$, a contradiction to the choice of $a$.

By the claim above and the fact that $\phi_{n,E}(a),\phi_{n,E}(z)$ have the same $\XXXX_n$-itinerary (Claims 1, 2), the pairs of points $\{\phi_{n,E}(P^j(a)),\phi_{n,E}(P^j(z))\}$, $j=0,\ldots,k$, factually belong to the closure of a $\Theta_n$-unlinked class in the dynamical plane of $P_n$ . As $\G_n$ can not cross any $\RRR_n(U_{n,i})$ and $\RRR_{n,j}$ for $1\leq i\leq s,1\leq j\leq m$, then $\G_n$ belongs to the same equivalence class as $\phi_{n,E}(a),\phi_{n,E}(z)$ (because $\G_n$ separates them). Note that $P_n$ is injective on each $\Theta_n$-unlinked class, then $P_n(\G_n)$ separates $P_n\circ\phi_{n,E}(z)=\phi_{n,E}(P(z))$ and $P_n\circ \phi_{n,E}(a)=\phi_{n,E}(P(a))$. Inductively use the argument above, we get that $P_n^k(\G_n)=\RRR_{n,i}$ separates $\phi_{n,E}(P^k(z))$ and $\phi_{n,E}(P^k(a))$. It means that $a$ and $z$ have distinct $\XXXX_n$-itinerary, a contradiction.
\hfill\qedsymbol

 By Claim 3, we define the tree $H_E^n$ in the dynamical plane of $P_n$ as the convex hull of $\phi_{n,E}(V(H_E))$ within the component of $\KKK_{P_n}$ containing $\phi_{n,E}(V(H_E))$. The vertex set of $H_E^n$ is defined as the union of $\phi(V(H_E))$ and the branched points of $H_E^n$. The map $\phi_{n,E}:V(H_E)\to V(H_E^n)$ can be extended to a map, also denoted by $\phi_{n,E}$, from $H_E$ to $H_E^n$, such that it maps an edge $e(x,y)$ of $H_E$ to an segment $[\phi_{n,E}(x),\phi_{n,E}(y)]$ in $H_E^n$. Since the endpoints of $H_E^n$ are contained in $\phi_{n,E}(V(H_E))$, the map $\phi_{n,E}:H_E\to H_E^n$ is thus surjective.

\noindent\emph{Claim 4. Then map $\phi_{n,E}:H_E\to H_E^n$ is a homeomorphism.}

\noindent\emph{Proof of Claim 4.}
Since $\phi_{n,E}$ is surjective and injective on $V(H_E)$, it is enough to show that $\phi_{n,E}$ maps an edge of $H_E$ to an edge of $H_E^n$. Let $e=e(x,y)$ be an edge of $H_E$. By definition,  its image $\phi_{n,E}(e)$ is the segment $[\phi_{n,E}(x),\phi_{n,E}(y)]$. By taking subsequences if necessary, we only need to exclude the following two cases.

\noindent {\bf Case 1}. There exists $z\in V(H_E)$ such that $\phi_{n,E}(z)$ belongs to the open arc $(\phi_{n,E}(x),\phi_{n,E}(y))$ for large $n$.

We first suppose $z\in\FFF_P$, and let the preperiod of $\phi_{n,E}(z)$ is $k$. Let $p$ be sufficiently large integer. We can choose a preperiodic point of preperiod $k$ and period $p$ in each of the two components $\partial U(\phi_{n,E}(z))\setminus (\phi_{n,E}(x),\phi_{n,E}(y))$, denoted as $u_n,v_n$. By taking subsequences if necessary, we assume $u_n\to u,v_n\to v$. By Lemma \ref{lem:internal-ray}, the points $u,v$ are preperiod points of $P$ contained in $\partial U(z)$ with preperiod $k$ and period $p$. Since $p$ is large enough, the orbits of $u,v$ avoid the critical points of $P$. Hence $u_n,v_n$ are the continuation of $u,v$ respectively.

Let $\alpha\in{\rm arg}_P(u)$ and $\beta\in{\rm arg}_P(v)$. By Lemma \ref{lem:perturbation1}, the external rays $\RRR_{P_n}(\alpha)$ and $\RRR_{P_n}(\beta)$ land at $u_n$ and $v_n$ respectively for all large $n$. Let $\wh{\RRR}_{P_n}(\alpha),\wh{\RRR}_{P_n}(\beta)$ denote the corresponding extended rays associated to $U(\phi_{n,E}(z))$.  According to Lemmas \ref{lem:perturbation1} and \ref{lem:internal-ray}, the arcs $\G_n:=\wh{\RRR}_{P_n}(\alpha)\cup \wh{\RRR}_{P_n}(\beta)$ converge to the arc $\G:=\wh{\RRR}_{P}(\alpha)\cup \wh{\RRR}_{P}(\beta)$. Note that every $\G_n$ separates $x_n$ and $y_n$, then $\G$ separates $x$ and $y$. It implies $z$ belongs to the open arc $(x,y)$, a contradiction.

The situation of $z\in\JJJ_P$ is similar. By taking subsequences, there exists an arc $\G_n:=\RRR_{P_n}(\alpha)\cup\{\phi_{E,n}(z)\}\cup\RRR_{P_n}(\beta)$ separating $\phi_{E,n}(x)$ and $\phi_{E,n}(y)$ for all large $n$. By Lemma \ref{lem:convergence3}, the arc $\G_n$ converge to the arc $\G:=\RRR_{P}(\alpha)\cup\{z\}\cup\RRR_{P}(\beta)$, which separates $x,y$, also a contradiction.

\noindent {\bf Case 2}. There exists a point $z_n\in V(H_E^n)\setminus\phi_{E,n}(V(H_E))$ contained in the open arc $(\phi_{n,E}(x),\phi_{n,E}(y))$ for every large $n$.

By the definition of $V(H_E^n)$, the point $z_n$ is a branched point.
Suppose on the contrary that this case happens. Then, for each $n$, we can find three external rays/extended rays of $P_n$ with arguments $\alpha_n,\beta_n,\eta_n$ landing together at $z_n$, and a endpoint  $v_n$ of $H_E^n$ disjoint with $\phi_{n,E}(x),\phi_{n,E}(y)$, such that the three components of $\C\setminus \RRR_{P_n}(\alpha_n,\beta_n,\eta_n)$ contains $\phi_{n,E}(x),\phi_{n,E}(y)$ and $v_n$ respectively,
 where $\RRR_{P_n}(\alpha_n,\beta_n,\eta_n)$ denotes the union of the three rays with arguments $\alpha_n,\beta_n,\eta_n$ together with their common landing point $z_n$. By taking subsequences, we can assume $\alpha_n\to\alpha,\beta_n\to\beta,\eta_n\to\eta,z_n\to z$ and $v_n=\phi_{n,E}(v)$ for a point $v\in V(H_E)$ since the endpoints of $H_E^n$ belong to $\phi_{n,E}(V(H_E))$.

As explained in Case 1, one can suitably choose the rays in $\RRR_{P_n}(\alpha_n,\beta_n,\eta_n)$ such that $$\limsup_{n\to\infty}\RRR_{P_n}(\alpha_n,\beta_n,\eta_n)=\RRR_P(\alpha,\beta,\eta),$$
where $\RRR_P(\alpha,\beta,\eta)$ consists of external/extended rays of $P$ with arguments $\alpha,\beta,\eta$, together with their common landing point $z$.
If $z\not\in\{x,y,v\}$, then $\RRR_P(\alpha,\beta,\eta)$ separates $x,y,v$, and hence $z$ is a branched point of $H_E$ contained in $(x,y)$, a contradiction.
Otherwise, one of $\{x,y,v\}$, say $x$, coincides with $z$. In this case, the distance of $\phi_{n,E}(x)$ and $z_n$ converge to $0$, so that the length of the orbit of $z_n$ converge to $\infty$. Since $z_n$ is a branched point of $H_E^n$, it follows that the number of the branched points of $H_E^n$ goes to infinity, which contradicts that the number of the endpoints of $H_E^n$ is bounded by $\#V(H_E)$ for all $n$.
\hfill\qedsymbol

By this claim, we see that $V(H_E^n)=\phi_{n,E}(V(H_E))$, and the endpoints of $H_E^n$ are contained in $\phi_{n,E}(M_E)$. Hence $H_E^n$ is a subtree of $\HHH_{P_n}$.

\noindent\emph{Claim 5. Let $E,E'$ be two elements of $\EEE_\JJJJ$ such that $E'=P(E)$. Then $P_n(H_E^n)\subseteq H_{E'}^n$ and $P_n:H_E^n\to H_{E'}^n$ is Markov.}

\noindent\emph{Proof.} By the definition of $\phi_{n,E}$, we have $P_n\circ\phi_{n,E}=\phi_{n,E'}\circ P$ on $V(H_E)$. Since $P(V(H_E))\subseteq V(H_{E'})$, it follows that $$P_n(V(H_E^n))=P_n\circ\phi_{n,E}(V(H_E))=\phi_{n,E'}\circ P(V(H_E))\subseteq\phi_{E',n}(V(H_{E'}))=V(H_{E'}^n).$$
Note that the image $P_n(H_E^n)$ equals to the regulated hull of $P_n(V(H_E^n))$ in $\KKK_{P_n}$, then one get $P_n(H_E^n)\subseteq H_{E'}^n$. To prove that $P_n:H_E^n\to H_{E'}^n$ is Markov, we just need to check that the restriction of $P_n$ on each edge of $H_E^n$ is injective.

Let $e=e(\phi_{n,E}(x),\phi_{n,E}(y))$ be an edge of $H_E^n$ with endpoints $\phi_{n,E}(x),\phi_{n,E}(y)$. Observe that the restriction of $P_n$ on $e$ is not injective only if $e$ is separated by $\RRR_{n,i}$ for some $1\leq i\leq m$. In this case $\phi_{n,E}(x)$ and $\phi_{n,E}(y)$ are separated by $\RRR_{n,i}$. However, it is impossible because $\phi_{n,E}(x)$ and $\phi_{n,E}(y)$ have the same $\XXXX_n$-itinerary.
\hfill\qedsymbol

\noindent \emph{Claim 6. For any different elements $E_1,E_2$ of $\EEE_\JJJJ$, we have $H_{E_1}^n\cap H_{E_2}^n=\emptyset$  for all large $n$}.

\noindent\emph{Proof.}
By Lemma \ref{lem:end-property}.(3), there exists a minimal $k$ such that $P^k(E_1)$ and $P^k(E_2)$ are separated by $\RRR_i$ for some $1\leq i\leq m$. Without loss of generality, we assume that $k=0$. As $V(H_{E_j}^n)\to V(H_{E_j})$ for $j=1,2$ and $\cup_{i=1}^m\RRR_{n,i}\to\cup_{i=1}^m\RRR_i$, then trees $H_{E_1}^n$ and $H_{E_2}^n$ are separated by $\RRR_{n,i}$ for all large $n$. Consequently, if $H_{E_1}^n\cap H_{E_2}^n\not=\emptyset$, the intersection equals to $c_{n,i}$.
On the other hand, since $E_1,E_2$ are periodic, by Claim 5, there exists $p$ independent on $n$ such that $P_n^p(H_{E_j}^n)\subseteq H_{E_j}^n$ for $j=1,2$. It follows that $P_n^p(c_{n,i})$ belongs to $H_{E_1}^n\cap H_{E_2}^n$, and hence equal to $c_{n,i}$. This is impossible because $c_{n,i}$  is either non-periodic, or periodic with period converges to $\infty$.
\hfill\qedsymbol

Now we define $\HHH_\JJJJ^n$ as the union of $H_E^n$ with $E\in\EEE_\JJJJ$. Since each $H_E^n$ is a connected component of $\HHH_\JJJJ^n$ (by Claim 6),  the vertex set $V(\HHH_\JJJJ^n)$ is defined as the union of $V(H_E^n)$ for all $E\in\EEE_\JJJJ$.
By Claim $5$, we have $P_n(V(\HHH_\JJJJ^n))\subseteq V(\HHH_\JJJJ^n)$ the map $P_n:\HHH_\JJJJ^n\to \HHH_\JJJJ^n$ is Markov.
Note that the trees $H_E,E\in\EEE_\JJJJ$ are pairwise disjoint, we then obtain a homeomorphism $\phi_n:\HHH_\JJJJ\to \HHH^n_\JJJJ$ for large $n$ such that $\phi_n(z):=\phi_{n,E}(z)$ if $z\in H_E$ and $E\in\EEE_\JJJJ$.

To prove $h_{top}(P|_{\HHH_\JJJJ})\leq h(P_n)$, it is enough to show that $h_{top}(P|_{\HHH_\JJJJ})=h_{top}(P_n|_{\HHH_\JJJJ^n})$, since $\HHH_\JJJJ^n\subseteq \HHH_{P_n}$.
 Let $D$ be the incidence matrix of $(\HHH_\JJJJ,P)$. By the homeomorphism $\phi_n:\HHH_\JJJJ\to \HHH_\JJJJ^n$,  the enumeration of $E(\HHH_\JJJJ)$ induces an enumeration on $E(\HHH_\JJJJ^n)$ such that $e$ and $\phi_n(e)$ have the same label.
Let $e=e(x,y)$ be any edge of $\HHH_\JJJJ$ with endpoints $x,y\in V(\HHH_\JJJJ)$. Then $P(e)=[P(x),P(y)]$ consists of several edges $\HHH_\JJJJ$, denoted as  $e_1,\ldots,e_r$. The edge $\phi_n(e)$ of $\HHH_\JJJJ^n$ has the endpoints $\phi_n(x),\phi_n(y)$, and its image $P_n(\phi_n(e))$ under $P_n$ equals to $[P_n\circ\phi_n(x),P_n\circ\phi_n(y)]$. Since $\phi_n$ is a homeomorphism, the segment $P_n(\phi_n(e))$ consists of exactly the edges $\phi_n(e_1),\ldots,\phi_n(e_r)$ of  $\HHH_\JJJJ^n$. It means that $(\HHH_\JJJJ,P)$ and $(\HHH_\JJJJ^n,P_n)$ have the same incidence matrix, and hence the same topological entropy. We then complete the proof of \eqref{eq:11}.

To prove the equality, we just need to show that, if $\{P_n,n\geq1\}$ is maximal-hyperbolic (of type $\JJJJ$), then any non-trivial periodic component $H_n$ of $\HHH_{P_n}$ is contained in $\HHH_\JJJJ^n$ for large $n$.
Let $M_{n}$ denote the Fatou critical/postcritical points of $P_n$ contained in $H_n$. Then $H_n$ is the regulated convex hull of $M_n$ within $\KKK_{P_n}$.

In the dynamical plane of $P$, we can find a set $M$ such that each point in $M_n$ is a continuation of a point in $M$. To complete the proof the proposition, it is enough to show that $M$ belong to a $\JJJJ$-end. On the contrary, we postulate that $x,y\in M$ belong to different $\JJJJ$-ends. Then one can find a minimal $k$ and a $\JJJJ$-puzzle piece $B_k$ such that $x\in B_k$ and $y\not\in B_k$. It follows that there exist an arc $\G$ consisting of two external rays and their common landing point, and a index $1\leq i\leq m$ such that $\G$ separates $x,y$ and $P^k(\G)\subseteq \RRR_i$. Correspondingly, in the dynamical plane of $P_n$, we can find an arc $\G_n$ consisting of two external radii and their common terminating point such that $P^k_n(\G_n)\subseteq \RRR_{n,i}$ and $\G_n\to \G$ as $n\to\infty$. Since $x_n\to x,y_n\to y$, we then get that $\G_n$ separates $x_n,y_n$, contradicting that $x_n,y_n$ belongs to the same component of $\KKK_{P_n}$.

The remaining part of the proposition follows directly from Proposition \ref{pro:convergence4} because the space of critical portraits of degree $d$ is compact (under the Hausdorff metric).
\hfill\qedsymbol


\begin{proposition}\label{pro:liminf-strong}
Let $P$ be a monic, centered \pf polynomial. Then we can find a sequence $\{P_n,n\geq1\}$ of partial \pf polynomials converging to $P$ by perturbing $P$ with capture surgery such that $\mu_P=h(P_n)$ for large $n$, where $\mu_P$ is defined in Proposition \ref{pro:liminf}, and the number $\mu_P$ equals to the limit inferior $\liminf_{Q\to P} h(Q)$ with $Q$ chosen in $\PPP_d^{\rm ppf}$.
\end{proposition}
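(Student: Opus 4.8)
The strategy is to apply the capture surgery of Section~\ref{sec:capture} simultaneously at all the Julia critical points of $P$, turning each of them into escaping critical points, so as to produce a maximal-hyperbolic sequence $\{P_n\}$ of polynomials converging to $P$; if the surgery is tuned so that $\{P_n\}$ has type $\JJJJ^*$ for a weak Julia critical marking $\JJJJ^*$ of $P$ realizing $\mu_P$, then the equality case of Proposition~\ref{pro:liminf} gives $h(P_n)=h_{top}(P|_{\HHH_{\JJJJ^*}})=\mu_P$ for all large $n$. Since $P_n\in\PPP_d^{\rm ppf}$ and $P_n\to P$, this forces $\liminf_{Q\to P}h(Q)\le\mu_P$, and combined with the reverse inequality $\mu_P\le\liminf_{Q\to P}h(Q)$ already contained in Proposition~\ref{pro:liminf} we obtain the desired equality. (If $P$ has no Julia critical point it is hyperbolic and the assertion is immediate.) So assume $P$ has Julia critical points and fix a weak Julia critical marking $\JJJJ^*=\{\Theta_1(c_1),\dots,\Theta_r(c_r)\}$ of $P$ (the $c_j$ need not be distinct) attaining $h_{top}(P|_{\HHH_{\JJJJ^*}})=\mu_P$.

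First I would set up, for each $n$, perturbation data $\vec{\s}_n=(\c,\DD_n,\ZZ_n,\XX_n)$ in which $\c$ lists all Julia critical points of $P$, ${\rm diam}(\DD_n)\to0$, and the invariant domains $\ZZ_n$ lie in the basin of infinity $\Omega(P)$ with forward orbits escaping to $\infty$ while avoiding all perturbation domains. The one non-formal ingredient is the perturbation mapping $\XX_n$: near each Julia critical point $c$, I would build $\chi_{n,c}$ with exactly one critical point $c^{(j)}$ of local degree $\#\Theta_j(c_j)$ for each $j$ with $c_j=c$ — this is consistent with $\sum_{c_j=c}(\#\Theta_j(c_j)-1)={\rm deg}(P|_c)-1$ from Definition~\ref{def:weak-critical-marking} — designed, using that the external rays of $P$ with arguments in $\Theta_j(c_j)$ all land at $c$, so that after realization and as $n\to\infty$ the external radii of $P_n$ terminating at $c^{(j)}$ have arguments Hausdorff-converging to $\Theta_j(c_j)$; I would also make the orbits of these new critical points avoid the critical set, compatibly with Definition~\ref{def:admissible}. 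Because $\ZZ_n\subseteq\Omega(P)$, Lemma~\ref{lem:no-Thurston-obstruction} realizes the topological perturbation $F_n$ as a polynomial $P_n$ (normalized monic, centered), and Proposition~\ref{pro:surgery-convergence} gives $P_n\to P$; a further arbitrarily small quasiconformal twist near the escaping critical points, as in the proof of Proposition~\ref{pro:limsup}, renders $P_n$ admissible without affecting $h(P_n)$, the convergence, or the limiting combinatorics. Since the surgery alters $P$ only on small neighborhoods of Julia critical points, disjoint from the Fatou postcritical orbits, the super-attracting cycles persist and, by Lemma~\ref{lem:Fatou-component}, the bounded critical points of $P_n$ are exactly the continuations of the Fatou critical points of $P$ and converge to them, while the former Julia critical points have become escaping critical points; thus $P_n$ is hyperbolic and $\{P_n\}$ is maximal-hyperbolic. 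Finally, letting $\EEEE_n$ be the escaping part of a critical marking $\Theta_n$ of $P_n$, the design of $\XX_n$ gives $\EEEE_n\to\JJJJ^*$, so $\{P_n\}$ has type $\JJJJ^*$, Proposition~\ref{pro:liminf} applies, and the proof is completed as in the first paragraph.

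I expect the main obstacle to be the construction of the local models $\chi_{n,c}$ ensuring that the realized polynomials $P_n$ have \emph{precisely} type $\JJJJ^*$ in the limit: making the Julia critical points escape is easy, but dictating which external radii terminate at which of the new critical points $c^{(j)}$ requires a careful local analysis near each $c$ together with the convergence of the Green functions $g_{P_n}$ and of their gradient flows (in the spirit of Lemmas~\ref{lem:convergence3} and~\ref{lem:perturbation1}), and one must check that the resulting arguments genuinely assemble into a critical marking of $P_n$ — this is where the admissibility twist and the closedness statement of Proposition~\ref{pro:convergence4} are used.
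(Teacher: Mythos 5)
Your proposal follows essentially the same route as the paper's proof: perturb $P$ by capture surgery at all Julia critical points, design the local surgery maps $\chi_{n,c}$ so that each Julia critical point splits into escaping critical points whose associated external radii realize the chosen weak Julia critical marking $\JJJJ^*$ in the limit, invoke Lemma~\ref{lem:no-Thurston-obstruction} and Proposition~\ref{pro:surgery-convergence} for realization and convergence, and conclude by the equality case of Proposition~\ref{pro:liminf}. The paper additionally records the minor point that the realized polynomials are monic but not automatically centered and fixes this with a translation, but your account (which treats the normalization as part of the realization) is otherwise complete and matches the paper's argument.
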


\begin{proof}
Let $\JJJJ$ be a weak Julia critical marking of $P$ such that $h(P|_{\HHH_\JJJJ})=\mu_P$. By Proposition \ref{pro:liminf}, we just need to construct a maximal-hyperbolic polynomial sequence converging to $P$ of type $\JJJJ$ by perturbing $P$ with capture surgery.

Denote $\JJJJ:=\{\Theta_1(c_1),\ldots,\Theta_m(c_m)\}$ as in Section \ref{sec:critical-marking}. Let $\c$ be the vector consists all Julia critical points of $P$, and $\vec{\s}_n=(\c,\DD_n,\ZZ_n,\XX_n),n\geq1$ be a sequence of perturbation data for capture surgery defined in Section \ref{sec:capture} such that
\begin{itemize}
\item ${\rm diam}(\DD_n):=\max_{c\in\c}{\rm diam}(D_n(c))\to 0$ as $n\to\infty$;
\item for each $n$ and $c\in\c$, the invariant domain $Z_n(c)\in\ZZ_n$ has one component in each component of $D_n(c)\setminus\KKK_P$.
\end{itemize}
To get an expected polynomial sequence by capture surgery, we need to specifically define the surgery mappings $\XX_n,n\geq1$.

Let $c$ be a Julia critical point of $P$, and $I_c:=\{i:1\leq i\leq m,c_i=c\}$ where the notations $c_i$ come from $\JJJJ=\{\Theta_1(c_1),\ldots,\Theta_m(c_m)\}$. For any index $i\in I_c$, we denote by $V_{n,i}$ the component of $P(D_{n,c})\setminus \KKK_P$ such that the external ray of $P$ with argument $\theta_i:=\tau(\Theta_i(c_i))$ lands at $P(c)$ through $V_{n,i}$, and denote $Y_{n,i}:=P(Z_n(c))\cap V_{n,i}$. We also denote by $v_{n,i}$ the intersection of $\RRR_P(\theta_i)$ and $\partial P(D_{n,c})$, and by $w_{n,i,j},j=1,\ldots,k_i$ the intersections of $\RRR_P(\alpha_{i,j})$ and $\partial D_{n,c}$ where $\Theta_i(c_i):=\{\alpha_{i,1},\ldots,\alpha_{i,k_i}\}$ (see Figure \ref{fig:capture-mapping}).
\begin{figure}[http]
 \includegraphics[scale=0.65]{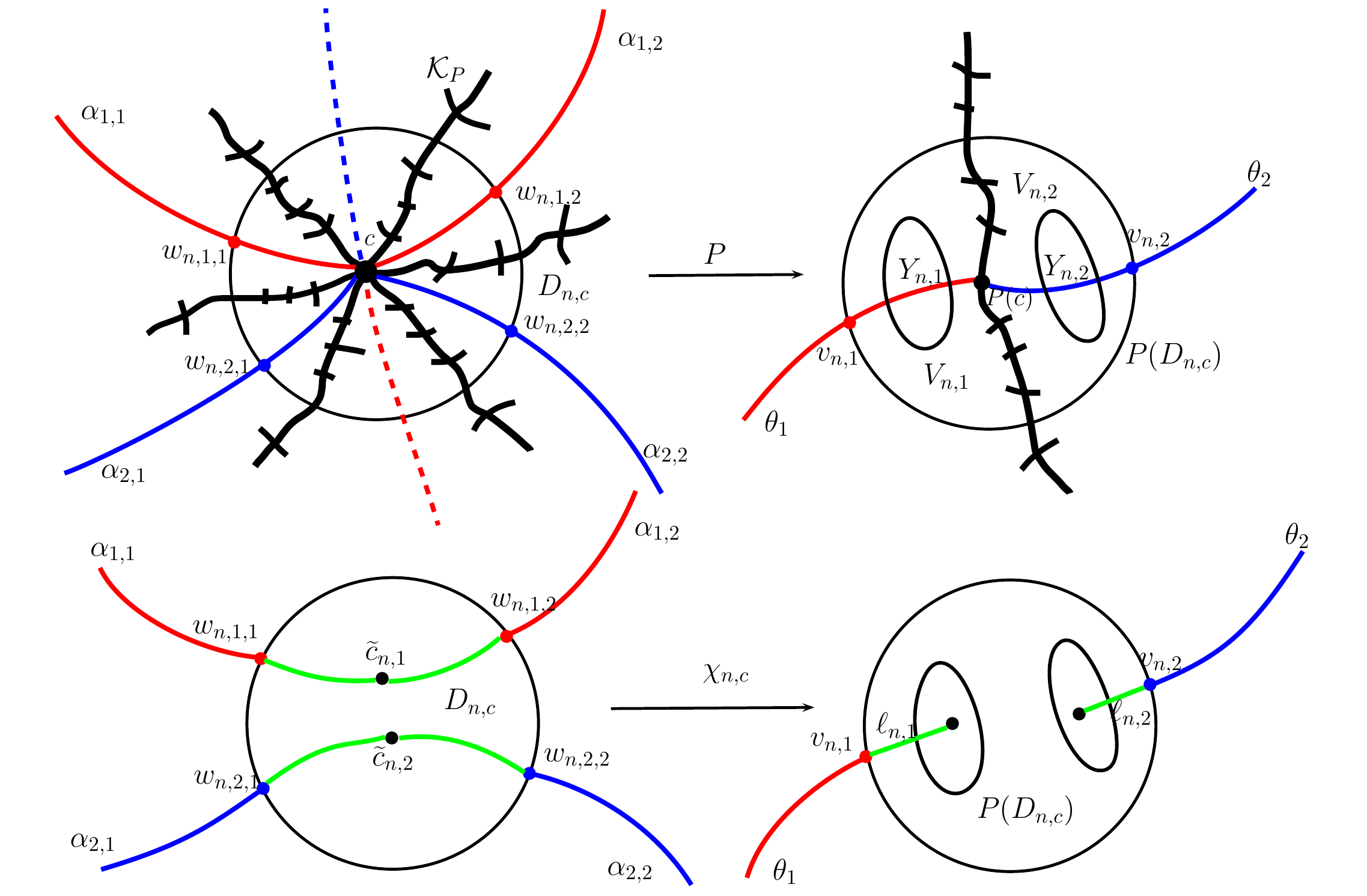}
\caption{The construction of perturbation mappings. The picture above shows the local behavior of $P$ near $c$ and the one below illustrates the perturbation of $P$ near $c$. Here $c=c_1=c_2$, $\Theta_1(c_1)=\{\alpha_{1,1},\alpha_{1,2}\}$ and $\Theta_2(c_2)=\{\alpha_{2,1},\alpha_{2,2}\}$.}
\label{fig:capture-mapping}
\end{figure}
Then all $w_{n,i,j},j=1,\ldots,k_i$ are preimages of $v_{n,i}$ by $\chi_{n,i}$ since $P=\chi_{n,c}$ on $\partial D_{n,c}$.
For the definition of $\chi_{n,c}$, we require that the critical point $c$ of $P$ splits into $\#I(c)$ critical points $\wt{c}_{n,i},i\in I(c),$ of the surgery map $\chi_{n,c}$, such that, for each $i$,
\begin{itemize}
\item the critical point $\wt{c}_{n,i}$ (of $\chi_{n,c}$) has the multiplicity $\#\Theta_i(c_i)-1$ and $\chi_{n,c}(\wt{c}_{n,i})\in Y_{n,i}$;
\item let $\ell_{n,i}$ be an arc in $V_{n,i}$ joining $\chi_{n,i}(\wt{c}_{n,i})$ and $v_{n,i}$, then the lift of $\ell_{n,i}$ by $\chi_{n,c}$ based at each $w_{n,i,j}$, with $j=1,\ldots,k_i$, has the terminal point $\wt{c}_{n,i}$.
\end{itemize}

Let $\wt{P}_{n}$ be the topological perturbation of $P$ by capture surgery with data $\vec{\s}_n$ specified above. By Lemma \ref{lem:no-Thurston-obstruction}, there exists a unique polynomial $P_n$ c-equivalent to $\wt{P}_{n}$ by a pair of normalized homeomorphisms $(\phi_{n,0},\phi_{n,1})$ such that they fix two  points near $\infty$ independent on $n$. This normalized property implies that $\phi_{n,0}(z)/z\to 1$ as $z\to\infty$, and hence $P_{n}$ is monic. For each Julia critical point $c$ of $P$ and any $i\in I(c)$, we denote $c_{n,i}:=\phi_{n,0}(\wt{c}_{n,i})$.
Then the Julia critical point $c$ of $P$ splits into $\#I(c)$ escaping critical points $c_{n,i},i\in I(c)$ of $P_n$. By Proposition \ref{pro:surgery-convergence}, it follows that the sequence $\{P_n,n\geq1\}$ is maximal-hyperbolic and converge to $P$.

To prove $h(P_n)=\mu_P$ for large $n$, we remains to show that the sequence $\{P_n,n\geq1\}$ has type $\JJJJ$. Let $c_{n,i}$ be a escaping critical point of $P_n$ with $i\in I(c)$, where $c$ is a Julia critical point of $P$. We define $\Theta(c_{n,i})$ as the set of the arguments of the external radii of $P_n$ which terminate at $c_{n,i}$. By taking subsequence if necessary, we assume that $\Theta(c_{n,i})\to \Theta_i$ as $n\to\infty$. According to Lemma \ref{lem:convergence3}, the external rays of $P$ with arguments in $\Theta_i$ land at $c$. Our specific construction of $\XX_n$ ensure that the set $\Theta_i$ is exactly $\Theta_i(c_i)$. Hence the Julia critical marking $\{\Theta(c_{n,i}):1\leq i\leq m\}$ of $P_n$ converge to $\JJJJ$ as $n\to\infty$.

Notice that the polynomials $P_n,n\geq1,$ constructed above by capture surgery are monic, but not necessary centered. To testify $\mu_P$ the limit inferior $\liminf_{Q\to P}h(Q)$ with $Q\in\PPP_d^{\rm ppf}$, we will find a sequence $\{Q_n,\geq1\}\subseteq \PPP_d^{\rm ppf}$ such that $h(Q_n)=h(P_n)$ and $Q_n\to P$.
Let $T_n$ be a the translation such that $Q_n:=T_n\circ P_n\circ T_n^{-1}$ is a monic, centered polynoimal. Then $h(P_n)=h(Q_n)$. By taking a subsequence, we can assume $T_n\to T$. Hence $Q_n,n\geq1,$ converge to a monic, centered polynomial $Q=T\circ P\circ T^{-1}$. Since $P$ itself is monic, centered, and $T$ is a translation, if follows that $T=id$, and then $P=Q$.
\end{proof}

\begin{proof}[Proof of Proposition \ref{pro:key}]
The upper semi-continuous of $h$ at $P$ is duo to Proposition \ref{pro:limsup}. Let $\JJJJ$ be a Julia weak critical marking of $P$ such that $\mu_P=h_{top}(P|_{\HHH_\JJJJ})$. We define $\HHH_P^*:=\HHH_\JJJJ$. Then the remaining part of Proposition \ref{pro:key} follows directly from Propositions \ref{pro:liminf} and \ref{pro:liminf-strong}.
\end{proof}

\section{The core entropy for \pf Newton maps}\label{sec:newton-graph}

In this section, we summarize the construction of \emph{extended Newton graphs} given in \cite{LMS1}, define the core entropy for \pf Newton maps, and give a formula to compute the core entropy. Throughout this subsection, the map $f$ denote a \pf Newton map of degree $d\geq3$.  

\subsection{Newton graphs and the induced puzzles}
Let $B_1,\ldots, B_d$ are the fixed Fatou components
of $f$, with centers $a_1,\ldots,a_d$, respectively. Let $\{(B_i,\Phi_i)_{1\leq i\leq d}\}$
be the B\"{o}ttcher coordinates of $B_1,\ldots,B_d$. For each $B_k$, the $d_k-1$ fixed internal
rays $$I_{B_k}(\frac{i}{d_k-1}),1\leq i\le d_k-1,$$
must land at fixed points in $\partial B_k$, which
can only possible be $\infty$. Thus these rays possesses a common landing point.
The union of all these $\sum_k(d_k-1)$
 fixed internal rays  together
with their landing point $\infty$, usually denoted by $\De_0$:
\[\De_0:=\bigcup_{k=1}^d\bigcup_{i=1}^{d_k-1}\ov{I_{B_k}\big(i/(d_k-1)\big)}.\]

 The set $\De_0$ is called the \emph{channel diagram} of $f$, following the notation in \cite{HSS}. Clear $f(\De_0)=\De_0$. For any $n\geq0$, denote by $\De_n$ the connected component of $f^{-n}(\De_0)$ that contains $\De_0$. Following \cite{LMS1}, we call $\De_n$  the \emph{Newton graph} of $f$ at level $n$. The vertex set $V(\De_n)$ of $\De_n$ consists of the points in $\De_n$ which are iterated by $f$ to the fixed points.

\begin{lemma}[\cite{MRS},Theorem 3.4]\label{lem:connected}
There exists $N\geq0$ such that the Newton graph  $\De_N$ contains all poles of $f$. Then $\De_{n+1}=f^{-1}(\De_n)$ and $\De_n\subseteq\De_{n+1}$ for any $n\geq N$.
\end{lemma}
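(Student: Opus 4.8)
\textbf{Proof proposal for Lemma~\ref{lem:connected}.} The statement is quoted from \cite{MRS}, so the plan is to reproduce the argument there rather than invent a new one. The key point is that $\infty$ is a repelling fixed point of $f$ with multiplier $d/(d-1)$, and that the finite critical points of $f$ are attracted to the attracting cycles (for a \pf Newton map, each finite fixed point $a_i$ is (super-)attracting, since the multiplier $1-1/n_i$ lies in $[0,1)$). First I would recall that the poles of $f$ (i.e. the preimages $f^{-1}(\infty)$, together with all their forward iterates landing eventually on $\infty$) are exactly the vertices of the graphs $\De_n$ that are mapped to $\infty$; since $f$ is \pf, the grand orbit of $\infty$ that lies in the Julia set is a finite set, so there are only finitely many such poles and preperiodic preimages, all repelling-preperiodic.

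Next I would argue that each pole $p$ is eventually captured by some Newton graph: since $p$ maps after finitely many steps to $\infty\in\De_0$, and $\De_n$ is defined as the component of $f^{-n}(\De_0)$ containing $\De_0$, it suffices to show that the component of $f^{-k}(\De_0)$ containing $p$ is the \emph{same} component as the one containing $\De_0$ for $k$ large. This is where local connectivity of the Julia set (Shishikura's theorem, quoted in the excerpt after Proposition~\ref{pro:head}) and the expanding orbifold metric near $\JJJ_f$ from Lemma~\ref{lem:sub-hyperbolic} enter: one connects $p$ to $\infty$ by an arc in $\JJJ_f$ of controlled length, pulls it back, and uses expansion to show that after enough iterations all the relevant arcs and their pullbacks merge into one connected piece together with $\De_0$. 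Taking $N$ to be the maximum over the (finitely many) poles of the level at which each is absorbed gives that $\De_N$ contains all poles of $f$.

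For the second assertion, once $\De_N$ contains all poles, I would show $\De_{n+1}=f^{-1}(\De_n)$ for $n\ge N$ directly: $f^{-1}(\De_n)$ is a priori a union of several components, one of which ($\De_{n+1}$ by definition) contains $\De_0$; but any component of $f^{-1}(\De_n)$ must contain a pole of $f$ (since $\De_n\ni\infty$ and every component of the preimage maps onto $\De_n$, hence surjects onto a neighborhood of $\infty$ within $\De_n$ — concretely it contains a preimage of the landing configuration at $\infty$), and all poles already lie in $\De_N\subseteq\De_n\subseteq f^{-1}(\De_{n})$... more carefully, all poles lie in $\De_{N+1}$, and inductively in $\De_{n+1}$, forcing every component of $f^{-1}(\De_n)$ to meet $\De_{n+1}$, hence to equal $\De_{n+1}$ by connectedness. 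The nesting $\De_n\subseteq\De_{n+1}$ then follows because $\De_{n-1}\subseteq\De_n$ implies $f^{-1}(\De_{n-1})\subseteq f^{-1}(\De_n)$, i.e. $\De_n\subseteq\De_{n+1}$, starting from $\De_0\subseteq\De_1$ which is immediate from $f(\De_0)=\De_0$.

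The main obstacle is the first step: proving that a pole is genuinely absorbed into the \emph{component of the preimage containing $\De_0$}, rather than landing in a separate component forever. This is a connectivity statement about pullbacks of graphs and is exactly the technical heart of \cite{MRS}; it relies on the precise structure of the channel diagram (the fixed internal rays all landing at $\infty$) plus expansion near $\JJJ_f$ to guarantee that successive preimages of arcs joining poles to $\infty$ eventually overlap. Since the excerpt explicitly cites \cite[Theorem~3.4]{MRS} for this lemma, in the paper itself I would simply invoke that reference and record the two consequences ($\De_{n+1}=f^{-1}(\De_n)$ and the nesting) as the easy formal corollaries sketched above, rather than reproducing the full pullback argument.
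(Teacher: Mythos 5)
Your proposal correctly observes that the paper cites \cite[Theorem~3.4]{MRS} without giving its own proof, so the right move is indeed to invoke the reference for the first (technical) claim and record the two equalities as formal corollaries. Your sketch of those corollaries is essentially right. Two small points worth tightening. First, the nesting $\De_n\subseteq\De_{n+1}$ holds for \emph{all} $n\geq 0$ and is logically prior to the statement $\De_{n+1}=f^{-1}(\De_n)$: from $f(\De_0)=\De_0$ one gets $\De_0\subseteq f^{-1}(\De_0)$, hence $f^{-n}(\De_0)\subseteq f^{-(n+1)}(\De_0)$, and then $\De_n\subseteq\De_{n+1}$ since both are components of the respective preimages containing $\De_0$. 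Your write-up has the nesting argument at the end but quietly uses it earlier (``all poles already lie in $\De_N\subseteq\De_n$''), so the order should be reversed. Second, the fact that every component $C$ of $f^{-1}(\De_n)$ maps \emph{onto} $\De_n$ (hence contains a preimage of $\infty$) deserves a sentence: $\De_n$ is a finite connected graph, its preimage is a finite graph, and a path in $\De_n$ from any point of $f(C)$ to $\infty$ can be lifted within $f^{-1}(\De_n)$ starting in $C$, so $C$ meets $f^{-1}(\infty)$; since all points of $f^{-1}(\infty)$ (i.e., $\infty$ and the poles) lie in $\De_N\subseteq\De_{n+1}$ and $\De_{n+1}$ is itself a component of $f^{-1}(\De_n)$, one concludes $C=\De_{n+1}$. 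With those reorderings the sketch is correct and matches what the paper intends by quoting this as a known result.
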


By this lemma, the Newton graphs naturally induce a puzzle as follows. Let $N$ be the minimal number such that $\De_N$ contains all poles of $f$. For each $k\geq0$, let $\WWW_k$ denote the collection of all connected components of $\ov{\C}\setminus \De_{N+k}$. We call $\WWW=\cup_{k\geq0}\WWW_k$ the \emph{$\De$-puzzle of $f$,} or  \emph{$\De(f)$-puzzle}, and call each element of $\WWW_k$  a \emph{$\De$-puzzle piece (of $f$) at level $k$}. By Lemma \ref{lem:connected}, we know that
 \begin{enumerate}
 \item each $\De$-puzzle piece is a simply-connected domain;
 \item two distinct $\De$-puzzle pieces are either disjoint or nested;
 \item each $\De$-puzzle piece of level $k+1$ ($k\geq0$) is contained in a $\De$-puzzle piece of level $k$;
 \item each $\De$-puzzle piece of level $k+1$ ($k\geq0$) is a connected component of the preimage by $f$ of a $\De$-puzzle piece of level $k$.
\end{enumerate}



\subsection{Puzzle-renormalization of Newton maps} \label{sec:puzzle-renormalization}
We call $\rho=(f^p,W,W')$ a \emph{puzzle-renormalization  triple} (of period $p$) if the following properties hold:
\begin{enumerate}
\item $W\subseteq W'$ are distinct $\De$-puzzle pieces;
\item $f^i(W),0\leq i\leq p-1,$ are pairwise disjoint $\De$-puzzle pieces, $f^p(W)=W'$, and ${\rm deg}(f^p|_W)\geq 2$;
\item the \emph{filled-in Julia set} of $\rho$, defined as
\[K_\rho:=\{z\in W\mid f^n(z)\in W\text{ for all $n\geq0$}\}\]
is connected.
\end{enumerate}

Note that the definition of puzzle-renormalization is different from that of renormalization given in Definition \ref{def:renormalization}, because it is possible that either $\ov{W}\not\subseteq W'$ (although $W\subseteq W'$) or $W,W'$ are not disks (although they are simply-connected). However, the following result shows that puzzle-renormalization implies renormalization.


\begin{proposition}[\cite{LMS1}, Lemma 4.19]\label{pro:puzzle-renormalization}
Let $(f^p,W,W')$ be a puzzle-renormalization triple. Then there exists a pair of Jordan domain $U\subseteq V$, such that $(f^p,U,V)$ is a renormalization triple of $f$ in the sense of Definition \ref{def:renormalization}, and the filled-in Julia set of $(f^p,U,V)$ equals to that of $(f^p,W,W')$.
\end{proposition}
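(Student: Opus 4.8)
The plan is to upgrade a puzzle-renormalization triple $(f^p,W,W')$ to an honest polynomial-like pair of Jordan domains by a standard ``thickening'' argument, taking care that the new domains still iterate correctly and that no mass of the filled-in Julia set is lost. First I would fix a large level $k_0$ and pass to the Newton graphs $\Delta_{N+k}$; since $\overline W$ is compact and $W\subsetneq W'$, for all sufficiently large $k$ one can find a $\Delta$-puzzle piece $W_k$ of level $k$ with $\overline{W_k}\subseteq W'$ and $K_\rho\subseteq W_k\subseteq W$, simply by taking the union of the finitely many level-$k$ pieces that meet $K_\rho$ (they are nested inside $W$ by property (3) of the $\Delta$-puzzle, and $K_\rho$ is connected so this union is connected). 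The key point here is that $K_\rho=\bigcap_n (f^p|_W)^{-n}(W)$ forces each forward iterate $f^{jp}(K_\rho)$ to stay inside $W$, so for $k$ large the intersection $\bigcap_{0\le j\le p-1} f^{jp}\text{-preimages}$ of these thickened pieces still traps exactly $K_\rho$.

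Next I would produce the Jordan domains. The obstruction is that the $\Delta$-puzzle pieces $W,W'$ need be neither Jordan domains nor compactly contained one in another; the cure is to replace their boundaries, which consist of arcs of internal/external rays and equipotentials, by nearby smooth Jordan curves. Concretely, I would choose a Jordan curve $\gamma'$ in $W'\setminus \overline{W_{k_0}}$ enclosing $K_\rho$, isotopic in $W'\setminus K_\rho$ to $\partial W'$ and avoiding the finitely many relevant rays and the postcritical set; set $V:=$ the bounded complementary component of $\gamma'$ (a Jordan domain with $K_\rho\subseteq V\subseteq W'$). Then define $U$ to be the connected component of $(f^p)^{-1}(V)$ containing $K_\rho$. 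Because $\deg(f^p|_W)\ge 2$ and the only critical values of $f^p$ on $W$ that matter lie in $K_\rho\subseteq V$ (away from $\gamma'$), the map $f^p\colon U\to V$ is proper of the same degree $\delta:=\deg(f^p|_W)\ge2$, $U$ is again a Jordan domain, and one gets $\overline U\subseteq V$ by choosing $\gamma'$ with a definite gap from $\partial W'$ and using that $f^p$ is expanding in the hyperbolic metric of $V$ near $K_\rho$ (equivalently, that $U$ is compactly contained in the level-$k_0$ thickened piece, which is compactly contained in $V$). This gives a genuine polynomial-like map in the sense of Section~\ref{sec:renormalization}.

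It then remains to check that the filled-in Julia set is unchanged: $K_{(f^p,U,V)}=\bigcap_{n\ge0}(f^p|_U)^{-n}(V)$. One inclusion is immediate since $K_\rho$ is $f^p$-invariant and contained in $U\subseteq V$. For the reverse, any point $z$ with $f^{np}(z)\in V$ for all $n$ has its whole forward $f^p$-orbit inside $V\subseteq W'$; I would argue that this orbit actually stays in $W$ (hence in $W_{k_0}$, and inductively in all thickened level-$k$ pieces), because a point that ever left $W$ would, after one more application of $f^p$, be separated from $K_\rho$ by one of the boundary rays of a $\Delta$-puzzle piece and would thereafter escape $V$ — contradicting $f^{np}(z)\in V$ for all $n$. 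Thus the orbit lies in $\bigcap_k W_k$, and since $\bigcap_k W_k=K_\rho$ (the puzzle pieces shrink to $K_\rho$ on the Julia part by expansion, and the Fatou parts are pinned down by the internal-ray structure of $\Delta_n$), we conclude $z\in K_\rho$. The connectedness of $K_\rho$ is given, so by the Straightening Theorem (Theorem~\ref{thm:straigtening}) $(f^p,U,V)$ straightens to a polynomial of degree $\delta$, which is exactly the renormalization triple asserted. The main obstacle I anticipate is the bookkeeping in the thickening step — verifying simultaneously that $\overline U\subseteq V$, that $\deg(f^p|_U)=\delta$, and that the trapping argument is not disturbed by the perturbation of the boundary — but all of this is controlled once $\gamma'$ is chosen generically relative to the finitely many rays, equipotentials and postcritical points involved, and this is precisely the argument carried out in \cite[Lemma 4.19]{LMS1}.
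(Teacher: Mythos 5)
The paper does not actually contain a proof of this proposition: it is imported verbatim from \cite[Lemma 4.19]{LMS1}, so there is no internal argument to compare against. Your thickening sketch is the expected route and is essentially sound, but a few of the intermediate claims are either confused or heavier than they need to be.

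First, the construction of the nested piece $W_k$ is garbled: since $K_\rho$ is connected, compact, and disjoint from the closed set $\Delta_{N+k}$ for every $k$, it lies in exactly \emph{one} level-$k$ puzzle piece (a single component of $\overline{\C}\setminus\Delta_{N+k}$). A ``union of the finitely many level-$k$ pieces meeting $K_\rho$'' would in general fail to be a puzzle piece and is unnecessary. Second, your degree count $\deg(f^p|_U)=\deg(f^p|_W)$ tacitly assumes that every critical point of $f^p|_W$ lies in $K_\rho\subseteq U$; that is morally a consequence of the hypothesis that $K_\rho$ is connected, but you cannot invoke the polynomial-like connectivity criterion before having produced the polynomial-like map, so this deserves a line of justification rather than an aside. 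Third, the verification of $K_{(f^p,U,V)}=K_\rho$ is overcomplicated and relies on an unproved claim (``the puzzle pieces shrink to $K_\rho$''). It is both shorter and safer to note that once $V\subseteq W'$, the component $U$ of $(f^p)^{-1}(V)$ containing $K_\rho$ necessarily satisfies $U\subseteq W$: indeed $W$ is a full connected component of $(f^p)^{-1}(W')$ (its boundary maps into the Newton graph and hence out of $W'$), so components of $(f^p)^{-1}(V)$ refine components of $(f^p)^{-1}(W')$ and the one containing $K_\rho$ sits inside $W$. With $U\subseteq W$ in hand, any point whose $(f^p|_U)$-orbit stays in $V$ has $f^{np}(z)\in U\subseteq W$ for all $n$, hence lies in $K_\rho$; the reverse inclusion is immediate. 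Your escape-separation argument can be dropped entirely. With these repairs the proposal is a faithful rendering of the standard argument.
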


Notice that, by definition, the filled-in Julia set of any puzzle-renormalization  triple is disjoint from the Newton graphs $\De_n$ for all $n\geq0$. So the filled-in Julia sets of different puzzle-renormalization triples either coincides or are disjoint. On the other hand, let $\rho_0=(f^p,W,W')$ be a puzzle-renormalization triple of period $p$, such that the level of $W'$ is larger than $p$. Then, for each $0\leq i\leq p$, the triple $\rho_i:=(f^p,f^i(W),f^i(W'))$ is also a puzzle-renormalization triple with its filled-in Julia set $K_{\rho_i}$ equal to $f^i(K_{\rho_0})$.
Therefore, the distinct filled-in Julia sets of all puzzle-renormalization triples are pairwise disjoint, and can be divided into finitely many orbits under the iteration of $f$.

Let $\rho=(f^p,W,W')$ be any puzzle-renormalization triple. By Proposition \ref{pro:puzzle-renormalization}, we can define an \emph{canonical extend Hubbard tree} $H_\rho$ of $\rho$ as the regular convex hull within $K_\rho$ of all critical/postcritical points and fixed points of $f^p|_{K_\rho}$ (following \cite{LMS1}). It is clear that $f(H_{\rho_i})\subseteq H_{\rho_{i+1}}$ with $\rho_i:=f^i(\rho)$ for $i=0,\ldots,p-1$.

A critical/postcritical point of $f$ is called \emph{free} if it is not iterated by $f$ to the fixed points. So the free critical/postcritical points are disjoint with Newton graphs of any level. Let $c$ be a free critical/postcritical point of $f$. If there exists a puzzle-renormalization triple $\rho$ such that $f^k(c)\in H_\rho$ for some minimal $k\geq0$, let $H_c$ be the component of $f^{-k}(H_\rho)$ containing $c$; otherwise we define $H_c=\{c\}$. It is obvious that, for any two free critical/postcritical points $c,c'$, the trees $H_c$ and $H_{c'}$ either coincide or are disjoint.
\begin{definition}[canonical Hubbard forest]\label{def:Hubbard-forest}
We define $\HHH_f$ as the union of $H_c$ (constructed above) for all free critical/postcritical points of $f$,
and call it the \emph{canonical (puzzle-renormalization) Hubbard forest} of $f$.
\end{definition}
The vertex set $V(\HHH_f)$ of $\HHH_f$ consists of the endpoints and branched points of $\HHH_f$, together with the free critical/postcritical points of $f$.
Then the Hubbard forest $\HHH_f$ is $f$-invariant, and $f:\HHH_f\to\HHH_f$ is Markov. Note also that every non-trivial periodic component of $\HHH_f$ is the canonical extended Hubbard tree of a puzzle renormalization triple.

\begin{lemma}\label{lem:renormalization}
Let $\De_n$ be the  Newton graph of level $n\geq0$, and $H$ a non-trivial, periodic component of $\HHH_f$ with period $p$. Then there exists a renormalization triple $(f^p,U_H,V_H)$ (in the sense of Definition \ref{def:renormalization}) such that $V_H\cap \De_n=\emptyset$ and $H$ is an extended Hubbard tree of the renormalization triple.
\end{lemma}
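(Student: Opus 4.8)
The goal is: given a non-trivial periodic component $H$ of $\HHH_f$ with period $p$, and a level $n$, produce a \emph{genuine} renormalization triple $(f^p,U_H,V_H)$ (disks, $\overline{U_H}\subseteq V_H$) with $H$ an extended Hubbard tree of it, \emph{and} with $V_H$ disjoint from the Newton graph $\De_n$. By the construction of $\HHH_f$ (Definition \ref{def:Hubbard-forest}), $H$ is the canonical extended Hubbard tree $H_\rho$ of a puzzle-renormalization triple $\rho=(f^p,W,W')$; and by Proposition \ref{pro:puzzle-renormalization}, $\rho$ yields a renormalization triple $(f^p,U,V)$ with $K_{(f^p,U,V)}=K_\rho$. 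So the first step is to recall that $H=H_\rho\subseteq K_\rho$, hence $H_\rho$ is a regulated tree inside the filled-in Julia set of the polynomial-like map $(f^p,U,V)$, containing all critical/postcritical points of $f^p|_{K_\rho}$ and the fixed point(s) of $f^p|_{K_\rho}$; in particular $H$ is an extended Hubbard tree of this renormalization triple (it is $f^p$-invariant and contains the Hubbard tree). That handles everything except the disjointness from $\De_n$.

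The second and main step is to shrink $V$ so that it avoids $\De_n$ while keeping $\overline{U}\subseteq V$ and the polynomial-like structure intact. The key fact I would invoke is that $K_\rho$ is disjoint from \emph{every} Newton graph $\De_m$, $m\geq 0$ (noted right after the definition of puzzle-renormalization: the filled-in Julia set of a puzzle-renormalization triple lies in the interior of a $\De$-puzzle piece of every level, hence misses all $\De_m$). Since $K_\rho$ is compact and $\De_n$ is closed, $\mathrm{dist}(K_\rho,\De_n)>0$. Now take a $\De$-puzzle piece $P_{k}$ of sufficiently high level $k$ containing $K_\rho$ in its interior; by the nesting and shrinking of puzzle pieces one can choose $k$ large enough that $\overline{P_k}\cap \De_n=\emptyset$ (the puzzle pieces containing $K_\rho$ form a nested sequence shrinking toward $K_\rho$, which is positively distant from $\De_n$). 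Then repeat the proof of Proposition \ref{pro:puzzle-renormalization} \emph{starting from this smaller puzzle piece}: i.e., apply Lemma 4.19 of \cite{LMS1} to the puzzle-renormalization triple $(f^p, W\cap P_{k+p}, P_k)$ — or more carefully, to a puzzle-renormalization triple whose range $W'$ is the level-$k$ piece $P_k$ — obtaining Jordan domains $U_H\subseteq V_H$ with $V_H$ contained in (a neighborhood of $K_\rho$ inside) $P_k$, hence $V_H\cap\De_n=\emptyset$, and with the same filled-in Julia set $K_\rho$. Since $H=H_\rho\subseteq K_\rho\subseteq U_H$ and $H$ is $f^p$-invariant containing the postcritical set of $f^p|_{K_\rho}$, it is an extended Hubbard tree of $(f^p,U_H,V_H)$.

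The main obstacle is the careful bookkeeping in the second step: one must verify that the smaller puzzle piece $P_k$ still gives a \emph{legitimate} puzzle-renormalization triple — namely that some level-$(k+p)$ piece $W_k\subseteq P_k$ maps properly onto $P_k$ under $f^p$ with the same degree $\deg(f^p|_W)$ and the same filled-in Julia set $K_\rho$ — and that Proposition \ref{pro:puzzle-renormalization} (Lemma 4.19 of \cite{LMS1}) then applies verbatim to produce disks with range inside $P_k$. This is essentially a matter of tracking how the puzzle pieces around $K_\rho$ behave under $f^p$: since $f^p:W\to W'$ is proper of degree $\delta:=\deg(f^p|_W)$ and $K_\rho$ is $f^p$-invariant, the level-$(k+p)$ piece $W_k$ obtained as the component of $f^{-p}(P_k)$ containing $K_\rho$ works, provided $k$ is large enough that $\overline{W_k}\subseteq P_k$ — which again follows from shrinking, since $W_k\downarrow K_\rho$ as $k\to\infty$. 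Once this is in place the conclusion is immediate; I would also remark that the degree and the identification $h_{top}(f^p|_H)=h_{top}(f^p|_{H_{\rho}})$ are unaffected because nothing about $K_\rho$, $H$, or the dynamics on them has changed — only the ambient disks $U_H,V_H$ have been chosen smaller.
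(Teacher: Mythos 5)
Your strategy matches the paper's terse proof: combine Proposition \ref{pro:puzzle-renormalization} with the disjointness of $K_\rho$ from every Newton graph, then arrange the range to avoid $\De_n$. But the shrinking step has a gap. You claim that for large $k$ the level-$(k+p)$ puzzle piece $W_k$ satisfies $\overline{W_k}\subseteq P_k$, ``which follows from shrinking since $W_k\downarrow K_\rho$''. This inference fails: the target $P_k$ shrinks toward $K_\rho$ simultaneously, and nothing prevents $\partial W_k$ from sharing boundary points of $\De_{N+k}$ with $\partial P_k$ at every level $k$. Indeed, this failure of compact nesting among $\De$-puzzle pieces is precisely why Definition of a puzzle-renormalization triple requires only $W\subseteq W'$ (not $\overline{W}\subseteq W'$) and why Proposition \ref{pro:puzzle-renormalization} is a genuine statement rather than a triviality. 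After conceding this, you have only a puzzle-renormalization triple $(f^p,W_k,P_k)$; the proposition then produces Jordan domains $U_H\subseteq V_H$ with the right filled-in Julia set, but its statement gives no control over where $V_H$ sits relative to $P_k$, so you still cannot conclude $V_H\cap\De_n=\emptyset$.

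The correct fix forgets puzzle pieces after using Proposition \ref{pro:puzzle-renormalization} once. Take the renormalization triple $(f^p,U,V)$ it produces and let $V_m$ be the component of $(f^p|_U)^{-m}(V)$ containing $K_\rho$. Then $(f^p,V_{m+1},V_m)$ is again a renormalization triple with the same filled-in Julia set, $\overline{V_{m+1}}\subseteq V_m$, and $\bigcap_m V_m=K_\rho$. Since $K_\rho$ is compact and disjoint from the closed set $\De_n$, the nested compacta $\overline{V_m}$ eventually lie inside a neighborhood of $K_\rho$ missing $\De_n$; set $(U_H,V_H):=(V_{m+1},V_m)$ for such an $m$. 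Your closing observation that $H=H_\rho\subseteq K_\rho$ is an $f^p$-invariant regulated tree containing all critical/postcritical and fixed points of $f^p|_{K_\rho}$, hence an extended Hubbard tree of the triple, is correct and completes the argument.
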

\begin{proof}
It follows directly from Proposition \ref{pro:puzzle-renormalization} and the fact that the filled-in Julia set of any puzzle-renormalization triple of $f$ is disjoint with the Newton graph of any level.
\end{proof}

\subsection{The construction of extended Newton graphs} By now we have two disjoint $f$-invariant objectives: one is a Newton graph $\De_n$ with $\De_n$ containing all poles of $f$; and the other one is the canonical Hubbard forest $\HHH_f$ of $f$. To obtain a connected $f$-invariant graph, one need to find some invariant arcs joining the two objectives together. These invariant arcs are constructed in \cite{LMS1} as \emph{Newton rays}.

A continuous injective map $\g:[0,1)\to \ov{\C}$ is called a \emph{ray}. Let $\De_n$ ($n\geq0$) be any Newton graph of $f$. A ray $\g:[0,1)\to\ov{\C}$ is called a \emph{Newton ray} (with respect to $\De_n$) if $\g(0)\in\De_n$, $\g(0,1)\cap \De_n=\emptyset$  and $\g(t)$ is iterated to $\De_0$ by $f$ for any $t\in[0,1)$. In other words, a Newton ray is a ray with image in $(\cup_{m\geq0}\De_m)\setminus \De_n$ except  its starting point.  The sub-hyperbolicity of $f$ implies that any Newton ray converges to a unique point as $t$ tends to $1$. We say that $\g$ \emph{lands} on this point.

A Newton ray $\g$ with respect to $\De_n$ is said to be \emph{periodic} if there exists an integer $m\geq1$ such that $f^m(\g)=\g\cup\EEE$ with $\EEE$ a subset of $\De_n$. The smallest such $m$ is called the \emph{period} of $\g$.


\begin{lemma}[\cite{LMS1}, Lemma 4.14]\label{lem:separate}
For any sufficiently large $n$, the Newton graph $\De_n$ satisfies the following two properties:
\begin{enumerate}
\item all poles and the non-free critical/postcritical points of $f$ are contained in $\De_n$;
\item any two components of $\HHH_f$ lie in different complementary components of $\De_n$.
\end{enumerate}
\end{lemma}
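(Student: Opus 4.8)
\textbf{Proof plan for Lemma \ref{lem:separate}.}

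The plan is to deduce both properties as straightforward consequences of the fact that the Newton graphs $\De_n$ form an increasing exhaustion (for large $n$) together with the local connectivity results already available for sub-hyperbolic rational maps. First I would recall from Lemma \ref{lem:connected} that there is a level $N$ such that $\De_N$ contains all poles of $f$, and $\De_{n+1}=f^{-1}(\De_n)\supseteq\De_n$ for $n\ge N$; consequently $\bigcup_{n\ge N}\De_n$ is an increasing union of finite connected graphs. For property (1), the non-free critical/postcritical points are by definition exactly those critical/postcritical points that are iterated by $f$ to a fixed point of $f$. Each fixed point of $f$ (the roots $a_i$ and $\infty$) lies in $\De_0\subseteq\De_n$; so if $c$ is non-free with $f^k(c)$ a fixed point, then $c\in f^{-k}(\De_0)\subseteq f^{-k}(\De_{n-k})=\De_n$ as soon as $n\ge N+k$. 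Since there are only finitely many critical/postcritical points, a single $n$ works for all of them simultaneously, and enlarging $n$ further also keeps all poles inside $\De_n$; this gives (1).

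For property (2), the key point is that the complement $\ov{\C}\setminus\De_n$ "shrinks" as $n\to\infty$ in the sense that its components have diameters tending to $0$, except possibly those containing periodic Fatou components or renormalization Julia sets — and a component of $\HHH_f$ is a compact connected set disjoint from every $\De_n$ (by construction of the canonical Hubbard forest, each $H_c$ lies in the grand orbit of a filled-in Julia set of a puzzle-renormalization triple, hence is disjoint from all Newton graphs, or else $H_c=\{c\}$ with $c$ free, again disjoint from the $\De_n$). So each component $H$ of $\HHH_f$ is contained in a single component $W_n(H)$ of $\ov{\C}\setminus\De_n$ for every $n\ge N$, and these $W_n(H)$ are nested and decreasing. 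Given two distinct components $H,H'$ of $\HHH_f$, they are disjoint compact sets, so $\mathrm{dist}(H,H')=:\epsilon>0$. I would then invoke the puzzle structure: the $\De$-puzzle pieces at level $k$ have the property (using Lemma \ref{lem:sub-hyperbolic}, i.e. the uniformly expanding orbifold metric near $\JJJ_f$, together with local connectivity of Fatou components à la Shishikura) that any nested sequence of puzzle pieces avoiding all periodic Fatou cycles and all renormalization Julia sets shrinks to a point; more precisely, for $n$ large enough a puzzle piece of level $n-N$ containing a point at distance $\ge\epsilon$ from a renormalization locus has diameter $<\epsilon$. Since $H$ and $H'$ each sit in at most one renormalization locus (their own), for $n$ sufficiently large the piece $W_n(H)$ has diameter $<\epsilon$ unless $W_n(H)$ is "locked" onto the renormalization triple carrying $H$ — but in that case $W_n(H)$ is disjoint from the analogous locked piece $W_n(H')$ because the filled-in Julia sets of distinct puzzle-renormalization triples are pairwise disjoint (as noted in Section \ref{sec:puzzle-renormalization}) and are separated by some $\De_m$. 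Either way $W_n(H)\ne W_n(H')$ for all large $n$, which is exactly (2).

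The main obstacle I anticipate is making the "shrinking of puzzle pieces" argument precise in the presence of renormalization: one must carefully separate the two phenomena — Fatou components (where one uses that $f$ is sub-hyperbolic so that, by Lemma \ref{lem:sub-hyperbolic}, $\JJJ_f$ and hence each Fatou boundary is locally connected and the relevant nested puzzle pieces shrink) and renormalization Julia sets (where one instead uses Proposition \ref{pro:puzzle-renormalization} to pass to an honest polynomial-like map and observes that its filled-in Julia set is a single puzzle-stable object, separated from the others by a Newton graph of high enough level). Rather than re-proving the shrinking statement from scratch, I would cite the relevant graph-separation and puzzle-nesting results of \cite{LMS1} (this lemma is explicitly Lemma 4.14 there), and phrase the argument above as the verification that their hypotheses hold in our setting. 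The finiteness of $\mathrm{Crit}_f$ and $\mathrm{Post}_f$ and of the number of components of $\HHH_f$ ensures that a single level $n$ can be chosen to work for (1) and (2) at once.
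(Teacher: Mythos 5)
The paper does not supply its own proof of this statement: it is quoted verbatim as \cite[Lemma~4.14]{LMS1}, with no argument given. Your proposal correctly identifies this and ultimately defers to the same citation, so in that sense the ``approach'' matches. The supplementary sketch you offer is a reasonable reconstruction. Part (1) is essentially correct: the crucial input is $\De_{n+1}=f^{-1}(\De_n)$ for $n\geq N$ from Lemma~\ref{lem:connected}, which is exactly what lets you promote $c\in f^{-k}(\De_0)$ to $c\in\De_n$ for $n\geq N+k$ (note that for a \pf Newton map all the roots $a_i$ are superattracting, so every fixed point indeed lies in $\De_0$ — a fact your argument silently uses). Part (2) as sketched has a soft spot: the claim that distinct renormalization loci are ``separated by some $\De_m$'' is morally what you are trying to prove, and the shrinking-of-puzzle-pieces statement away from Fatou cycles and renormalization Julia sets is precisely the nontrivial content of the LMS1 argument; invoking it in a self-contained proof would need the uniform expansion from Lemma~\ref{lem:sub-hyperbolic} spelled out carefully. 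Since both you and the paper hand this off to \cite{LMS1}, the gap is acceptable as a citation and your outline of why the hypotheses are met is sound.
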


Let $N=N_f$ be the minimal integer such that the Newton graph $\De_N$ satisfies the two properties in Lemma \ref{lem:separate}. Then $\De_N$ is called the \emph{canonical Newton graph} of $f$. Note that the canonical Newton graph and the canonical Hubbard forest are uniquely determined by $f$.

Roughly speaking, an extended Newton graph of $f$ consists of three parts: the canonical Newton graph $\De_f$, the canonical Hubbard forest $\HHH_f$, and finitely many preperiodic Newton rays which join each compnent of $\HHH_f$ to $\De_f$.
We do not intend to write the specific construction of an extended Newton graph here (one can refer to the proof of \cite[Theorems 6.2]{LMS1}). Instead, we list some properties of  extended Newton graphs that will be used below.

\begin{proposition}[\cite{LMS1}, Theorem 6.2]\label{pro:extended-graph}
Let $G=G_f$ be an extended Newton graph of a given Newton map $f$. Then the pair $(G,f)$ satisfies the following properties.
\begin{enumerate}
\item The graph $G$ is a finite connected graph consisting of the canonical Newton graph, the canonical Hubbard forest of $f$ and several preperiodic Newton rays. As a consequence, all critical/postcritical points of $f$ belong to $G$.
\item The vertex set of $G$ consists of the vertices of the canonical Newton graph and the vertices of the canonical Hubbard forest. In other words, each Newton ray in $G$ is an edge of $G$ with one endpoint in $V(\De_f)$ and the other one in $V(\HHH_f)$.
\item The edge set of $G$ consists of: the edges of the canonical Newton graph, the edges of the canonical Hubbard forest, and the Newton rays in $G$.
\item The graph $G$ is $f$-invariant and $f|_G:G\to G$ is a Markov map.
\end{enumerate}
\end{proposition}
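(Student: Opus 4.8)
Since Proposition~\ref{pro:extended-graph} restates \cite[Theorem 6.2]{LMS1}, the plan is to recall the three-stage construction of an extended Newton graph from \cite{LMS1} and then read the four properties off from it, using the material already assembled in this section; only the construction itself --- the subject of the proof of \cite[Theorem 6.2]{LMS1} --- lies outside the scope of what I would reprove here. Recall that $G_f$ is built as follows: one starts from the canonical Newton graph $\De_f=\De_{N_f}$; one adjoins the canonical Hubbard forest $\HHH_f$ (Definition~\ref{def:Hubbard-forest}); and for every component $H$ of $\HHH_f$ one adjoins a single preperiodic Newton ray $\g_H$ with $\g_H(0)\in V(\De_f)$, landing at a vertex of $H$, the rays being chosen pairwise disjoint off their endpoints, disjoint from $\De_f\cup\HHH_f$ off their endpoints, and forming a family that is forward invariant modulo $\De_f$, in the sense that $f(\g_H)$ is the union of a ray of the family with a subset of $\De_f$. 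Granting this, I would verify (1)--(4) as follows.

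For (1): $\De_f$ is connected, being by definition the component of $f^{-N_f}(\De_0)$ containing the channel diagram; each $H$ is a tree, hence connected; and by Lemma~\ref{lem:separate}(2) the components of $\HHH_f$ lie in distinct complementary components of $\De_f$, so adjoining the arc $\g_H$ to each yields the connected graph $G_f=\De_f\cup\HHH_f\cup\bigcup_H\g_H$. For the last clause, a critical or postcritical point of $f$ is either not free --- then it lies in $\De_f$ by Lemma~\ref{lem:separate}(1) --- or free --- then it lies in $H_c\subseteq\HHH_f$ by Definition~\ref{def:Hubbard-forest}; in either case it lies in $G_f$.

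For (2) and (3): the key point to exploit is that for each component $H$ the set $\g_H((0,1))$ (the Newton ray minus its two endpoints) lies in $\big(\bigcup_{m\ge0}\De_m\big)\setminus\De_f$, whereas $\HHH_f$ is disjoint from every Newton graph $\De_m$ --- each non-trivial component lies in the filled-in Julia set of a puzzle-renormalization triple (Proposition~\ref{pro:puzzle-renormalization} and the construction of $H_c$), which is disjoint from all $\De_m$, and each trivial component is a single free critical/postcritical point, likewise disjoint from all $\De_m$. Hence $\g_H((0,1))$ meets neither $\De_f$, nor $\HHH_f$, nor any other ray; being an arc it contains no branch point; and by (1) it contains no critical or postcritical point. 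Therefore no vertex of $G_f$ lies in the interior of any Newton ray, which gives $V(G_f)=V(\De_f)\cup V(\HHH_f)$ and makes each $\g_H$ a single edge with one endpoint $\g_H(0)\in V(\De_f)$ and the other its landing vertex in $V(H)\subseteq V(\HHH_f)$; this is (2). Since $\De_f$ and $\HHH_f$ are disjoint closed subsets of $G_f$ and the ray interiors avoid both, the components of $G_f\setminus V(G_f)$ are exactly the open edges of $\De_f$, the open edges of $\HHH_f$ and the open Newton rays; taking closures yields (3).

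For (4): one verifies $f(G_f)\subseteq G_f$ on each of the three kinds of edges. Since $\De_{N_f}$ contains all poles of $f$ by Lemma~\ref{lem:separate}(1), Lemma~\ref{lem:connected} applies at level $N_f$ and gives $f^{-1}(\De_{N_f})=\De_{N_f+1}\supseteq\De_{N_f}$, whence $f(\De_f)=f(\De_{N_f})\subseteq f(\De_{N_f+1})=\De_{N_f}=\De_f$; next, $f(\HHH_f)\subseteq\HHH_f$ is the $f$-invariance of the canonical Hubbard forest recorded just after Definition~\ref{def:Hubbard-forest}; and $f(\g_H)\subseteq G_f$ is the forward invariance modulo $\De_f$ built into the choice of rays. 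For the Markov property, $f$ sends $V(\De_f)$ into $V(\De_f)$ (the image of a point iterated to a fixed point is again iterated to a fixed point, and lies in $\De_f$) and $V(\HHH_f)$ into $V(\HHH_f)$ (since $f\colon\HHH_f\to\HHH_f$ is Markov), so $f(V(G_f))\subseteq V(G_f)$; and $f$ is injective on each edge --- on a Newton-graph edge because $f$ restricted to $\De_f$ is itself Markov (part of the Newton-graph structure, \cite{LMS1}), on a Hubbard-forest edge because $f\colon\HHH_f\to\HHH_f$ is Markov, and on a Newton ray because $f$ restricts to it as a homeomorphism onto a ray possibly extended inside $\De_f$. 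The genuinely hard point --- and the reason the construction is deferred to \cite{LMS1} --- is producing the preperiodic Newton rays with all the disjointness and invariance properties used above; this is the substance of the proof of \cite[Theorem 6.2]{LMS1} and rests on a delicate analysis of the accesses to $\infty$ and of the $\De$-puzzle pieces containing the components of $\HHH_f$.
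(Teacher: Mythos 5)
The paper does not give a proof of this proposition: it is cited verbatim from \cite[Theorem 6.2]{LMS1}, and the text immediately before the statement says explicitly that the construction will not be reproduced (``We do not intend to write the specific construction of an extended Newton graph here \ldots. Instead, we list some properties \ldots''). There is therefore no argument in the paper to compare yours against. What you have written is a reasonable consistency check showing that, once the three-stage construction from \cite{LMS1} is granted --- and in particular the existence of preperiodic Newton rays with the stated disjointness and forward-invariance-modulo-$\De_f$ properties, which you correctly flag as the genuine content deferred to \cite{LMS1} --- the four listed properties do follow along the lines you indicate. Your derivation of $f(\De_f)\subseteq\De_f$ is correct and worth noting: since $\De_{N_f}$ contains all poles by Lemma~\ref{lem:separate}(1), we have $N_f\geq N$ in the sense of Lemma~\ref{lem:connected}, hence $\De_{N_f}\subseteq\De_{N_f+1}=f^{-1}(\De_{N_f})$, which gives $f(\De_{N_f})\subseteq\De_{N_f}$. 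One caveat on your summary of the construction: \cite{LMS1} does not necessarily attach exactly one Newton ray per component of $\HHH_f$ (the number and preperiods of the rays are governed by the period structure of the components), but this looseness does not affect your verification of (1)--(4), since you only use that the ray interiors lie in $\bigl(\bigcup_{m\ge0}\De_m\bigr)\setminus\De_f$ and avoid $\HHH_f$. In short: the paper offers a citation; you offer a citation plus a sanity check that the stated consequences cohere with the material developed in Section~7, and that check is sound.
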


According to the proposition, the only vague and possibly non-unique part of an extended Newton graph comes from  the Newton rays. In fact, the Newton rays satisfying the properties of Proposition \ref{pro:extended-graph} are not unique.

\subsection{The entropy formula for  \pf Newton maps}\label{sec:entropy-newton-map}

Let $f$ be a given \pf Newton map of degree $d\geq 3$, and $G$ be an extended Newton graph of $f$. Let $\De=\De_f$ be the canonical Newton graph of $f$, $\HHH=\HHH_f$ be the canonical Hubbard forest, and $\RRR=\RRR(G)$ the union of Newton rays contained in $G$. We know from Proposition \ref{pro:extended-graph} that $G$ is the disjoint union of $\De,\HHH,\RRR$ except the vertices, $f(\De)\subseteq\De$, $f(\RRR)\subseteq \De\cup\RRR$ and $f(\HHH)\subseteq\HHH$. Recall we stipulate in the paper that $h_{top}(f|_\emptyset):=0$.

\begin{lemma}\label{lem:well-defined}
The topological entropy $h_{top}(f|_G)$ equals to $h_{top}(f|_\HHH)$.
\end{lemma}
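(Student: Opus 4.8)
The plan is to split the extended Newton graph $G$ into the two pieces that carry genuinely different dynamics: the canonical Hubbard forest $\HHH$, on which the entropy is concentrated, and the graph $\De\cup\RRR$ obtained from the canonical Newton graph together with the Newton rays. Since $f(\HHH)\subseteq\HHH$ and $f(\De\cup\RRR)\subseteq\De\cup\RRR$ (recalled from Proposition~\ref{pro:extended-graph}), both are compact $f$-invariant sets whose union is $G$, so Proposition~\ref{Do2} gives $h_{top}(f|_G)=\max\{h_{top}(f|_\HHH),\,h_{top}(f|_{\De\cup\RRR})\}$. As $h_{top}\ge 0$, it suffices to prove $h_{top}(f|_{\De\cup\RRR})=0$.

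To establish this I would argue at the level of the incidence matrix of the Markov map $f|_{\De\cup\RRR}$. Enumerate the edges of $\De\cup\RRR$ so that the edges of $\De$ come first and the Newton-ray edges come last. Since $f(\De)\subseteq\De$, the image of every $\De$-edge is a union of $\De$-edges; and, by the construction of Newton rays in \cite{LMS1}, $f$ maps each (pre)periodic Newton ray homeomorphically onto the ray part of another Newton ray together with a subset of $\De$, so the image of a Newton-ray edge covers a union of $\De$-edges and \emph{at most one} Newton-ray edge. Hence the incidence matrix of $(\De\cup\RRR,f)$ has the block lower-triangular shape $\left(\begin{smallmatrix}D_\De & 0\\ B & C\end{smallmatrix}\right)$, where $D_\De$ is the incidence matrix of $(\De,f)$ and $C$ is a $0$--$1$ matrix with at most one $1$ per row. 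Its spectral radius equals $\max\{\rho(D_\De),\rho(C)\}$, and $\rho(C)\le 1$.

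It remains to see that $h_{top}(f|_\De)=0$, equivalently $\rho(D_\De)\le 1$. Here $\De=\De_N$ and, by definition of the Newton graph, $f^N(\De_N)\subseteq\De_0$ with $f(\De_0)=\De_0$; so Proposition~\ref{Do3} applies with $X=\De_N$ and $Z=\De_0$ — the convergence hypothesis is trivial since $f^n(x)\in\De_0$ for every $x\in\De_N$ and every $n\ge N$ — and gives $h_{top}(f|_{\De_N})=h_{top}(f|_{\De_0})$. But $\De_0$ is a finite union of closed fixed internal rays meeting only at $\infty$, and $f$ maps each of them homeomorphically onto itself fixing both endpoints; thus the incidence matrix of $(\De_0,f)$ is the identity and $h_{top}(f|_{\De_0})=0$. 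Consequently $D_\De$ is either nilpotent or has spectral radius $1$, so the incidence matrix of $(\De\cup\RRR,f)$ has spectral radius $\le 1$, whence $h_{top}(f|_{\De\cup\RRR})=0$ by Proposition~\ref{entropy-formula}. Combining with the first paragraph, $h_{top}(f|_G)=\max\{h_{top}(f|_\HHH),0\}=h_{top}(f|_\HHH)$.

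The only step that is not pure bookkeeping is the claim that the $f$-image of a Newton-ray edge covers at most one Newton-ray edge; this relies on the monotone (homeomorphic) behaviour of $f$ along Newton rays coming from the construction in \cite{LMS1}, and I expect this to be the main point to spell out carefully. Everything else uses only Propositions~\ref{Do2}, \ref{Do3}, \ref{entropy-formula} and the structural description of $G$, $\De_N$ and $\De_0$ from Section~\ref{sec:newton-graph}.
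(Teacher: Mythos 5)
Your proposal is correct and follows essentially the same route as the paper: decompose $G$ into the $f$-invariant compact pieces $\HHH$ and $\De\cup\RRR$, observe that the incidence matrix of $(\De\cup\RRR,f)$ is block triangular with diagonal blocks indexed by $\De$-edges and Newton-ray edges, bound the spectral radius of the Newton-ray block by $1$, and use the eventual collapse of $\De_N$ onto the channel diagram $\De_0$ (where $f$ acts as the identity on each edge) to conclude $h_{top}(f|_{\De\cup\RRR})=0$. The one step you flag as needing care — that the $f$-image of a Newton-ray edge covers at most one Newton-ray edge — is exactly the point the paper invokes (Newton rays are pairwise disjoint except at endpoints and $f$ is injective on each edge since $f|_G$ is Markov), so it requires no further elaboration; you show $\rho(C)\le1$ where the paper asserts $\rho(D_\RRR)=1$, an immaterial difference.
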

\begin{proof}
Note that $f(\HHH)\subseteq \HHH$ and  $f(\De\cup\RRR)\subseteq \De\cup \RRR$, by Propositions \ref{Do2}, \ref{Do3}, we have
\[h_{top}(f|_G)=\max\{h_{top}(f|_{\De\cup \RRR}),h_{top}(f|_\HHH)\}.\]
Let $D_{\De\cup\RRR}$ denote the incidence matrix of $(\De\cup\RRR,f)$. Since $\De$ is $f$-invariant, then the matrix $D_{\De\cup\RRR}$ has the form
$$\left(
\begin{array}{cc}
D_\RRR&O\\
*& D_\De
\end{array}
\right),$$
where $D_\De$ denotes the incidence matrix of $(\De,f)$ and $O$ denotes a zero matrix. As each Newton ray in $\RRR$ is preperiodic with respect to $\De$, and any two rays in $\RRR$ are disjoint except at their endpoints, then the leading eigenvalue of $D_\RRR$ is $1$. It follows that $h_{top}(f|_{\De\cup\RRR})=h_{top}(f|_\De)$.
Note also that $\De$ is finally iterated to the channel diagram $\De_0$ and $f:\De_0\to\De_0$ is a homeomorphism, then $h_{top}(f|_{\De\cup\RRR})=h_{top}(f|_\De)=h_{top}(f|_{\De_0})=0$.
\end{proof}


By this lemma, although the construction of extended Newton graphs $G$ is not unique, the topological entropy $h_{top}(f|_G)$ is independent on the choices of $G$ since $\HHH$ is uniquely determined by $f$.  Therefore, the following definition is well-defined.

\begin{definition}[core entropy of Newton map]
Let $f$ be a \pf Newton graph, and $G$ an extended Newton graph of $f$. The \emph{core entropy} $h(f)$ of $f$ is defined as the topological entropy of the restriction of $f$ on $G$, i.e., $h(f)=h_{top}(f|_G)$.
\end{definition}

In the following, we will develop some  precise formulas to compute the core entropy of Newton maps. A collection $\OOO=\{H_i,0\leq i\leq p-1\}$ of components of $\HHH_f$ is called a \emph{cycle in ${\rm Comp}(\HHH_P)$ of period $p$} if $p$ is the minimal number such that $f(H_i)\subseteq H_{i+1}$ for each $0\leq i\leq p-1$ and $H_p:=H_0$.
Let $\HHH_\OOO$ denote the union of $H_0,\ldots,H_{p-1}$.  Then, by Propositions \ref{Do2} and \ref{Do3}, we have the entropy formula
\begin{equation}\label{eq:entropy1}
h(f)=\max_{\OOO\subseteq {\rm Comp}(\HHH_f)}h_{top}(f|_{\HHH_\OOO}).
\end{equation}



\begin{lemma}\label{lem:orbit-entropy}
Let $\OOO=(H_0,\ldots,H_{p-1})$ be a cycle in ${\rm Comp}(\HHH_f)$ of period $p$. Then for any element $H$ of $\OOO$, we have $$h_{top}(f|_{\OOO})=h_{top}(f^p|_H)/p.$$
\end{lemma}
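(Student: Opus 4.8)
The plan is to reduce the statement to the abstract entropy facts of Section~\ref{sec:topological-entropy}, the one new idea being to replace each component $H_i$ of the cycle by its \emph{eventual image} under $f^p$. Write $\OOO=(H_0,\dots,H_{p-1})$ with $f(H_i)\subseteq H_{i+1}$ (indices read mod $p$), so that $f^p(H_i)\subseteq H_i$, and $\HHH_\OOO=H_0\sqcup\cdots\sqcup H_{p-1}$ is a disjoint union of compact trees. First I would apply Proposition~\ref{Do1} to obtain $p\cdot h_{top}(f|_{\HHH_\OOO})=h_{top}(f^p|_{\HHH_\OOO})$, and then Proposition~\ref{Do2} (iterated $p-1$ times, each $H_i$ being compact and $f^p$-invariant) to obtain $h_{top}(f^p|_{\HHH_\OOO})=\max_{0\le i\le p-1}h_{top}(f^p|_{H_i})$. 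Thus the lemma becomes the assertion that $h_{top}(f^p|_{H_i})$ is independent of $i$.

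To prove this, set $K_i:=\bigcap_{n\ge0}f^{np}(H_i)$. Since the $f^{np}(H_i)$ form a decreasing sequence of non-empty compacta and a continuous map commutes with intersections of such, one checks that $K_i$ is a non-empty compactum with $f^p(K_i)=K_i$ and $f(K_i)\subseteq K_{i+1}$; moreover, for each $\varepsilon>0$ there is $N$ with $f^{Np}(H_i)$ inside the $\varepsilon$-neighbourhood of $K_i$, hence ${\rm dist}(f^{np}(x),K_i)<\varepsilon$ for all $n\ge N$ and \emph{all} $x\in H_i$, so Proposition~\ref{Do3} (with $X=H_i$, $Z=K_i$, map $f^p$) gives $h_{top}(f^p|_{H_i})=h_{top}(f^p|_{K_i})$. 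The point is that $f$ permutes the $K_i$ \emph{exactly}: combining $f(K_j)\subseteq K_{j+1}$ for all $j$ with $f^p(K_i)=K_i$ produces the chain $K_i=f^p(K_i)\subseteq f^{p-1}(K_{i+1})\subseteq\cdots\subseteq f(K_{i+p-1})\subseteq K_i$, which forces every inclusion to be an equality, in particular $f(K_i)=K_{i+1}$. Hence $f$ restricts to a surjective semi-conjugacy $f\colon K_i\to K_{i+1}$ between $f^p|_{K_i}$ and $f^p|_{K_{i+1}}$ with fibres of size at most $d=\deg f$; Proposition~\ref{Do4} then gives $h_{top}(f^p|_{K_i})=h_{top}(f^p|_{K_{i+1}})$, and running once around the cycle shows all of these numbers equal a single value $\mu$. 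Consequently $h_{top}(f^p|_{H_i})=\mu$ for every $i$, so $h_{top}(f^p|_{\HHH_\OOO})=\mu=h_{top}(f^p|_H)$ for any $H\in\OOO$, and dividing by $p$ yields $h_{top}(f|_{\HHH_\OOO})=h_{top}(f^p|_H)/p$.

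The one genuine obstacle is the equality $f(K_i)=K_{i+1}$ — equivalently, the reason one cannot simply feed the maps $f\colon H_i\to H_{i+1}$ directly into Proposition~\ref{Do4}: those maps need not be surjective, since $f(H_i)$ may be a proper subtree of $H_{i+1}$, and then the cycle of entropy inequalities never closes up to an equality. Passing to the eventual images $K_i$ is precisely the fix: it discards the transient part of each $H_i$ (which by Proposition~\ref{Do3} contributes no entropy) and leaves a cyclic system on which $f$ acts surjectively with bounded multiplicity. Everything else is routine compactness, and I anticipate no further difficulty; note that the argument uses nothing about Hubbard forests or Newton maps beyond the finite-to-oneness of $f$, so it applies verbatim to any finite cycle of pairwise disjoint compacta permuted by a rational map.
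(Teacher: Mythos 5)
Your proof is correct, and it takes a genuinely different route from the paper's. The paper attacks the non-surjectivity of $f\colon H_i\to H_{i+1}$ by going \emph{upward}: it replaces $H_i$ by the component $\wt{H}_i$ of $f^{-1}(H_{i+1})$ containing $H_i$, obtains $h_{top}(f^p|_{\wt{H}_i})=h_{top}(f^p|_{H_{i+1}})$ from Proposition~\ref{Do4}, and then proves $h_{top}(f^p|_{H_i})=h_{top}(f^p|_{\wt{H}_i})$ by invoking the puzzle-renormalization structure (Lemma~\ref{lem:renormalization}), the Straightening Theorem, and Lemma~\ref{lem:extended-tree} to recognize both $H_i$ and $\wt{H}_i$ as extended Hubbard trees of the same polynomial-like restriction. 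You instead go \emph{downward}: you pass to the eventual image $K_i=\bigcap_{n\ge0}f^{np}(H_i)$, use the nested-compacta fact together with Proposition~\ref{Do3} to get $h_{top}(f^p|_{H_i})=h_{top}(f^p|_{K_i})$, then prove $f(K_i)=K_{i+1}$ by the chain of inclusions forced to be equalities, and close with Proposition~\ref{Do4}. The two routes are dual in spirit (shrink vs.\ enlarge until $f$ becomes an exact surjective semi-conjugacy), but yours is more elementary: it uses only the abstract facts of Section~2.1 plus standard compactness, makes no appeal to straightening or to extended Hubbard trees, and as you point out applies verbatim to any cycle of pairwise disjoint compacta cyclically permuted (up to inclusion) by a finite-to-one continuous map. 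What the paper's route buys, by contrast, is a reason the reader will need anyway (Lemma~\ref{lem:renormalization} and the straightening step reappear throughout Section~8), so the cost of the extra machinery is amortized elsewhere in the paper. Both arguments are sound; the one small housekeeping remark is that for Proposition~\ref{Do3} you actually established uniform convergence over \emph{all} of $H_i$ rather than only on compact subsets of $H_i\setminus K_i$, which is stronger than required and so causes no trouble.
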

\begin{proof}
By Propositions \ref{Do1} and \ref{Do2}, we have $$h_{top}(f|_{\HHH_O})=\frac{1}{p}h_{top}(f^p|_{\HHH_\OOO})=\frac{1}{p}\max_{0\leq i\leq p-1}h_{top}(f^p|_{H_i}).$$
So it is enough to prove that $h_{top}(f^p|_{H_i})=h_{top}(f^p|_{H_{i+1}})$ for every $0\leq i\leq p-1$. Let $0\leq i\leq p-1$. We have the commutative diagram
$$\begin{array}{rcl}H_i &\xrightarrow[]{\ f^p\ }  &H_i\\ f\Big\downarrow &&\Big\downarrow f \vspace{-0.1cm} \\ H_{i+1} &  \xrightarrow[]{\ f^p\ } & H_{i+1}\vspace{-0.1cm}.\end{array}$$
Let $\wt{H}_i$ be the component of $f^{-1}(H_{i+1})$ containing $H_i$. Then the commutative graph
$$\begin{array}{rcl}\wt{H}_i &\xrightarrow[]{\ f^p\ }  &\wt{H}_i\\ f\Big\downarrow &&\Big\downarrow f \vspace{-0.1cm} \\ H_{i+1} &  \xrightarrow[]{\ f^p\ } & H_{i+1}\vspace{-0.1cm}\end{array}$$
holds. In this case $f:\wt{H}_i\to H_{i+1}$ is surjective and $\#f^{-1}(y)$ is uniformly bounded for all $y\in H_{i+1}$. According to Proposition \ref{Do4}, we have $h_{top}(f^p|_{\wt{H}_i})=h_{top}(f^p|_{H_{i+1}})$.

By Lemma \ref{lem:renormalization}, there exists a renormalization triple $(f^p,U_{H_i},V_{H_i})$ such that $H_i$, hence $\wt{H}_i$, are extended Hubbard trees of this triple. Then, following Straightening Theorem and Lemma \ref{lem:extended-tree}, the topological entropies $h_{top}(f^p|_{H_i})$ and $h_{top}(f|_{\wt{H}_i})$ are equal.
\end{proof}

For each periodic component $H$ of $\HHH_f$, we denote $p_H$ the period of $H$. In each cycle $\OOO$ in ${\rm Comp}(\HHH_f)$, we assign any element $H_\OOO$.  By Lemma \ref{lem:orbit-entropy}, we obtain two finer formulas for $h(f)$ as
\begin{eqnarray}
h(f)&=&
           \max\limits_{\mbox{
\tiny
$\begin{array}{c}
H\in{\rm Comp}(\HHH_f)\\
{\rm non-trivial}\\
{\rm periodic}\end{array}$
}
}\frac{1}{p_H}h_{top}(f^{p_H}|_H)\label{eq:entropy2}\\[5pt]
&=&
       \max\limits_{\mbox{
\tiny
$\begin{array}{c}
\OOO\subseteq {\rm Comp}(\HHH_f)\\
{\rm non-trivial\ cycle}
\end{array}$
}}\frac{1}{p_{H_\OOO}}h_{top}(f^{p_{H_\OOO}}|_{H_\OOO}).\label{eq:entropy3}
\end{eqnarray}


Let $H$ be a non-trivial periodic component of $\HHH_f$ of period $p$. By Lemma \ref{lem:renormalization} and Straightening Theorem, there exist a \pf monic, centered polynomial $P_H$ of degree $\de:={\rm deg}(f^p|_{K_\rho})$ such that $f^p:U_H\to V_H$ is hybrid conjugate to $P_H$. Such polynomial $P_H$ is unique up to conjugation by $\de-1$ roots of unit, and called a \emph{renormalization polynomial} of $f$ (associated to $H$). We denote by
\[\mathfrak{R}(f):=\{P_H:\text{ $H$ is a non-trivial periodic component of $\HHH_f$}\}.\]
The map $f$ is called \emph{(puzzle-)renormalizalbe} if $\mathfrak{R}(f)\not=\emptyset$, and \emph{non-renormalizable} otherwise.

\begin{proof}[Proof of Proposition \ref{pro:entropy-formula}]
It follows directly from Lemma \ref{lem:well-defined}, equation \eqref{eq:entropy2} and the fact of $h(P_H)=h_{top}(f^p|_{H})$ for any non-trivial periodic component of $\HHH_f$.
\end{proof}

\subsection{Capture surgery for \pf Newton maps}\label{sec:capture-N}
Let $f$ be a \pf Newton map, and $\vec{\s}=(\c,\DD,\ZZ,\XX)$ a perturbation data for capture surgery on $f$ defined in Section \ref{sec:capture}. To obtain an expected perturbation, we restrict two further properties on $\vec{\s}$:
\begin{itemize}
\item the critical points in $\c$ are not iterated to $\infty$, i.e., all critical points in $\c$ belong to $\HHH_f$;
\item the critical values of each $\chi_c$ in $\XX$ are the iterated preimages of the fixed critical points of $f$.
\end{itemize}
The first property means that we only perturb some free critical points of $f$, and the second one ensures that the perturbation map $F_{\vec{\s}}$ is a ``topological \pf Newton map''.

\begin{lemma}\label{lem:rational-realization}
Let $F=F_{\vec{\s}}$ be the topological perturbation of $f$ by capture surgery with data $\vec{\s}$. Then $F$ is c-equivalent to a \pf Newton map.
\end{lemma}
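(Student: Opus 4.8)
\textbf{Proof proposal for Lemma \ref{lem:rational-realization}.} The plan is to apply the Thurston--Cui--Tan characterization (Theorem \ref{thm:cui-tan-Thurston}) to the marked map $(F,\QQQ)$ with $\QQQ:={\rm Post}_F\cup{\rm Post}_f$, exactly as in the polynomial case (Lemma \ref{lem:no-Thurston-obstruction}), and then recover from the resulting rational map the fact that it is a Newton map via Head's theorem (Proposition \ref{pro:head}). The two restrictions imposed on $\vec{\s}$ are precisely what make this work: since every $c\in\c$ lies in $\HHH_f$ and the critical values of each $\chi_c$ are iterated preimages of the fixed critical points of $f$, the map $F$ is a topological ``\pf Newton map'' — in particular $F$ agrees with $f$ near $\infty$ and near all the finite fixed points $a_1,\ldots,a_d$, so $F$ still has $d+1$ fixed points with the same multipliers $1-1/n_i$ and $d/(d-1)$. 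Moreover $F$ is sub-hyperbolic and semi-rational by the general discussion following Theorem \ref{thm:cui-tan-Thurston}, and $\QQQ$ is a legitimate marked set because of Property (2) of the perturbation mapping $\XX$. So it suffices to prove: (i) $(F,\QQQ)$ carries no Thurston obstruction, whence by Theorem \ref{thm:cui-tan-Thurston} it is c-equivalent to a rational map $g$; and (ii) $g$ is in fact a Newton map.

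For step (i) I would argue contrapositively, mirroring the structure of Lemma \ref{lem:no-Thurston-obstruction}. Suppose $\G$ is a Thurston obstruction of $(F,\QQQ)$. The key geometric input is that the perturbation is supported in the small disks $D_c$, $c\in\c$, all of which are disjoint from the Newton graph $\De_n$ of every level (since $\c\subseteq\HHH_f$ and the canonical Hubbard forest is disjoint from all Newton graphs, cf. Lemma \ref{lem:renormalization} and Lemma \ref{lem:separate}). As in Claim 1 of Lemma \ref{lem:no-Thurston-obstruction}, I would first show that a curve $\g\in\G$ can be chosen in its homotopy class in $\ov{\C}\setminus\QQQ$ so that its relevant preimage $\wt\g$ avoids $\cup_{c\in\c}D_c$; this uses the same ``push $\g$ off the bad preimages of $D_{z_0}$ inductively'' argument, with the invariant rays $\RRR_B$ replaced by the internal rays of $f$ landing at the attracting fixed points (or by arcs in the linearizing domains of the relevant periodic points), which are $f$-invariant and disjoint from $\DD$ by construction. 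Once $\wt\g$ avoids $\cup_c D_c$, the curve $\wt\g$ is also a component of $f^{-1}(\g)$ and is homotopic to $\g$ in $\ov{\C}\setminus{\rm Post}_f$; since $f$ itself is a \pf rational map with no Thurston obstruction (it is realized by the genuine Newton map $f$!), this forces $\g$ to be peripheral in $\ov{\C}\setminus{\rm Post}_f$. A final bookkeeping step, again as in Claim 2 of Lemma \ref{lem:no-Thurston-obstruction}, shows that the disk bounded by $\g$ can contain no ``extra'' postcritical points of $F$ beyond those of $f$ (the orbits of the marked escaping-type points created by the surgery land in $f(Z_c)$, which under forward iteration never returns to $\cup_c D_c$), so $\g$ is peripheral in $\ov{\C}\setminus\QQQ$ as well — contradiction.

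For step (ii): having produced $g$ c-equivalent to $F$ by a pair $(\phi_0,\phi_1)$ with $\phi_0$ holomorphic near $\QQQ'$ and $\phi_0=\phi_1$ on $\QQQ$, I would observe that the finite fixed points $a_1,\ldots,a_d$ and $\infty$ are attracting/repelling periodic points lying in $\QQQ$ (they are postcritical for $f$, hence in $\QQQ$), and the c-equivalence transports a neighbourhood dynamics conjugacy, so $g$ has fixed points $\phi_0(a_1),\ldots,\phi_0(a_d),\phi_0(\infty)$ with the same multipliers $1-1/n_i$ and $d/(d-1)$; these $d+1$ points are distinct. By Proposition \ref{pro:head}, $g$ is a Newton map, and it is \pf since its postcritical set is contained in $\phi_0(\QQQ)$ which is finite. (The non-Lattès hypothesis in Theorem \ref{thm:cui-tan-Thurston} is automatic: a Newton map is never Lattès, having attracting fixed points.) I expect the main obstacle to be step (i), specifically the careful re-derivation of ``Claim 1'' — choosing the curve so its preimage avoids the perturbation disks — in the Newton-map setting where the relevant invariant curves are internal rays at the attracting fixed points rather than external rays in a basin of infinity; the rotation-number-zero normalization used in the polynomial proof must be replaced by working in linearizing coordinates at the (possibly rotating) periodic points in the orbit of $\c$, but the combinatorial skeleton of the argument is unchanged, so I would import it essentially verbatim with these substitutions.
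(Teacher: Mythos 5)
Your plan is to import the polynomial argument (Lemma \ref{lem:no-Thurston-obstruction}) ``essentially verbatim'' after checking $(F,\QQQ)$ has no Thurston obstruction, with $\QQQ={\rm Post}_F\cup{\rm Post}_f$ and the basin-of-infinity machinery replaced by linearizing coordinates at the attracting cycles. Step (ii), invoking Head's theorem to identify the realized rational map as a Newton map, is fine and matches the paper. The gap is in step (i), and it is not a cosmetic one.

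Lemma \ref{lem:no-Thurston-obstruction}'s entire reduction to Claim~1 and Claim~2 hinges on the observation that $F$ is a \emph{topological polynomial}, so that Thurston obstructions are equivalent to Levy cycles; that is why the argument can work with a single curve $\g$, find its homeomorphic lift $\wt\g$, and run the ``separating invariant rays'' contradiction. A topological perturbation of a Newton map is not a topological polynomial, so Thurston obstructions are \emph{not} equivalent to Levy cycles, and the sentence ``since $f$ itself is a \pf rational map with no Thurston obstruction, this forces $\g$ to be peripheral'' is not a valid deduction: a single curve having a homotopic preimage is not an obstruction for a rational map, and conversely no single curve needs to be peripheral just because $f$ has no obstruction — what must be ruled out is a \emph{multicurve} with leading eigenvalue $\geq 1$. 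Your proposal never engages with the multicurve/eigenvalue structure at all, so there is nothing in it that would prevent, say, an obstruction $\G$ none of whose individual curves lift homeomorphically.

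The paper's actual proof attacks exactly this and uses a genuinely different decomposition. It takes the much larger marked set $\QQQ=V(\De_N)\cup{\rm Post}_F$ (your $\QQQ$ omits the vertices of $\De_N$, and they are essential), minimizes $\#(\g\cap\De_N)$ over homotopy classes, and sorts the curves of a would-be obstruction $\G$ into levels $\G_0,\G_1,\ldots,\G_N,\G_\infty$ according to which Newton graph $\De_i$ they first meet. The transition matrix then becomes block lower-triangular with zero blocks $O$ in the middle, so the whole eigenvalue analysis collapses to the two extremal blocks: $D_\infty$ (curves disjoint from $\De_N$, localized in the renormalization pieces, where the polynomial result Lemma \ref{lem:no-Thurston-obstruction} finally does apply via the puzzle-renormalization triples), and $D_0$ (curves meeting $\De_0$, where a genuine Levy-cycle argument becomes available only because the matrix entries are forced to be $\leq 1$ by the minimality of $\#(\g\cap\De_0)$). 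It is only at that last stage that an argument resembling your Claim~1/Claim~2 appears, and even there the invariant object driving the contradiction is the filtration $\De_0\subseteq\De_1\subseteq\cdots$, not rays from a fixed point. None of this triangularization is present in your proposal, so as written it does not establish the absence of Thurston obstructions for the Newton map case.
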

\begin{proof}
By the second property above and Proposition \ref{pro:head}, any rational map c-equivalent to $F$ must be a \pf Newton maps up to conformal conjugation. So, by Theorem \ref{thm:cui-tan-Thurston}, we just need to check that the marked map $(F,\QQQ)$ has no Thurston obstructions for some marked set $\QQQ$ of $F$.

Let $\De_N$ be the Newton graph of level $N$ such that all critical/postcritical points of $f$ not in $\HHH_f$ belong to $\De_N$, and different components of $\HHH_f$ are contained in different components of $\C\setminus \De_N$. Then $F=f$ outside the components of $\C\setminus\De_N$ at which the surgery happens. We define the marked set $\QQQ$ as the union of  the vertex set of $\De_N$ and ${\rm Post}_F$.

On the contrary, let $\wt{\G}$ be a Thursotn obstruction of $(F,\QQQ)$, then there exists a sub-multicurve $\G\subseteq \wt{\G}$, such that
\begin{itemize}
\item the leading eigenvalue $\lambda(\G)$ of the transition matrix of $\G$ is at least $1$;
\item for each curve $\g\in\G$, there exists a component in $F^{-1}(\G)$ homotopic to $\g$ in $\ov{\C}\setminus\QQQ$;
\item for each curve  $\g\in\G$, there exists a component in $F^{-1}(\g)$ which is homotopic to an element of $\G$ in $\ov{\C}\setminus\QQQ$.
\end{itemize}

 We assume that for each $\g\in\G$, $\#(\g\cap\De_N)$ is minimal in its homotopic class in $\ov{\C}\setminus\QQQ$.
For each $0\leq i\leq N$, let $\G_i$ consists of all curves in $\G$ which intersect $\De_i$, but not intersect $\De_{i-1},\ldots, \De_0$, and let $\G_\infty$ consists the curves in $\G$ disjoint with $\De_N$. Then the multicurve $\G$ equals to the disjoint union of $\G_0,\ldots,\G_N,\G_\infty$.

Note that for each $1\leq i\leq N$, any component in $F^{-1}(\G_i)$ can not be homotopic in $\ov{\C}\setminus \QQQ$ to a curve in $\G_0,\ldots,\G_i$, and any component in $F^{-1}(\G_\infty)$ can not be homotopic in $\ov{\C}\setminus \QQQ$ to a curve in $\cup_{i=0}^N\G_i$ (by the minimality of $\#(\G\cap \De_N)$). Then the incidence matrix of $\G$ has the lower-triangle form
\[\left(\begin{array}{ccccc}
D_0&&&&\\
*&O&&&\\
\vdots&\ddots&\ddots&&\\
*&\cdots&*&O\\
*&\cdots&\cdots&*&D_{\infty}
\end{array}\right),
\]
where $D_0,D_\infty$ denote the incidence matrix of $\G_0,\G_\infty$ respectively, and $O$ denotes the zero metric. To deduce a contradiction, we will show that $\lambda_F(D_0),\lambda_F(D_\infty)<1$.

We first suppose $\lambda_F(D_\infty)\ge 1$. Since the curves in $\G_\infty$ are disjoint with $\De_N$, for each non-trivial component $H$ of $\HHH_f$, we denote by $\G_H$ the curves in $\G_\infty$ belonging to the components of $\C\setminus \De_N$ that contains $H$. Since $\lambda_F(D_\infty)\geq1$, then there exists a non-trivial periodic component $H$ of $\HHH_f$ with period $p$, such that $\lambda_{F^p}(\G_H)\geq1$. If the surgery does not happens in the orbit of $H$, then $F^p$ induces a polynomial-like map in a neighborhood of $H$, and hence $\lambda_{F^p}(\G_H)<1$, a contradiction. Otherwise, the map $F^p$ induces a topological polynomial-like map in a neighborhood of $H$, which is exactly a topological perturbation by capture surgery of a polynomial-like map induced by $f^p$ near $H$. By Lemma \ref{lem:no-Thurston-obstruction}, we have $\lambda_{F^p}(\G_H)<1$, also a contradiction.

We then assume that $\lambda_F(D_0)\geq1$. Keep in mind that $\#(\g\cap\De_0)$ is minimal for each $\g\in\G_0$. Since $F:\De_0\to\De_0$ is bijective, the number $0<\#(\g\cap \De_0)<\infty$ is a constant for $\g\in\G_0$, and then there exists at most one component of $F^{-1}(\g)$ homotopic in $\ov{\C}\setminus \QQQ$ to a curve in $\G_0$ for each $\g\in\G_0$. Thus for each $\g\in\G_0$, there is one curve $\beta\in\G$ such that $F^{-1}(\beta)$ has one component homotopic in $\ov{\C}\setminus \QQQ$ to $\g$ (combining properties (2),(3) of Thurston obstruction). Therefore each entry of the transition matrix $D_0$ is less
than or equal to 1. Because $\lambda(\G_0)\geq 1$, there is a Levy cycle in $\G_0$.
For simplicity of the statement, we assume that $\G_0=\{\g\}$ is the Levy cycle, and $\wt{\g}$ denote the component of $F^{-1}(\g)$ homotopic to $\g$ in $\ov{\C}\setminus\QQQ$.

By the minimality of $\#(\g\cap \De_0)$, we have $\#(\wt{\g}\cap \De_0)\ge \#(\g\cap\De_0)$. On the other hand, since $F:\wt{\g}\to\g$ is a homeomorphism and $F(\De_0)=\De_0$, then $\#\wt{\g}\cap\De_0\leq \#\g\cap\De_0$ and $F(\wt{\g}\cap\De_0)\subseteq \g\cap \De_0$. Combining these two aspects, we get that $F:\wt{\g}\cap\De_0\to\g\cap \De_0$ is a homeomorphism. To deduce a contradiction, we only need to show that $\wt{\g}$ intersects $\De_1\setminus \De_0$ (contradicting that $F:\wt{\g}\to\g$ is a homeomorphism).

Observe that $\g$ must intersects the boundaries of the components of $\ov{\C}\setminus\De_N$ intersecting $\HHH_f$: otherwise $\g$ is a Levy cycle of the marked map $(f,V(\De_N)\cup{\rm Post}_f)$ since $(F,\QQQ)=(f,V(\De_N)\cup{\rm Post}_f)$ outside these components, a contradiction. It implies that $\g$  intersects  $\De_j\setminus \De_{j-1}$ for some $1\leq j\leq N$. By the minimality of $\#(\g\cap \De_N)$, the curve $\wt{\g}$ also intersects $\De_j\setminus\De_{j-1}$, and hence $\g=F(\wt{\g})$ intersects  $\De_{j-1}\setminus \De_{j-2}$. Inductively, we have  $\wt{\g}$  intersects $\De_1\setminus \De_0$, a contradiction.
\end{proof}

\section{Continuity of the entropy function:\pf Newton family}\label{sec:continuity2}


A \pf Newton map is called \emph{generic} if $\infty\not\in{\rm Post}_f$.
We will study the continuity of the entropy function within the \pf Newton family at generic parameters. Throughout this section, we always assume that $f$ is a generic \pf Newton map, and $\{f_n,n\geq1\}$ a sequence of \pf Newton map converging to $f$.

\subsection{Perturbation of Newton graphs}\label{sec:perturbation-puzzle}
The reason we require $f$ generic is the following perturbation lemma.

\begin{lemma}[perturbation of Newton graphs]\label{lem:perturbation-newton-graph}
 Fix any $m\geq0$. Let $\De_m$ and $\De_m^n$ denote the Newton graphs of level $m$ associated to $f$ and $f_n$ respectively. Then $\De_m$ is homeomorphic to $\De_m^n$ for all sufficiently large $n$ and $\limsup_{n\to\infty}\De_m^n=\De_m$.
\end{lemma}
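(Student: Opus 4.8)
The plan is to prove the two assertions—topological equivalence $\De_m\cong\De_m^n$ for large $n$, and $\limsup_{n\to\infty}\De_m^n=\De_m$—simultaneously by induction on the level $m$, using the structural results already available in the excerpt. The base case is $m=0$: the channel diagram $\De_0$ is the union of the fixed internal rays of the fixed Fatou components $B_1,\ldots,B_d$ together with $\infty$. Since $f$ is generic, $\infty\notin{\rm Post}_f$, and the attracting fixed points $a_1,\ldots,a_d$ and the repelling fixed point $\infty$ all persist under small perturbation (the $a_i$ as attracting fixed points of $f_n$ with multiplier close to $1-1/n_i$, and $\infty$ as the continuation of a repelling fixed point, using the continuation machinery of Section 4, in particular Lemma~\ref{lem:Fatou-component} and Goldberg-Milnor's argument in \cite[Appendix B]{GM}). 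By Lemma~\ref{lem:Fatou-component} the fixed Fatou components $B_i$ deform to fixed Fatou components $B_i^n$ of $f_n$ with the same local degree $d_i$, so $f_n$ has the same number $\sum_i(d_i-1)$ of fixed internal rays; by Lemma~\ref{lem:internal-ray} each fixed internal ray $I_{B_i}(j/(d_i-1))$ has $\limsup_{n\to\infty}$ equal to the corresponding fixed internal ray of $f_n$, which lands at the continuation of $\infty$. This gives $\De_0^n\cong\De_0$ and $\limsup_n\De_0^n=\De_0$.

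For the inductive step, suppose the statement holds at level $m$. First I would observe that the vertex set $V(\De_m)$ consists of repelling preperiodic points whose orbits avoid the critical points of $f$ (they are iterated preimages of the fixed points lying in the Fatou set or at $\infty$), and that all of $\De_m$ lies at bounded distance from the Julia set in a region where, by genericity, no critical point of $f$ sits on $\De_m$ in a way that obstructs the lift. The key point is that $\De_{m+1}$ is the connected component of $f^{-1}(\De_m)$ containing $\De_0$, and taking preimages is a ``stable'' operation: since $f_n\to f$ uniformly and the finite graph $\De_m^n$ converges to $\De_m$ in the $\limsup$ sense by the inductive hypothesis, the preimage graphs $f_n^{-1}(\De_m^n)$ converge to $f^{-1}(\De_m)$, and in particular the component containing the (persistent) $\De_0$ converges to $\De_{m+1}$. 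Concretely: every edge of $\De_{m+1}$ is an arc mapped homeomorphically by $f$ onto an edge of $\De_m$; its endpoints are continuations of the endpoints of the image edge (via Lemma~\ref{lem:Fatou-component} and Lemma~\ref{lem:internal-ray} for the Fatou/internal-ray parts, via Lemma~\ref{lem:perturbation1} or the continuation of repelling points for the points at $\infty$/repelling preperiodic vertices); lifting the nearby edge of $\De_m^n$ through $f_n$ starting at the continued endpoint produces the corresponding edge of $\De_{m+1}^n$, with $\limsup_n$ equal to the original. A counting argument using that $\deg(f_n)=\deg(f)=d$ and the local degrees match (Lemma~\ref{lem:Fatou-component}) shows no edges are created or destroyed, giving the homeomorphism $\De_{m+1}^n\cong\De_{m+1}$; the $\limsup$ statement then follows edge by edge, together with the fact that $\limsup$ of a finite union is the union of the $\limsup$'s.

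The main obstacle I expect is controlling the lift near points where $\De_m$ passes through or near critical values of $f$, and ensuring that the component of $f^{-1}(\De_m)$ containing $\De_0$ is exactly $\De_{m+1}$ and stays connected under perturbation. This is precisely where genericity ($\infty\notin{\rm Post}_f$) is used: it guarantees that $\infty$ is not a postcritical point, so the repelling fixed point $\infty$ and the rays landing at it vary holomorphically, and more importantly it rules out the degenerate behavior where a postcritical point hits $\infty$ and the combinatorics of the Newton graph jumps. One must check that the critical points of $f_n$ that are not being tracked stay away from $\De_m^n$ (or, if they lie on it, they are continuations of critical points of $f$ lying on $\De_m$, and the local branched-covering picture is stable), so that the number of components of the preimage and their incidences are locally constant in $n$. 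I would handle this by a compactness argument: $\De_m$ is compact, $f$ has finitely many critical points, and for $n$ large $f_n$ is uniformly close to $f$ on a neighborhood of $\De_m$, so the local mapping degree of $f_n$ over each edge and vertex of $\De_m^n$ equals that of $f$ over the corresponding edge and vertex of $\De_m$; this pins down the topological type of $f_n^{-1}(\De_m^n)$ and identifies the distinguished component with $\De_{m+1}^n$.
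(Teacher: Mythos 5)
Your proposal proceeds by induction on the level $m$, whereas the paper's proof constructs the homeomorphism directly for a fixed $m$, exploiting a structural fact that your argument does not use explicitly: every edge of $\De_m$ is an internal ray with one endpoint a Fatou center and the other on the boundary of that Fatou component. The paper (i) defines $\xi_n$ on vertices by continuation (Lemma~\ref{lem:Fatou-component} for Fatou centers; implicit-function-theorem continuation for Julia vertices, which is well-defined precisely because genericity forces the orbit of any iterated preimage of $\infty$ to avoid $\mathrm{Crit}_f$); (ii) extends $\xi_n$ over each edge to the corresponding internal ray using Lemma~\ref{lem:internal-ray}; (iii) gets $\xi_n(\De_m)\subseteq\De_m^n$ from the equivariance $f_n\circ\xi_n=\xi_n\circ f$ on $V(\De_m)$; and (iv) proves $\xi_n(\De_m)=\De_m^n$ by contradiction: if some vertex $u_n\in V(\De_m^n)\setminus\xi_n(\De_m)$ were joined to $\xi_n(\De_m)$ by an edge $e_n$, then $e_n$ converges to an internal ray whose far endpoint $u$ must already be a vertex of $\De_m$ (since $f^m(u)$ is fixed and $\De_m\cup e$ is connected), so $u_n\in\xi_n(V(\De_m))$ after all. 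Your inductive step is sound in outline, and your base case essentially reproduces the paper's treatment; but the part you flag yourself as an obstacle — controlling the preimage near critical values and pinning down the distinguished component — is exactly where your sketch remains vague (the ``compactness/degree-counting argument''), and is exactly what the paper's connectivity-plus-limit contradiction replaces with a shorter, cleaner argument. You would gain a more local geometric picture from the induction, but at the cost of substantially more bookkeeping of local degrees of $f_n$ over $\De_m^n$ and of how the critical points that are \emph{not} continuations of points on $\De_m$ behave; the paper's direct argument avoids both by never needing to track the full preimage $f_n^{-1}(\De_m^n)$.
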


\begin{proof}
Fix any $m\geq0$. We first establish an injection $\xi_n:\De_m\to\De_m^n$ for all large $n$ such that $\xi_n(\De_m)\to \De_m$ as $n\to\infty$.

Note that the vertices of $\De_m$ are iterated preimages of fixed points of $f$. Let $v\in V(\De_m)$. If $v\in\FFF_f$, it is a center of a Fatou component $U$ of $f$. By Lemma \ref{lem:Fatou-component} $v$ has a unique continuation $v_n$ at $f_n$ such that $v_n$ is the center of $U_n$, where $U_n$ is the deformation of $U$ at $f_n$. In this case, we define $\xi_n(v)=:v_n$. If $v\in\JJJ_f$, since its orbit is supposed to avoid ${\rm Crit}_f$, then $v$ has a unique continuation $v_n$ at $f_n$, and we define $\xi_n(v)=:v_n$. Thus we obtain an injection $\xi_n:V(\De_m)\to \C$ such that $\lim_{n\to\infty}\xi_n(V(\De_m))=V(\De_m)$. Since $\xi_n(v)$ is the unique continuation of $v$ for each vertex $v$ of $\De_m$, we have $f_n\circ \xi_n=\xi_n\circ f$ on $V(\De_m)$. It follows that $\xi_n(V(\De_m))\subseteq V(\De_m^n)$ for all large $n$.

We begin to extend the map $\xi_n$ to $\De_m$. Let $e$ be an edge of $\De_m$ with endpoints $x,y$. Then one point of $x$ and $y$, say $x$, is the center of a Fatou component $U$ of $f$, $y\in \partial U$ and $e$ is the internal ray in $U$ landing at $y$. We extend $\xi_n$ homeomorphically on $e$ such that its image is the internal ray in $U_n$ landing at $\xi_n(y)$ where $U_n$ denotes the deformation of $U$ at $f_n$. Clearly, the map $\xi_n:\De_m\to \C$ is injective, and by Lemma \ref{lem:internal-ray}, we have $\limsup_{n\to\infty}\xi_n(\De_m)=\De_m$. Notice that the edges of $\xi_n(\De_m)$ are all internal rays of $f_n$. Then, according to the formula $f_n\circ \xi_n=\xi_n\circ f$ on $V(\De_m)$, each edge of $\xi_n(\De_m)$ is mapped by $f^m_n$ to an fixed internal ray in some fixed Fatou component of $f_n$. It means that $\xi_n(\De_m)\subseteq \De_m^n$.

To complete the proof of the lemma, we remain to show $\xi_n(\De_m)=\De_m^n$. On the contrary, by taking subsequences, we assume that there exists a vertex $v$ of $\De_m$ and an vertex $u_n\in V(\De_m^n)\setminus \xi_n(\De_m)$ such that $\xi_n(v)$ and $u_n$ are the endpoints of an edge $e_n$ of $\De_m^n$. By Lemma \ref{lem:internal-ray}, the edge $e_n$ converge to an internal ray $e$ of $f$ which joins $v$ and $u=\lim_{n\to\infty} u_n$. Since $u_n\in V(\De_m^n)$, then $f_n^m(u_n)$ is a fixed point of $f_n$. It follows that $f^m(u)$ is a fixed point of $f$. Combining the fact that $\De_m\cup e$ is connected, we get  $u\in V(\De_m)$, and hence $u_n\in\xi_n(V(\De_m))$, a contradiction to the choice of $u_n$.
\end{proof}

As a consequence, we see that for any $\De(f)$-puzzle piece $W$ of level $k\geq0$, there exists a unique $\De(f_n)$-puzzle piece $W_n$ of level $k$ such that $\ov{W_n}\to\ov{W}$ as $n\to\infty$.

\begin{definition}[deformation of puzzle pieces]\label{def:puzzle}
Under the notations above, the $\De(f_n)$-puzzle piece $W_n$ is called the \emph{deformation of $W$ at $f_n$}.
\end{definition}

\subsection{Continuity of entropy function in non-renormalizable case}
In this subsection, we prove that the entropy function is continuous at generic, non-renormalizable parameters.

\begin{lemma}\label{lem:non-renormalizable}
Let $T_n$ be a non-trivial periodic component of the Hubbard forest $\HHH_{f_n}$  for all large $n$. Assume that a critical $c_n$ of $f_n$ belong to $T_n$ and $\lim_{n\to\infty}c_n=c$. Then, if $c$ does not belong to a periodic component of $\HHH_f$,  the period $p_n$ of $T_n$ goes to infinity.
\end{lemma}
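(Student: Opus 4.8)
The plan is to argue by contradiction. Suppose the periods $p_n$ do not tend to infinity; then, after passing to a subsequence, we may assume $p_n = p$ is constant for all large $n$. Each $T_n$ is, by Lemma \ref{lem:renormalization} and the Straightening Theorem, an extended Hubbard tree of a renormalization triple $\rho_n = (f_n^p, U_n, V_n)$; equivalently, $T_n$ is a periodic component of the canonical Hubbard forest $\HHH_{f_n}$ which, by the puzzle description of Section \ref{sec:puzzle-renormalization}, corresponds to a puzzle-renormalization triple $(f_n^p, W_n, W_n')$ built from $\De(f_n)$-puzzle pieces. The key input is Lemma \ref{lem:perturbation-newton-graph} together with Definition \ref{def:puzzle}: the Newton graphs $\De_m^n$ converge to $\De_m$ and each relevant $\De(f_n)$-puzzle piece $W_n$ is the deformation of a fixed $\De(f)$-puzzle piece $W$ of the same level. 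So, after a further subsequence, the puzzle pieces $W_n, W_n'$ carrying $T_n$ stabilize to fixed puzzle pieces $W, W'$ of $f$, and $\ov{W_n}\to\ov{W}$, $\ov{W_n'}\to\ov{W'}$.

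First I would show that the limiting data $(f^p, W, W')$ is itself a puzzle-renormalization triple of $f$, of period (dividing) $p$. The combinatorial conditions (1) and (2) in the definition of puzzle-renormalization — that $W\subseteq W'$ are $\De$-puzzle pieces, that $f^i(W)$ for $0\le i\le p-1$ are pairwise disjoint puzzle pieces with $f^p(W)=W'$, and that $\deg(f^p|_W)\ge 2$ — pass to the limit because these are discrete/combinatorial statements about the finitely many puzzle pieces of a bounded level, and each holds for $f_n$ by hypothesis; disjointness is preserved since distinct puzzle pieces of $f$ are the Hausdorff limits of distinct (hence disjoint or nested, hence here disjoint) puzzle pieces of $f_n$. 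The local degree $\deg(f^p|_W)\ge 2$ survives the limit because $\deg(f_n^p|_{W_n}) = \deg(f^p|_W)$ for large $n$ (the critical points of $f_n$ in $W_n$ converge to critical points of $f$ in $W$, counted with multiplicity, and none can escape to $\partial W$ since $\partial W\subseteq\De_m$ is disjoint from all free critical/postcritical points by construction of the canonical Newton graph). Condition (3), connectedness of the filled-in Julia set $K_{(f^p,W,W')}$, then follows because $f$ is \pf: the filled-in Julia set of a polynomial-like restriction of a \pf rational map is automatically connected (its critical points all lie in it and have finite orbit, so the straightened polynomial is \pf).

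Next I would locate the limit critical point $c$. By hypothesis $c_n\in T_n$ and $c_n\to c$; since $T_n\subseteq K_{\rho_n}$ and $\ov{K_{\rho_n}}$ accumulates (in the Hausdorff sense) inside $\ov{W}$, the point $c$ lies in $\ov{W}$, and being a critical point of $f$ it lies in the open piece $W$ (again using $\partial W\subseteq\De_m$ avoids critical points). Moreover $c$ never enters any Newton graph $\De_\ell$ — if it did, then $c_n$ would enter $\De_\ell^n$ for large $n$ by continuation of the fixed-point preimage relation, contradicting $c_n\in T_n$ which is disjoint from all Newton graphs. Hence $c$ is a free critical point whose entire forward orbit stays in $\bigcup_i f^i(W)$, i.e. $c\in K_{(f^p,W,W')}$. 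By Proposition \ref{pro:puzzle-renormalization} this puzzle-renormalization triple promotes to a genuine renormalization triple, and its canonical extended Hubbard tree $H$ — a non-trivial periodic component of $\HHH_f$, non-trivial because it contains the critical point $c$ in a set of degree $\ge 2$ — contains $c$. This contradicts the assumption that $c$ does not belong to a periodic component of $\HHH_f$.

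The main obstacle is the second paragraph: verifying that the limiting puzzle-renormalization data genuinely satisfies all three defining conditions, and in particular ruling out degenerations where either $W = W'$ in the limit (period collapses, so that the ``renormalization'' becomes trivial) or the local degree drops to $1$. Both are handled by the rigidity of the puzzle-piece combinatorics under Lemma \ref{lem:perturbation-newton-graph} and by the fact, noted above, that the free critical points of $f_n$ cannot migrate onto the Newton graph in the limit; but making the degree-counting argument precise — that no critical multiplicity is lost to the boundary — is where the genericity hypothesis ($\infty\notin\operatorname{Post}_f$, which is what makes Lemma \ref{lem:perturbation-newton-graph} available) does the real work, and this is the step that deserves the most care.
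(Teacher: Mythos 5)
Your proposal argues by contradiction in the same opening move as the paper (pass to a subsequence with constant period $p$), but then takes a genuinely different route. The paper's own proof is more elementary: it follows the periodic postcritical point $b=f^M(c)$, splits into the two cases $c\in\De_N$ and $c\in\HHH_f$, and in each case derives a contradiction directly from the convergence of Newton graphs (Lemma~\ref{lem:perturbation-newton-graph}), using the genericity hypothesis only to rule out $b=\infty$. You instead try to manufacture a limiting puzzle-renormalization triple $(f^p,W,W')$ of $f$ carrying $c$, so that $c$ would lie in a periodic component of $\HHH_f$.

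There are two genuine gaps in your argument. First, you assert that ``after a further subsequence, the puzzle pieces $W_n,W_n'$ carrying $T_n$ stabilize to fixed puzzle pieces $W,W'$ of $f$.'' This requires the levels of $W_n,W_n'$ (or equivalently the canonical Newton graph levels $N_{f_n}$ and the depth of the separating puzzle pieces) to be \emph{uniformly bounded} in $n$, and you give no argument for that. Lemma~\ref{lem:perturbation-newton-graph} compares $\De_m^n$ with $\De_m$ only for a \emph{fixed} level $m$; it does not by itself bound the level at which the $f_n$-puzzle must be taken to isolate $T_n$. Without this bound, the extraction of a limiting triple does not go through.

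Second, the step asserting that ``$c$ never enters any Newton graph $\De_\ell$'' is not justified by the argument you give. You claim that if $f^j(c)\in\De_\ell$ then $f_n^j(c_n)\in\De_\ell^n$ ``by continuation of the fixed-point preimage relation,'' but $c_n$ is merely \emph{some} critical point of $f_n$ converging to $c$; it need not be the continuation of $c$ as a Newton-graph vertex in the sense of Lemma~\ref{lem:Fatou-component}. Indeed, the scenario in which $c$ iterates to a finite fixed point (so $c\in\De_N$, i.e.\ $c$ is not free for $f$) while the nearby critical points $c_n$ of $f_n$ are free and trapped in a bounded-period renormalization is precisely what must be excluded, and it is exactly the paper's Case~1, where the genericity hypothesis ($\infty\notin{\rm Post}_f$) is used to conclude that $b=f^M(c)$ is a finite, attracting fixed point, from which a contradiction follows. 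Your proposal tries to sidestep this case by a continuation claim that does not hold, so it leaves open the very situation the lemma needs to handle. The boundary-criticality issue you flag at the end (critical multiplicity escaping to $\partial W$) is a real concern, but it is downstream of these two more basic gaps.
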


\begin{proof}
On the contrary, assume that $p_n$ is uniformly bounded above. Then there exists an integer $M$ such that $f_n^M(T_n)\subseteq T_n$ for all $n\geq0$.
Set $b_n:=f_n^M(c_n)\in T_n$ and $b:=\lim_{n\to\infty}b_n$. Then $f^M(c)=b$. By increasing $M$ if necessary, we assume that $b$ is periodic. Hence $b\not=c$ since $c$ is not period.

Let $\De_N,\De_N^n$ be the Newton graphs of $f$ and $f_n$ respectively, with level $N$, such that all critical/postcritical points of $f$ not in $\HHH_f$ belong to $\De_N$, and different components of $\HHH_f$ lie in different components of $\C\setminus \De_N$. If $c\in \De_N$
 the point $b$ must be a fixed point of $f$.
 Since $f$ is generic, then $b\not=\infty$ and hence a  fixed critical point of $f$.  In this case, the point $b_n$ is also a fixed critical point of $f_n$ by Lemma \ref{lem:Fatou-component}.  It contradicts that $b_n\in T_n\subseteq \HHH_{f_n}$. In the case of  $c\in\HHH_f$, the points $b,c$ belong to different components of $\HHH_f$, since $c$ is not in a periodic component of $\HHH_f$ (by the assumption of the lemma) but $b$ is (because $b$ is period). Then the graph $\De_N$ separates $b,c$. It follows from Lemma \ref{lem:perturbation-newton-graph} that the graphs $\De_N^n$ also separate $b,c$ for all large $n$. On the other hand, note that $T_n$ contains $b_n$ and $c_n$ which converges to $b$ and $c$ respectively, then $T_n$ intersects $\De_N^n$ for all large $n$. It also contradicts to $T_n\subseteq \HHH_{f_n}$ (since $\HHH_{f_n}\cap \De_N^n=\emptyset$).
\end{proof}

\begin{proposition}\label{pro:non-renormalizable}
Let $f$ be a generic, non-renormalizable \pf Newton map. Then the entropy function $h:\NNN_d^{\rm pf}\to\R$ is continuous at $f$.
\end{proposition}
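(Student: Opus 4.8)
Since $f$ is non-renormalizable, Proposition~\ref{pro:entropy-formula} gives $h(f)=0$, and as topological entropy is non-negative, continuity of $h$ at $f$ is equivalent to $\limsup_{n}h(f_n)\le 0$, i.e.\ $h(f_n)\to 0$. The plan is to prove this upper semi-continuity by contradiction, so suppose that after passing to a subsequence $h(f_n)\ge\epsilon$ for some $\epsilon>0$ and all $n$.

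Then each $f_n$ is renormalizable, and by the entropy formula \eqref{eq:entropy2} there is a non-trivial periodic component $T_n$ of $\HHH_{f_n}$, of period $p_n$, with $\frac{1}{p_n}h_{top}(f_n^{p_n}|_{T_n})\ge\epsilon$. By Lemma~\ref{lem:renormalization}, $T_n$ is an extended Hubbard tree of a renormalization triple $(f_n^{p_n},U_n,V_n)$; straightening this triple to a \pf polynomial $P_{T_n}$ and using Lemma~\ref{lem:extended-tree} give $h_{top}(f_n^{p_n}|_{T_n})=h(P_{T_n})\le\log\deg(P_{T_n})$. The degree $\deg(P_{T_n})$ is the polynomial-like degree of the period-$p_n$ renormalization, which factors through the $p_n$ pairwise disjoint pieces of the underlying puzzle-renormalization cycle; since $f_n$ has at most $2d-2$ critical points counted with multiplicity, this product of local degrees is bounded above by a constant $C_d$ depending only on $d$. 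Hence $\epsilon\le\frac{1}{p_n}\log C_d$, so the periods $p_n$ are bounded, and after a further subsequence $p_n\equiv p$. Finally choose a critical point $c_n\in T_n$ of $f_n$ and pass to a subsequence with $c_n\to c$; then $c$ is a critical point of $f$.

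Because $f$ is non-renormalizable, $\HHH_f$ has no non-trivial periodic component. If $c$ lies in no periodic component of $\HHH_f$, then Lemma~\ref{lem:non-renormalizable} forces $p_n\to\infty$, contradicting $p_n\equiv p$. Otherwise $c$ lies in a periodic component of $\HHH_f$, which must then be the trivial tree $\{c\}$; thus $c$ is a free periodic critical point of $f$, lying in a super-attracting cycle, and $H_c=\{c\}$, i.e.\ $c$ is not contained in the extended Hubbard tree of any puzzle-renormalization of $f$. To exclude this last case I would analyze the convergence of the renormalizations $\rho_n$: since $p$ is bounded, the underlying $\De(f_n)$-puzzle pieces containing $U_n$ have bounded level, so by Lemma~\ref{lem:perturbation-newton-graph} and Definition~\ref{def:puzzle} each is the deformation of a fixed $\De(f)$-puzzle piece; passing to the limit one obtains a triple $(f^p,W,W')$ with $f^p(W)=W'$, the $f^i(W)$ pairwise disjoint, $\deg(f^p|_W)\ge2$, and $c\in K:=\{z\in W:f^{kp}(z)\in W\ \forall k\ge0\}$. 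Using that $h(P_{T_n})\ge\epsilon p>0$ together with the continuity of core entropy on the \pf polynomial family \cite{GT}, the component of $K$ through $c$ cannot degenerate, so $(f^p,W,W')$ is a genuine puzzle-renormalization triple of $f$ (Section~\ref{sec:puzzle-renormalization}) whose canonical extended Hubbard tree contains the critical point $c$. Then $H_c$ is a non-trivial periodic component of $\HHH_f$, so $f$ is renormalizable --- a contradiction.

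The reduction to the bounded-period situation is routine once the degree bound above is in place; the main obstacle is this last case, where $c$ is periodic. There the two delicate points are: (i) showing that the levels of the relevant puzzle pieces stay bounded as $f_n\to f$, which rests on the stability of the Newton graphs (Lemma~\ref{lem:perturbation-newton-graph}) and on the canonical level $N_{f_n}$ being eventually bounded --- here the genericity of $f$ is used; and (ii) ruling out disconnectedness of the limiting filled-in Julia set $K$ around $c$, for which the uniform positive lower bound $h(P_{T_n})\ge\epsilon p$ and the continuity of core entropy on \pf polynomials are precisely what is needed.
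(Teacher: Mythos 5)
Your overall strategy follows the paper's: bound $h_{top}(f_n^{p_n}|_{T_n})$ by a constant depending only on $d$ via the polynomial-like degree, take a limit critical point $c$ of the critical points $c_n\in T_n$, and invoke Lemma~\ref{lem:non-renormalizable} to force $p_n\to\infty$. The degree bound is argued correctly.

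The gap is in your ``Otherwise'' case, where $c$ lies in a (necessarily trivial) periodic component of $\HHH_f$, i.e.\ $c$ is a free periodic critical point. You try to rule this out by building a puzzle-renormalization triple of $f$ as a limit of the triples $\rho_n$, but two steps there are unjustified: (i) you assert that ``since $p$ is bounded, the underlying $\De(f_n)$-puzzle pieces containing $U_n$ have bounded level,'' but boundedness of the renormalization period $p$ does not bound the puzzle \emph{level} of the pieces used (only the level \emph{difference} between $W_n$ and $W_n'$ is $p$; the base level can a priori grow with $n$); and (ii) you invoke the Gao--Tiozzo continuity and the lower bound $h(P_{T_n})\ge\epsilon p$ to claim ``the component of $K$ through $c$ cannot degenerate,'' but a puzzle-renormalization requires $K$ to be \emph{connected}, which is neither implied by non-degeneracy of one component nor by a lower bound on core entropy. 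So as written, this case is not closed.

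The paper closes it with a structural observation that you have not used: when $f$ is non-renormalizable, every periodic component of $\HHH_f$ is a \emph{repelling} periodic point. Indeed, a trivial periodic component $\{w\}$ of $\HHH_f$ has $w$ a free periodic critical or postcritical point; if $w$ were in a superattracting cycle, that cycle would lie in the filled Julia set of a puzzle-renormalization of $f$, whose canonical extended Hubbard tree is non-trivial (a degree-$\ge2$ \pf polynomial-like map has at least two distinct fixed points), so $H_w\ne\{w\}$ and $f$ would be renormalizable — a contradiction. Since the limit $c$ of critical points is itself critical, it cannot be a repelling periodic point, so $c$ lies in no periodic component of $\HHH_f$ and Lemma~\ref{lem:non-renormalizable} applies directly, with no need for your case (ii) at all.
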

\begin{proof}
Let $\{f_n,n\geq1\}$ be an arbitrary sequence of \pf Newton map converging to $f$. Since $h(f)=0$ (by Proposition \ref{pro:entropy-formula}), we just need to show $\lim_{n\to\infty}h(f_n)=0$.
 Without loss of generality, we can assume that all $f_n$ are renormalizable.

We first claim that, for any non-trivial periodic component $T_n$ of $\HHH_{f_n}$ with period $p_n$, the topological entropy $h_{top}(f_n^{p_n}|_{T_n})$ is bounded by $(2d-2)\log d$. To see this point, let $\rho_n=(f_n^{p_n},U_n,V_n)$ be a renormalization triple such that $T_n$ is an extended Hubbard tree of $\rho_n$ (Lemma \ref{lem:renormalization}). Note that the orbit of the filled-in Julia set $K_{\rho_n}$ covers the critical points of $f_n$ at most $2d-2$ times, and the degree of $f_n|_{f_n^k(K_{\rho_n})}$ is at most $d$ ($k\geq0$), then the degree of $f_n^{p_n}|_{K_{\rho_n}}$ is bounded by $d^{2d-2}$. As a consequence, the topological entropy $h_{top}(f_n^{p_n}|_{T_n})$ is bounded by $(2d-2)\log d$.

Let $T_n$ be any non-trivial periodic component of the  $\HHH_{f_n}$, such that a critical point $c_n$ of $f_n$ belong to $T_n$ and $c_n\to c$ as $n\to\infty$. Since $f$ is postulated non-renormalizable, then $c$ is not in a periodic component of $\HHH_f$ (in this case, any periodic component of $\HHH_f$ is a repelling periodic point). By Lemma \ref{lem:non-renormalizable}, the period $p_n$ of $T_n$ converge to $\infty$. Combining the claim above, we have $h_{top}(f_n^{p_n}|_{T_n})/p_n\to 0$ as $n\to\infty$.
On the other hand, according to formula
(\ref{eq:entropy3}), the core entropy $h(f_n)$ equals to the maximum of $h_{top}(f_n^{p_n}|_{T_n})/p_n$ with $T_n$ going though all periodic components of $\HHH_{f_n}$ containing critical points. Hence $h(f_n)\to 0$ as $n\to \infty$.
\end{proof}

\subsection{Continuity of the entropy function in renormalizable case.}
In this part, we always assume that $f$ is renormalizable, i.e., $\HHH_f$ contains non-trivial components.


 By Lemma \ref{lem:separate}, we can choose, for each non-trivial periodic component $H$ of $\HHH_f$ (with period $p$), a pair of $\De(f)$-puzzle pieces $(W_H,W_H')$ containing $H$ such that
$(f^p,W_H,W_H')$ is a puzzle-renormalization triple, and the $\De(f)$-puzzle pieces $\{W_H':H\in{\rm Comp}(\HHH_f), \text{periodic and non-trivial}\}$ are pairwise disjoint.

In the dynamical plane of $f_n$, for each non-trivial periodic component $H$ of $\HHH_f$, we denote by $W_{H,n},W_{H,n}'$ the $\De(f_n)$-puzzle piece deformed from $W_H,W_H'$ respectively (see Definition \ref{def:puzzle}). Then the pairs of $\De(f_n)$-puzzle pieces $(W_{H,n},W_{H,n}')$ satisfy the following properties:
\begin{enumerate}
\item the $\De(f_n)$-puzzle pieces $\{W_{H,n}':H\in{\rm Comp}(\HHH_f), \text{periodic and non-trivial}\}$ are pairwise disjoint;
\item for each non-trivial, periodic component $H$ of $\HHH_f$ with period $p$, we have $W_{H,n}\subseteq W_{H,n}'$ and the map $f_n^p:W_{H,n}\to W_{H,n}'$ is a branched covering of degree equal to ${\rm deg}(f^p|_{W_H})$.
\end{enumerate}

To compare the core entropy $h(f)$ and the limits of $h(f_n)$ as $n\to\infty$, we divide the canonical Hubbard forest $\HHH_{f_n}$  into two parts: let $\HHH^n_{\rm bd}$ denote the union of the components of $\HHH_{f_n}$ which stay in $\cup_H W_{H,n}$ under the iteration of $f_n$; and  $\HHH_{\rm esc}^n$ denote the union of the periodic components of $\HHH_{f_n}$ not contained in $\HHH^n_{\rm bd}$, where the subscripts ``bd'' and ``esc'' mean ``bounded'' and ``escape'' respectively. It is clear that both $\HHH^n_{\rm bd}$ and $\HHH^n_{\rm esc}$ are $f_n$-invariant, and by Propositions \ref{Do2} and \ref{Do3}, we have
\begin{equation}\label{eq:33}
h(f_n)=\max\{\ h_{top}(f_n|_{\HHH_{\rm bd}^n}),h_{top}(f_n|_{\HHH_{\rm esc}^n})\ \}.
\end{equation}


\begin{lemma}\label{lem:infinite-period}
Let $p_n$ denote the minimal period among the non-trivial components of $\HHH_{\rm esc}^n$, then $p_n\to\infty$ as $n\to\infty$. As a consequence, we have $h_{top}(f_n|_{\HHH_{\rm esc}^n})\to 0$ as $n\to\infty$.
\end{lemma}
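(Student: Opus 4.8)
# Proof Proposal for Lemma \ref{lem:infinite-period}

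\textbf{Overview of the strategy.} The plan is to argue by contradiction: if the periods $p_n$ of the non-trivial components of $\HHH_{\rm esc}^n$ do not go to infinity, then along a subsequence we can find a fixed period $p$ and non-trivial periodic components $T_n \subseteq \HHH_{\rm esc}^n$ of period exactly $p$. Passing to a further subsequence, a critical point $c_n \in T_n$ converges to some $c \in \ov{\C}$, and $c$ must be a free critical/postcritical point of $f$ (since the components of $\HHH_{f_n}$ avoid the Newton graphs $\De_m^n$, and by Lemma \ref{lem:perturbation-newton-graph} these converge to $\De_m$). The key point to extract is that $c$ lies in a non-trivial periodic component $H$ of $\HHH_f$, so that $c$ is ``captured'' by one of the chosen puzzle-renormalization triples $(f^p, W_H, W_H')$; I will then show that the orbit of $T_n$ must eventually enter the deformed puzzle piece $W_{H,n}$, contradicting the definition of $\HHH_{\rm esc}^n$.

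\textbf{Key steps, in order.} First I would apply Lemma \ref{lem:non-renormalizable} (whose hypotheses are met here: $T_n$ is a non-trivial periodic component of $\HHH_{f_n}$ with bounded period and $c_n \to c$): its contrapositive forces $c$ to lie in a periodic component of $\HHH_f$. Thus $c \in H$ for some non-trivial periodic component $H$ of $\HHH_f$ of period, say, $q$, and $H \subseteq W_H$ where $(f^q, W_H, W_H')$ is one of the fixed puzzle-renormalization triples. Second, since $c$ is an interior point (in the tree sense, or at worst a vertex) of the puzzle piece $W_H$ — more precisely, $H$ lies in the interior of $W_H$ by the construction in Section \ref{sec:puzzle-renormalization} — and $c_n \to c$, for all large $n$ the critical point $c_n$ lies in the deformation $W_{H,n}$, by Lemma \ref{lem:perturbation-newton-graph} and Definition \ref{def:puzzle}. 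Third, I would track the orbit: because $T_n$ has period $p$ (bounded) and the orbit of $H$ under $f$ is a cycle of puzzle pieces $W_H = W_{H_0}, W_{H_1}, \ldots$ whose deformations $W_{H_0,n}, W_{H_1,n}, \ldots$ are pairwise disjoint $\De(f_n)$-puzzle pieces, I claim that the forward $f_n$-orbit of the component of $\HHH_{f_n}$ through $c_n$ stays inside $\bigcup_i W_{H_i,n}$. This uses the convergence of the puzzle-piece boundaries together with the fact that, for $f$, the forward orbit of $c$ stays in $\bigcup_i W_{H_i}$ (as $c \in H \subseteq K_{\rho}$ for the puzzle-renormalization triple $\rho$). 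Consequently $T_n \subseteq \HHH^n_{\rm bd}$, contradicting $T_n \subseteq \HHH^n_{\rm esc}$.

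\textbf{The ``as a consequence'' clause.} Once $p_n \to \infty$, the second assertion is routine: by the same degree estimate as in the proof of Proposition \ref{pro:non-renormalizable}, each non-trivial periodic component $T_n$ of $\HHH^n_{\rm esc}$ of period $r_n$ satisfies $h_{top}(f_n^{r_n}|_{T_n}) \le (2d-2)\log d$, so $h_{top}(f_n^{r_n}|_{T_n})/r_n \le (2d-2)\log d / p_n \to 0$; and by formula \eqref{eq:entropy3} applied to the invariant forest $\HHH^n_{\rm esc}$, the entropy $h_{top}(f_n|_{\HHH^n_{\rm esc}})$ is the maximum of such quantities, hence tends to $0$.

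\textbf{Main obstacle.} The delicate step is the third one: showing that the forward $f_n$-orbit of $T_n$ really is trapped inside the deformed puzzle pieces $\bigcup_i W_{H_i,n}$, rather than merely starting there. The subtlety is that $T_n$ could, a priori, wander out of $W_{H_j,n}$ across a boundary edge of the Newton graph $\De_{m}^n$ even though for $f$ the orbit of $c$ stays strictly inside. I expect to handle this by using $\limsup_n \De_m^n = \De_m$ from Lemma \ref{lem:perturbation-newton-graph} to control the boundaries of $W_{H_j,n}$ uniformly, combined with the fact that $H_j$ is a positive distance from $\partial W_{H_j}$ and that the period $p$ is bounded, so only finitely many iterates need to be controlled; the continuity of $f_n \to f$ then confines the whole (bounded) orbit. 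If a clean trapping argument resists, an alternative is to invoke Lemma \ref{lem:renormalization} / the Straightening Theorem to realize $f_n^p$ near $T_n$ as a polynomial-like map and argue that its filled-in Julia set, being connected and containing a critical point converging into $K_\rho$, must coincide with (a piece of) the deformation of $K_\rho$, again forcing $T_n \subseteq \HHH^n_{\rm bd}$.
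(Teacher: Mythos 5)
Your argument is correct and tracks the paper's own proof step for step: contradiction along a subsequence of fixed period $p$, Lemma~\ref{lem:non-renormalizable} to force the limit critical point $c$ into a non-trivial periodic component $H\subseteq W_H$ of $\HHH_f$, trapping the $f_n$-orbit of $T_n$ inside $\bigcup W_{H_i,n}$ to contradict $T_n\subseteq\HHH_{\rm esc}^n$, and the degree bound $h_{top}(f_n^{p_{T_n}}|_{T_n})\le(2d-2)\log d$ together with formula~\eqref{eq:entropy3} for the consequence. The ``main obstacle'' you flag is in fact not there: every component of $\HHH_{f_n}$ is by construction disjoint from every Newton graph $\De_m^n$, hence from $\partial W_{H,n}$, so $T_n$ and all its forward images cannot ``wander out across a boundary edge'' --- each is entirely inside or entirely outside each deformed puzzle piece --- and the paper makes the orbit containment immediate by first choosing a deeper puzzle piece $\wt W_H\subseteq W_H$ (containing $H$, of level high enough that $f^i(\wt W_H)\subseteq W_{H_i}$ for $0\le i\le p$), so that $T_n\subseteq\wt W_{H,n}$ and then $f_n^i(T_n)\subseteq f_n^i(\wt W_{H,n})\subseteq W_{H_i,n}$ all at once, rather than tracking each iterate $f_n^i(c_n)$ separately as you propose.
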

\begin{proof}
On the contrary, by taking subsequences if necessary, we assume that all $p_n$ have a uniform upper bound. Without loss of generality, we can assume $p_n=p$ for all $n$. Let $T_n$ be a component of $\HHH_{\rm esc}^n$ with $f^p(T_n)\subseteq T_n$. Note that at least one of $T_n, f(T_n),\ldots,f^{p-1}(T_n)$ contains a critical point of $f_n$. So, without loss of generality and by taking subsequences if necessary, we can assume that $T_n$ contains a critical point $c_n$ of $f_n$ and $c_n$ converge to a critical point $c$ of $f$.

By Lemma \ref{lem:non-renormalizable}, the point $c$ must belong to a non-trivial periodic component $H$ of $\HHH_f$. Then $c\in W_H$. Let $\wt{W}_H\subseteq W_H$
be a $\De(f)$-puzzle piece containing $H$ such that level difference between $\wt{W}_H$ and $W_H$ is sufficiently large.
It follows that, for each $0\leq i\leq p$, the $\De(f)$-puzzle piece $f^i(\wt{W}_H)$ belongs to $W_{H_i}$, where $H_i$ is the component of $\HHH_f$ which contains $f^i(H)$.
Let $\wt{W}_{H,n}$ denote the deformation of $\wt{W}_H$ for large $n$. Since $c_n\to c$ and $T_n\cap\partial \wt{W}_{H,n}=\emptyset$,  the tree $T_n$ is contained in $\wt{W}_{H,n}$ for all large $n$. Moreover, the $\De(f_n)$-puzzle piece $f_n^i(\wt{W}_{H,n})$ belongs to $W_{H_i,n}$ for each $0\leq i\leq p$ and all large $n$. It implies that the entire orbit of $T_n$ stay in $\cup_HW_{H,n}$, and hence $T_n\subseteq \HHH_{\rm bd}^n$, a contradiction. So we prove that $\lim_{n\to\infty}p_n=\infty$.

By the same argument as that in the proof of Lemma \ref{lem:orbit-entropy}, we have
$$h_{top}(f_n|_{\HHH_{\rm esc}^n})=\max_{T_n\in{\rm Comp}(\HHH_{\rm esc}^n)}\frac{1}{p_{_{T_n}}}h_{top}(f^{p_{_{T_n}}}_n|_{T_n})\leq \frac{1}{p_n}\max_{T_n\in{\rm Comp}(\HHH_{\rm esc}^n)}h_{top}(f^{p_{_{T_n}}}_n|_{T_n}),$$
where $p_{_{T_n}}$ denote the period of $T_n$. Since $h_{top}(f^{p_{_{T_n}}}_n|_{T_n})\leq (2d-2)\log d$ (see the claim in the proof Proposition \ref{pro:non-renormalizable}) and $p_n\to\infty$, then $\lim_{n\to\infty}h_{top}(f_n|_{\HHH_{\rm esc}^n})\to 0$.
\end{proof}


For any non-trivial, periodic component $H$ of $\HHH_f$, we define a forest
\begin{equation}\label{eq:nearby}
H_n:=\HHH_{\rm bd}^n\cap W_{H,n}.
\end{equation}
It is easy to see that $f_n(H_n)\subseteq H_n'$ if and only if $f(H)\subseteq H'$ where $H'$ is a non-trivial periodic component of $\HHH_f$.
By a similar argument as previous, we have an entropy formula
\begin{equation}\label{eq:44}
h_{top}(f_n|_{\HHH_{\rm bd}^n})=\max\limits_{\mbox{
\tiny
$\begin{array}{c}
H\in{\rm Comp}(\HHH_f)\\
{\rm non-trivial}\\
{\rm period}\ p_H\end{array}$
}
}
\frac{1}{p_H}h_{top}(f_n^{p_H}|_{H_n}).
\end{equation}

\begin{lemma}\label{lem:entropy-f_n}
The limit/limit superior/limit inferior of $h(f_n)$ is equal to that of $\max_H \frac{1}{p_H}h_{top}(f_n^{p_H}|_{H_n})$ with $H$ going through all non-trivial periodic components of $\HHH_f$, as $n\to\infty$.
\end{lemma}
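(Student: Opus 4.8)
Looking at Lemma~\ref{lem:entropy-f_n}, the claim is that the limit behavior (limit, limsup, liminf) of $h(f_n)$ coincides with that of $\max_H \frac{1}{p_H}h_{top}(f_n^{p_H}|_{H_n})$. The plan is to combine the two entropy decompositions already established, namely equation~\eqref{eq:33}, which gives $h(f_n)=\max\{h_{top}(f_n|_{\HHH_{\rm bd}^n}),\,h_{top}(f_n|_{\HHH_{\rm esc}^n})\}$, with equation~\eqref{eq:44}, which gives $h_{top}(f_n|_{\HHH_{\rm bd}^n})=\max_H\frac{1}{p_H}h_{top}(f_n^{p_H}|_{H_n})$, and then to kill off the ``escaping'' term using Lemma~\ref{lem:infinite-period}.

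Concretely, I would argue as follows. Write $a_n := \max_H \frac{1}{p_H}h_{top}(f_n^{p_H}|_{H_n})$ (the maximum over all non-trivial periodic components $H$ of $\HHH_f$) and $b_n := h_{top}(f_n|_{\HHH_{\rm esc}^n})$. By \eqref{eq:33} and \eqref{eq:44} we have $h(f_n)=\max\{a_n, b_n\}$ for all large $n$. By Lemma~\ref{lem:infinite-period}, $b_n\to 0$ as $n\to\infty$. Now I distinguish two cases according to whether $\liminf_n a_n > 0$ or $\liminf_n a_n = 0$. In the first case, for all large $n$ we have $a_n > b_n$ (since $b_n\to 0$ while $a_n$ stays bounded away from $0$), so $h(f_n)=a_n$ for all large $n$ and the three limit quantities are trivially equal. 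In the second case, both $a_n$ and $b_n$ have liminf zero; one then checks directly that $\limsup h(f_n)=\limsup\max\{a_n,b_n\}=\max\{\limsup a_n,\limsup b_n\}=\limsup a_n$ (using $\limsup b_n = 0 \le \limsup a_n$), that $\liminf h(f_n)=0=\liminf a_n$, and that if $\lim a_n$ exists then since $h(f_n)=\max\{a_n,b_n\}$ is squeezed between $a_n$ and $a_n+b_n$ with $b_n\to0$, $\lim h(f_n)$ exists and equals $\lim a_n$. In fact the second-case reasoning subsumes the first, so one can simply note once and for all that $h(f_n)=\max\{a_n,b_n\}$ with $b_n\to 0$ forces $a_n \le h(f_n)\le a_n+b_n$, whence $|h(f_n)-a_n|\le b_n\to 0$; this immediately transfers any of $\lim$, $\limsup$, $\liminf$ from $\{a_n\}$ to $\{h(f_n)\}$ and back.

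The one point requiring a little care — and the only real substance beyond bookkeeping — is making sure that equations \eqref{eq:33} and \eqref{eq:44} are legitimately available, i.e.\ that $\HHH_{\rm bd}^n$ and $\HHH_{\rm esc}^n$ are well-defined $f_n$-invariant forests and that the forests $H_n$ from \eqref{eq:nearby} exhaust all periodic components of $\HHH_{\rm bd}^n$ with the correct period assignments. This has essentially been set up in the paragraphs preceding the lemma: the disjointness of the deformed puzzle pieces $W_{H,n}'$, Propositions~\ref{Do2} and~\ref{Do3}, and the observation that $f_n(H_n)\subseteq H_n'$ exactly when $f(H)\subseteq H'$. So the genuine content is all upstream, and the proof of Lemma~\ref{lem:entropy-f_n} itself is a two-line consequence of $|h(f_n)-a_n|\le b_n$ together with Lemma~\ref{lem:infinite-period}; I do not anticipate any obstacle here, only the need to phrase the ``limit/limsup/liminf'' transfer cleanly in one stroke rather than case by case.

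Thus the write-up would be: invoke \eqref{eq:33} to split $h(f_n)$, invoke \eqref{eq:44} to identify the bounded part with $a_n$, invoke Lemma~\ref{lem:infinite-period} to get $h_{top}(f_n|_{\HHH_{\rm esc}^n})\to 0$, deduce the sandwich $a_n\le h(f_n)\le a_n + h_{top}(f_n|_{\HHH_{\rm esc}^n})$, and conclude that $h(f_n)$ and $a_n$ have identical asymptotic behavior in every one of the three senses, which is precisely the assertion of the lemma.
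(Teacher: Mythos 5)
Your proposal is correct and follows exactly the paper's route: the paper's own proof is simply ``It follows directly from formulas \eqref{eq:33}, \eqref{eq:44}, and Lemma~\ref{lem:infinite-period},'' and your sandwich observation $a_n \le h(f_n) = \max\{a_n, b_n\} \le a_n + b_n$ with $b_n \to 0$ is the elementary reason this deduction is indeed immediate.
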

\begin{proof}
It follows directly from formulas \eqref{eq:33}, \eqref{eq:44}, and Lemma \ref{lem:infinite-period}.
\end{proof}

Let $H$ be a non-trivial periodic component of $\HHH_f$ with period $p$. By Lemma \ref{lem:renormalization}, there exists a renormalization triple $(f^p,U_H,V_H)$ such that $\ov{V_H}\subseteq W_H$, and $f^p:U_H\to V_H$ has the same filled-in Julia set as $f^p:W_H\to W_H'$. Therefore, by shrinking $V_H$, we can assume that $\ov{f^i(U_H)}\subseteq W_{H_i}$ for $0\leq i\leq p$, where $H_i$ denotes the component of $\HHH_f$ such that $f^i(H)\subseteq H_i$. As $f_n\to f$, then in the dynamical plane of $f_n$ for large $n$, there exists a polynomial-like map $f_n^p:U_{H,n}\to V_{H,n}$ converging to $f^p:U_H\to V_H$ in the sense that $\ov{U_{H,n}}\to \ov{U_H}$ and $\ov{V_{H,n}}\to \ov{V_H}$.  Notice also that $W_{H,n}\to W_H$ for each $H$, we then get a property for all large $n$:
\begin{equation}\label{eq:property}
\ov{f^i_n(U_{H,n})}\subseteq W_{H_i,n} \text{ for each } 0\leq i\leq p.
\end{equation}

\begin{lemma}\label{lem:filled-Julia-set}
Let $K_{H,n}$ denote the filled-in Julia set of the polynomial map $f^p_n:U_{H,n}\to V_{H,n}$. Then a point belongs to $K_{H,n}$ if and only if this point belongs to $W_{H,n}$ and its orbit stay in $\cup_H W_{H,n}$. As a consequence, we have that $H_n:=\HHH_{\rm bd}\cap W_{H,n}$ is an extended Hubbard forest of $(f^p_n,U_{H,n},V_{H,n})$ (for large $n$).
\end{lemma}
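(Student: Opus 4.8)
The plan is to prove Lemma~\ref{lem:filled-Julia-set} by first establishing the set-theoretic characterization of $K_{H,n}$ and then deducing the statement about $H_n$ being an extended Hubbard forest.

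First I would prove the characterization of $K_{H,n}$. The polynomial-like map $f_n^p: U_{H,n}\to V_{H,n}$ has filled-in Julia set $K_{H,n}=\bigcap_{k\ge0}f_n^{-pk}(V_{H,n})$, i.e.\ the set of points in $U_{H,n}$ whose entire $f_n^p$-orbit stays in $U_{H,n}$. The key point is to translate ``stays in $U_{H,n}$ under $f_n^p$'' into ``stays in $\cup_H W_{H,n}$ under $f_n$''. For the forward inclusion: if $z\in K_{H,n}$, then $z\in U_{H,n}\subseteq W_{H,n}$, and for each $j\ge0$ write $j=kp+i$ with $0\le i<p$; then $f_n^j(z)=f_n^i(f_n^{pk}(z))$ with $f_n^{pk}(z)\in U_{H,n}$, so by \eqref{eq:property}, $f_n^j(z)\in f_n^i(U_{H,n})\subseteq W_{H_i,n}$, hence the orbit stays in $\cup_H W_{H,n}$. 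For the reverse inclusion: suppose $z\in W_{H,n}$ and its whole $f_n$-orbit stays in $\cup_H W_{H,n}$. I need to show all $f_n^{pk}(z)\in U_{H,n}$. The puzzle pieces $W_{H',n}'$ are pairwise disjoint, and $f_n^i(U_{H,n})$ is compactly contained in $W_{H_i,n}$; I would argue (using that $U_{H,n}$ is the component of $f_n^{-p}(V_{H,n})$ meeting $W_{H,n}$, together with the nesting/disjointness properties of the $\De(f_n)$-puzzle) that the only way an orbit starting in $W_{H,n}$ can perpetually return to the $W_{\cdot,n}$ is to stay inside the successive preimages $U_{H,n}\supseteq f_n^{-p}(U_{H,n})\cap\dots$, which forces $z\in K_{H,n}$. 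This is essentially the standard argument identifying a puzzle-renormalization with a genuine renormalization, and I expect it to parallel the proof of Proposition~\ref{pro:puzzle-renormalization} in \cite{LMS1}.

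Once the characterization of $K_{H,n}$ is in hand, the consequence is almost immediate: by definition $H_n=\HHH_{\rm bd}^n\cap W_{H,n}$, and $\HHH_{\rm bd}^n$ consists precisely of the components of $\HHH_{f_n}$ whose $f_n$-orbit stays in $\cup_H W_{H,n}$. So every point of $H_n$ lies in $W_{H,n}$ with orbit in $\cup_H W_{H,n}$, hence $H_n\subseteq K_{H,n}$ by the characterization just proved. Since $\HHH_{f_n}$ is a regulated forest that is $f_n$-invariant and contains all free critical/postcritical points, the sub-forest $H_n$ is a regulated $f_n^p$-invariant forest inside $K_{H,n}$; moreover it contains all critical/postcritical points of $f_n^p$ lying in $K_{H,n}$ (these are exactly the free critical/postcritical points of $f_n$ whose orbits stay in $\cup_H W_{H,n}$ and land in $W_{H,n}$, hence in $\HHH_{\rm bd}^n\cap W_{H,n}=H_n$), so it contains the Hubbard tree of $(f_n^p,U_{H,n},V_{H,n})$. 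Therefore $H_n$ is, by definition, an extended Hubbard tree (forest) of the renormalization triple $(f_n^p,U_{H,n},V_{H,n})$.

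The main obstacle I anticipate is the reverse inclusion in the characterization of $K_{H,n}$ --- specifically, ruling out that an orbit could leave $U_{H,n}$ while still returning to some $W_{H',n}$ infinitely often. This requires carefully using the disjointness of the $W_{H',n}'$ (property (1) of the deformed puzzle pairs), the fact that $f_n^p:W_{H,n}\to W_{H,n}'$ has the same degree as $f^p:W_H\to W_H'$ (property (2)), and the compact containment \eqref{eq:property}, to show that the only $\De(f_n)$-puzzle piece of the appropriate level inside $W_{H,n}$ that maps correctly under $f_n^p$ is the one defining $U_{H,n}$; a point whose orbit genuinely escapes $U_{H,n}$ must, at the first escape time, land in a puzzle piece whose forward image misses $\cup_H W_{\cdot,n}$, contradicting the hypothesis. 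This is the step where I would need to invoke the structure of the $\De$-puzzle and Lemma~\ref{lem:perturbation-newton-graph} most heavily, and where the bulk of the technical work lies; the rest is bookkeeping with the definitions of $\HHH_{\rm bd}^n$, $H_n$, and extended Hubbard trees.
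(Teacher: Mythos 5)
Your forward direction and the final step (deducing that $H_n$ is an extended Hubbard forest) match the paper's proof exactly. The gap is in the reverse inclusion, which you correctly identify as the crux but do not actually close. Your sketch says that a point whose orbit stays in $\cup_H W_{H,n}$ must ``stay inside the successive preimages $U_{H,n}\supseteq f_n^{-p}(U_{H,n})\cap\dots$,'' but this is precisely the statement to be proved, not an argument for it: since $U_{H,n}\subsetneq V_{H,n}\subsetneq W_{H,n}$, knowing $f_n^{pk}(z)\in W_{H,n}$ for all $k$ does not by itself place any $f_n^{pk}(z)$ in $V_{H,n}$, so you cannot directly feed the orbit into the preimage tower of $U_{H,n}$. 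Your later remark that ``at the first escape time'' the orbit must land in a puzzle piece whose forward image misses $\cup_H W_{\cdot,n}$ is the right intuition, but it is left unjustified, and the attempted reduction to ``the only $\De(f_n)$-puzzle piece of the appropriate level inside $W_{H,n}$ that maps correctly is the one defining $U_{H,n}$'' is imprecise: $U_{H,n}$ is a Jordan domain supplied by the straightening construction, not a $\De(f_n)$-puzzle piece, so there is no such piece to single out.

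The paper closes this gap with a specific device you do not produce: it chooses a $\De(f)$-puzzle piece $\wt W_H$ containing $K_H$ and so deep that $\ov{f^i(\wt W_H)}\subseteq U_{H_i}$ for $0\le i\le p-1$. This is an \emph{open} condition, so after deforming to $\wt W_{H,n}$ and using $\wt W_{H,n}\to\wt W_H$ and $U_{H_i,n}\to U_{H_i}$ one gets $f_n^i(\wt W_{H,n})\subseteq U_{H_i,n}$ for large $n$. Then the puzzle structure gives exactly what your sketch was missing: points of $W_{H,n}\setminus\wt W_{H,n}$ eventually leave $W_{H,n}$ under $f_n^p$, so any point whose orbit stays in $\cup_H W_{H,n}$ must in fact have all its $f_n^{pk}$-iterates in $\wt W_{H,n}\subseteq U_{H,n}$, whence it lies in $K_{H,n}$. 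In short, the trick is to replace the non-puzzle domain $U_{H,n}$ by a deformed deeper puzzle piece whose iterates are forced into the $U_{H_i,n}$ by a perturbation-stable compact containment; without this intermediary your escape argument cannot be made rigorous.
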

\begin{proof}
If $x\in K_{H,n}$, by property \eqref{eq:property}, we have $x\in W_{H,n}$ and its orbit belongs to $\cup_{i=0}^{p-1}W_{H_i,n}$. To prove the other side, since $f^p:W_H\to W_H'$ and $f^p:U_H\to V_H$ have the same filled-in Julia set, denoted as $K_H$, we can choose a $\De(f)$-puzzle piece $\wt{W}_H$ containing $K_H$, such that for each $0\leq i\leq p$, the closure of $f^i(\wt{W}_H)$ is contained in $U_{H_i}$. Let $\wt{W}_{H,n}$ denote the $\De(f_n)$-puzzle piece deformed from $\wt{W}_H$. As $\wt{W}_{H,n}\to \wt{W}_H $ and $U_{H_i,n}\to U_{H_i} (0\leq i\leq p-1)$, it follows that
 $$f_n^i(\wt{W}_{H,n})\subseteq U_{H_i,n} \text{ for each } 0\leq i\leq p-1.\qquad\qquad(*)$$
Let $x\in W_{H,n}$ such that its orbit stay in $\cup_{i=0}^{p-1}(W_{H_i,n})$. Since the points in $W_{H,n}\setminus\wt{W}_{H,n}$ will eventually leave $W_{H,n}$ under the iteration of $f_n^p$, then $x\in \wt{W}_{H,n}$. It follows from property $(*)$ that the point $x$ belongs to $K_{H,n}$.

By the result we just proved and the definition of $H_n$, we have that $H_n$ is an $f_n^p$-invariant forest in $K_{H,n}$ and  contains all bounded critical points of $f_n^p|_{U_{H,n}}$, hence an extended Hubbard forest of $(f_n^p,U_{H,n},V_{H,n})$.
\end{proof}

Now, let $H$ be a non-trivial periodic component of $\HHH_f$, and $(f^p,U_H,V_H)$ the renormalization triple near $H$ given before Lemma \ref{lem:filled-Julia-set}. Recall that a monic, centered \pf polynomial $P_H$ hybrid equivalent to $(f^p,U_H,V_H)$ is called a renormalization polynomial of $f$ associated to $H$. By the discussion above, we also get a polynomial-like map $(f_n^p,U_{H,n},V_{H,n})$ for every large $n$ such that $(f_n^p,U_{H,n},V_{H,n})$ converges to $(f^p,U_H,V_H)$.
\begin{definition}[perturbation of $P_H$]\label{def:nearby}
A monic, centered polynomial $P_{H,n}$ hybrid equivalent to $f^p_n:U_{H,n}\to V_{H,n}$ is called a \emph{straightening-perturbation polynomial of $P_H$ at $f_n$}.
 \end{definition}
Observe that the orbit of every bounded critical point of $(f_n^p,U_{H,n},V_{H,n})$ is finite (since $f_n$ is postcritically-finite) and $H_n$ is an extended Hubbard forest of $(f_n^p,U_{H,n},V_{H,n})$ (Lemma \ref{lem:filled-Julia-set}), then the polynomial $P_{H,n}$ is partial postcritically-finite, and $\varphi_{H,n}(H_n)$ is an extended Hubbard forest of $P_{H,n}$, where $\varphi_n:V_{H,n}\to \C$ is a quasi-conformal map realizing the hybrid equivalent between $(f_n^p,U_{H,n},V_{H,n})$ and $P_{H,n}$. By a generalized version of Lemma \ref{lem:extended-tree}, we have $h_{top}(f_n^p|_{H_n})=h(P_{H,n})$. The following fact will be repeatedly used.

\begin{lemma}\label{lem:equal-entropy}
Let $H$ be a non-trivial periodic component of $\HHH_f$ with period $p$, and $H_n$ the forest defined in \eqref{eq:nearby}. Then we have $h_{top}(f^p|_H)=h_{top}(P_H)$ and $h_{top}(f_n^p|_{H_n})=h(P_{H,n})$.
\end{lemma}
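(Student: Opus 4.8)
The plan is to reduce both equalities to the ``extended Hubbard tree'' stability statement already available in the excerpt, namely Lemma \ref{lem:extended-tree} together with its polynomial-like analogue. First I would handle the equality $h_{top}(f^p|_H)=h(P_H)$. By Lemma \ref{lem:renormalization} there is a renormalization triple $(f^p,U_H,V_H)$ of which $H$ is an extended Hubbard tree. By the Straightening Theorem \ref{thm:straigtening}, the polynomial-like map $f^p:U_H\to V_H$ is hybrid equivalent to $P_H$ via a quasiconformal conjugacy $\varphi$, so $\varphi(H)$ is a regulated, $P_H$-invariant tree inside $\KKK_{P_H}$ containing the Hubbard tree $\HHH_{P_H}$; that is, $\varphi(H)$ is an extended Hubbard tree of the \pf polynomial $P_H$. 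Since $\varphi$ conjugates $f^p|_H$ to $P_H|_{\varphi(H)}$, the two topological entropies coincide, and then Lemma \ref{lem:extended-tree} (applied to the extended Hubbard tree $\varphi(H)$ of $P_H$) gives $h_{top}(P_H|_{\varphi(H)})=h(P_H)$. Chaining these identifications yields $h_{top}(f^p|_H)=h(P_H)$.

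Next I would prove $h_{top}(f_n^p|_{H_n})=h(P_{H,n})$ in exactly the same spirit, but using the \emph{partial \pf} versions of the relevant facts rather than the \pf ones. By Lemma \ref{lem:filled-Julia-set} the forest $H_n=\HHH_{\rm bd}^n\cap W_{H,n}$ is an $f_n^p$-invariant forest sitting inside the filled-in Julia set $K_{H,n}$ of the polynomial-like map $f_n^p:U_{H,n}\to V_{H,n}$, and it contains all bounded critical points of that polynomial-like map; hence it is an extended Hubbard forest of $(f_n^p,U_{H,n},V_{H,n})$. Straightening this polynomial-like map to $P_{H,n}$ via a quasiconformal conjugacy $\varphi_{H,n}$ (note $K_{H,n}$ need not be connected, so $P_{H,n}$ is only partial \pf, which is exactly why we are in the $\PPP_\de^{\rm ppf}$ setting), the image $\varphi_{H,n}(H_n)$ becomes an extended Hubbard forest of $P_{H,n}$, and $\varphi_{H,n}$ conjugates $f_n^p|_{H_n}$ to $P_{H,n}|_{\varphi_{H,n}(H_n)}$, so the entropies agree. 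It then remains to invoke the partial-\pf analogue of Lemma \ref{lem:extended-tree} — that $h_{top}(P_{H,n}|_T)=h(P_{H,n})$ for any extended Hubbard forest $T$ of a partial \pf polynomial $P_{H,n}$ — which follows verbatim from the argument of Lemma \ref{lem:extended-tree}: every point of $T$ except the finitely many endpoints is eventually mapped into the Hubbard forest $\HHH_{P_{H,n}}$, so Proposition \ref{Do3} forces the two entropies to coincide. (The excerpt itself already refers to this as ``a generalized version of Lemma \ref{lem:extended-tree}'' in the paragraph preceding the statement.)

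The only genuinely delicate point is making sure the identifications are literally conjugacies of the restricted dynamics on the (extended) trees, not merely semiconjugacies: the quasiconformal straightening maps are global homeomorphisms near the respective filled-in Julia sets and conjugate $f^p$ (resp. $f_n^p$) to $P_H$ (resp. $P_{H,n}$) there, so they restrict to homeomorphisms of the invariant trees intertwining the maps, and topological entropy is a conjugacy invariant — so this is bookkeeping rather than a real obstacle. The other thing to be careful about is that $H_n$, being a forest with possibly several components permuted by $f_n^p$, is handled correctly: one uses Propositions \ref{Do1} and \ref{Do2} to reduce to a single periodic component if desired, but this is unnecessary since the extended-Hubbard-forest version of Lemma \ref{lem:extended-tree} already applies directly to the whole forest. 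Thus the proof is a short assembly of Lemma \ref{lem:renormalization}, the Straightening Theorem, Lemma \ref{lem:filled-Julia-set}, and the (partial-\pf) extended-tree lemma, with no new estimates required.
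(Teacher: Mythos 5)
Your proposal is correct and follows essentially the same route as the paper, which does not give a separate proof of this lemma but instead records it as a consequence of the immediately preceding paragraph (Lemma \ref{lem:filled-Julia-set}, the hybrid conjugacy $\varphi_{H,n}$, and the explicitly invoked ``generalized version of Lemma \ref{lem:extended-tree}'') together with the introduction's observation that $\varphi(H)$ is an extended Hubbard tree of $P_H$. Your additional remark that the partial-\pf forest version of Lemma \ref{lem:extended-tree} follows by the same Proposition \ref{Do3} argument as the connected case is exactly the gap the paper leaves implicit, and your handling of it is sound.
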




\begin{lemma}\label{lem:straighten}
Let $H$ be a non-trivial periodic component of $\HHH$, with $P_H$ a renormalization polynomial of $f$ associated to $H$ (Definition \ref{def:renormalization}), and $P_{H,n}$ a straightening-perturbation of $P_H$ at $f_n$ (Definition \ref{def:nearby}). By suitably choice of $P_{H,n}$, we have $P_{H,n}\to P_H$ as $n\to\infty$.
\end{lemma}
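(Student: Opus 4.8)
The plan is to establish the convergence $P_{H,n}\to P_H$ by using the standard dependence of the straightening map on analytic parameters, following Douady--Hubbard \cite{DH2}, together with a normalization argument. First I would set up the analytic family of polynomial-like maps: as noted before Lemma \ref{lem:filled-Julia-set}, we have $f_n^p:U_{H,n}\to V_{H,n}$ converging to $f^p:U_H\to V_H$ in the sense that $\ov{U_{H,n}}\to\ov{U_H}$ and $\ov{V_{H,n}}\to\ov{V_H}$. By restricting to slightly smaller Jordan domains if necessary, one can arrange that the moduli $\mathrm{mod}(V_{H,n}\setminus\ov{U_{H,n}})$ are uniformly bounded below and that $f_n^p$ converges uniformly to $f^p$ on a fixed neighborhood of $\ov{U_H}$. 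This is precisely the setting in which the straightening construction produces, for each $n$, a quasiconformal conjugacy $\varphi_{H,n}$ between $(f_n^p,U_{H,n},V_{H,n})$ and a polynomial $\wt{P}_{H,n}$ of degree $\delta=\mathrm{deg}(f^p|_{U_H})$.

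Next I would invoke the continuity of straightening. The map $\varphi_{H,n}$ is obtained by solving a Beltrami equation whose coefficient depends continuously (in the appropriate $L^\infty$ / measurable sense, with uniformly bounded dilatation because the moduli are bounded below) on the data $(f_n^p,U_{H,n},V_{H,n})$; by the measurable Riemann mapping theorem with parameters, the solutions $\varphi_{H,n}$ converge locally uniformly to the conjugacy $\varphi_H$ straightening $(f^p,U_H,V_H)$, once we fix a common normalization. Conjugating by $\varphi_{H,n}$ then gives polynomials $\wt{P}_{H,n}=\varphi_{H,n}\circ f_n^p\circ\varphi_{H,n}^{-1}$ that converge locally uniformly, hence coefficient-wise, to $\wt{P}_H=\varphi_H\circ f^p\circ\varphi_H^{-1}$. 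Since $P_H$ is \pf it has connected Julia set, so by the uniqueness clause of the Straightening Theorem (Theorem \ref{thm:straigtening}) $\wt{P}_H$ is affinely conjugate to $P_H$; composing all the $\varphi_{H,n}$ with the fixed affine map realizing $\wt{P}_H = A\circ P_H\circ A^{-1}$ we may assume $\wt{P}_H=P_H$, so $\wt{P}_{H,n}\to P_H$.

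Finally I would pass from $\wt{P}_{H,n}$ to the \emph{monic, centered} representative $P_{H,n}$. Each $\wt{P}_{H,n}$ is affinely conjugate to a unique monic centered polynomial $P_{H,n}=A_n\circ\wt{P}_{H,n}\circ A_n^{-1}$, where $A_n(z)=\alpha_n z+\beta_n$ with $\alpha_n$ determined up to a $(\delta-1)$-st root of unity by the leading coefficient and $\beta_n$ by the requirement that the sum of roots vanish. Because $\wt{P}_{H,n}\to P_H$ and $P_H$ is itself monic and centered, the coefficients entering the definition of $A_n$ converge to those of the identity; choosing the root of unity consistently (which is exactly the freedom mentioned in the definition of a renormalization polynomial — $P_H$ is unique up to conjugation by $(\delta-1)$-st roots of unity), we get $A_n\to\mathrm{id}$ and hence $P_{H,n}=A_n\circ\wt{P}_{H,n}\circ A_n^{-1}\to P_H$. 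The main obstacle is the bookkeeping needed to make the normalizations of the quasiconformal conjugacies compatible across $n$ so that the parameter-continuity of the measurable Riemann mapping theorem applies cleanly; once the domains are shrunk to have uniformly bounded geometry this is routine, but it is the step that requires care, together with the (harmless) ambiguity by roots of unity in the monic-centered normalization.
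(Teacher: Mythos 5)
Your overall strategy (produce uniformly $K$-quasiconformal conjugacies, exploit compactness, fix normalizations, reconcile the monic–centered ambiguity by a $(\delta-1)$-st root of unity) is the same as the paper's, but there is a genuine gap at the step where you assert that ``by the measurable Riemann mapping theorem with parameters, the solutions $\varphi_{H,n}$ converge locally uniformly to the conjugacy $\varphi_H$ straightening $(f^p,U_H,V_H)$.'' The MRMT with parameters gives continuity of the normalized solution with respect to the Beltrami coefficient in $L^\infty$, but here the Beltrami coefficients $\mu_n$ of the straightening conjugacies do \emph{not} converge in $L^\infty$ to the Beltrami coefficient $\mu$ of $\varphi_H$: each $\mu_n$ vanishes on the filled Julia set $K_{H,n}$ and is nonzero off it, while $\mu$ vanishes on $K_H$. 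Since the $K_{H,n}$ may be Cantor sets of small or zero Lebesgue measure while $K_H$ is connected and may carry positive measure, $\|\mu_n-\mu\|_\infty$ need not go to $0$, and this is precisely the mechanism behind the well-known failure of continuity of the straightening map. So you cannot conclude that $\varphi_{H,n}\to\varphi_H$; you can only conclude, via normality of the uniformly $K$-quasiconformal family, that some subsequence $\varphi_{H,n_k}$ converges to \emph{some} $K$-quasiconformal map $\psi$, which is conformal near $\infty$ but whose $\bar\partial$ need not vanish on $K_H$.

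This matters because the limit polynomial $P:=\psi\circ F\circ\psi^{-1}$ is then only quasiconformally conjugate (and conformally conjugate near $\infty$) to $P_H$, not hybrid conjugate, so the uniqueness clause of the Straightening Theorem that you invoke (which concerns hybrid equivalence with connected filled Julia set) does not directly apply to identify $P$ with $P_H$. What closes the gap is quasiconformal rigidity for postcritically finite maps — in the paper, an application of Theorem \ref{thm:cui-tan-Thurston} — which promotes the quasiconformal (and near-$\infty$ conformal) conjugacy between the monic, centered polynomials $P$ and $P_H$ to conjugacy by a $(\delta-1)$-st root of unity. With that ingredient and the ``every subsequence has a further subsequence converging to the same limit'' device, the argument goes through; as written, however, the convergence of $\varphi_{H,n}$ to $\varphi_H$ and the appeal to the uniqueness clause of the Straightening Theorem are both unjustified.
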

\begin{proof}
Let $\varphi:V_H\to\C,\varphi_n:V_{H,n}\to\C$ be two quasi-conformal maps, by which the polynomial-like maps $(f^p:U_H,V_H),(f_n^p:U_{H,n},V_{H,n})$ are hybrid equivalent to $P_H,P_{H,n}$ respectively.
We are able to extend the polynomial-like maps $f_n^p:U_{H,n}\to V_{H,n}$ and $f^p:U_H\to V_H$ to topological polynomials $F_n:\C\to\C$ and $F:\C\to\C$ respectively, and extend the quasi-conformal maps $\varphi_n$ and $\varphi$ to quasi-conformal maps on $\C$, also denoted by $\varphi_n$ and $\varphi$, such that
\begin{itemize}
\item $F_n$ and $F$ are holomorphic and coincide in a neighborhood of $\infty$;
\item $\varphi_n$ and $\varphi$ are conformal in a neighborhood of $\infty$ and normalized by fixing two given points in $U_H$ (and also $\infty$);
\item $\varphi_n\circ F_n=P_{H,n}\circ \varphi_n$ and $\varphi\circ F=P_{H}\circ \varphi$ on $\C$.
\end{itemize}
The convergence of $(f^p_n,U_{H,n},V_{H,n})$ to $(f^p,U_H,V_H)$ ensures that one can construct $F_n,F,\varphi_n$ such that $F_n$ uniform converge to $F$ and all $\varphi_n$ are $K$-quasi-conformal for some constant $K>1$. Hence the collection of maps $\{\varphi_n,n\geq1\}$ is a normal family.

Let $\varphi_{n_k},k\geq1,$ be an arbitrary convergent subsequence in this family with the limit map $\psi$. Then the map $\psi$ is $K$-quasiconformal on $\C$, conformal near $\infty$, and the polynomials $P_{H,n_k}=\varphi_{n_k}\circ F_n\circ \varphi^{-1}_{n_k}$ local-uniformly converge to a monic, centered polynomial $P:=\psi\circ F\circ \psi^{-1}$ on $\C$. Note that $P_{H}=\varphi\circ F\circ \varphi^{-1}$. So $P$ and $P_{H}$ are quasi-conformal conjugate on $\C$, and conformal conjugate near $\infty$. According to Theorem \ref{thm:cui-tan-Thurston}, the monic, centered polynomials $P$ and $P_H$ are conjugate under a $(\de-1)$-th root of unit $u$ with $\de:={\rm deg}(P_H)$. Therefore, the monic, centered polynomials $\wt{P}_{H,n_k}:=u\circ P_{H,n_k}\circ u^{-1},n\geq1,$ converge to $P_H$. Since $\{\varphi_{n_k},k\geq1\}$ is arbitrary, then the conclusion of the lemma holds
\end{proof}

\begin{definition}[entropy-minimal sequence]\label{def:entropy-minimal}
Let $g$ be a partial \pf polynomial (resp. \pf Newton map), and $\{g_n,n\geq1\}$ a sequence of partial \pf polynomials (resp. \pf Newton maps) converging to $g$. We say that the sequence is \emph{entropy-minimal} if $\lim_{n\to\infty}h(g_n)=\liminf_{\wt{g}\to g}h(g)$, where $\wt{g}$ are chosen in the partial \pf polynomial family (resp. \pf Newton-map family).
\end{definition}

\begin{proposition}\label{pro:liminf-newton}
Let $f$ be a  \pf Newton map, and $H_1,\ldots,H_k$ a collection of non-trivial periodic components of $\HHH_f$ in pairwise distinct cycles. Let $P_{H_i},i=1,\ldots,k$, be a renormalization polynomial of $f$ associated to $H_i$ (Definition \ref{def:renormalization}). Then we can construct a sequence of \pf Newton map $\{f_n,n\geq1\}$ by perturbation of $f$ with capture surgery, such that $f_n\to f$ and the polynomials $\{P_{H_i,n},n\geq1\}$ (defined in Definition \ref{def:nearby}) is a entropy-minimal sequence converging to $P_{H_i}$ for each $1\leq i\leq k$.
\end{proposition}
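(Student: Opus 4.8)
The plan is to build the perturbed Newton maps $f_n$ by a single application of the capture surgery of Section~\ref{sec:capture-N}, performed simultaneously near all the Julia critical points of $f$, and then to show that, when one straightens the polynomial-like restrictions $f_n^{p}:U_{H_i,n}\to V_{H_i,n}$, the resulting polynomials $P_{H_i,n}$ are exactly the entropy-minimal perturbations of $P_{H_i}$ produced in Proposition~\ref{pro:liminf-strong}. More precisely: for each non-trivial periodic component $H_i$ fix a renormalization triple $(f^{p_i},U_{H_i},V_{H_i})$ as in Lemma~\ref{lem:renormalization}, and let $P_{H_i}$ be the associated renormalization polynomial, which is monic, centered and \pf. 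Apply Proposition~\ref{pro:liminf-strong} inside each $P_{H_i}$: this gives, for each $i$, a weak Julia critical marking $\JJJJ_i$ of $P_{H_i}$ realizing $\mu_{P_{H_i}}$, together with perturbation data for a capture surgery on $P_{H_i}$ (domains shrinking to the Julia critical points of $P_{H_i}$, invariant domains in the basin of infinity, and perturbation mappings forcing each Julia critical point to split into escaping critical points with the prescribed external arguments). The Julia critical points of $f$ lying in the orbit of $H_i$ correspond, via the hybrid conjugacy, to the Julia critical points of $P_{H_i}$; so these polynomial-plane perturbation data transport to perturbation data $\vec\s_n=(\c,\DD_n,\ZZ_n,\XX_n)$ for a capture surgery on $f$, where $\c$ is the vector of \emph{all} Julia critical points of $f$ that lie in $\HHH_f$ (with $\ZZ_n$ chosen in the grand orbit leaving the $W_{H,n}$'s, hence escaping to $\infty$ under $f$). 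By Lemma~\ref{lem:rational-realization} the topological perturbation $F_{\vec\s_n}$ is c-equivalent to a \pf Newton map $f_n$, and by Proposition~\ref{pro:surgery-convergence} we have $f_n\to f$.

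The next step is to identify $P_{H_i,n}$. By construction the surgery on $f$ restricted to a neighborhood of the orbit of $H_i$ is precisely the capture surgery on the polynomial-like map $f^{p_i}:U_{H_i}\to V_{H_i}$ induced by the chosen data — this is the same local picture used in the proof of Lemma~\ref{lem:rational-realization} to rule out Levy cycles in $\G_\infty$. Straightening this perturbed polynomial-like map (Lemma~\ref{lem:straighten} gives $P_{H_i,n}\to P_{H_i}$ after normalizing by an appropriate root of unity) therefore yields a monic, centered, partial \pf polynomial which is c-equivalent to the topological perturbation of $P_{H_i}$ by the capture surgery of Proposition~\ref{pro:liminf-strong}; by the uniqueness clause in Theorem~\ref{thm:cui-tan-Thurston} this polynomial \emph{is} (up to the normalization) the polynomial $P_n$ built in Proposition~\ref{pro:liminf-strong}. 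Consequently $\{P_{H_i,n},n\geq1\}$ is a maximal-hyperbolic sequence converging to $P_{H_i}$ of type $\JJJJ_i$, and by Proposition~\ref{pro:liminf} together with Proposition~\ref{pro:liminf-strong} we get $h(P_{H_i,n})=\mu_{P_{H_i}}=\liminf_{Q\to P_{H_i}}h(Q)$ for all large $n$; that is, $\{P_{H_i,n}\}$ is entropy-minimal in the sense of Definition~\ref{def:entropy-minimal}.

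Two points need care, and I expect the second to be the real obstacle. First, the perturbation data for the different components $H_1,\dots,H_k$ must be mutually compatible and compatible with the requirement, imposed in Section~\ref{sec:capture-N}, that the critical values of each $\chi_c$ be iterated preimages of the fixed critical points of $f$: here one uses that $\ZZ_n$ is taken in the basin of $\infty$ of $f$ (the fixed Fatou components), so the escaping critical points created by the surgery do land, under forward iteration, on the channel diagram — exactly as in Lemma~\ref{lem:no-Thurston-obstruction}, whose argument is reused to show $F_{\vec\s_n}$ has no Thurston obstruction. Second, and this is the crux, one must check that the straightening really commutes with the surgery: i.e.\ that straightening the perturbed polynomial-like map $f_n^{p_i}:U_{H_i,n}\to V_{H_i,n}$ gives the \emph{same} polynomial as perturbing the straightening $P_{H_i}$ by the transported data. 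This is where the uniqueness in Theorem~\ref{thm:cui-tan-Thurston} and the normalization bookkeeping (matching the three marked Fatou points used to rigidify both the surgery and the straightening, and accounting for the $(\delta_i-1)$-st root of unity ambiguity) must be pushed through; I would handle it by producing, from the hybrid conjugacies $\varphi,\varphi_n$ of Lemma~\ref{lem:straighten} and the c-equivalence $(\phi_{n,0},\phi_{n,1})$ of Lemma~\ref{lem:rational-realization}, an explicit quasiconformal conjugacy between the topological perturbation of $P_{H_i}$ and $P_{H_i,n}$ that is holomorphic on the filled-in Julia set, and then invoke the Straightening Theorem's uniqueness. Once this identification is in place the proposition follows by combining Proposition~\ref{pro:liminf}, Proposition~\ref{pro:liminf-strong} and Lemma~\ref{lem:equal-entropy}.
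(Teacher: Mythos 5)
Your overall strategy is the right one and mirrors the paper's: build $\wt Q_{H_i,n}$ by the capture surgery of Proposition~\ref{pro:liminf-strong} in the dynamical plane of $P_{H_i}$, transport the perturbation data through the hybrid conjugacy $\varphi$ to get a perturbation $F_n$ of $f$ acting as $F_n^{p_i}=\varphi^{-1}\circ\wt Q_{H_i,n}\circ\varphi$ on $U_{H_i}$, realize $F_n$ as a \pf Newton map $f_n$ via Lemma~\ref{lem:rational-realization} and Proposition~\ref{pro:surgery-convergence}, and then compare the straightening $P_{H_i,n}$ of $f_n^{p_i}|_{U_{H_i,n}}$ with the rational realization $Q_{H_i,n}$ of $\wt Q_{H_i,n}$. (Your decision to run all cycles simultaneously rather than reducing to one cycle is fine and is only a cosmetic difference from the paper.)

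The gap is in how you propose to close the loop. You correctly flag the ``crux'' — relating $P_{H_i,n}$ to $Q_{H_i,n}$ — but your proposed resolution does not work. You want to produce a quasiconformal conjugacy ``holomorphic on the filled-in Julia set'' between $P_{H_i,n}$ and $\wt Q_{H_i,n}$ (or $Q_{H_i,n}$) and then ``invoke the Straightening Theorem's uniqueness.'' However, after the surgery the filled-in Julia set of $f_n^{p_i}:U_{H_i,n}\to V_{H_i,n}$ is disconnected, and the uniqueness clause of Theorem~\ref{thm:straigtening} applies \emph{only} when $K_f$ is connected; in the disconnected case a polynomial-like map admits a whole family of non-affinely-equivalent straightenings, and Definition~\ref{def:nearby} already acknowledges this by taking $P_{H,n}$ to be \emph{a} (not \emph{the}) hybrid-equivalent polynomial. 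You also gesture at Theorem~\ref{thm:cui-tan-Thurston}, but applying it would require exhibiting a genuine c-equivalence between $P_{H_i,n}$ and $\wt Q_{H_i,n}$ as marked sub-hyperbolic semi-rational maps on the whole sphere — i.e.\ a pair of global homeomorphisms isotopic rel the marked set — which is not supplied by the local conjugacy between the two polynomial-like restrictions. In short, the identity $P_{H_i,n}=Q_{H_i,n}$ (even up to a root of unity) is neither available for free nor actually needed.

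The paper resolves the crux more economically: from the commutative diagram built out of the hybrid conjugacies $\varphi,\varphi_n$ and the Thurston-pullback homeomorphisms $\phi_{n,k}$, it transports the Hubbard forest $T_n$ of $Q_{H,n}$ to a forest $H_n\subset\KKK_{P_{H,n}}$, checks via Lemma~\ref{lem:filled-Julia-set} that $H_n$ is the Hubbard forest of $P_{H,n}$ and that $P_{H,n}|_{H_n}$ and $Q_{H,n}|_{T_n}$ are combinatorially equivalent, and concludes $h(P_{H,n})=h(Q_{H,n})$. Combined with $P_{H,n}\to P_H$ (Lemma~\ref{lem:straighten}) and the entropy-minimality of $\{Q_{H,n}\}$, this already yields entropy-minimality of $\{P_{H,n}\}$, with no need to identify the two polynomials. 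You should replace your ``straightening commutes with surgery'' step by this combinatorial comparison of Hubbard forests; the rest of your argument then goes through.
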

\begin{proof}
Note that  different cycles in ${\rm Comp}(\HHH_f)$ are pairwise disjoint and the capture surgery only happens in sufficiently small neighborhood of each cycle, then, without loss of generality, we can assume that the surgery is carried out at one non-trivial cycle in ${\rm Comp}(\HHH_f)$. Let $H$ be any element in this cycle of period $p$. We just need to construct a sequence of \pf Newton maps $\{f_n,n\geq1\}$ converging to $f$ such that the strengthening-perturbation polynomials $\{P_{H,n},n\geq1\}$ of $P_H$ is an entropy-minimal sequence converging to $P_H$.

As previous, let $\rho=(f^p,W_H,W_H')$ be a puzzle renormalization triple of $f$ with $H$ an extended Hubbard tree of $\rho$, and $(f,U_H,V_H)$  a renormalization triple such that $\ov{V}\subseteq W_H$ and $(f^p,U_H,V_H)$ has the same filled in Julia set as $\rho$ (by Lemma \ref{lem:renormalization}). Denote by $P_H$ a renormalization polynomial of $f$ associated to $H$. Let $\varphi:V_H\to \C$ be a quasi-conformal map by which $f:U_H\to V_H$ is hybrid equivalent to $P_H$. We denote $U:=\varphi(U_H)$ and $V:=\varphi(V_H)$. Then $P_H:U\to V$ is a polynomial-like map with its filled-in Julia set equal to $\KKK_{P_H}$.

For each $n$, we construct a rational perturbation $f_n$ of $f$ by capture surgery as follows. Let $\{\wt{Q}_{H,n},n\geq1\}$ be a sequence of topological perturbation of $P_H$ by capture surgery, such that
\begin{enumerate}
\item the surgery domain is compactly contained in $U$;
\item each escaping critical value of $\wt{Q}_{H,n}$ is the image by $\varphi$ of an iterated preimage of a fixed critical point of $f$;
\item the rational realization $\{Q_{H,n},n\geq1\}$ of $\{\wt{Q}_{H,n},n\geq1\}$ is an entropy-minimal sequence converging to $P_H$ (by Proposition \ref{pro:liminf-strong}).
\end{enumerate}
Each capture surgery above on $P_H$  induces a topological perturbation $F_n$ of $f$ by capture surgery near the orbit $H,\ldots,f^{p-1}(H)$, such that $F^p_n(z)= \varphi^{-1}\circ\wt{Q}_{H,n}\circ\varphi(z)$ for all $z\in U_H$.
By Lemma \ref{lem:rational-realization} and Proposition \ref{pro:surgery-convergence}, each $F_n$ is c-equivalent to a \pf Newton map $f_n$ such that the sequence $\{f_n,n\geq1\}$ converges to $f$.

For each $n$, let $(\phi_{n,o},\phi_{n,1})$ be a pair of normalized homeomorphisms by which $F_n$ is c-equivalent to $f_n$. According to the homotopy lifting lemma, we get a sequence of homeomorphism $\{\phi_{n,k},k\geq1\}$ such that $\phi_{n,k}\circ F_n= f_n\circ\phi_{n,k+1}$ on $\ov{\C}$ for each $k\geq0$.
Assume $W_H$ has level $k$ as a $\De(f)$ puzzle piece. We can choose $\phi_{n,0}$ in its isotopic class so that $\phi_{n,0}:\De_0(f)\to\De_0(f_n)$ is a homeomorphism and $\phi_{n,0}\circ f(=\phi_{n,0}\circ F_n)=f_n\circ \phi_{n,0}$ on $\De_0(f)$. Since $f$ coincides with $F_n$ outside $U_H,\ldots,f^{p-1}(U_H)$, it follows that $\phi_{n,k}:\De_k(f)\to \De_k(f_n)$ is a homeomorphism and $\phi_{n,k}\circ f=f_n\circ \phi_{n,k}$ on $\De_k(f)$. So we can assume the initial $\phi_{n,0}$ has this property.

Let $W_{H,n}$ and $W_{H,n}'$ be the image of $W_H,W_H'$ by $\phi_{n,0}$. Then $(f_n^p,W_{H,n},W_{H,n}')$ is the deformation of $(f^p,W_H,W_H')$ at $f_n$ (in the sense of Definition \ref{def:puzzle}). As $\ov{V_H}\subseteq W_H$, there exists polynomial-like maps $f_n^p:U_{H,n}\to V_{H,n}$ which converge to $f^p:U_H\to V_H$ and satisfy $\ov{V_{H,n}}\subseteq W_{H,n}$. Each excepted polynomial $P_{H,n}$ is hybrid equivalent to $f_n^p:U_{H,n}\to V_{H,n}$ by $\varphi_n$ (see the following commutative graph).
\begin{equation}
\xymatrix{
  \C\ar[d]_{P_{H,n}}  & \ar[l]_{\varphi_n} U_{H,n}  \ar[d]_{f_n^p}\ar[r]^{\hookrightarrow}& W_{H,n}\ar[d]_{f_n^p}&\ar[l]_{\phi_{n,p}} W_H\ar[d]_{F_n^p}&\ar[l]_{\hookleftarrow} U_H \ar[d]_{F_n^p} \ar[r]^{\varphi} & \C \ar[d]_{\wt{Q}_{H,n}} \ar[r]^{\psi_{n,1}} & \C \ar[d]^{Q_{H,n}} \\
  \C  & \ar[l]_{\varphi_n} V_{H,n} \ar[r]^{\hookrightarrow}& W_{H,n}'&\ar[l]_{\phi_{n,0}} W_H'&\ar[l]_{\hookleftarrow}V_H \ar[r]^{\varphi} & \C \ar[r]^{\psi_{n,0}} & \C   }
\end{equation}
where $(\psi_{n,0},\psi_{n,1})$ is a pair of homeomorphisms by which $\wt{Q}_{H,n}$ is c-equivalent to $Q_{H,n}$.

Let $T_n$ be the Hubbard forest of $Q_{H,n}$ and set $H_n:=\varphi_n\circ\phi_{n,0}\circ\varphi^{-1}\circ \psi_{n,0}^{-1}(T_n)$. Using Lemma \ref{lem:filled-Julia-set}, we can see  the points in $H_n$ do not escape under the iteration of $P_{H,n}$, hence $H_n\subseteq \KKK_{P_{H,n}}$. By revising $\phi_{n,0}$ its isotopic class, we can assume $H_n$ is regulated. Clearly, $H_n$ contains all bounded critical/postcritical points of $P_{H,n}$, then $H_n$ is the Hubbard forest of $P_{H,n}$. The combinatorial equivalence between $P_{H,n}:H_n\to H_n$ and $Q_{H,n}:T_n\to T_n$ implies $h(P_{H,n})=h(Q_{H,n})$. Since the sequence $\{Q_{H,n},n\geq1\}$ is entropy-minimal, so is $\{P_{H,n},n\geq1\}$.
\end{proof}

\begin{proof}[Proof of Theorem \ref{thm:main2}]
Let $\{f_n,n\geq1\}$ be an arbitrary sequence in $\NNN_d^{\rm pf}$ converging to $f$. We first claim that
\[\limsup_{n\to\infty}h_{top}(f^{p_H}_n|_{H_n})\leq h_{top}(f^{p_H}|_H)\]
for any non-trivial periodic component $H$ of $\HHH_f$, and $H_n$ is defined in \eqref{eq:nearby}. By Lemma \ref{lem:equal-entropy}, the inequality above is equivalent to
$$\limsup_{n\to\infty}h(P_{H,n})\leq h(P_H),$$
where $P_H$ is a renormalization polynomial of $f$ associated to $H$ (Definition \ref{def:renormalization}), and $P_{H,n}$ a straightening-perturbation of $P_H$ at $f_n$ (Definition \ref{def:nearby}).
Now, this inequality follows directly from Lemma \ref{lem:straighten} and Proposition \ref{pro:key}. Then the upper semi-continuous of $h$ at $f$ is duo to the claim above, formulas \eqref{eq:entropy2},  and  Lemma \ref{lem:entropy-f_n}.


The entropy function is continuous at non-renormalizable parameters duo to
 Proposition \ref{pro:non-renormalizable}.  So we only need to deal with the renormalizable case.

Suppose first that $h$ is continuous at $f$. According to Proposition \ref{pro:liminf-newton},  we can find a sequence of \pf Newton maps $\{f_n,n\geq1\}$ by perturbing $f$ with capture surgery  such that $f_n\to f$, and the associated polynomials $\{P_{H,n},n\geq1\}$ (Definition \ref{def:nearby}) is an entropy-minimal sequence converging to $P_H$  for every non-trivial periodic component $H$ of $\HHH_f$ with $h(f)=h_{top}(f^{p_H}|_H)/p_H$. By the claim in first paragraph and Lemma \ref{lem:entropy-f_n}, by taking subsequences if necessary, the continuity of $h$ at $f$ implies that there exists such a component $H$ so that $\lim_{n\to\infty}h_{top}(f_n^{p_H}|_{H_n})=h_{top}(f^{p_H}|_H)$. Combining Lemma \ref{lem:equal-entropy}, we have $\lim_{n\to\infty}h(P_{H,n})=h(P_H)$. Note that $\{P_{H,n},n\geq1\}$ is an entropy-minimal sequence converging to $P_H$, then the entropy function $h:\PPP_\de^{\rm ppf}\to \R$ is continuous at $P_H$ duo to Proposition \ref{pro:key}, where $\de:={\rm deg}(P_H)$.

On the other hand, suppose that $H$ is a non-trivial periodic component of $\HHH_f$ with period $p$, such that $h(f)=h_{top}(f^p|_H)/p$ and the entropy function on $\PPP_\de^{\rm ppf}$ is continuous at the renormalization polynomial $P_H$ associated to $H$, where $\de:={\rm deg}(P_H)$. Let $\{f_n,n\geq1\}$ be an entropy-minimal sequence in $\NNN_d^{\rm pf}$ converging to $f$.
By Lemma \ref{lem:straighten}, there exists a sequence $\{P_{H,n},n\geq1\}$ of straightening-perturbation polynomials  of $P_H$ at $f_n$ converging to $P_H$. It follows that
\[h(f)=h_{top}(f^p|_H)/p=h(P_H)/p=\lim_{n\to\infty}h(P_{H,n})/p=\lim_{n\to\infty}h_{top}(f_n^p|_{H_n})/p\leq \liminf_{n\to\infty}h(f_n),\]
where the third equality is duo to the continuity of $h$ at $P_H$, the second and fourth equality follows form Lemma \ref{lem:equal-entropy}, and the last inequality is by formula \eqref{eq:entropy2}. Remember that the sequence $\{f_n,n\geq1\}$ is entropy-minimal, then $h$ is lower semi-continuous at $f$, and hence continuous since $h$ is proved upper semi-continuous.
\end{proof}

\begin{proof}[Proof of Theorem \ref{thm:main3}]
Let $f$ be a \pf cubic Newton map. We first show the ``if'' part. The conclusion is obvious if $f$ is hyperbolic; and follows directly from Theorem \ref{thm:main2} when $f$ is generic and non-renormalizable.  So it remains to show that $h$ is continuous at $f$ in the non-generic, non-renormalizable case, for which we need to use some particular results for cubic Newton maps given in \cite{Ro}.

Let $\{f_n,\geq1\}$ be any sequence of \pf cubic Newton maps converging to $f$. Since $h(f)=0$ (by the non-renormalizable property), one can assume that all $f_n$ are renormalizable. Denote by $p_n$ the period of the unique periodic component $T_n$ of $\HHH_{f_n}$ which contains a critical point of $f_n$. Since $h(f_n)=h_{top}(f^{p_n}_n|_{T_n})/p_n$ and $h_{top}(f^{p_n}_n|_{T_n})\leq \log2$, we only need to prove $\lim_{n\to\infty}p_n=\infty$.

On the contrary, suppose all $p_n$ have a uniform upper bound. Let $c_n$ be the unique (free) critical point of $f_n$ contained in $T_n$, with $c_n\to c$ as $n\to\infty$. By the assumption, there exist a integer $p$ such that $f_n^p(T_n)\subseteq T_n$ for all large $n$. Let $b_n:=f_n^p(c_n)$. Then $f^p(c)=b$. By increasing $p$ if necessary, we can assume that $b$ is period. Since $f$ is postulated non-generic, then $b=\infty$.

By Lemmas 3.14, 3.15 and Corollary 3.16 in \cite{Ro}, there exists a Jordan curve $\g$ consisting of the closure of finitely many preperiodic internal rays of $f$ which are iterated to the fixed Fatou components of $f$, such that $\g$ separates $c$ and $b=\infty$, and the periods of the rays in $\g$ are sufficiently large. Note that the landing point of the rays in $\g$ are not pre-critical. Then, by a similar argument as that in Lemma \ref{lem:perturbation-newton-graph}, we can find a Jordan curve $\g_n$ consisting of the closure of internal rays of $f_n$ perturbed from the internal rays of $f$ contained in $\g$. As $\g_n\to\g$, the curve $\g_n$ also separates $b=\infty,c$, and hence separates $b_n,c_n$ for all large $n$. It follows that the tree $T_n$ intersects $\g_n$ for every large $n$. Since $T_n$ is disjoint with the iterated preimages of fixed Fatou components of $f_n$, the intersection of $T_n$ and $\g_n$ are landing points of the internal rays in $\g_n$. Notice that all internal rays in $\g_n$ are iterated to fixed Fatou components, then $T_n$ intersects the boundary of a fixed Fatou component $U_n$ of $f_n$ at a preperiodic point $x_n$. It follows that $f_n^p(x_n)$ also belongs to $T_n\cap \partial U_n$ and distinct with $x_n$ by the large period of $x_n$. It contradicts \cite[Lemma 6.3]{Ro}: which says that the filled-in Julia set of any renormalization triple of a cubic Newton map intersects at most one point with the closure of a fixed Fatou component.

For the ``only if'' part, we need to show that if $f$ is non-hyperbolic and renormalizable, then $h$ is not continuous at $f$. Note that in this case, the map $f$ must be generic, since its unique free critical point belongs to the filled-in Julia set of a renormalization triple. As $f$ is not hyperbolic, the unique (quadratic) renormalization polynomial in $\mathfrak{R}(f)$ is  Misiurewicz. However, the entropy function $h:\PPP_2^{\rm ppf}\to\R$ is not continuous at \Mi parameters, because any \Mi quadratic polynomial has  positive core entropy (see \cite{Do}), but can be estimated by hyperbolic polynomials with Cantor Julia set (having $0$ core entropy).
Following Theorem \ref{thm:main2}, the function $h$ is not continuous at $f$.
\end{proof}

\section{Continuity of entropy fucntion: partial \pf parameters}


The objective here is to prove Theorem \ref{thm:main1}. Its proof is completely the same as that of Theorem \ref{thm:main2}: combining Proposition \ref{pro:key} and Straightening Theorem.
In fact, every Proposition or Lemma in Sections \ref{sec:newton-graph} and \ref{sec:continuity2} for Newton maps has a parallel version in partial \pf case with a similar argument. So we just state these parallel results without giving proofs, from which we deduce Theorem \ref{thm:main1}.

\subsection{The computation of the core entropy}
Let $P$ be a partial \pf polynomials with the Hubbard forest $\HHH_P$. Then $\HHH_P=\emptyset$ if and only if $P$ is hyperbolic with Cantor Julia set. In this case the core entropy $h(P)=0$.

In the case of $\HHH_P\not=\emptyset$, a collection $\OOO=\{H_i,0\leq i\leq p-1\}$ of components of $\HHH_P$ is called a \emph{cycle in ${\rm Comp}(\HHH_P)$ of period $p$} if $p$ is the minimal number such that $P(H_i)\subseteq H_{i+1}$ for each $0\leq i\leq p-1$ and $H_p:=H_0$.
Let $\HHH_\OOO$ denote the union of $H_0,\ldots,H_{p-1}$. Then, by Lemma \ref{Do2}, the core entropy $h(P)$ equals to the maximum of $h(P|_{\HHH_\OOO})$ with $\OOO$ going through all cycles in ${\rm Comp}(\HHH_P)$. Furthermore, we have the following results (remember $h_{top}(P|_\emptyset):=0$).

\begin{lemma}[analogy to Lemma \ref{lem:orbit-entropy}]\label{lem:equal}
Let $\OOO$ be a cycle in ${\rm Comp}(\HHH_P)$ of period $p$. Then $h_{top}(P|_{\HHH_\OOO})=h_{top}(P^p|_H)/p$ for any element $H$ of $\OOO$. As a consequence, we have entropy foumulas ($p_H$ denotes the period of $H$, and $H_\OOO$ is an arbitrary element of $\OOO$): 
\begin{eqnarray}
h(P)&=& \max\limits_{\mbox{
\tiny
$\begin{array}{c}
H\in{\rm Comp}(\HHH_P)\\
{\rm non-trivial}\\
{\rm periodic}\end{array}$
}
}\frac{1}{p_H}h_{top}(P^{p_H}|_H)\label{eq:entropy2'}\\[5pt]
&=& \max\limits_{\mbox{
\tiny
$\begin{array}{c}
\OOO\subseteq {\rm Comp}(\HHH_P)\\
{\rm non-trivial\ cycle}
\end{array}$
}}\frac{1}{p_{H_\OOO}}h_{top}(P^{p_{H_\OOO}}|_{H_\OOO}).\label{eq:entropy3'}
\end{eqnarray}
\end{lemma}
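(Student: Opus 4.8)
The plan is to transcribe, almost verbatim, the proof of Lemma~\ref{lem:orbit-entropy}, replacing the canonical Hubbard forest of a Newton map by the Hubbard forest $\HHH_P$ of the partial \pf polynomial $P$ and Lemma~\ref{lem:renormalization} by its partial \pf counterpart (the renormalization structure on non-trivial periodic components of $\KKK_P$ recalled in the introduction, whose proof runs as in the Newton case). Write $\OOO=\{H_0,\ldots,H_{p-1}\}$ with $P(H_i)\subseteq H_{i+1}$, indices mod $p$. Since $P$ is injective on each edge of $\HHH_P$, the image $P(H_i)$ is non-degenerate whenever $H_i$ is; hence either every $H_i$ is a single point — in which case $\HHH_\OOO$ is a finite cyclically permuted set and both sides of the claimed identity are $0$ — or every $H_i$ is a non-trivial tree, the case I treat below.

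The first step is the reduction to a single equality. Each $H_i$ is a compact $P^p$-invariant piece and $\HHH_\OOO=\bigsqcup_i H_i$, so Propositions~\ref{Do1} and~\ref{Do2} give
\[
h_{top}(P|_{\HHH_\OOO})=\tfrac1p\,h_{top}(P^p|_{\HHH_\OOO})=\tfrac1p\max_{0\le i\le p-1}h_{top}(P^p|_{H_i}),
\]
so it suffices to prove $h_{top}(P^p|_{H_i})=h_{top}(P^p|_{H_{i+1}})$ for each $i$. Let $\wt H_i$ be the component of $P^{-1}(H_{i+1})$ containing $H_i$; then $P:\wt H_i\to H_{i+1}$ is surjective and finite-to-one with uniformly bounded fibers (it is a graph map), and $P^p(\wt H_i)=P^{p-1}(H_{i+1})\subseteq H_i\subseteq\wt H_i$, so $P^p$ restricts to a self-map of $\wt H_i$ and $P$ is a surjective semi-conjugacy from $(\wt H_i,P^p)$ onto $(H_{i+1},P^p)$. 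Proposition~\ref{Do4} then yields $h_{top}(P^p|_{\wt H_i})=h_{top}(P^p|_{H_{i+1}})$. On the other side, by the partial \pf counterpart of Lemma~\ref{lem:renormalization} there is a renormalization triple $(P^p,U_{H_i},V_{H_i})$ having $H_i$ as an extended Hubbard tree; the tree $\wt H_i$ is regulated, $P^p$-invariant, and contains $H_i$, hence is also an extended Hubbard tree of that triple, so Straightening Theorem~\ref{thm:straigtening} together with the partial \pf version of Lemma~\ref{lem:extended-tree} gives $h_{top}(P^p|_{H_i})=h_{top}(P^p|_{\wt H_i})$. Chaining the two equalities proves $h_{top}(P^p|_{H_i})=h_{top}(P^p|_{H_{i+1}})$, hence $h_{top}(P|_{\HHH_\OOO})=h_{top}(P^p|_H)/p$ for every $H\in\OOO$.

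The two displayed formulas then follow at once: recall (from Proposition~\ref{Do2}, as noted just before the statement) that $h(P)=\max_{\OOO}h_{top}(P|_{\HHH_\OOO})$ over all cycles in ${\rm Comp}(\HHH_P)$; every non-trivial periodic component $H$ lies in exactly one cycle, trivial cycles contribute $0$ and may be dropped, and substituting the identity $h_{top}(P|_{\HHH_\OOO})=h_{top}(P^{p_{H_\OOO}}|_{H_\OOO})/p_{H_\OOO}$ for an arbitrary representative $H_\OOO$ yields both \eqref{eq:entropy2'} and \eqref{eq:entropy3'}. The one point that needs genuine care is the assertion that $\wt H_i$ is an extended Hubbard tree — equivalently, that every non-endpoint of $\wt H_i$ is eventually mapped by $P^p$ into the Hubbard tree of the renormalization, so that the entropy-collapsing Proposition~\ref{Do3} can be applied as in the proof of Lemma~\ref{lem:extended-tree}. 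This is where the uniform expansion of $P$ on (a neighborhood of) the extended Julia set, Lemma~\ref{lem:sub-hyperbolic} and Proposition~\ref{pro:subhyperbolic}, must be invoked, exactly as in the partial \pf analogues of Section~\ref{sec:polynomials}.
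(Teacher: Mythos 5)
Your proof transcribes the paper's proof of Lemma~\ref{lem:orbit-entropy} faithfully, and the first half — the reduction via Propositions~\ref{Do1},~\ref{Do2} to a single equality $h_{top}(P^p|_{H_i})=h_{top}(P^p|_{H_{i+1}})$, and the $\operatorname{Do4}$-step $h_{top}(P^p|_{\wt H_i})=h_{top}(P^p|_{H_{i+1}})$ — goes through unchanged. The second half, however, has a genuine gap: you assert that $H_i$ is an extended Hubbard tree of the renormalization triple $(P^p,U_{H_i},V_{H_i})$, appealing to ``the partial \pf counterpart of Lemma~\ref{lem:renormalization}.'' But that counterpart does not exist verbatim, because the two Hubbard forests are built differently. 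In the Newton case $\HHH_f$ is by construction a union of preimages of the \emph{canonical extended Hubbard trees} $H_\rho$, which by definition contain the Hubbard tree of $\rho$; hence each periodic $H_i$ is automatically an extended Hubbard tree. In the partial \pf case, $\HHH_P$ is the regulated hull of the bounded critical/postcritical points \emph{of $P$}, whereas the Hubbard tree of the renormalization $(P^p,U_{H_i},V_{H_i})$ has as vertices the critical/postcritical points of the polynomial-like map $P^p|_{K_i}$. A critical point of $P^p|_{K_i}$ is a point $z\in K_i$ with $P^j(z)\in\operatorname{Crit}_P$ for some $0\le j<p$; when $j\ge1$, such a $z$ is a proper preimage of a critical point in another component of the cycle and is in general \emph{not} a critical or postcritical point of $P$, hence not a vertex of $H_i$, and there is no reason for it to lie in the hull $H_i$. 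So $H_i$ need not contain the Hubbard tree of the renormalization, and Lemma~\ref{lem:extended-tree} cannot be invoked for $H_i$ (nor, a fortiori, for $\wt H_i$; the delicate object is $H_i$, not $\wt H_i$).

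The lemma is nevertheless correct, and the repair is both simple and sidesteps renormalization and straightening entirely. Note first that $H_{i+1}$ contains all critical values of $P|_{K_i}$ (they are postcritical points of $P$ lying in $K_{i+1}$), so $P^{-1}(H_{i+1})\cap K_i$ is connected and $P:\wt H_i\to H_{i+1}$ is surjective (exactly as you need for $\operatorname{Do4}$). But then
$$P^p(\wt H_i)\;=\;P^{p-1}\bigl(P(\wt H_i)\bigr)\;=\;P^{p-1}(H_{i+1})\;\subseteq\;H_{i+p}\;=\;H_i,$$
so every point of $\wt H_i$ lands in $H_i$ after a single iterate of $P^p$, and Proposition~\ref{Do3} with $Z=H_i$, $X=\wt H_i$, $f=P^p$ yields $h_{top}(P^p|_{\wt H_i})=h_{top}(P^p|_{H_i})$ immediately. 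Combining with the $\operatorname{Do4}$-step finishes the proof. (As a side remark, this same shortcut also streamlines the paper's Newton-map proof of Lemma~\ref{lem:orbit-entropy}; the detour through the Straightening Theorem is unnecessary there too. But you should be aware that the identity $h_{top}(P^{p_H}|_H)=h(P_K)$ relating the entropy of $H$ to the renormalization polynomial, which is used later in Section~\ref{sec:continuity2} and in Theorem~\ref{thm:main1}, still requires a separate argument — a $\operatorname{Do3}$-type comparison between $H$ and the Hubbard tree of the renormalization — precisely because of the discrepancy just described.)
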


\subsection{Branner-Hubbard puzzles and renormalization}
According to Branner and Hubbard \cite{BH}, one can construct \emph{BH-puzzle pieces} for any polynomial with disconnected Julia set.

Recall that $g_P$ is the Green function of $P$. We denote by $a_P>0$ the maximum of $g_P(c)$ with $c\in{\rm Crit}_P$. Choose $r_0$ such that $a_P<r_0<da_P$, and $r_0\not=d^kg_P(c)$ for any $k\geq0$ and $c\in{\rm Crit}_P$. We define $V_0(P):=\{z\in\C:g_P(z)<r_0\}$, and $\VVV_n(P)$ the collection of bounded components of $P^{-n}(V_0(P))$ for each $n\geq0$. We call $\VVV(P):=\cup_{n\geq0}\VVV_n(P)$ the \emph{BH-puzzle} of $P$, and any element of $\VVV_n(P)$ a \emph{BH-puzzle piece of level $n$}.
The BH-puzzle pieces satisfy the following properties.
\begin{enumerate}
\item Each puzzle piece is a Jordan disk with its boundary contained in $\Omega(P)$.
\item Each puzzle piece of level $n\geq1$ is compactly contained in a puzzle piece of level $n-1$.
\item The map $P$ sends any puzzle piece of level $n\geq1$ to a puzzle piece of level $n-1$ as a branched covering.
\item The intersection of a sequence of nested puzzle pieces is a component of $\KKK_P$.
\end{enumerate}

Let $K$ be a non-trivial periodic component of $\KKK_P$ with period $m$. Since $P$ is uniform expanding near $\JJJ_P$, we have ${\rm deg}(P^m|_K)\geq2$. By properties above, there exists a renormalization triple $\rho=(P^m,U,V)$ such that $U,V$ are BH-puzzle pieces of $P$ and $K_\rho=K$. By Straightening Theorem, the map $P^m:U\to V$
is hybrid equivalent to a unique (up to a conjugation by ${\rm deg}(P^m|_K)-1$-th roots of unit) monic, centered \pf polynomial, denoted as $Q_K$, called the \emph{renormalization polynomial associated to $K$}. The polynomial $P$ is called \emph{renormalizable} if $\KKK_P$ has non-trivial periodic components.


For each $n\geq0$, we denote by $\G_n(P)$ the union of the boundaries of all BH-puzzle pieces of level $0\leq i\leq n$. Then $\G_n(P)$ is a disjoint union of circles in $\Omega(P)$. Given any integer $N>0$, the number $r_0$ is not included in $\{g_Q(z):z\in\cup_{c\in{\rm Crit}_Q}\cup_{i=0}^NQ^i(c)\}$ for all $Q$ closed enough to $P$. Hence the set $\G_N(Q)$ is also the disjoint union of circles for all $Q$ close enough to $P$.

\begin{lemma}[anology to Lemma \ref{lem:perturbation-newton-graph}]\label{lem:BH-puzzle}
The graph $\G_N(Q)$ is homeomorphic to $\G_N(P)$ for any $Q$ close enough to $P$ and $\G_N(Q)\to \G_N(P)$ as $Q\to P$.
\end{lemma}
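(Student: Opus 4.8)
\textbf{Proof proposal for Lemma \ref{lem:BH-puzzle}.}

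The plan is to run an argument entirely parallel to the proof of Lemma \ref{lem:perturbation-newton-graph}, replacing internal rays of fixed Fatou components by external rays of $P$, and the Newton graph $\De_m$ by the graph $\G_N(P)$. First I would note that each boundary $\partial V$ of a BH-puzzle piece $V$ of level $n$ is a finite union of arcs of external rays of $P$ together with arcs of the equipotential $\{g_P = r_0/d^n\}$, and that the ``vertices'' of $\G_N(P)$ — the points where an equipotential arc meets a ray arc, i.e.\ the points of $\G_N(P)$ on $\{g_P = r_0/d^i\}$ lying on rays through precritical points — move holomorphically with the polynomial. Indeed the hypothesis that $r_0 \notin \{g_Q(Q^i(c)) : c \in {\rm Crit}_Q, 0 \le i \le N\}$ for $Q$ near $P$ guarantees that no singularity of the Green function crosses the relevant equipotentials during the perturbation, so the combinatorial pattern of which rays land on $\G_N$ and in what cyclic order is locally constant; this is exactly the role played by genericity (orbit of the vertices avoiding critical points) in Lemma \ref{lem:perturbation-newton-graph}.

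The key steps, in order: (1) Build an injection $\xi_Q : \G_0(P) \to \G_0(Q)$ for $Q$ near $P$ by following the equipotential $\{g_P = r_0\}$ to $\{g_Q = r_0\}$ and matching the finitely many external rays through the (precritical) singularities on it, using Lemma \ref{lem:perturbation1} (Goldberg--Milnor) to track where each such ray lands and $\limsup_{Q\to P}\ov{\RRR_{Q}(\theta)} = \ov{\RRR_P(\theta)}$. (2) Inductively extend to $\xi_Q : \G_n(P) \to \G_n(Q)$ for $0 \le n \le N$: a BH-puzzle piece $V$ of level $n+1$ is a component of $P^{-1}(V')$ for a piece $V'$ of level $n$, and since $\partial V$ consists of ray and equipotential arcs whose images under $P$ lie in $\partial V'$, the already-constructed $\xi_Q$ on $\G_n$ determines $\xi_Q$ on $\G_{n+1}$ via lifting; the functional equation $g_Q(Q(z)) = d\,g_Q(z)$ keeps equipotential arcs going to equipotential arcs. (3) Check $\xi_Q(\G_N(P)) \subseteq \G_N(Q)$ (each arc of the image is, by construction, carried by $Q^i$ onto an arc of $\partial V_0(Q)$), and then check surjectivity $\xi_Q(\G_N(P)) = \G_N(Q)$ by the same contradiction argument as in Lemma \ref{lem:perturbation-newton-graph}: a putative extra vertex $u_Q \in \G_N(Q) \setminus \xi_Q(\G_N(P))$ adjacent to an edge converging to a ray arc of $P$ would force an extra component of a preimage of $V_0(P)$, contradicting the local constancy of the puzzle structure established in step (1). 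Finally (4), Lemma \ref{lem:convergence3} (or directly Lemma \ref{lem:perturbation1}) applied arc by arc gives $\limsup_{Q\to P}\G_N(Q) = \G_N(P)$.

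I expect the main obstacle to be step (1)–(2): rigorously controlling the combinatorics of external rays through precritical points on the equipotentials $\{g_Q = r_0/d^i\}$ during the perturbation, i.e.\ making precise that ``the graph $\G_N(Q)$ has the same abstract graph structure as $\G_N(P)$.'' The genericity-type hypothesis ($r_0$ avoiding the forward orbit of critical Green-function values up to level $N$, which holds on a neighborhood of $P$) is what rules out a bifurcation of the ray portrait, but turning this into a clean homeomorphism statement requires the perturbation theory of external rays landing at (pre)critical points and at repelling preperiodic points — precisely Lemmas \ref{lem:perturbation1} and \ref{lem:convergence3}. Once the abstract graph isomorphism is in hand and each edge is realized by a ray or equipotential arc varying continuously with $Q$, the homeomorphism $\G_N(Q) \cong \G_N(P)$ and the Hausdorff convergence follow formally, exactly as in the Newton-graph case. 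Since the excerpt explicitly says the partial \pf arguments are ``completely the same'' as those for Newton maps, I would present this lemma's proof as a short remark citing Lemma \ref{lem:perturbation-newton-graph} and the perturbation lemmas, rather than rewriting the whole induction.
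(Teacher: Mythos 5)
Your proposal rests on a structural misreading of what the Branner--Hubbard puzzle is, and this makes the whole argument aimed at the wrong object. You write that ``each boundary $\partial V$ of a BH-puzzle piece $V$ of level $n$ is a finite union of arcs of external rays of $P$ together with arcs of the equipotential $\{g_P = r_0/d^n\}$'' and that $\G_N(P)$ has ``vertices'' where ray arcs meet equipotential arcs. That description is correct for Yoccoz puzzles of polynomials with \emph{connected} Julia set, but it is not what the paper defines here. In the paper, $V_0(P) = \{g_P < r_0\}$ with $r_0 > a_P = \max_c g_P(c)$, so $\partial V_0(P)$ is a single smooth Jordan curve (an equipotential above the critical level), and a level-$n$ BH-puzzle piece is a bounded component of $P^{-n}(V_0(P))$, whose boundary is a component of $P^{-n}(\{g_P = r_0\}) = \{g_P = r_0/d^n\}$. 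The genericity condition $r_0 \neq d^k g_P(c)$ guarantees these equipotentials avoid all singularities of $g_P$, so every such boundary component is a \emph{smooth Jordan curve}, with no ray arcs and no vertices. The paper even records this: ``Hence the set $\G_N(Q)$ is also the disjoint union of circles.'' There is simply no external-ray data in $\G_N$, and Lemmas \ref{lem:perturbation1} and \ref{lem:convergence3} — the machinery you invoke to track landing points of rays — have nothing to attach to.

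Because of this, your analogy with the proof of Lemma \ref{lem:perturbation-newton-graph} is the wrong analogy. There, $\De_m$ genuinely is a graph (internal rays meeting at precritical points), and one must control convergence of rays and their landing points. Here the objects are disjoint smooth circles, and the paper's actual proof is much lighter: for level $0$, use that the B\"ottcher maps $\phi_Q^{-1} \to \phi_P^{-1}$ uniformly on a neighbourhood of $\{|z| = r_0\}$, giving $\G_0(Q) \to \G_0(P)$; for the inductive step, each component $\g_P$ of $\G_{k+1}(P)$ is a component of $P^{-1}$ of a component $\g_P'$ of $\G_k(P)$, and Rouch\'e's theorem (applied to $Q(z) - w$ for $w$ near $\g_P'$) yields a unique nearby component $\g_Q$ of $Q^{-1}(\g_Q')$, again a circle, converging to $\g_P$. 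No ray perturbation theory, no vertex bookkeeping, no surjectivity-by-contradiction step is needed, because one tracks whole circles rather than edges pinned at vertices. You should discard the Yoccoz-puzzle picture, observe that $\G_N$ is a finite disjoint union of smooth equipotential curves, and then the induction with B\"ottcher coordinates and Rouch\'e gives the homeomorphism and Hausdorff convergence directly.
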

\begin{proof}
The proof goes by induction. Note that the definition domains of $\phi_P,\phi_Q$ contains $\G_0(P),\G_0(Q)$ respectively for $Q$ close to $P$, where $\phi_P,\phi_Q$ denotes the B\"{o}ttcher coordinates of $P,Q$. Then $\G_0(Q)\to \G_0(P)$ as $Q\to P$ since $\G_0(P)=\phi^{-1}_P\{z:|z|=r_0\}$, $\G_0(Q)=\phi^{-1}_Q\{z:|z|=r_0\}$ and $\phi_Q^{-1}$ uniformly converge to $\phi_P^{-1}$ in a compact set containing $\{z:|z|=r_0\}$.

Assume that $\G_k(Q)$ is homeomorphic to $\G_k(P)$  and $\G_k(Q)\to \G_k(P)$ for some $0\leq k<N$. Let $\g_P$ be any component of $\G_{k+1}(P)$. Then its image $P(\g_P)=:\g_P'$ is a component of $\G_k(P)$. By the assumption of induction, we can find a unique component $\g_Q'$ of $\G_k(Q)$ such that $\g_Q'$ is in a sufficiently small neighborhood of $\g_P'$ for $Q$ close enough to $P$. Hence, by Rouch\'{e} Theorem, there is a unique preiamge $\g_Q$ by $Q$ of $\g_Q'$ which lies in arbitrarily small neighborhood of $\g_P$ provided $Q$ is close enough to $P$. Since $\g_P,\g_Q$ are circles, we then have $\g_Q\to\g_P$.
\end{proof}

As a consequence, we see that for any BH-puzzle piece $V_P$ of $P$ with level $k\geq0$, there exists a unique BH-puzzle piece $V_Q$ of $Q$ with level $k$ such that $\ov{V_Q}\to\ov{V_P}$ as $Q\to P$. Such $V_Q$ is called the \emph{deformation} of $V_P$ at $Q$.

\subsection{Continuity of the entropy function in partial \pf polynomial family}
Throughout this subsection, we always assume that $P$ is a partial \pf polynomial with disconnected Julia set, and $\{P_n,n\geq1\}$ a sequence of partial \pf polynomials converging to $P$.

\begin{proposition}[analogy to Proposition \ref{pro:non-renormalizable} ]\label{pro:non-renormalizable'}
Let $P$ be a non-renormalizable partial \pf polynomial. Then the entropy function $h:\PPP_d^{\rm ppf}\to\R$ is continuous at $P$.
\end{proposition}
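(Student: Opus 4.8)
\textbf{Proof proposal for Proposition \ref{pro:non-renormalizable'}.}

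The plan is to mirror the proof of Proposition \ref{pro:non-renormalizable} for Newton maps, replacing Newton graphs by Branner--Hubbard puzzles and using Lemma \ref{lem:BH-puzzle} in place of Lemma \ref{lem:perturbation-newton-graph}. Since $P$ is non-renormalizable, $\KKK_P$ has no non-trivial periodic components, so every non-trivial periodic component of $\HHH_P$ is a single repelling periodic point and hence $h(P)=0$ by formula \eqref{eq:entropy2'}. Let $\{P_n,n\geq1\}\subseteq\PPP_d^{\rm ppf}$ be an arbitrary sequence converging to $P$; we must show $h(P_n)\to 0$. We may assume each $P_n$ is renormalizable (otherwise $h(P_n)=0$ and there is nothing to prove for that index).

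First I would record the uniform entropy bound: for any non-trivial periodic component $T_n$ of $\HHH_{P_n}$ with period $p_n$, Lemma \ref{lem:renormalization}-type reasoning (here: BH-renormalization of $P_n$ near $T_n$) and Straightening give a renormalization triple $\rho_n=(P_n^{p_n},U_n,V_n)$ with $T_n$ an extended Hubbard tree of $\rho_n$; since the orbit of $K_{\rho_n}$ covers the critical points of $P_n$ at most $2d-2$ times and each iterate has degree at most $d$, we get $\mathrm{deg}(P_n^{p_n}|_{K_{\rho_n}})\le d^{2d-2}$, hence $h_{top}(P_n^{p_n}|_{T_n})\le (2d-2)\log d$. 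By the analogue of formula \eqref{eq:entropy3'} for $P_n$, $h(P_n)=\max_{T_n}\tfrac1{p_{_{T_n}}}h_{top}(P_n^{p_{_{T_n}}}|_{T_n})$ with $T_n$ ranging over periodic components of $\HHH_{P_n}$ containing a critical point. So it suffices to prove that the minimal such period $p_n$ tends to $\infty$.

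Next I would prove $p_n\to\infty$ by contradiction, using the BH-puzzle. Suppose along a subsequence the periods are bounded; passing to a further subsequence, assume there is a fixed $p$ with $P_n^p(T_n)\subseteq T_n$, a critical point $c_n\in T_n$ with $c_n\to c$, and (increasing $p$) that $b_n:=P_n^p(c_n)\to b$ with $b$ periodic for $P$. Since $c$ cannot lie in a non-trivial periodic component of $\HHH_P$ ($P$ being non-renormalizable), $b\ne c$. Choose a BH-puzzle level $N$ of $P$ large enough that the landing/terminating data separating $b$ from $c$ appears in $\G_N(P)$: because $b$ and $c$ lie in distinct components of $\KKK_P$ (or $c$ is escaping and $b$ bounded, etc.), some finite union of BH-puzzle-piece boundaries in $\G_N(P)$ separates $b$ from $c$. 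By Lemma \ref{lem:BH-puzzle}, $\G_N(P_n)$ is homeomorphic to $\G_N(P)$ and $\G_N(P_n)\to\G_N(P)$, so $\G_N(P_n)$ separates $b_n$ from $c_n$ for all large $n$; hence $T_n$, which contains both $b_n$ and $c_n$, must meet $\G_N(P_n)$. But $T_n\subseteq\HHH_{P_n}$ is contained in a single BH-puzzle piece of every level (being a connected subset of one component of $\KKK_{P_n}$ disjoint from all $\G_k(P_n)$), so $T_n\cap\G_N(P_n)=\emptyset$, a contradiction. Therefore $p_n\to\infty$, and combined with the uniform bound $h_{top}(P_n^{p_n}|_{T_n})\le(2d-2)\log d$ we get $h(P_n)\to 0=h(P)$.

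The main obstacle is the separation step: one must justify, for each relevant configuration (both $b,c$ bounded in distinct components; one escaping; $c$ in a trivial component of $\KKK_P$ versus $c$ in the unbounded part), that a finite piece of $\G_N(P)$ genuinely separates $b$ from $c$, and that the corresponding deformed circles in $\G_N(P_n)$ do the same — this is exactly where the analogy with Lemma \ref{lem:non-renormalizable} must be carried out carefully, using that $T_n$ is disjoint from the BH-puzzle boundaries and that the deformation of puzzle pieces is continuous. Everything else is bookkeeping parallel to the Newton-map case.
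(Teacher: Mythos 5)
Your proposal is correct and is exactly the strategy the paper intends: transcribe the Newton-map argument of Lemma~\ref{lem:non-renormalizable} and Proposition~\ref{pro:non-renormalizable}, with BH-puzzle boundaries $\G_N(\cdot)$ in place of Newton graphs and Lemma~\ref{lem:BH-puzzle} supplying the required convergence of separating curves.

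The case analysis you flag at the end as ``the main obstacle'' in fact collapses here, and it is worth seeing why. Non-renormalizability of $P$ means every component of $\KKK_P$ is a point, so $\KKK_P=\JJJ_P$ is totally disconnected, there are no bounded Fatou components, and $\HHH_P$ is a finite set of singletons (not, as you wrote, ``non-trivial periodic components that are single repelling points'' --- a singleton is a trivial component; the right statement is that $\HHH_P$ has only trivial components, hence $h(P)=0$). Since $c_n\in T_n\subseteq\HHH_{P_n}\subseteq\KKK_{P_n}$ and $g_{P_n}\to g_P$ uniformly, the limit $c$ satisfies $g_P(c)=0$, so $c\in\JJJ_P$ is automatically bounded; a critical point in the Julia set cannot be periodic (it would be superattracting), so $c$ is strictly preperiodic and $b=P^p(c)\neq c$. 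Both $b$ and $c$ are then distinct points of a totally disconnected $\KKK_P$, so for $N$ large a single BH-puzzle piece of level $N$ contains $c$ but not $b$, and its boundary circle in $\G_N(P)$ separates them. There is no escaping case, no ``distinct-component'' ambiguity, and no analogue of the Newton-map subcase $c\in\De_N$; the separation and the contradiction with $T_n\cap\G_N(P_n)=\emptyset$ go through exactly as you describe.
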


In the following, we only deal with the renormalizable case.
One can choose a BH-puzzle piece $V_H$ of $P$
for each non-trivial periodic component $H$ of $\HHH_P$ such that $H\subseteq V_H$ and $V_H\cap V_{H'}=\emptyset$ for any two different components $H,H'$ of $\HHH_P$.
For each non-trivial periodic component $H$ of $\HHH_f$ and each large $n$, we denote by $V_{H,n}$ the BH-puzzle piece of $P_n$ deformed from $V_H$. These puzzle pieces are pairwise disjoint and ${\rm deg}(P|_{V_H})={\rm deg}(P_n|_{V_{H,n}})$.

As in the case of \pf Newton maps, the Hubbard forest $\HHH_{P_n}$ can be divided into two parts: let $\HHH^n_{\rm bd}$ denote the union of the components of $\HHH_{P_n}$ which stay in $\cup_{H} V_{H,n}$ under the iteration of $P_n$, and  $\HHH_{\rm esc}^n$ the union of periodic components of $\HHH_{P_n}$ not contained in $\HHH^n_{\rm bd}$, where the subscripts ``bd'' and ``esc'' mean ``bounded'' and ``escape'' respectively. Then both $\HHH^n_{\rm bd}$ and $\HHH^n_{\rm esc}$ are $P_n$-invariant, and by Propositions \ref{Do2}, \ref{Do3}, we have
\begin{equation}\label{eq:33'}
h(P_n)=\max\{\ h_{top}(P_n|_{\HHH_{\rm bd}^n}),h_{top}(P_n|_{\HHH_{\rm esc}^n})\ \}.
\end{equation}


\begin{lemma}[analogy to Lemma \ref{lem:infinite-period}]\label{lem:infinite-period'}
The topological entropy  $h_{top}(P_n|_{\HHH_{\rm esc}^n})$ tends to $0$ as $n\to\infty$.
\end{lemma}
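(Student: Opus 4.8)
\textbf{Proof plan for Lemma \ref{lem:infinite-period'}.} The statement is the polynomial analogue of Lemma \ref{lem:infinite-period}, so the plan is to transcribe that argument into the Branner--Hubbard puzzle setting, replacing the Newton graphs $\De_N$ by the graphs $\G_N(P)$ and invoking Lemma \ref{lem:BH-puzzle} in place of Lemma \ref{lem:perturbation-newton-graph}. Concretely, let $p_n$ denote the minimal period among the non-trivial components of $\HHH_{\rm esc}^n$; I would first show $p_n\to\infty$ and then deduce the entropy bound exactly as in Lemma \ref{lem:infinite-period}.

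\emph{Step 1 (divergence of periods).} Argue by contradiction: pass to a subsequence along which $p_n\equiv p$ is constant. Choose a component $T_n$ of $\HHH_{\rm esc}^n$ with $P_n^p(T_n)\subseteq T_n$; at least one of $T_n,\dots,P_n^{p-1}(T_n)$ carries a critical point of $P_n$, so after a further subsequence assume $c_n\in T_n$ is critical and $c_n\to c$, a critical point of $P$. Since the orbit of $T_n$ does \emph{not} eventually stay in $\bigcup_H V_{H,n}$, the point $c$ cannot lie in a non-trivial periodic component $H$ of $\HHH_P$: if it did, pick a BH-puzzle piece $\wt{V}_H\subseteq V_H$ with level-difference large enough that $P^i(\wt V_H)\subseteq V_{H_i}$ for $0\leq i\leq p$, where $H_i$ is the component carrying $P^i(H)$; by Lemma \ref{lem:BH-puzzle} the deformation $\wt V_{H,n}$ satisfies $P_n^i(\wt V_{H,n})\subseteq V_{H_i,n}$ for large $n$, and since $c_n\to c$ with $T_n\cap\partial\wt V_{H,n}=\emptyset$ we get $T_n\subseteq\wt V_{H,n}$, forcing the whole orbit of $T_n$ into $\bigcup_H V_{H,n}$, i.e.\ $T_n\subseteq\HHH_{\rm bd}^n$ --- contradiction. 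Hence $c$ is not in a periodic component of $\HHH_P$; but then $T_n$ has bounded period while carrying $c_n\to c$, and one derives a contradiction as in Lemma \ref{lem:non-renormalizable}: set $b_n=P_n^p(c_n)\to b=P^p(c)$, enlarge $p$ so $b$ is periodic, note $b\neq c$, and observe that (in the disconnected-Julia-set setting) $b$ and $c$ must lie in different components of $\KKK_P$ --- indeed a $\G_N(P)$ with $N$ large separates them --- so by Lemma \ref{lem:BH-puzzle} the perturbed graph $\G_N(P_n)$ also separates $b_n,c_n$, whence $T_n$ meets $\G_N(P_n)$ for large $n$, contradicting $T_n\subseteq\HHH_{P_n}$ (which is disjoint from all BH-puzzle-piece boundaries). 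This proves $p_n\to\infty$.

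\emph{Step 2 (entropy estimate).} Exactly as in Lemma \ref{lem:infinite-period}, for any non-trivial periodic component $T_n$ of $\HHH_{\rm esc}^n$ of period $p_{T_n}$, the degree of $P_n^{p_{T_n}}$ on the associated filled-in Julia set is at most $d^{2d-2}$ (the orbit covers critical points at most $2d-2$ times, each covering of degree $\leq d$), so $h_{top}(P_n^{p_{T_n}}|_{T_n})\leq (2d-2)\log d$. Combining with Propositions \ref{Do1} and \ref{Do2},
\[
h_{top}(P_n|_{\HHH_{\rm esc}^n})=\max_{T_n\in{\rm Comp}(\HHH_{\rm esc}^n)}\frac{1}{p_{T_n}}h_{top}(P_n^{p_{T_n}}|_{T_n})\leq \frac{(2d-2)\log d}{p_n}\to 0.
\]

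\emph{Main obstacle.} The only non-routine point is Step 1, specifically the separation argument showing $b$ and $c$ lie in distinct components of $\KKK_P$ and that this separation is stable under perturbation; everything hinges on the fact --- supplied by Lemma \ref{lem:BH-puzzle} and the defining properties of BH-puzzle pieces --- that the graphs $\G_N(P_n)$ converge to $\G_N(P)$ and that components of $\HHH_{P_n}$ never touch these graphs. Once that is in place the rest is a direct translation of the Newton-map proof, so no genuinely new idea is required.
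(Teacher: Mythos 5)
Your proof transcribes the Newton-map argument (Lemmas \ref{lem:non-renormalizable} and \ref{lem:infinite-period}) faithfully, replacing Newton graphs by the Branner--Hubbard equipotential graphs $\G_N$ and invoking Lemma \ref{lem:BH-puzzle} in place of Lemma \ref{lem:perturbation-newton-graph}; Step 2 is exactly the paper's argument and is correct. However, Step 1 contains a gap and in fact aims at a statement that is not true in full generality: after ruling out $c$ lying in a \emph{non-trivial} periodic component of $\HHH_P$, you write ``Hence $c$ is not in a periodic component of $\HHH_P$'', which is a stronger claim. You have not excluded the case where $c$ is a bounded \emph{superattracting periodic} critical point whose forward orbit lands in pairwise distinct components of $\KKK_P$, in which case $\{c\}$ is a \emph{trivial} periodic component of $\HHH_P$. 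For such $c$ the ensuing argument breaks: $b=P^p(c)$ can equal $c$ whenever the period of $c$ divides $p$, so the separation by $\G_N(P)$ cannot be set up, and the associated non-trivial periodic component of $\KKK_P$ carries no $V_H$ (these are indexed by non-trivial periodic components of $\HHH_P$), so the first sub-case does not apply either. In fact $p_n$ need \emph{not} tend to $\infty$ in this situation, so the intermediate claim ``$p_n\to\infty$'' is strictly stronger than what is available.

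The gap traces to a genuine structural difference between the two settings. For Newton maps the canonical extended Hubbard tree $H_\rho$ is defined as the regulated hull of critical, postcritical \emph{and fixed} points of $f^p|_{K_\rho}$, so it always has at least two vertices, and the $\HHH_f$-component of a periodic free critical point is automatically non-trivial --- this is what silently justifies the word ``non-trivial'' in the paper's proof of Lemma \ref{lem:infinite-period}. The Hubbard forest $\HHH_P$ of a partial pf polynomial is only the regulated hull of bounded critical and postcritical points, and has no such safeguard. The lemma is nevertheless true in the omitted case: every vertex of a non-trivial component $T_n\subseteq\HHH_{\rm esc}^n$ with $c_n\in T_n$ must converge to $c$ (any other limit would be a second bounded critical or postcritical point of $P$ in the $\KKK_P$-component of $c$, contradicting triviality of $\{c\}$), so for large $n$ the whole tree $T_n$ lies inside the immediate basin of the attracting cycle of $P_n$ near the orbit of $c$; every orbit in $T_n$ then converges to that cycle, and $h_{top}(P_n^{p_{T_n}}|_{T_n})=0$ by Proposition \ref{Do3}. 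You should add this case explicitly, or more cleanly weaken the goal of Step 1 to ``components of $\HHH_{\rm esc}^n$ of bounded period have entropy tending to $0$'' rather than ``$p_n\to\infty$''.
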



For any non-trivial periodic component $H$ of $\HHH_P$, we define a forest
\begin{equation}\label{eq:55}
H_n:=\HHH_{\rm bd}^n\cap V_{H,n}.
\end{equation}
It is easy to see that $f_n(H_n)\subseteq H_n'$ if and only if $f(H)\subseteq H'$, and we have the entropy formula (apology to formula \eqref{eq:44}):
\begin{equation}\label{eq:44'}
h_{top}(P_n|_{\HHH_{\rm bd}^n})=    \max\limits_{\mbox{
\tiny
$\begin{array}{c}
H\in{\rm Comp}(\HHH_P)\\
{\rm non-trivial}\\
{\rm period}\ p_H\end{array}$
}
}
\frac{1}{p_H}h_{top}(P_n^{p_H}|_{H_n}).
\end{equation}

Combining formulas \eqref{eq:33'},\eqref{eq:44'} and Lemma \ref{lem:infinite-period'}, we get a result analogous to Lemma \ref{lem:entropy-f_n}

\begin{lemma}[analogy to Lemma \ref{lem:entropy-f_n}]\label{lem:entropy-f_n'}
The limit/limit superior/limit inferior of $h(P_n)$ is equal to that of $\max_H \frac{1}{p_H}h_{top}(P_n^{p_H}|_{H_n})$ with $H$ going through all non-trivial periodic components of $\HHH_P$, as $n\to\infty$.
\end{lemma}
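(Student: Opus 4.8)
The plan is simply to combine the three ingredients already assembled in the passage preceding the statement: the splitting \eqref{eq:33'} of $h(P_n)$ into a ``bounded'' and an ``escaping'' contribution, the formula \eqref{eq:44'} identifying the bounded contribution with $\max_H \frac{1}{p_H}h_{top}(P_n^{p_H}|_{H_n})$, and Lemma \ref{lem:infinite-period'}, which asserts that the escaping contribution tends to $0$. Concretely, set $a_n := h_{top}(P_n|_{\HHH_{\rm bd}^n})$ and $b_n := h_{top}(P_n|_{\HHH_{\rm esc}^n})$. Then \eqref{eq:33'} gives $h(P_n)=\max\{a_n,b_n\}$, \eqref{eq:44'} gives $a_n = \max_H \frac{1}{p_H}h_{top}(P_n^{p_H}|_{H_n})$ (the quantity in the statement, the maximum being over all non-trivial periodic components $H$ of $\HHH_P$), and Lemma \ref{lem:infinite-period'} gives $b_n\to 0$ as $n\to\infty$.

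Because topological entropy is non-negative, $a_n,b_n\ge 0$, so $a_n \le \max\{a_n,b_n\} \le a_n+b_n$ for every $n$. Applying $\limsup$ to this chain of inequalities and using $\limsup_n b_n = \lim_n b_n = 0$ yields $\limsup_n a_n \le \limsup_n h(P_n) \le \limsup_n(a_n+b_n) = \limsup_n a_n$, hence $\limsup_n h(P_n) = \limsup_n a_n$. The identical argument with $\liminf$ in place of $\limsup$ gives $\liminf_n h(P_n) = \liminf_n a_n$. Consequently $\lim_n h(P_n)$ exists if and only if $\lim_n a_n$ exists, in which case they coincide; this is exactly the three assertions of the lemma about the limit, limit superior, and limit inferior.

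The only points requiring care are structural and have already been established earlier: that $\HHH_{\rm bd}^n$ and $\HHH_{\rm esc}^n$ are $P_n$-invariant subforests whose union carries the full entropy of $\HHH_{P_n}$, so that Propositions \ref{Do2} and \ref{Do3} genuinely produce \eqref{eq:33'} and \eqref{eq:44'}; and that Lemma \ref{lem:infinite-period'} does force $b_n\to 0$ (via divergence of the minimal period among non-trivial components of $\HHH_{\rm esc}^n$ together with the uniform bound $h_{top}(P_n^{p_{T_n}}|_{T_n})\le(2d-2)\log d$ on each piece). Granting these, there is no substantive obstacle: the remainder is the elementary real-analysis manipulation above, exactly parallel to the one-line derivation of Lemma \ref{lem:entropy-f_n} from \eqref{eq:33}, \eqref{eq:44}, and Lemma \ref{lem:infinite-period}.
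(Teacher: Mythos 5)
Your proposal is correct and follows exactly the paper's route: the paper derives this lemma in one line by combining formulas \eqref{eq:33'} and \eqref{eq:44'} with Lemma \ref{lem:infinite-period'}, just as you do, and your explicit $\limsup$/$\liminf$ manipulation is the (omitted) elementary step that makes that combination rigorous.
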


Let $H$ be a non-trivial periodic component of $\HHH_f$ with period $p$. Then $\rho_H:=(P^p,V_H,P^p(V_H))$ is a renormalization triple. By Straightening Theorem, there exist a monic, centered polynomial $P_H$ of degree $\delta={\rm deg}(P|_{V_H})$ hybrid equivalent to $P^p:V_H\to P^p(V_H)$, called a \emph{renormalization polynomial of $P$ associated to $H$}. It is clear that $h_{top}(P^p|_H)=h(P_H)$.

Similarly, for each large $n$,
the map $P^p_n:V_{H,n}\to f(V_{H,n})$ is a polynomial-like map of degree $\de$;
By Straightening Theorem again, we get for each large $n$ a monic,centered polynomial $P_{H,n}$ of degree $\delta$ hybrid equivalent to $P^p_n:V_{H,n}\to f(V_{H,n})$, which we call a \emph{straightening-perturbation} of $P_H$ at $P_n$.

\begin{lemma}[analogy to Lemma \ref{lem:straighten}]\label{lem:straighten'}
Every polynomial $P_{H,n}$ is partial \pf and  $h_{top}(P^p_n|_{H_n})=h(P_{H,n})$ ($H_n$ is defined in \eqref{eq:55}). Furthermore, by suitably choice of $P_{H,n}$, we have $P_{H,n}\to P_H$ as $n\to\infty$.
\end{lemma}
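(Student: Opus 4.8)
\textbf{Proof plan for Lemma \ref{lem:straighten'}.}
The statement contains three assertions: that each $P_{H,n}$ is partial \pf, that $h_{top}(P^p_n|_{H_n})=h(P_{H,n})$, and that a suitable choice of straightening conjugacies gives $P_{H,n}\to P_H$. These mirror Lemma \ref{lem:filled-Julia-set}, the generalized version of Lemma \ref{lem:extended-tree}, and Lemma \ref{lem:straighten} respectively, so the plan is to run the Newton-map arguments verbatim, replacing Newton graphs by the Branner--Hubbard graphs $\G_N(P)$ and invoking Lemma \ref{lem:BH-puzzle} wherever Lemma \ref{lem:perturbation-newton-graph} was used.

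First I would establish that $H_n:=\HHH^n_{\rm bd}\cap V_{H,n}$ is an extended Hubbard forest of the polynomial-like map $(P^p_n,V_{H,n},P^p_n(V_{H,n}))$. Following the proof of Lemma \ref{lem:filled-Julia-set}, one chooses a deeper BH-puzzle piece $\wt V_H\supseteq K_{\rho_H}$ whose first $p$ iterates have closures contained in the corresponding puzzle pieces $V_{H_i}$; by Lemma \ref{lem:BH-puzzle} the deformation $\wt V_{H,n}$ has the same nesting for large $n$, so a point of $V_{H,n}$ whose orbit stays in $\cup_H V_{H,n}$ must lie in the filled-in Julia set $K_{H,n}$ of $(P^p_n,V_{H,n},P^p_n(V_{H,n}))$. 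Since $P_n$ is partial \pf, every bounded critical point of $P^p_n|_{V_{H,n}}$ has finite orbit, so $P_{H,n}$ (being hybrid equivalent to this polynomial-like map) is partial \pf, and $\varphi_n(H_n)$ — the image of $H_n$ under the hybrid conjugacy $\varphi_n$ — is a $P_{H,n}$-invariant regulated forest in $\KKK_{P_{H,n}}$ containing all bounded critical points of $P_{H,n}$, hence an extended Hubbard forest. The generalized version of Lemma \ref{lem:extended-tree} (every non-endpoint of an extended Hubbard forest is iterated into the Hubbard forest, so Proposition \ref{Do3} applies componentwise) then gives $h_{top}(P^p_n|_{H_n})=h_{top}(\varphi_n\circ P^p_n\circ\varphi_n^{-1}|_{\varphi_n(H_n)})=h(P_{H,n})$.

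For the convergence $P_{H,n}\to P_H$ I would copy the normal-families argument of Lemma \ref{lem:straighten}: extend $(P^p,V_H,P^p(V_H))$ and $(P^p_n,V_{H,n},P^p_n(V_{H,n}))$ to topological polynomials $F,F_n$ on $\C$ agreeing and holomorphic near $\infty$, extend the hybrid conjugacies to quasi-conformal maps $\varphi,\varphi_n$ normalized by fixing two points and $\infty$, with $\varphi_n\circ F_n=P_{H,n}\circ\varphi_n$. Because $\ov{V_{H,n}}\to\ov{V_H}$ and $\ov{P^p_n(V_{H,n})}\to\ov{P^p(V_H)}$ (a consequence of Lemma \ref{lem:BH-puzzle}) one arranges $F_n\to F$ uniformly with all $\varphi_n$ uniformly $K$-quasi-conformal; any subsequential limit $\psi$ of $\{\varphi_n\}$ is $K$-quasi-conformal and conformal near $\infty$, so $P:=\psi\circ F\circ\psi^{-1}$ is a monic centered polynomial quasi-conformally conjugate, and conformally conjugate near $\infty$, to $P_H$; by the uniqueness in the Straightening Theorem $P$ differs from $P_H$ by a $(\delta-1)$-th root of unity, so post-composing each $\varphi_n$ with the appropriate rotation yields $P_{H,n}\to P_H$.

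The only genuinely non-routine point is verifying that Lemma \ref{lem:BH-puzzle} is strong enough to substitute for Lemma \ref{lem:perturbation-newton-graph} in the ``trapped orbit'' steps — specifically, that the deformed puzzle pieces $\wt V_{H,n}$ and the first $p$ iterates of $V_{H,n}$ have exactly the nesting needed so that ``orbit stays in $\cup_H V_{H,n}$'' is equivalent to ``lies in $K_{H,n}$''. This is where one must be careful that the boundary circles of the BH-puzzle do not degenerate: the choice of $r_0$ with $r_0\neq d^k g_P(c)$ (and its persistence under small perturbation, noted just before Lemma \ref{lem:BH-puzzle}) guarantees $\G_N(P_n)$ stays a disjoint union of smooth circles, so the Hausdorff convergence $\G_N(P_n)\to\G_N(P)$ upgrades to convergence of the individual puzzle pieces, and the argument closes exactly as in the Newton-map case. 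Everything else is a transcription, so I would simply state the lemma and indicate that its proof follows that of Lemmas \ref{lem:filled-Julia-set} and \ref{lem:straighten} with $\G_N$ in place of the Newton graphs.
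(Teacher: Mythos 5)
Your proposal matches the paper's intended proof. The paper explicitly omits the argument for Lemma \ref{lem:straighten'}, noting in the preamble to Section 9 that every result there is obtained by transcribing the corresponding Newton-map lemma from Sections 7--8 with Branner--Hubbard puzzles and the graphs $\G_N$ in place of Newton graphs (via Lemma \ref{lem:BH-puzzle}); your plan carries out exactly this transcription, tracing the three sub-assertions back to Lemmas \ref{lem:filled-Julia-set}, \ref{lem:equal-entropy} and \ref{lem:straighten} respectively.
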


\begin{proposition}[analogy to Proposition \ref{pro:liminf-newton}]\label{pro:liminf-newton'}
Let $P$ be a partial \pf polynomial, and $H_1,\ldots,H_k$ a collection of non-trivial periodic components of $\HHH_P$ in pairwise distinct cycles. Let $P_{H_i},i=1,\ldots,k$, be a renormalization polynomial of $P$ associated to $H_i$. Then there exists a sequence of  polynomials $\{P_n,n\geq1\}\subseteq \PPP_d^{\rm ppf}$ such that $P_n\to P$ and the polynomials $\{P_{H_i,n},n\geq1\}$  is a entropy-minimal sequence converging to $P_{H_i}$ for each $1\leq i\leq k$.
\end{proposition}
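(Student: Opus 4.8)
The plan is to mimic, step by step, the proof of Proposition \ref{pro:liminf-newton}, but replacing the puzzle-renormalization machinery for Newton maps with the Branner--Hubbard puzzle machinery for partial \pf polynomials that was set up in Lemma \ref{lem:BH-puzzle} and the surrounding discussion. As in the Newton case, I would first observe that the different cycles in ${\rm Comp}(\HHH_P)$ are pairwise disjoint and the capture surgery will be carried out only in arbitrarily small neighbourhoods of each cycle, so after relabeling I may assume the surgery happens at a single non-trivial cycle of ${\rm Comp}(\HHH_P)$; let $H$ be an element of that cycle with period $p$. I would fix the renormalization triple $\rho_H=(P^p,V_H,P^p(V_H))$ with $V_H$ a BH-puzzle piece, take a quasiconformal map $\varphi:V_H\to\C$ realizing the hybrid equivalence with the renormalization polynomial $P_H$, and set $U:=\varphi(V_H)$, $V:=\varphi(P^p(V_H))$, so $P_H:U\to V$ is polynomial-like with filled-in Julia set $\KKK_{P_H}$.

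Next I would invoke Proposition \ref{pro:liminf-strong} applied to $P_H$: this produces a sequence $\{\wt Q_{H,n}\}$ of topological perturbations of $P_H$ by capture surgery, with surgery domain compactly contained in $U$ and escaping critical values the $\varphi$-images of iterated preimages of escaping critical points, whose rational realizations $\{Q_{H,n}\}$ form an entropy-minimal sequence converging to $P_H$. Transporting the surgery back through $\varphi$ and the inclusion $V_H\hookrightarrow\C$, each such $\wt Q_{H,n}$ induces a topological perturbation $F_n$ of $P$ by capture surgery near the orbit $H,P(H),\dots,P^{p-1}(H)$, with $F_n^p=\varphi^{-1}\circ\wt Q_{H,n}\circ\varphi$ on $V_H$ and $F_n=P$ elsewhere. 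The key point is that $F_n$ is c-equivalent to a genuine polynomial in $\PPP_d^{\rm ppf}$: this should follow from the Thurston obstruction analysis of Lemma \ref{lem:no-Thurston-obstruction} exactly as in the Newton case (Lemma \ref{lem:rational-realization}) --- a Levy cycle would localize either in the escaping region, where the BH-puzzle circles $\G_N$ play the role of the Newton graph $\De_N$ and force a contradiction with the triangular structure of the transition matrix, or inside a neighbourhood of the renormalized piece, where Lemma \ref{lem:no-Thurston-obstruction} applies directly. Normalizing the rational realizations so that two points near $\infty$ are fixed yields monic polynomials $P_n\in\PPP_d^{\rm ppf}$, and Proposition \ref{pro:surgery-convergence} gives $P_n\to P$.

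Finally, I would identify the straightening-perturbation polynomial $P_{H,n}$ (Definition-wise the analogue of Definition \ref{def:nearby}, via Lemma \ref{lem:straighten'}) with $Q_{H,n}$ up to an explicit root-of-unity normalization. Concretely, one chases $V_{H,n}$ (the BH-puzzle piece of $P_n$ deformed from $V_H$, as in Lemma \ref{lem:BH-puzzle}) around a commutative diagram linking $P_{H,n}$, $P_n^p$ on $V_{H,n}$, $F_n^p$ on $V_H$, $\wt Q_{H,n}$ and $Q_{H,n}$; using Lemma \ref{lem:filled-Julia-set}'s analogue (the description of the filled-in Julia set of the polynomial-like restriction as exactly the points staying in $\cup_H V_{H,n}$), one sees that the Hubbard forest of $P_{H,n}$ maps homeomorphically and $P_{H,n}$-conjugately onto that of $Q_{H,n}$, so $h(P_{H,n})=h(Q_{H,n})$. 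Since $\{Q_{H,n}\}$ is entropy-minimal and $Q_{H,n}\to P_H$, the sequence $\{P_{H,n}\}$ is entropy-minimal converging to $P_H$, which is what is claimed. The step I expect to be the main obstacle is verifying the absence of Thurston obstructions for $F_n$ in the polynomial setting: one must check that the BH-puzzle circles really do confine any hypothetical Levy cycle in the same way the Newton graphs $\De_n$ do, and that the minimality-of-intersection-number argument goes through with circles rather than with the more rigid graph structure; everything else is a transcription of the Newton-case argument with $\De_n$ replaced by $\G_n(P)$.
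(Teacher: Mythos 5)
Your proposal is essentially the intended argument: the paper itself omits the proof of this proposition, stating only that it follows by the same reasoning as Proposition~\ref{pro:liminf-newton} with the Newton graph replaced by the Branner--Hubbard puzzle, and your transcription is the right one. One remark, though: the step you flag as the main obstacle --- verifying no Thurston obstruction for $F_n$ --- is in fact the step where the polynomial case is strictly \emph{easier} than the Newton case, and you do not need to redo the Levy-cycle analysis with the circles $\G_N$ playing the role of $\De_N$. The transported surgery on $P$ is a capture surgery in the sense of Section~\ref{sec:capture} whose invariant domain lies in $V_H\setminus K_{\rho_H}\subset\Omega(P)$, so Lemma~\ref{lem:no-Thurston-obstruction} (which is stated for sub-hyperbolic polynomials with invariant domain in the basin of infinity) applies directly, with no analogue of Lemma~\ref{lem:rational-realization} to prove. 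Likewise, the Newton proof constrains the escaping critical values of $\wt Q_{H,n}$ to be $\varphi$-images of preimages of fixed critical points only so that Head's theorem guarantees the realization is still a Newton map; in the polynomial case any realization of a topological polynomial is a polynomial, so that condition can be dropped (your phrasing ``iterated preimages of escaping critical points'' is in any case slightly off, since $P_H$ is \pf and has none). The remaining bookkeeping --- normalizing to get monic $P_n$, invoking Proposition~\ref{pro:surgery-convergence} for $P_n\to P$, and chasing the commutative diagram through $\varphi$, $\phi_{n,k}$, the BH-puzzle piece $V_{H,n}$, and Lemma~\ref{lem:BH-puzzle} to identify $P_{H,n}$ with $Q_{H,n}$ up to a root of unity --- is exactly as you describe and matches the Newton-case proof line by line.
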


\begin{proof}[Proof of Theorem \ref{thm:main1}]
This proof is completely the same as that of Theorem \ref{thm:main2}. One just need to replace the formula \eqref{eq:entropy2}; Lemmas \ref{lem:straighten}, \ref{lem:entropy-f_n} and Propositions \ref{pro:non-renormalizable},
\ref{pro:liminf-newton} used in the proof of Theorem \ref{thm:main2}, by the formula \eqref{eq:entropy2'}; Lemmas \ref{lem:straighten'}, \ref{lem:entropy-f_n'} and Propositions \ref{pro:non-renormalizable'},
\ref{pro:liminf-newton'} respectively. The detail is omitted.
\end{proof}

\vspace{1cm}

\noindent Yan Gao, \\
Mathemaitcal School  of Sichuan University, Chengdu 610064,
P. R. China. \\
Email: gyan@scu.edu.cn

\end{document}